\documentclass[11pt]{article}

\usepackage[margin=1in]{geometry}
\usepackage{amsmath,amssymb,amsthm,mathtools,mathrsfs}
\usepackage{booktabs}
\usepackage{tabularx}
\usepackage{longtable}
\usepackage{enumitem}
\usepackage{graphicx}
\usepackage{float}
\usepackage{algorithm}
\usepackage[noend]{algpseudocode}
\algrenewcommand\algorithmicrequire{\textbf{Input:}}
\algrenewcommand\algorithmicensure{\textbf{Output:}}
\usepackage[hypcap=false]{caption}
\usepackage{microtype}
\usepackage{placeins}
\usepackage{needspace}
\usepackage{environ}
\usepackage{xcolor}
\usepackage{tikz}
\usetikzlibrary{arrows.meta,calc,decorations.pathmorphing,positioning,shapes.geometric}
\usepackage[colorlinks=true,linkcolor=blue!55!black,citecolor=blue!55!black,urlcolor=blue!55!black]{hyperref}
\hypersetup{
  pdftitle={Barycentric Weak Inner-Product Gromov--Wasserstein},
  pdfauthor={Youssef Mroueh}
}

\newcommand{\R}{\mathbb R}
\newcommand{\E}{\mathbb E}
\newcommand{\cP}{\mathcal P}
\newcommand{\cU}{\mathcal U}
\newcommand{\cM}{\mathcal M}
\newcommand{\ip}[2]{\left\langle #1,#2\right\rangle}
\newcommand{\norm}[1]{\left\lVert #1\right\rVert}
\newcommand{\cx}{\preceq_{\mathrm{cx}}}
\newcommand{\wIGW}{\mathrm{wIGW}_{\mathrm{bar}}}
\newcommand{\wIGWeps}{\mathrm{wIGW}_{\mathrm{bar},\varepsilon}}
\newcommand{\IGW}{\mathrm{IGW}}

\newcommand{\WGW}{\mathrm{WGW}}
\DeclareMathOperator{\Tr}{Tr}
\DeclareMathOperator{\KL}{KL}
\DeclareMathOperator*{\argmin}{arg\,min}
\DeclareMathOperator*{\argmax}{arg\,max}

\theoremstyle{plain}
\newtheorem{theorem}{Theorem}[section]
\newtheorem{proposition}[theorem]{Proposition}
\newtheorem{lemma}[theorem]{Lemma}
\newtheorem{corollary}[theorem]{Corollary}
\theoremstyle{definition}
\newtheorem{definition}[theorem]{Definition}

\theoremstyle{remark}
\newtheorem{remark}[theorem]{Remark}

\let\appendixproof\proof
\let\endappendixproof\endproof
\RenewEnviron{proof}[1][\proofname]{%
  \par\smallskip\noindent\emph{Proof.}
  The complete argument is given in
  \hyperref[proof:\thetheorem]{Appendix~\ref*{proof:\thetheorem}}.
  \par\smallskip
}

\title{Barycentric Weak Inner-Product Gromov--Wasserstein}
\author{Youssef Mroueh\\[0.3em]
\normalsize IBM Research\\
\normalsize\texttt{mroueh@us.ibm.com}}
\date{}

\begin{document}
\maketitle

\begin{abstract}
Gromov--Wasserstein (GW) compares distributions through relations within each space.
This pointwise comparison can be too sensitive in one-to-many settings, where several
target outcomes refine one source state and their mean carries the geometry of
interest. We introduce a weak GW framework that compares source
relations with relations between the target conditional laws induced by a coupling.
For inner-product relations, we retain the conditional means
\(m_\pi(x)=\E_\pi[Y\mid X=x]\). The resulting barycentric weak inner-product GW
(wIGW) satisfies
\[
  \wIGW^2(\mu,\nu)=\inf_{\eta\cx\nu}\IGW^2(\mu,\eta).
\]
Here \(\eta\cx\nu\) means that \(\nu\) is a mean-preserving spread of \(\eta\).
Thus wIGW searches for an intermediate target geometry that can be refined into the
prescribed target law without changing conditional means. Under finite second
moments, minimizers exist and martingale gluing recovers an optimal coupling. With
ridge regularization, moment duality gives an \(A\)--\(B\) min--max problem whose
inner step is weak optimal transport with a quadratic cost parameterized by \(A\) and
\(B\); the outer problem optimizes these matrices. For finitely supported measures,
we give an iterative algorithm. Under
a quantitative ridge condition, the reduced problem is
convex--concave, and the projected outer iteration satisfies an explicit
contraction bound for inexact inner solves. Point cloud and graph feature refinement
experiments illustrate how mean-preserving target refinements can have zero cost.
A paired peripheral blood mononuclear cell (PBMC) multiome study evaluates
atlas based cell type transfer through RNA/ATAC alignment in cell to cell and
prototype to cell settings, with the prototype to cell setting representing the
one-to-many case.
\end{abstract}

\section{Introduction}\label{sec:introduction}

Gromov--Wasserstein (GW) compares probability spaces through their internal pairwise
relations \cite{Memoli,PeyreCuturiSolomon}. Inner-product GW (IGW) uses
\[
  c_{\mathcal X}(x,x')=\ip{x}{x'},
  \qquad c_{\mathcal Y}(y,y')=\ip{y}{y'},
\]
as its relation functions. This permits comparisons of Euclidean embeddings and
graph features whose coordinate systems need not agree. For $p\ge1$, let
$\cP_p(\R^d)$ denote the Borel probability measures on $\R^d$ with finite
$p$th moment, and let $\Pi(\mu,\nu)$ denote the set of couplings of $\mu$ and
$\nu$. For $\mu\in\cP_2(\R^{d_x})$ and $\nu\in\cP_2(\R^{d_y})$, the ordinary
IGW objective is
\begin{equation}\label{eq:intro-ordinary-igw}
 \IGW^2(\mu,\nu)
 :=\inf_{\pi\in\Pi(\mu,\nu)}
 \iint\bigl(\ip{x}{x'}-\ip{y}{y'}\bigr)^2
 d\pi(x,y)d\pi(x',y').
\end{equation}
The OT envelope of the squared inner-product loss motivates the computational
construction below. Write
\[
 S_\mu:=\int xx^\top d\mu(x),\qquad
 S_\nu:=\int yy^\top d\nu(y).
\]
Then ordinary IGW has the optimal transport envelope
\begin{equation}\label{eq:intro-ordinary-igw-envelope}
 \IGW^2(\mu,\nu)
 =\norm{S_\mu}_F^2+\norm{S_\nu}_F^2
 +\min_{A\in\R^{d_x\times d_y}}
 \left\{2\norm A_F^2+\mathsf{OT}_{c_A}(\mu,\nu)\right\},
 \qquad c_A(x,y):=-4y^\top A^\top x,
\end{equation}
where
$\mathsf{OT}_{c_A}(\mu,\nu):=\inf_{\pi\in\Pi(\mu,\nu)}\int c_A\,d\pi$.
Thus ordinary IGW reduces to an outer optimization over one matrix $A$; each
evaluation solves ordinary OT with a bilinear cost
\cite{ZhangGoldfeldMrouehSriperumbudur}.

\paragraph{From IGW to conditional laws.}
The objective in \eqref{eq:intro-ordinary-igw} evaluates the loss on individual
target realizations. In a one-to-many correspondence, a coarse source state may
represent several refined outcomes; for example, a coarse graph vertex may split
into several vertices in a refined graph. In multimodal data, an atlas prototype
may likewise correspond to a heterogeneous population of target cells. Quantum
measurements provide another coarse to fine example: classical post-processing can
merge several outcomes of a positive operator-valued measure (POVM) into one
detector readout, while a quantum instrument also attaches a state update branch to
each outcome \cite{OreshkovCalsamiglia2009,LeppajarviSedlak2021}. Across these
examples, the conditional mean can carry
the geometry of the coarse state, while the conditional law describes its refined
outcomes.

For a coupling $\pi\in\Pi(\mu,\nu)$, write
$\pi(dx,dy)=\mu(dx)\pi_x(dy)$. Weak optimal transport assigns a general cost to
the entire conditional target law, whereas its barycentric specialization retains
only the conditional mean \cite{GRST,BBP}:
\[
\begin{aligned}
 &\inf_{\pi\in\Pi(\mu,\nu)}\int C(x,\pi_x)\,d\mu(x),
 &&\text{(weak OT)},\\
 &m_\pi(x):=\int y\,\pi_x(dy),\qquad
 \inf_{\pi\in\Pi(\mu,\nu)}\int c\bigl(x,m_\pi(x)\bigr)\,d\mu(x),
 &&\text{(barycentric weak OT)}.
\end{aligned}
\]
We lift this principle from pointwise costs to pairwise relations.

Given a source relation $c_{\mathcal X}$, a measurable relation
$D:\mathcal P(\mathcal Y)^2\to\R$ between two conditional laws, and a loss
$\mathcal L$, we define the following weak Gromov--Wasserstein discrepancy based
on conditional laws:
\begin{equation}\label{eq:intro-aggregated-wgw}
 \inf_{\pi\in\Pi(\mu,\nu)}
 \iint \mathcal L\!\left(c_{\mathcal X}(x,x'),
 D(\pi_x,\pi_{x'})\right)d\mu(x)d\mu(x').
\end{equation}
Figure~\ref{fig:pointwise-conditional-spaces} contrasts this construction with
ordinary GW. Section~\ref{sec:weak-gw-framework} gives the general conditional
cost framework, which contains \eqref{eq:intro-aggregated-wgw}; a separate
averaging specialization recovers ordinary GW exactly.

\newcommand{\pointmartingalefigure}{%
Every feasible conditional mean map can be realized by a coupling in two stages:
\[
  X\xrightarrow{\ m\ }Z=m(X)
  \xrightarrow{\ \kappa\ }Y,
  \qquad \E[Y\mid Z]=Z.
\]
The wIGW loss compares the relations between $X$ and $Z$, while the martingale
kernel $\kappa$ supplies the conditional variation needed to reproduce the target
marginal $\nu$. For an optimal $m$, this construction yields an optimal coupling.
Figure~\ref{fig:point-martingale} illustrates the realization.

\begin{figure}[ht!]
\centering
\resizebox{\textwidth}{!}{%
\begin{tikzpicture}[
  font=\small,
  >=Latex,
  panel/.style={rounded corners=3pt,draw=black!22,fill=black!1},
  source/.style={circle,fill=blue!72!black,draw=white,line width=.35pt,inner sep=2.8pt},
  bary/.style={circle,fill=green!55!black,draw=white,line width=.35pt,inner sep=2.8pt},
  child/.style={circle,fill=red!70!black,draw=white,line width=.3pt,inner sep=2.5pt},
  midpoint/.style={circle,fill=white,draw=green!55!black,line width=1pt,inner sep=2.15pt},
  skeleton/.style={line width=1pt},
  spread/.style={dashed,line width=.75pt,red!65!black},
  stage/.style={->,line width=1.05pt,black!65}
]
  \draw[panel] (0,0) rectangle (4.25,4.25);
  \draw[panel] (5.35,0) rectangle (9.60,4.25);
  \draw[panel] (10.70,0) rectangle (14.95,4.25);

  \node[font=\bfseries,anchor=north] at (2.125,4.05) {source points};
  \node[font=\bfseries,anchor=north] at (7.475,4.05) {wIGW pushforward};
  \node[font=\bfseries,anchor=north] at (12.825,4.05) {target refinement};

  \coordinate (x1) at (0.92,2.95);
  \coordinate (x2) at (2.70,3.20);
  \coordinate (x3) at (3.02,1.33);
  \coordinate (x4) at (1.28,1.02);
  \draw[skeleton,blue!34] (x1)--(x2)--(x3)--(x4)--cycle;
  \node[source,label=left:$x_1$] at (x1) {};
  \node[source,label=right:$x_2$] at (x2) {};
  \node[source,label=right:$x_3$] at (x3) {};
  \node[source,label=below:$x_4$] at (x4) {};
  \node[blue!65!black] at (2.125,.38) {$x_i\sim\mu$};

  \coordinate (z1) at (6.13,2.72);
  \coordinate (z2) at (7.82,3.02);
  \coordinate (z3) at (8.02,1.24);
  \coordinate (z4) at (6.38,1.02);
  \draw[skeleton,green!42!black] (z1)--(z2)--(z3)--(z4)--cycle;
  \node[bary,label=left:$z_1$] at (z1) {};
  \node[bary,label=above:$z_2$] at (z2) {};
  \node[bary,label=right:$z_3$] at (z3) {};
  \node[bary,label=below:$z_4$] at (z4) {};
  \node[green!45!black] at (7.475,.38) {$z_i=m(x_i)$};

  \coordinate (t1) at (11.48,2.72);
  \coordinate (t2) at (13.17,3.02);
  \coordinate (t3) at (13.37,1.24);
  \coordinate (t4) at (11.73,1.02);
  \draw[skeleton,densely dashed,green!34!black] (t1)--(t2)--(t3)--(t4)--cycle;
  \foreach \z/\ya/\yb in {
    t1/{11.18,3.04}/{11.78,2.40},
    t2/{12.82,2.82}/{13.52,3.22},
    t3/{13.03,1.42}/{13.71,1.06},
    t4/{11.44,.73}/{12.02,1.31}}
  {
    \draw[spread] (\ya)--(\yb);
    \node[child] at (\ya) {};
    \node[child] at (\yb) {};
    \node[midpoint] at (\z) {};
  }
  \node[red!65!black,anchor=west] at (11.80,2.34) {$y_{1,-}$};
  \node[red!65!black,anchor=west] at (11.12,3.18) {$y_{1,+}$};
  \node[green!45!black] at (12.825,.38)
    {$\E[Y\mid X=x_i]=z_i$};

  \draw[stage] (4.38,2.12)--(5.22,2.12)
    node[midway,above] {$m$};
  \draw[stage] (9.73,2.12)--(10.57,2.12)
    node[midway,above] {$\kappa(z_i,dy)$};
\end{tikzpicture}}
\caption{Point space view of wIGW. The mean map sends each source point $x_i$ to
$z_i$, and the wIGW loss compares pairwise inner products among the source points
with those among their images. A martingale kernel then distributes the mass at
each $z_i$ over target points while preserving its mean; the resulting mixture has
target marginal $\nu$.}
\label{fig:point-martingale}
\end{figure}
\FloatBarrier
}

\newcommand{\catcloudsfigure}{%
Figure~\ref{fig:cat-clouds} illustrates an exact zero wIGW certificate and its
numerical recovery. The rotation $z_i=R_{35^\circ}x_i$ preserves the source Gram
matrix, while the symmetric children $y_{i,\pm}=z_i\pm s_i v_i$ average to $z_i$.
Sending each pair to its coarse parent therefore gives zero wIGW, although ordinary
IGW remains positive because it compares the individual children. The third panel
shows the numerically recovered pushforward;
Section~\ref{sec:experiments} describes the solver and its diagnostics, and
Corollary~\ref{cor:noisy-isometry} formalizes the construction.

\begin{figure}[ht!]
\centering
\includegraphics[width=.86\linewidth]{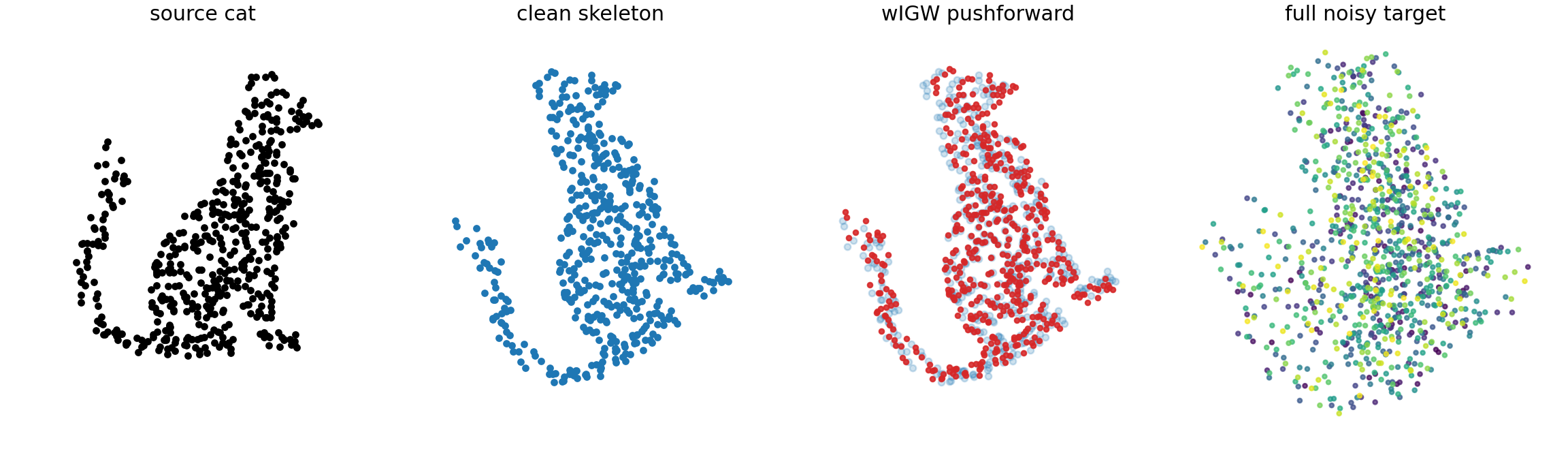}
\caption{A martingale refinement of the cat silhouette in
\cite{IBMUSD}. The panels show the centered source, its Gram preserving rotation,
the solved wIGW pushforward $(m_{\widehat\pi})_\#\mu$ overlaid on that reference,
and the full target with two children per parent. The returned coupling has the full
noisy target as its second marginal.}
\label{fig:cat-clouds}
\end{figure}
}

\newcommand{\conditionalcomparisonfigure}{%
\begin{figure}[H]
\centering
\resizebox{.82\textwidth}{!}{%
\begin{tikzpicture}[
  font=\small,
  >=Latex,
  panel/.style={rounded corners=3pt,draw=black!22,fill=black!1},
  xspace/.style={fill=blue!7,draw=blue!58!black,line width=.9pt},
  yspace/.style={fill=red!6,draw=red!58!black,line width=.9pt},
  xpoint/.style={circle,fill=blue!72!black,draw=white,line width=.35pt,inner sep=2.5pt},
  ypoint/.style={circle,fill=red!68!black,draw=white,line width=.35pt,inner sep=2.5pt},
  ambientx/.style={circle,fill=blue!42,inner sep=1.15pt},
  ambienty/.style={circle,fill=red!40,inner sep=1.15pt},
  row/.style={ellipse,draw=red!62!black,fill=red!9,line width=.9pt,
    minimum width=1.62cm,minimum height=.82cm},
  atom/.style={circle,fill=red!65!black,inner sep=1.15pt},
  relation/.style={<->,line width=.9pt},
  coupling/.style={->,line width=.95pt,orange!82!black},
  rowmap/.style={->,line width=1pt,blue!66!black}
]
\begin{scope}
  \draw[panel] (0,0) rectangle (7.35,4.95);
  \node[font=\bfseries,anchor=north] at (3.675,4.82) {(a) Ordinary GW};

  \path[xspace] plot[smooth cycle,tension=.82] coordinates{
    (.38,.92) (.38,2.07) (.82,3.56) (1.75,4.00)
    (2.86,3.52) (3.12,2.16) (2.72,.84) (1.44,.55)};
  \path[yspace] plot[smooth cycle,tension=.82] coordinates{
    (4.22,.88) (4.22,2.16) (4.72,3.54) (5.63,4.00)
    (6.78,3.47) (7.02,2.12) (6.62,.78) (5.32,.54)};
  \node[blue!65!black,font=\bfseries,anchor=east] at (.82,3.78) {$\mathcal X$};
  \node[red!65!black,font=\bfseries,anchor=east] at (4.86,3.78) {$\mathcal Y$};

  \foreach \p in {(.78,1.02),(1.12,2.18),(1.82,.86),(2.52,2.66),(2.57,1.20)}
    \node[ambientx] at \p {};
  \foreach \p in {(4.66,1.02),(4.95,2.21),(5.58,.84),(6.34,2.66),(6.44,1.18)}
    \node[ambienty] at \p {};

  \node[xpoint,label=left:$x$] (ox) at (1.08,3.03) {};
  \node[xpoint,label=left:$x'$] (oxp) at (2.33,1.72) {};
  \node[ypoint,label=right:$y$] (oy) at (4.91,3.03) {};
  \node[ypoint,label=right:$y'$] (oyp) at (6.17,1.72) {};
  \draw[relation,blue!70!black] (ox)--(oxp)
    node[midway,above=4pt] {$c_{\mathcal X}$};
  \draw[relation,red!70!black] (oy)--(oyp)
    node[midway,right=3pt] {$c_{\mathcal Y}$};
  \draw[coupling] (ox) to[bend left=8] (oy);
  \draw[coupling] (oxp) to[bend right=8] (oyp);
  \node[orange!82!black,font=\bfseries] at (3.68,3.48) {$\pi$};
  \node at (3.675,.22)
    {$\mathcal L\!\left(c_{\mathcal X}(x,x'),c_{\mathcal Y}(y,y')\right)$};
\end{scope}

\begin{scope}[xshift=7.75cm]
  \draw[panel] (0,0) rectangle (7.35,4.95);
  \node[font=\bfseries,anchor=north] at (3.675,4.82)
    {(b) Aggregated weak GW based on conditional laws};

  \path[xspace] plot[smooth cycle,tension=.82] coordinates{
    (.38,.92) (.38,2.07) (.82,3.56) (1.75,4.00)
    (2.86,3.52) (3.12,2.16) (2.72,.84) (1.44,.55)};
  \path[yspace] plot[smooth cycle,tension=.82] coordinates{
    (4.02,.70) (3.96,2.13) (4.55,3.65) (5.62,4.06)
    (6.86,3.58) (7.08,2.09) (6.58,.66) (5.16,.43)};
  \node[blue!65!black,font=\bfseries,anchor=east] at (.82,3.78) {$\mathcal X$};
  \node[red!65!black,font=\bfseries,anchor=east] at (4.38,3.82) {$\mathcal Y$};

  \foreach \p in {(.78,1.02),(1.12,2.18),(1.82,.86),(2.52,2.66),(2.57,1.20)}
    \node[ambientx] at \p {};
  \node[xpoint,label=left:$x$] (wx) at (1.08,3.03) {};
  \node[xpoint,label=left:$x'$] (wxp) at (2.33,1.72) {};
  \draw[relation,blue!70!black] (wx)--(wxp)
    node[midway,above=4pt] {$c_{\mathcal X}$};

  \node[row] (rowx) at (5.10,3.03) {};
  \node[row] (rowxp) at (5.94,1.55) {};
  \node[red!68!black,anchor=south] at (rowx.north) {$\pi_x$};
  \node[red!68!black,anchor=north] at (rowxp.south) {$\pi_{x'}$};
  \foreach \dx/\dy in {-.48/.08,-.19/-.14,.12/.12,.43/-.05}
    \node[atom] at ([xshift=\dx cm,yshift=\dy cm]rowx.center) {};
  \foreach \dx/\dy in {-.48/.05,-.18/-.13,.14/.12,.44/-.04}
    \node[atom] at ([xshift=\dx cm,yshift=\dy cm]rowxp.center) {};

  \draw[rowmap] (wx) to[bend left=7]
    node[midway,above=2pt] {$x\mapsto\pi_x$} (rowx.west);
  \draw[rowmap] (wxp) to[bend right=7]
    node[midway,below=2pt] {$x'\mapsto\pi_{x'}$} (rowxp.west);
  \draw[relation,red!70!black] (rowx.south east) to[bend left=18]
    (rowxp.north east);
  \node[red!70!black] at (6.34,2.45) {$D$};
  \node at (3.675,.22)
    {$\mathcal L\!\left(c_{\mathcal X}(x,x'),D(\pi_x,\pi_{x'})\right)$};
\end{scope}
\end{tikzpicture}}
\caption{Ordinary GW and aggregated weak GW based on conditional laws. Ordinary GW compares
$c_{\mathcal X}(x,x')$ with $c_{\mathcal Y}(y,y')$ for target points paired by the
coupling. The aggregated construction sends $x$ and $x'$ to the conditional laws $\pi_x$
and $\pi_{x'}$, then compares the source relation with $D(\pi_x,\pi_{x'})$. Here
$\mathcal L$ is the relation loss, and the choice of $D$ determines which
information from the conditional laws enters the comparison.}
\label{fig:pointwise-conditional-spaces}
\end{figure}
\FloatBarrier
}

\conditionalcomparisonfigure

\paragraph{Barycentric wIGW and convex order.}
The paper focuses on the barycentric relation
\[
 D_{\rm bar}(\rho,\rho')
 :=\ip{b(\rho)}{b(\rho')},
 \qquad b(\rho):=\int y\,d\rho(y),
\]
for conditional laws with finite first moments. For a coupling $\pi$, let
\[
 m_\pi(x):=b(\pi_x)=\E_\pi[Y\mid X=x].
\]
Because $\nu$ has a finite second moment, this mean is defined for
$\mu$ almost every $x$.
With $c_{\mathcal X}(x,x')=\ip{x}{x'}$ and
$\mathcal L(a,b)=(a-b)^2$, we define the barycentric weak inner-product GW
discrepancy (wIGW) by
\begin{equation}\label{eq:intro-wigw}
  \wIGW^2(\mu,\nu)
  :=\inf_{\pi\in\Pi(\mu,\nu)}
  \iint\left(\ip{x}{x'}-\ip{m_\pi(x)}{m_\pi(x')}\right)^2
  d\mu(x)d\mu(x').
\end{equation}
We write \(\wIGW(\mu,\nu)\) for the nonnegative square root of the value in
\eqref{eq:intro-wigw}.
The wIGW objective therefore depends on each conditional law $\pi_x$ through its
mean $m_\pi(x)$. Appendix~\ref{app:general-wgw-geometry} discusses relations that
retain distributional spread or shape.

Convex order determines which mean maps can be induced by a coupling with target
marginal $\nu$. For
$\eta,\nu\in\cP_1(\R^{d_y})$, write $\eta\cx\nu$ when
$\int u\,d\eta\le\int u\,d\nu$ for every finite convex function $u$ of at most
linear growth. Conditional Jensen and Strassen's theorem \cite{Strassen} show,
up to $\mu$ almost everywhere equality, that the attainable conditional means are
precisely the maps $m\in L^2(\mu;\R^{d_y})$ satisfying $m_\#\mu\cx\nu$.

In equal dimensions, Gozlan and Juillet showed that quadratic barycentric weak OT
admits the Wasserstein projection formula
\begin{equation}\label{eq:intro-quadratic-wot}
 \mathsf T_2(\nu\mid\mu)
 :=\inf_{\pi\in\Pi(\mu,\nu)}
   \int\norm{x-m_\pi(x)}^2d\mu(x)
 =\inf_{\substack{\eta\in\cP_2(\R^d)\\\eta\cx\nu}}
   W_2^2(\mu,\eta),
 \qquad \mu,\nu\in\cP_2(\R^d),
\end{equation}
onto the measures below $\nu$ in convex order \cite{GozlanJuillet}. We apply this
characterization to \eqref{eq:intro-wigw} and use bilinearity of the inner product
and conditional Jensen to derive the map and IGW projection formulas
\begin{equation}\label{eq:intro-projection}
  \begin{aligned}
  \wIGW^2(\mu,\nu)
  &=\inf_{\substack{m\in L^2(\mu;\R^{d_y})\\m_\#\mu\cx\nu}}
    \iint\!\left(\ip{x}{x'}-\ip{m(x)}{m(x')}\right)^2d\mu(x)d\mu(x')\\
  &=\inf_{\substack{\eta\in\cP_2(\R^{d_y})\\\eta\cx\nu}}
    \IGW^2(\mu,\eta).
  \end{aligned}
\end{equation}
Quadratic weak OT minimizes $W_2$ over $\eta\cx\nu$, whereas the wIGW projection
uses IGW on the same feasible set. Because convex order is directional, wIGW is a
discrepancy. The
Variational Dominance Criterion provides another use of a directional stochastic
order constraint \cite{DomingoEnrichSchiffMroueh2023}.

For measures with finite second moments, the coupling, map, and projection
formulations of wIGW all admit minimizers.

\pointmartingalefigure

\paragraph{Variational form, duality, and computation.}
The projection identity \eqref{eq:intro-projection} describes the geometry of wIGW,
and its map formulation leads to a dual representation. For a feasible mean map
$m$, set
\[
 S_m:=\int m(x)m(x)^\top d\mu(x),
 \qquad M_m:=\int x m(x)^\top d\mu(x).
\]
Proposition~\ref{prop:moment} reduces the map objective to
$\norm{S_\mu}_F^2+\norm{S_m}_F^2-2\norm{M_m}_F^2$. Duality introduces matrices
$A$ and $B\succeq0$ for these two moments and a convex potential for the convex
order constraint. Under compact support, this gives the unregularized dual in
Theorem~\ref{thm:compact-dual}.

For general measures with finite second moments, we add the ridge penalty
$\varepsilon\int\norm{m_\pi(x)}^2d\mu(x)$ and denote the resulting value by
$\wIGWeps^2(\mu,\nu)$. The ridge makes the resulting cost coercive and
strongly convex in $z$. Proposition~\ref{prop:wot} uses this structure to establish
uniqueness of the minimizing barycentric map and equality between the weak OT
primal and its dual over convex potentials. For fixed $A$ and $B$, the weak OT cost is
\[
 c_{A,B}^{\varepsilon}(x,z)
 :=z^\top(2B+\varepsilon I_{d_y})z-4z^\top A^\top x.
\]
Write $M_2(\rho):=\int\norm z^2d\rho(z)$. Let $\mathcal A_2$ be the Frobenius
ball in $\R^{d_x\times d_y}$ with radius $\sqrt{M_2(\mu)M_2(\nu)}$, and let
$\mathcal B_2$ be the Frobenius ball of positive semidefinite $d_y\times d_y$
matrices with radius $M_2(\nu)$. Combining the moment dualities with weak OT duality, for every
$\varepsilon>0$ we obtain the following dual variational form:
\begin{equation}\label{eq:intro-ridge-ab-envelope}
\begin{aligned}
 \wIGWeps^2(\mu,\nu)
 &=\norm{S_\mu}_F^2+
   \inf_{A\in\mathcal A_2}\sup_{B\in\mathcal B_2}
   \left\{2\norm A_F^2-\norm B_F^2+
   \mathsf W_{A,B}^{\varepsilon}(\mu,\nu)\right\},\\
 \mathsf W_{A,B}^{\varepsilon}(\mu,\nu)
 &:=\inf_{\pi\in\Pi(\mu,\nu)}
   \int c_{A,B}^{\varepsilon}\bigl(x,m_\pi(x)\bigr)d\mu(x).
\end{aligned}
\end{equation}
We refer to the outer matrix optimization in
\eqref{eq:intro-ridge-ab-envelope} as the \(A\)--\(B\) min--max problem.
Equation~\eqref{eq:intro-ridge-ab-envelope} gives the computational form: the
outer problem optimizes $A$ and $B$, while the inner block evaluates a convex
barycentric weak OT problem. This parallels the ordinary IGW envelope
\eqref{eq:intro-ordinary-igw-envelope}, which uses one matrix and ordinary OT.
Theorem~\ref{thm:swap} gives the outer exchange when
$\varepsilon\ge2\lambda_{\max}(S_\mu)$ and $\varepsilon>0$; the convergence
analysis later uses the strict inequality
$\varepsilon>2\lambda_{\max}(S_\mu)$.
Sections~\ref{sec:ridge-duality}--\ref{sec:algorithm} give the full duality,
reconstruction, and algorithmic results.

\catcloudsfigure
\FloatBarrier

\paragraph{Relation to prior work.}
Weak optimal transport was introduced in \cite{GRST}; existence and duality were
developed in \cite{BBP}, and \cite{GozlanJuillet} studied the quadratic barycentric
projection. Following \cite{PatyChoneKramarz2022}, our finite weak OT oracle uses KL
mirror descent with Sinkhorn KL projections, but without entropy regularization of the
weak OT objective. The outer representation builds on \cite{ZhangGoldfeldMrouehSriperumbudur},
and the inexact oracle analysis follows the strategy of \cite{RiouxGoldfeldKato2024}.
SCOT uses GW for single-cell multiome alignment \cite{DemetciEtAl2022}.
Table~\ref{tab:prior-comparison} compares the conditional or distributional objects in
related constructions. In wIGW, the optimized coupling induces the laws $\pi_x$; the
coupling objective is generally nonconvex, and the discrepancy is directional.

\begin{table}[H]
\centering
\footnotesize
\renewcommand{\arraystretch}{1.05}
\begin{tabularx}{\textwidth}{@{}>{\raggedright\arraybackslash}p{.21\textwidth}
  >{\raggedright\arraybackslash}p{.30\textwidth}
  >{\raggedright\arraybackslash}X@{}}
\toprule
Construction & Conditional or distributional object & Relation to wIGW\\
\midrule
SCOT \cite{DemetciEtAl2022} & Cell graph geometries in different
single-cell modalities & Ordinary GW baseline for multimodal alignment.\\
CDOT \cite{ChungSongKimPark} & Conditional expectation operators induced by the
optimized coupling & Convex pseudometric based on intertwining distance operators.\\
MIRROR/SI-GW \cite{WangWangDing2026} & A cross-attention coupling and a learned
target geometry & Geometry learning with a fixed coupling.\\
WL meets GW \cite{ChenLimMemoliWanWang} & Fixed transition kernels of measure
Markov chains & Exogenous conditional laws supplied as transition kernels.\\
$Z$-GW \cite{BauerMemoliNeedhamNishino} & Fixed kernels valued in $Z$, including
distributional structure & Exogenous relational data valued in $Z$.\\
Semi-relaxed and linear GW \cite{VincentCuazEtAl,BeierBeinertSteidl} & A relaxed
marginal, or barycentric projections into a reference space & Different relaxation and
linearization mechanisms.\\
\bottomrule
\end{tabularx}
\caption{Selected related GW constructions and the origin of their conditional or
distributional objects.}
\label{tab:prior-comparison}
\end{table}

The distinction from these constructions is structural. Semi-relaxed GW changes a
marginal constraint, whereas wIGW retains the prescribed target marginal $\nu$ and
relaxes only the geometry carried by conditional means; equivalently, it projects IGW
onto laws $\eta\cx\nu$ and realizes the result by a martingale refinement. Linear GW
uses barycentric projections to represent previously computed transport plans, whereas
the barycentric map here is optimized as part of the discrepancy. The conditional laws
in WL meets GW and $Z$-GW are exogenous relational data, while the laws $\pi_x$ in
wIGW are induced by the coupling being optimized. CDOT instead builds a convex
pseudometric from conditional expectation and distance operators.

\FloatBarrier
\paragraph{Contributions.}
\begin{itemize}[leftmargin=2em]
\item We formulate weak GW using conditional laws induced by a coupling and recover
ordinary GW as a special case.
\item For barycentric inner-product relations, we derive the map and moment forms,
characterize zero discrepancy, prove existence of minimizers, and establish the
convex order projection identity.
\item We obtain compact and ridge dual formulas. The ridge formulation is an
$A$--$B$ min--max problem with a convex barycentric weak OT block and an explicit
reconstruction principle.
\item We give a finite sample algorithm and, under a quantitative ridge condition,
derive a contraction bound with explicit control of oracle error and give numerical
illustrations on shape and graph feature refinements.
\item We evaluate atlas based cell type transfer through RNA/ATAC alignment in
cell to cell and prototype to cell settings on the paired 10x Genomics PBMC
multiome dataset \cite{TenXPBMC}, with target labels excluded from the transport
optimization, and assess sensitivity to the ridge parameter.
\end{itemize}

\paragraph{Organization.}
Section~\ref{sec:variational-foundations} recalls the ordinary IGW envelope and the
weak OT theorem used later. Section~\ref{sec:weak-gw-framework} defines weak GW
based on conditional laws, and Sections~\ref{sec:wigw-definition}--\ref{sec:projection}
develop wIGW, its moment structure, and the convex order projection theorem.
Sections~\ref{sec:compact-duality}--\ref{sec:reconstruction} give compact and ridge
duality, equivalent formulations, and reconstruction. Section~\ref{sec:algorithm}
gives the finite algorithm and convergence analysis; Section~\ref{sec:experiments}
presents synthetic shape and graph feature refinement studies and the PBMC
atlas based transfer benchmark; and
Section~\ref{sec:conclusion} concludes. Appendix~\ref{app:variational-proofs}
contains the analytic foundations; Appendices~\ref{app:barycentric-proofs}--
\ref{app:algorithm-proofs} contain the geometric, duality, reconstruction, and
algorithmic proofs. Appendix~\ref{app:general-wgw-geometry} discusses alternative
relations between conditional laws, and Appendix~\ref{app:reference-theorems}
states the Strassen, Sion, and Danskin results used in the paper.
Appendix~\ref{app:experimental-configurations} lists the numerical
configurations, defines the PBMC evaluation metrics, and reports the secondary
paired cell retrieval analysis.

\section{Variational foundations for IGW and weak optimal transport}
\label{sec:variational-foundations}

\subsection{Ordinary IGW: OT envelope and computational precedent}
\label{subsec:ordinary-igw}

We begin with the ordinary IGW representation that motivates the weak theory.
For a Euclidean probability law $\rho$ with a finite moment of order $p$, write
\begin{equation}\label{eq:moment-notation}
 M_p(\rho):=\int\norm z^p d\rho(z).
\end{equation}
For
$\mu\in\cP_2(\R^{d_x})$, $\nu\in\cP_2(\R^{d_y})$, and
$\pi\in\Pi(\mu,\nu)$, set
\begin{equation}\label{eq:ordinary-igw-moment-notation}
 S_\mu=\int xx^\top d\mu(x),\qquad
 S_\nu=\int yy^\top d\nu(y),\qquad
 M_\pi=\int xy^\top d\pi(x,y),
\end{equation}
and define
\begin{equation}\label{eq:ordinary-igw-definition}
 \IGW^2(\mu,\nu)
 :=\inf_{\pi\in\Pi(\mu,\nu)}
 \iint\big(\ip{x}{x'}-\ip{y}{y'}\big)^2
 d\pi(x,y)d\pi(x',y').
\end{equation}
The unsquared symbol \(\IGW(\mu,\nu)\) denotes the nonnegative square root of
the value in \eqref{eq:ordinary-igw-definition}.
For $A\in\R^{d_x\times d_y}$, introduce the bilinear cost
\begin{equation}\label{eq:ordinary-igw-bilinear-cost}
 c_A(x,y):=-4y^\top A^\top x,
 \qquad
 \mathsf{OT}_{c_A}(\mu,\nu)
 :=\inf_{\pi\in\Pi(\mu,\nu)}\int c_A\,d\pi.
\end{equation}

\begin{theorem}[Ordinary IGW OT envelope and duality
\cite{ZhangGoldfeldMrouehSriperumbudur,RiouxGoldfeldKato2024}]
\label{thm:ordinary-igw-envelope}
Let $\mu\in\cP_2(\R^{d_x})$ and $\nu\in\cP_2(\R^{d_y})$. With the
moment notation, moment matrices, IGW objective, and bilinear OT cost defined in
\eqref{eq:moment-notation}--\eqref{eq:ordinary-igw-bilinear-cost},
\begin{equation}\label{eq:ordinary-igw-envelope}
 \IGW^2(\mu,\nu)
 =\norm{S_\mu}_F^2+\norm{S_\nu}_F^2
 +\min_{A\in\R^{d_x\times d_y}}
 \left\{2\norm A_F^2+\mathsf{OT}_{c_A}(\mu,\nu)\right\}.
\end{equation}
The minimum is unchanged if $A$ is restricted to
\[
 \mathcal A_{\mu,\nu}:=
 \left\{A\in\R^{d_x\times d_y}:\norm A_F\le\sqrt{M_2(\mu)M_2(\nu)}\right\}.
\]
Moreover, for every $A\in\R^{d_x\times d_y}$, set
\[
  \mathcal K_A:=\left\{(\varphi,\psi)\in L^1(\mu)\times L^1(\nu):
  \begin{array}{l}\varphi,\psi\text{ are selected Borel versions, and}\\[-2pt]
  \varphi(x)+\psi(y)\le c_A(x,y)\text{ pointwise}\end{array}\right\}.
\]
Kantorovich duality then gives
\begin{equation}\label{eq:ordinary-igw-kantorovich-dual}
 \mathsf{OT}_{c_A}(\mu,\nu)
 =\sup_{(\varphi,\psi)\in\mathcal K_A}
 \left\{\int\varphi\,d\mu+\int\psi\,d\nu\right\}.
\end{equation}
Equivalently, with
\[
 Q_Au(x):=\inf_y\{u(y)-4y^\top A^\top x\},
\]
the dual formula in \eqref{eq:ordinary-igw-kantorovich-dual} also has the
one-potential form
\begin{equation}\label{eq:ordinary-igw-Q-dual}
 \mathsf{OT}_{c_A}(\mu,\nu)
 =\sup_u\left\{\int Q_Au\,d\mu-\int u\,d\nu\right\},
\end{equation}
where the supremum is over Borel $u$ for which $u\in L^1(\nu)$ and the
universally measurable transform $Q_Au$ belongs to $L^1(\mu)$. Measures are
understood on their completions when such transforms are integrated.
The infimum over $\pi$ in \eqref{eq:ordinary-igw-definition} also admits a
minimizer.
\end{theorem}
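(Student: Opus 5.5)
Expanding the square in \eqref{eq:ordinary-igw-definition} is the first step. For $\pi\in\Pi(\mu,\nu)$,
\[
 \iint(\ip{x}{x'}-\ip{y}{y'})^2\,d\pi\,d\pi
 =\norm{S_\mu}_F^2+\norm{S_\nu}_F^2-2\iint \ip{x}{x'}\ip{y}{y'}\,d\pi\,d\pi .
\]
The cross term factorizes. Since $\ip{x}{x'}\ip{y}{y'}=\Tr(x y^\top y' x'^\top)$, the integral equals $\Tr(M_\pi M_\pi^\top)=\norm{M_\pi}_F^2$. All integrals are finite by Cauchy--Schwarz and the second moment assumptions. Hence the objective is $\norm{S_\mu}_F^2+\norm{S_\nu}_F^2-2\norm{M_\pi}_F^2$.

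Next, I would linearize the concave term with the scalar identity
\[
 -2\norm{M}_F^2=\min_{A}\{2\norm{A}_F^2-4\ip{A}{M}_F\},
\]
whose unique minimizer is $A=M$. Note that $-4\ip{A}{M_\pi}_F=\int c_A\,d\pi$. Taking the infimum over $\pi$ and exchanging the two infima (which is always legitimate) gives the envelope, with "$\min$" over $A$ still to be justified. By Cauchy--Schwarz, $\norm{M_\pi}_F\le\int\norm{x}\norm{y}\,d\pi\le\sqrt{M_2(\mu)M_2(\nu)}$. For any $\pi$, the inner minimizer $A=M_\pi$ therefore lies in $\mathcal A_{\mu,\nu}$. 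So the value is unchanged under the restriction, and the envelope becomes $\inf_{A\in\mathcal A_{\mu,\nu}}\{2\norm A_F^2+\mathsf{OT}_{c_A}\}$.

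Attainment comes from compactness. $\Pi(\mu,\nu)$ is tight and weakly compact, and the set $\{M_\pi\}$ is its image under $\pi\mapsto\int xy^\top d\pi$. This map is weakly continuous on $\Pi(\mu,\nu)$: the marginals are fixed and $\norm{xy^\top}\le(\norm x^2+\norm y^2)/2$, so the integrand is uniformly integrable. The image is therefore compact, and the continuous function $-2\norm{M}_F^2$ attains its minimum at some $\pi^\star$. This gives the minimizer of \eqref{eq:ordinary-igw-definition}. Setting $A^\star=M_{\pi^\star}$ then attains the outer minimum, since $\mathsf{OT}_{c_{A^\star}}\le\int c_{A^\star}d\pi^\star$ and the chain of inequalities closes.

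For duality, the cost $c_A$ is continuous and satisfies $|c_A(x,y)|\le 2\norm{A}(\norm x^2+\norm y^2)$. It is thus bounded below by $a(x)+b(y)$ with $a\in L^1(\mu)$ and $b\in L^1(\nu)$. Standard Kantorovich duality for lower semicontinuous costs with integrable lower bounds (Villani, Thm.~5.10) then gives \eqref{eq:ordinary-igw-kantorovich-dual}, including the restriction to Borel versions.

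The main obstacle is the one-potential form \eqref{eq:ordinary-igw-Q-dual}, specifically its measurability and integrability bookkeeping. I would set $\psi=-u$ and replace $\varphi$ by the $c$-transform $Q_Au$. This transform is an infimum of affine functions of $x$, hence concave and upper semicontinuous where finite, so in fact Borel. This replacement only increases the dual objective, which gives "$\le$" in the comparison with \eqref{eq:ordinary-igw-kantorovich-dual}. For "$\ge$", I would check that any admissible $u$ with $Q_Au\in L^1(\mu)$ produces a pair in $\mathcal K_A$. This holds because $Q_Au(x)-u(y)\le c_A(x,y)$ pointwise by definition. Interpreting the integrals on completions handles the universally measurable case stated in the theorem.
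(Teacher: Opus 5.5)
Your proposal is correct and follows essentially the same route as the paper: the moment expansion to $\norm{S_\mu}_F^2+\norm{S_\nu}_F^2-2\norm{M_\pi}_F^2$, the Fenchel identity with the Cauchy--Schwarz ball, Prokhorov compactness plus uniform integrability for attainment, Villani's Theorem~5.10, and the $c$-transform replacement. Your remark that $Q_Au$ is an infimum of affine functions of $x$, hence upper semicontinuous and Borel, is in fact sharper than the paper's appeal to universal measurability. The one step you leave implicit is $Q_Au\in L^1(\mu)$ when you replace $\varphi$ by $Q_Au$; the paper obtains it from the sandwich $\varphi\le Q_Au\le u(y_0)-4y_0^\top A^\top x$ for any fixed $y_0$ with $u(y_0)$ finite, since both bounds are $\mu$-integrable.
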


\begin{proof}
For independent copies $(X,Y),(X',Y')$ with law $\pi$, expansion and independence
give
\[
 \E\big(\ip{X}{X'}-\ip{Y}{Y'}\big)^2
 =\norm{S_\mu}_F^2+\norm{S_\nu}_F^2-2\norm{M_\pi}_F^2.
\]
The Fenchel identity
\[
 -2\norm M_F^2
 =\min_A\{2\norm A_F^2-4\ip{A}{M}_F\},
 \qquad A^\star=M,
\]
therefore permits the two infima to be grouped in either order and yields
\eqref{eq:ordinary-igw-envelope}. At every joint minimizer,
$A^\star=M_{\pi^\star}$, and Cauchy--Schwarz gives
\[
 \norm{A^\star}_F
 \le\int\norm x\norm y\,d\pi^\star
 \le\sqrt{M_2(\mu)M_2(\nu)}.
\]
This proves the exact ball restriction. It also proves existence of a minimizer: the coupling set
is weakly compact by Prokhorov's theorem \cite[Theorem~5.1]{Billingsley}, its fixed
marginals make the family uniformly square-integrable,
and the objective is continuous along weakly convergent couplings; the ball
$\mathcal A_{\mu,\nu}$ is compact.

Finally,
\[
 |c_A(x,y)|\le
 2\norm A_F(\norm x^2+\norm y^2),
\]
so $c_A$ is continuous and bounded in absolute value by a sum of integrable
marginal functions. Standard Kantorovich duality for such a cost
\cite[Theorem~5.10]{VillaniOT} gives
\eqref{eq:ordinary-igw-kantorovich-dual} with pointwise Borel representatives of
the potentials. Set $u=-\psi$. For fixed $u$, the maximal pointwise admissible
first potential is
\[
 Q_Au(x)=\inf_y\{u(y)-4y^\top A^\top x\}.
\]
This infimum is universally measurable in general by
\cite[Proposition~7.47]{BertsekasShreve}, hence measurable for the completion of
$\mu$. Replacing $\varphi$ by $Q_Au$ proves
\eqref{eq:ordinary-igw-Q-dual}; standard truncation under the marginal envelope
above gives the stated integrable class. The theorem imposes the two-potential
constraint pointwise; an almost everywhere constraint would instead lead to a
$\nu$-essential infimum.
\end{proof}

Theorem~\ref{thm:ordinary-igw-envelope} is the inner-product specialization, in
our normalization, of the variational mechanism developed by Zhang, Goldfeld,
Mroueh, and Sriperumbudur for quadratic Euclidean GW
\cite{ZhangGoldfeldMrouehSriperumbudur}. Their squared-distance theorem assumes
fourth moments because its expanded cost contains $\norm x^2\norm y^2$; the
bilinear IGW formula above needs only second moments. The essential structural point
is the same: a quadratic coupling functional becomes an outer finite-dimensional
optimization plus an ordinary OT problem with a parametrized cost. In ordinary
IGW, the target marginal fixes $S_\nu$, so the envelope needs only the variable
$A$.

Rioux, Goldfeld, and Kato use the entropic version of this envelope as an
algorithmic principle \cite{RiouxGoldfeldKato2024}. In their entropic setting,
regularization yields a differentiable envelope amenable to optimization with
inexact Sinkhorn solves. We use this only as the
computational precedent for the variational architecture developed below: wIGW
replaces the ordinary OT oracle by barycentric weak OT and introduces a second outer
variable $B$ to linearize the optimized barycentric second moment.

\subsection{Weak optimal transport}\label{subsec:weak-ot}

Weak OT replaces a pointwise cost between spaces by a cost of a source
point and its entire conditional target distribution. Let $\mathcal X,\mathcal Y$ be standard Borel
spaces, let $\mu\in\cP(\mathcal X)$ and $\nu\in\cP(\mathcal Y)$. By the
disintegration theorem \cite[Theorem~3.4]{Kallenberg}, every
$\pi\in\Pi(\mu,\nu)$ can be written as
$\pi(dx,dy)=\mu(dx)\pi_x(dy)$. For a measurable conditional cost
$C:\mathcal X\times\cP(\mathcal Y)\to(-\infty,+\infty]$, the weak OT problem is
\begin{equation}\label{eq:generic-wot}
 \mathsf{WT}_{C}(\mu,\nu)
 :=\inf_{\pi\in\Pi(\mu,\nu)}\int C(x,\pi_x)d\mu(x).
\end{equation}
The defining feature is that $C(x,\cdot)$ may be nonlinear in the conditional law
\cite{GRST,BBP}.

For barycentric costs, convex order and Strassen's theorem characterize the
possible conditional means.

\begin{definition}[Convex order]\label{def:convex-order}
For $\eta,\nu\in\cP_1(\R^d)$, write $\eta\cx\nu$ when
$\int u\,d\eta\le\int u\,d\nu$ for every finite convex function of at most linear
growth, meaning that for some finite constant $C$,
\[
 |u(z)|\le C\bigl(1+\norm z\bigr)
 \qquad\text{for every }z\in\R^d.
\]
Equivalently, the inequality holds for every convex test for which both
integrals are well defined; see \cite[Chapter~7]{ShakedShanthikumar2007} for
standard terminology for convex order.
\end{definition}

The order is equivalent to the existence of a martingale coupling whose
conditional mean recovers the less dispersed law.

\begin{theorem}[Strassen martingale characterization]\label{thm:strassen}
For $\eta,\nu\in\cP_1(\R^d)$, $\eta\cx\nu$ if and only if there is a coupling
$\kappa\in\Pi(\eta,\nu)$ with the following martingale property. Let $(Z,Y)$ be
the coordinate random variables under $\kappa$. Then $Z\sim\eta$, $Y\sim\nu$, and
$\E_\kappa[Y\mid Z]=Z$. Equivalently, there is a
Borel probability kernel $\kappa(dy\mid z)$ satisfying
\[
  \int\kappa(\mathord\cdot\mid z)d\eta(z)=\nu,
  \qquad \int y\,\kappa(dy\mid z)=z\quad\text{for $\eta$-a.e. }z.
\]
This is Strassen's theorem \cite{Strassen}.
Appendix~\ref{app:reference-theorems} gives the complete statement, hypotheses,
and proof.
\end{theorem}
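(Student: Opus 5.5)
The easy direction is the existence of a martingale coupling implying convex order; it is conditional Jensen. Let $\kappa\in\Pi(\eta,\nu)$ satisfy $\E_\kappa[Y\mid Z]=Z$, and let $u$ be finite convex with $|u(z)|\le C(1+\norm z)$. Then $u(Y)$ is integrable because $\nu\in\cP_1$, and conditional Jensen gives $u(Z)=u(\E[Y\mid Z])\le\E[u(Y)\mid Z]$ almost surely. Integrating yields $\int u\,d\eta\le\int u\,d\nu$. For the kernel formulation, I disintegrate $\kappa$ along its first marginal on the standard Borel space $\R^d$ (as in Section~\ref{subsec:weak-ot}). The martingale property then reads $\int y\,\kappa(dy\mid z)=z$ for $\eta$-a.e.\ $z$, and the second marginal condition is $\int\kappa(\cdot\mid z)\,d\eta(z)=\nu$. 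Conversely, a kernel with these two properties defines $\kappa(dz,dy)=\eta(dz)\kappa(dy\mid z)$, so the two formulations are interchangeable.

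For the hard direction I will first treat finitely supported $\eta$ and then pass to the general case by approximation. Fix $\eta=\sum_i a_i\delta_{z_i}$. I seek probability measures $\rho_i$ with barycenter $z_i$ and $\sum_i a_i\rho_i=\nu$. Let $K$ be the set of vectors $(\rho_i)_i\in\cP_1(\R^d)^n$ with $\sum a_i\rho_i=\nu$. It is nonempty (take $\rho_i=\nu$) and convex. It is also compact in the weak topology enhanced by first moments, since each $a_i\rho_i\le\nu$ gives tightness and uniform integrability. Consider the continuous affine map $\Phi:(\rho_i)\mapsto(b(\rho_i))_i\in(\R^d)^n$. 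If $(z_i)\notin\Phi(K)$, a separating hyperplane gives vectors $w_i$ with $\sum_i a_i\ip{w_i}{z_i}>\sup_{K}\sum_i a_i\ip{w_i}{b(\rho_i)}$. The right-hand supremum equals $\int \max_i\ip{w_i}{y}\,d\nu(y)$: it is attained by sending the mass of $\nu$ at $y$ to an index maximizing $\ip{w_i}{y}$, with a measurable tie-breaking rule. Now set $u(y)=\max_i\ip{w_i}{y}$. This $u$ is convex, finite, and of linear growth, and $\int u\,d\eta\ge\sum a_i\ip{w_i}{z_i}$. Hence $\int u\,d\eta>\int u\,d\nu$, contradicting $\eta\cx\nu$. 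So some element of $K$ has barycenters $(z_i)$, which gives the kernel.

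For general $\eta$, I take finer and finer partitions of $\R^d$ and replace each cell by its $\eta$-barycenter. This produces finitely supported $\eta_n$ with $\eta_n\cx\eta\cx\nu$ (conditional Jensen again) and $\eta_n\to\eta$ in $W_1$. The tail cell is handled by truncation, using that $\eta$ has a finite first moment. By the finite case, there are martingale couplings $\kappa_n\in\Pi(\eta_n,\nu)$. These are tight and have uniformly integrable first moments, since $\eta_n\to\eta$ in $W_1$ and the second marginal $\nu$ is fixed. A subsequence therefore converges, in the weak topology with first moments, to some $\kappa\in\Pi(\eta,\nu)$. The martingale property is $\int g(z)\ip{v}{y-z}\,d\kappa=0$ for bounded continuous $g$ and $v\in\R^d$, and it passes to the limit because the integrand is continuous with linear growth and the first moments are uniformly integrable.

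I expect the main obstacle to be the finite-support separation step. The delicate points are the compactness of $K$ in a topology where $b$ is continuous, and the exact identification of the support function with $\int\max_i\ip{w_i}{y}\,d\nu$ (the measurable-selection point). The limiting step is routine once uniform integrability is in place.
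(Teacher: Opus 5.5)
\textbf{There is a genuine gap in the finite-support step.} Your easy direction, the coupling/kernel equivalence, and the approximation step are sound: $\eta_n\cx\eta$ holds by conditional Jensen, $\eta_n\to\eta$ in $W_1$, the $\kappa_n$ are tight with uniformly integrable first moments, and $\int g(z)\ip{v}{y-z}\,d\kappa_n=0$ passes to the limit. The gap is the claimed identity
\[
 \sup_{(\rho_i)\in K}\sum_i a_i\ip{w_i}{b(\rho_i)}=\int\max_i\ip{w_i}{y}\,d\nu(y).
\]
Only ``$\le$'' holds. The greedy assignment sets $a_i\rho_i=\nu$ restricted to $\{i^\ast(y)=i\}$. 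This lies in $K$ only if $\nu(\{i^\ast=i\})=a_i$ for every $i$. The greedy rule ignores the requirement that each $\rho_i$ be a probability measure, and this has nothing to do with measurable selection.

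With only ``$\le$'', your chain gives $\int u\,d\eta>\sup_K(\cdots)$ and $\int u\,d\nu\ge\sup_K(\cdots)$, which is not a contradiction. The defect cannot be patched inside your setup. The finite case uses $\eta\cx\nu$ only through positively homogeneous tests $u=\max_i\ip{w_i}{\cdot}$, and those tests do not force a martingale coupling. In $d=1$, take $\eta=\tfrac12\delta_1+\tfrac12\delta_3$ and $\nu=\delta_2$. Every such $u$ equals $c_1y+c_2|y|$ with $c_2\ge0$, so $\int u\,d\eta=2c_1+2c_2=\int u\,d\nu$, yet no martingale coupling exists. Concretely, $K=\{(\delta_2,\delta_2)\}$ and $w=(0,1)$ separates, since $\sum_ia_iw_iz_i=\tfrac32>1=\sup_K$. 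However, $\int\max(0,y)\,d\nu=2$, not $1$.

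\textbf{The fix.} What is missing is an affine intercept for each index, i.e.\ a Lagrange multiplier for the mass constraint $\rho_i(\R^d)=1$. Separate the point $(a_i,a_iz_i)_{i}$ from the compact convex set
\[
 \Bigl\{\bigl(\sigma_i(\R^d),\textstyle\int y\,d\sigma_i\bigr)_{i}:\sigma_i\ge0,\ \sum_i\sigma_i=\nu\Bigr\}\subset(\R\times\R^d)^n .
\]
Because the masses are now free, the greedy assignment is admissible and the support function really is $\int\max_i(\lambda_i+\ip{w_i}{y})\,d\nu$. The test $u(y)=\max_i(\lambda_i+\ip{w_i}{y})$ then gives $\int u\,d\eta\ge\sum_ia_i(\lambda_i+\ip{w_i}{z_i})>\int u\,d\nu$, contradicting $\eta\cx\nu$.

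\textbf{Comparison with the paper.} With that repair, your route is a valid alternative. The paper separates once over the whole compact set $\Pi(\eta,\nu)$ using the functionals $\kappa\mapsto\int h(z)^\top(y-z)\,d\kappa$, then applies Kantorovich duality to that cost. Its test $u(y)=\sup_z\{a(z)+h(z)^\top(z-y)\}$ takes its intercepts from the first dual potential $a$, which is exactly the ingredient your version omits. Your approach replaces that duality and the separation in an infinite product space by finite-dimensional separation, at the cost of the approximation and compactness step.
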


Strassen's theorem converts the marginal constraint on a coupling into an order
constraint on its conditional mean map. To state the resulting primal and dual
forms, let $\mu\in\cP_2(\R^{d_x})$, $\nu\in\cP_2(\R^{d_y})$, let
$c:\R^{d_x}\times\R^{d_y}\to\R$ be measurable, and use the disintegration
$\pi(dx,dy)=\mu(dx)\pi_x(dy)$ for $\pi\in\Pi(\mu,\nu)$. For
$\rho\in\cP_1(\R^{d_y})$, define
\begin{equation}\label{eq:barycentric-wot-notation}
\begin{aligned}
 b(\rho)&:=\int y\,d\rho(y),
 &m_\pi(x)&:=b(\pi_x),\\
 \mathsf{WT}_{c}^{\rm bar}(\mu,\nu)
 &:=\inf_{\pi\in\Pi(\mu,\nu)}\int c(x,m_\pi(x))d\mu(x),
 &Q_cu(x)&:=\inf_{z\in\R^{d_y}}\{c(x,z)+u(z)\}.
\end{aligned}
\end{equation}
Let
\begin{equation}\label{eq:quadratic-convex-potentials}
 \mathcal U_2^{\rm cvx}:=
 \left\{u:\R^{d_y}\to\R:
 \begin{array}{l}
 u\text{ is finite, continuous, and convex, and for some }C_u<\infty,\\[-2pt]
 |u(z)|\le C_u(1+\norm z^2)\text{ for every }z\in\R^{d_y}
 \end{array}\right\}.
\end{equation}

The noncompact existence and probability-valued duality inputs below are
Theorems~2.9 and~3.1 of Backhoff-Veraguas, Beiglb\"ock, and Pammer
\cite{BBP}. We then reduce their dual to convex functions of the barycenter.

\begin{theorem}[Barycentric weak OT representations \cite{GRST,BBP}]
\label{thm:wot-blueprint}
Let $\mu\in\cP_2(\R^{d_x})$ and $\nu\in\cP_2(\R^{d_y})$. Assume that
$c:\R^{d_x}\times\R^{d_y}\to\R$ is continuous, $c(x,\cdot)$ is convex, and for
some $\alpha>0$ and finite $C$,
\[
 \alpha\norm z^2-C(1+\norm x^2)
 \le c(x,z)\le C(1+\norm x^2+\norm z^2).
\]
With $\mathsf{WT}_{c}^{\rm bar}$, $Q_c$, and $\mathcal U_2^{\rm cvx}$ defined in
\eqref{eq:barycentric-wot-notation}--\eqref{eq:quadratic-convex-potentials}, the
coupling infimum defining $\mathsf{WT}_{c}^{\rm bar}(\mu,\nu)$ admits a minimizer,
and
\begin{equation}\label{eq:wot-blueprint}
\begin{aligned}
 \mathsf{WT}_{c}^{\rm bar}(\mu,\nu)
 &=\min_{m:\,m_\#\mu\cx\nu}\int c(x,m(x))d\mu(x)\\
 &=\sup_{u\in\mathcal U_2^{\rm cvx}}
   \left\{\int Q_cu\,d\mu-\int u\,d\nu\right\}.
\end{aligned}
\end{equation}
Here the minimum is over Borel maps $m\in L^2(\mu;\R^{d_y})$ satisfying
$m_\#\mu\cx\nu$ in the sense of Definition~\ref{def:convex-order}.
If $c(x,\cdot)$ is uniformly strongly convex, the minimizing barycentric map is
unique in $L^2(\mu)$.
\end{theorem}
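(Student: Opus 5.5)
We prove the three assertions of Theorem~\ref{thm:wot-blueprint} in order: existence, the map formulation, and the convex-potential dual, followed by uniqueness.

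\emph{Existence.} We invoke the noncompact existence theorem of \cite{BBP} (Theorem~2.9) for the weak cost
\[
C(x,\rho):=c(x,b(\rho)).
\]
This cost is convex in $\rho$, since $b$ is linear and $c(x,\cdot)$ is convex. It is lower semicontinuous for $W_1$-convergence of $\rho$, because $b$ is $W_1$-continuous and $c$ is continuous. It is bounded below by $-C(1+\norm x^2)$, which is $\mu$-integrable. Hence a minimizing coupling $\pi^\star$ exists.

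\emph{Map formulation.} For any coupling $\pi$, conditional Jensen gives $(m_\pi)_\#\mu\cx\nu$, and $m_\pi\in L^2(\mu)$. Conversely, let $m\in L^2(\mu)$ with $m_\#\mu\cx\nu$. Theorem~\ref{thm:strassen} provides a martingale kernel $\kappa$ from $\eta:=m_\#\mu$ to $\nu$. The glued coupling $\pi(dx,dy):=\mu(dx)\kappa(dy\mid m(x))$ has second marginal $\nu$ and $m_\pi=m$ $\mu$-a.e. Therefore the coupling infimum and the map infimum coincide, and the infimum over maps is attained at $m_{\pi^\star}$.

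\emph{Dual formulation.} Weak duality comes first. For $u\in\mathcal U_2^{\rm cvx}$ and a feasible $m$,
\[
\int Q_cu\,d\mu\le\int \bigl(c(x,m)+u(m)\bigr)\,d\mu\le\int c(x,m)\,d\mu+\int u\,d\nu,
\]
where the last step uses the convex order. Convex order is defined with linear-growth tests, so it must be extended to convex $u$ of quadratic growth. We do this by approximating $u$ monotonically from below by linear-growth convex functions, for instance Moreau-type truncations, and applying monotone convergence, which is legitimate because $\eta$ and $\nu$ lie in $\cP_2$.

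For strong duality we start from Theorem~3.1 of \cite{BBP}:
\[
\mathsf{WT}=\sup_{\psi}\Bigl\{\int R_C\psi\,d\mu-\int\psi\,d\nu\Bigr\},
\qquad R_C\psi(x)=\inf_\rho\Bigl\{C(x,\rho)+\int\psi\,d\rho\Bigr\},
\]
with $\psi$ ranging over continuous functions of quadratic growth. Fixing the barycenter $z=b(\rho)$ and minimizing over $\rho$ gives
\[
\inf_{b(\rho)=z}\int\psi\,d\rho=\check\psi(z),
\]
the lower convex envelope of $\psi$. Hence $R_C\psi=Q_c\check\psi$. Since $\check\psi\le\psi$, we have $-\int\check\psi\,d\nu\ge-\int\psi\,d\nu$, so replacing $\psi$ by $\check\psi$ does not decrease the dual objective.

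\emph{Main obstacle.} We must check that $\check\psi\in\mathcal U_2^{\rm cvx}$, that is, $\check\psi$ is finite, continuous, and bounded in absolute value by a quadratic. The upper bound $\check\psi\le\psi\le C(1+\norm z^2)$ is immediate. The convex envelope could equal $-\infty$, however, if $\psi$ is not bounded below by a convex quadratic.

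\begin{enumerate}[leftmargin=2em]
\item First we restrict the supremum to $\psi\ge -K(1+\norm z^2)$. This loses nothing, because $\nu\in\cP_2$, so truncating $\psi$ from below by $-K(1+\norm z^2)$ changes $\int\psi\,d\nu$ by an amount that vanishes as $K\to\infty$.
\item With this restriction, $\check\psi\ge -K(1+\norm z^2)$. Indeed, $-K\norm z^2$ is concave, so take the affine minorant $-K(1+\norm z^2)$ only after observing that the lower envelope of $\psi$ over measures with barycenter $z$ is bounded below by $-K\bigl(1+\int\norm y^2d\rho\bigr)$.
\item This lower bound could still blow up, because $\int\norm y^2\,d\rho$ is not controlled along measures with barycenter $z$. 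We resolve this with the coercivity $\alpha\norm z^2$ of $c$: in $Q_c\check\psi$ only points with $\check\psi>-\alpha\norm z^2+O(1)$ matter. We therefore replace $\psi$ by $\max\bigl(\psi,-\tfrac{\alpha}{2}\norm z^2-K'\bigr)$ before taking envelopes. The resulting $\check\psi$ is finite and convex, hence continuous, and bounded below by a concave quadratic.
\end{enumerate}
This coercivity-based truncation is the step that needs the most care.

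\emph{Uniqueness.} Suppose $c(x,\cdot)$ is uniformly strongly convex and $m_1,m_2$ are two minimizers. The set of feasible maps is convex: by gluing, $\tfrac12(m_1+m_2)$ is the barycentric map of the mixture coupling $\tfrac12(\pi_1+\pi_2)$. Strong convexity gives
\[
\int c\Bigl(x,\tfrac{m_1+m_2}{2}\Bigr)\,d\mu\le\tfrac12\int c(x,m_1)\,d\mu+\tfrac12\int c(x,m_2)\,d\mu-\tfrac{\alpha}{8}\norm{m_1-m_2}_{L^2(\mu)}^2.
\]
The left side is at least the minimum value, so $\norm{m_1-m_2}_{L^2(\mu)}=0$ and $m_1=m_2$ in $L^2(\mu)$.
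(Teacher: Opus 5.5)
The overall architecture matches the paper's. You use Jensen--Strassen gluing for the coupling/map identity, and the BBP existence and duality theorems applied to $C(x,p)=c(x,b(p))$; for their lower-boundedness hypothesis you should add $h(x)=C(1+\norm x^2)$ and subtract $\int h\,d\mu$ afterwards. You then replace each target potential $\psi$ by its barycentric convexification $\check\psi$, using $R_C\psi=Q_c\check\psi$ and $\check\psi\le\psi$, pass to all of $\mathcal U_2^{\rm cvx}$ by weak duality, and prove uniqueness by the midpoint argument.

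\textbf{The gap.} Your ``main obstacle'' paragraph never establishes $\check\psi\in\mathcal U_2^{\rm cvx}$, and its two key claims are false.
\begin{itemize}
\item In item~2, a bound $\psi\ge-K(1+\norm y^2)$ gives no lower bound on $\check\psi$. For $\psi(y)=-K(1+\norm y^2)$, measures with barycenter $z$ and arbitrarily large second moment give $\check\psi\equiv-\infty$.
\item Item~3 does not repair this. The function $\max\{\psi,-\frac{\alpha}{2}\norm y^2-K'\}$ is still bounded below only by a concave quadratic. Whenever $\psi\le-\frac{\alpha}{2}\norm y^2-K'$, its envelope is again identically $-\infty$.
\item Coercivity of $c$ cannot compensate. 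It acts on the barycenter $b(\rho)$, not on the spread $\int\norm y^2\,d\rho$, so for such $\psi$ one simply has $R_C\psi\equiv-\infty$.
\item Raising $\psi$ at a fixed level increases $\int\psi\,d\nu$. You give no argument that the dual objective does not drop.
\item Item~1 is vacuous, since a potential of quadratic growth already satisfies $\psi\ge-a(1+\norm y^2)$.
\end{itemize}

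\textbf{The fix.} The missing observation is that the potentials in the BBP dual (the class $\Phi_{b,2}$) are continuous, bounded \emph{below by a constant}, and bounded above by a quadratic. This is exactly what the paper uses. Then $\inf\psi\le\check\psi\le\psi$, so $\check\psi$ is a finite convex function, hence continuous, with two-sided quadratic growth. Thus $\check\psi\in\mathcal U_2^{\rm cvx}$, and no truncation is needed.

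Even without lower boundedness there is a clean dichotomy. If $\psi$ has an affine minorant $\ell$, then $\ell\le\check\psi\le\psi$. Otherwise $\check\psi$ cannot be finite everywhere, since a finite convex function has an affine minorant, which would also minorize $\psi$. So $\check\psi=-\infty$ at some point, hence everywhere by mixing with Dirac masses. Then $R_C\psi\equiv-\infty$ and $\psi$ can be discarded.

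Coercivity of $c$ is needed elsewhere. Combined with an affine minorant of $u\in\mathcal U_2^{\rm cvx}$, it gives $|Q_cu(x)|\le C'(1+\norm x^2)$. Your weak-duality chain implicitly requires this so that $\int Q_cu\,d\mu$ is well defined, and you should state it.
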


\begin{proof}
For a coupling $\pi$, conditional Jensen gives
$(m_\pi)_\#\mu\cx\nu$. Conversely, if $m_\#\mu\cx\nu$,
Theorem~\ref{thm:strassen} supplies a martingale kernel from $m_\#\mu$ to $\nu$;
gluing it after $m$ produces a coupling with conditional mean $m$. This proves the
first equality.

We next verify the noncompact dual. On
$\R^{d_x}\times\cP_2(\R^{d_y})$, set
\[
 C(x,p):=c(x,b(p)),\qquad b(p)=\int y\,dp(y).
\]
The barycenter map is continuous for $W_2$, so $C$ is jointly continuous. It is
convex in $p$ because $b$ is affine and $c(x,\cdot)$ is convex. If $C_0$ is the
constant in the lower growth bound, then adding
$h(x)=C_0(1+\norm x^2)$ makes $C+h$ globally bounded below.
Theorems~2.9 and~3.1 of Backhoff-Veraguas, Beiglb\"ock, and Pammer
\cite{BBP}, applied to $C+h$ and followed by subtraction of $\int h\,d\mu$,
therefore gives existence of a primal minimizer and strong duality for $C$.

It remains to identify its probability-valued dual with the barycentric dual in
\eqref{eq:wot-blueprint}. For a target potential $\psi$ in the BBP dual, define its barycentric
convexification
\[
 u(z):=\inf\left\{\int\psi(y)\,dp(y):
 p\in\cP_2(\R^{d_y}),\ b(p)=z\right\}.
\]
The choice $p=\delta_z$ gives $u\le\psi$. The growth bounds in the BBP dual make
$u$ finite and lower bounded, while mixtures of admissible measures prove convexity.
A finite convex function on Euclidean space is continuous, and the quadratic upper
growth follows by testing with $\delta_z$. Moreover, partitioning the infimum by its
barycenter gives
\[
 \inf_p\left\{c(x,b(p))+\int\psi\,dp\right\}
 =\inf_z\{c(x,z)+u(z)\}=Q_cu(x).
\]
Convexity gives $\int u\,dp\ge u(b(p))$ by Jensen, while $p=\delta_z$ gives
equality at every barycenter $z$. Hence replacing $\psi$ by $u$ leaves this
transform unchanged. It can only increase the dual objective because $u\le\psi$.
Thus the dual in \eqref{eq:wot-blueprint} has the primal value on this convex subclass of functions
bounded below.

We may enlarge it to all of $\mathcal U_2^{\rm cvx}$. Indeed, a finite convex
$u$ has an affine supporting lower bound. Combining that bound with the coercive
lower bound on $c$, and using the quadratic upper growth of $c$ and $u$ at the
test point $z=0$, gives a finite constant $C'$ such that
\[
 |Q_cu(x)|\le C'(1+\norm x^2).
\]
Hence every integral above is well defined. For any Borel
$m\in L^2(\mu;\R^{d_y})$ satisfying $m_\#\mu\cx\nu$,
\[
 Q_cu(x)\le c(x,m(x))+u(m(x)),\qquad
 \int u(m)d\mu\le\int u\,d\nu,
\]
so weak duality holds throughout the enlarged class. Since the original subclass
already has the primal value, the enlargement preserves equality. Finally, uniform
strong convexity makes $m\mapsto\int c(x,m(x))d\mu$ strongly convex on the
convex set of Borel maps $m\in L^2(\mu;\R^{d_y})$ satisfying
$m_\#\mu\cx\nu$, proving uniqueness of the minimizing barycentric map.
\end{proof}

For $d_x=d_y=d$ and $c(x,z)=\norm{x-z}^2$, this theorem specializes to quadratic
barycentric weak OT. Its geometric form is the known convex order projection
identity \cite{GozlanJuillet}
\begin{equation}\label{eq:wot-projection-blueprint}
 \mathsf T_2(\nu\mid\mu)
 =\inf_{\substack{\eta\in\cP_2(\R^d)\\\eta\cx\nu}}W_2^2(\mu,\eta).
\end{equation}
The relational construction uses the same conditional mechanism for a source pair:
the conditional laws $(\pi_x,\pi_{x'})$ replace the single law $\pi_x$, and their aggregate is
compared with the source relation. This gives the weak GW definition below. The
Jensen--Strassen correspondence will then yield its convex order projection and,
after ridge regularization, the weak OT block used by the numerical method.

\section{A weak Gromov--Wasserstein framework based on conditional laws}
\label{sec:weak-gw-framework}

We formulate weak GW directly in terms of the conditional laws induced by a
coupling.

\begin{definition}[Weak GW based on conditional laws]\label{def:generic-wgw}
Let $(\mathcal X,\mathscr X)$ and $(\mathcal Y,\mathscr Y)$ be standard Borel
spaces, let $\mu\in\cP(\mathcal X)$ and $\nu\in\cP(\mathcal Y)$, let
$c_{\mathcal X}:\mathcal X^2\to\R$ be measurable, and let
$\mathfrak C:\R\times\cP(\mathcal Y)^2\to[0,+\infty]$ be measurable for the
evaluation $\sigma$-field on $\cP(\mathcal Y)$. For each
$\pi\in\Pi(\mu,\nu)$, choose a disintegration
$\pi(dx,dy)=\mu(dx)\pi_x(dy)$. Define, whenever the integral is well defined,
\begin{equation}\label{eq:generic-wgw-definition}
  \WGW_{\mathfrak C}(\mu,\nu)
  :=\inf_{\pi\in\Pi(\mu,\nu)}
  \iint \mathfrak C\big(c_{\mathcal X}(x,x'),\pi_x,\pi_{x'}\big)
  d\mu(x)d\mu(x').
\end{equation}
Changing the disintegration on a $\mu$-null set does not change the value.
\end{definition}

For the two principal lifts, let
$c_{\mathcal Y}:\mathcal Y^2\to\R$ and
$\mathcal L:\R^2\to[0,+\infty]$ be measurable. The first lift averages the
original pointwise loss;
the second aggregates each pair of conditional laws before applying the loss:
\begin{equation}\label{eq:principal-lifted-costs}
\begin{aligned}
 \mathfrak C_{\rm GW}(a,\rho,\rho')
   &:=\iint\mathcal L\big(a,c_{\mathcal Y}(y,y')\big)d\rho(y)d\rho'(y'),\\
 \mathfrak C_D(a,\rho,\rho')
   &:=\mathcal L\big(a,D(\rho,\rho')\big).
\end{aligned}
\end{equation}
Here $D:\cP(\mathcal Y)^2\to\R$ is assumed measurable. The product measure
$\rho\otimes\rho'$ depends measurably on the pair $(\rho,\rho')$. Consequently,
integrating the measurable pointwise loss against this product defines a measurable
lift $\mathfrak C_{\rm GW}$. The aggregated lift $\mathfrak C_D$ is measurable
because it is the composition of the measurable maps $D$ and $\mathcal L$.

The averaging lift in \eqref{eq:principal-lifted-costs} provides a consistency
check. Disintegrating both copies of a coupling recovers the original GW
objective before conditional aggregation, as the following proposition shows.

\begin{proposition}[Ordinary GW is an exact specialization]\label{prop:gw-specialization}
Let $(\mathcal X,\mathscr X)$ and $(\mathcal Y,\mathscr Y)$ be standard Borel
spaces, let $\mu\in\cP(\mathcal X)$ and $\nu\in\cP(\mathcal Y)$, and let
$c_{\mathcal X}$ and $c_{\mathcal Y}$ be measurable relations with values in $\R$. For
every measurable $\mathcal L:\R^2\to[0,+\infty]$, define
$\mathfrak C_{\rm GW}$ by \eqref{eq:principal-lifted-costs} and
$\WGW_{\mathfrak C_{\rm GW}}$ by \eqref{eq:generic-wgw-definition}. Then
\[
  \WGW_{\mathfrak C_{\rm GW}}(\mu,\nu)
  =\inf_{\pi\in\Pi(\mu,\nu)}
  \iint \mathcal L\big(c_{\mathcal X}(x,x'),c_{\mathcal Y}(y,y')\big)
  d\pi(x,y)d\pi(x',y').
\]
In particular, for $\mathcal X=\R^{d_x}$, $\mathcal Y=\R^{d_y}$,
$\mu\in\cP_2(\R^{d_x})$, $\nu\in\cP_2(\R^{d_y})$,
$c_{\mathcal X}(x,x')=\ip{x}{x'}$,
$c_{\mathcal Y}(y,y')=\ip{y}{y'}$, and
$\mathcal L(a,b)=(a-b)^2$, then
\[
 \WGW_{\mathfrak C_{\rm GW}}(\mu,\nu)=\IGW^2(\mu,\nu),
\]
where the ordinary IGW objective is defined in
\eqref{eq:ordinary-igw-definition}.
\end{proposition}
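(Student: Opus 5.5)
The plan is to fix an arbitrary coupling $\pi\in\Pi(\mu,\nu)$ and show that the two integrands agree. Since the two infima run over the same set $\Pi(\mu,\nu)$, pointwise equality of the objectives gives equality of the values.

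Fix a disintegration $\pi(dx,dy)=\mu(dx)\pi_x(dy)$. The map $x\mapsto\pi_x$ is a Borel probability kernel, and hence so is $(x,x')\mapsto\pi_x\otimes\pi_{x'}$ from $\mathcal X^2$ to $\mathcal Y^2$. The key observation is that the product measure $\pi\otimes\pi$ on $(\mathcal X\times\mathcal Y)^2$, after reordering coordinates to $(x,x',y,y')$, disintegrates as
\[
 (\mu\otimes\mu)(dx\,dx')\,(\pi_x\otimes\pi_{x'})(dy\,dy').
\]
To check this, compare both sides on rectangles $A\times B\times A'\times B'$. Each side equals $\int_A\pi_x(B)\,d\mu(x)\cdot\int_{A'}\pi_{x'}(B')\,d\mu(x')$. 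Rectangles form a $\pi$-system generating the product $\sigma$-field, so the two measures agree.

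Next, apply Tonelli's theorem to the nonnegative measurable function
\[
(x,x',y,y')\mapsto\mathcal L\bigl(c_{\mathcal X}(x,x'),c_{\mathcal Y}(y,y')\bigr)\in[0,+\infty].
\]
Here measurability follows because $c_{\mathcal X}$, $c_{\mathcal Y}$, and $\mathcal L$ are all measurable. The kernel form of Tonelli for $\mu\otimes\mu$ composed with the kernel $\pi_x\otimes\pi_{x'}$ gives
\[
\iint\mathcal L\bigl(c_{\mathcal X}(x,x'),c_{\mathcal Y}(y,y')\bigr)\,d\pi\,d\pi
=\iint\Bigl[\iint\mathcal L\bigl(c_{\mathcal X}(x,x'),c_{\mathcal Y}(y,y')\bigr)\,d\pi_x(y)\,d\pi_{x'}(y')\Bigr]\,d\mu(x)\,d\mu(x').
\]
The inner bracket is exactly $\mathfrak C_{\rm GW}\bigl(c_{\mathcal X}(x,x'),\pi_x,\pi_{x'}\bigr)$. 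Both sides are well defined in $[0,+\infty]$ because the integrand is nonnegative, so there is no integrability issue. Taking the infimum over $\pi$ proves the first display.

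The only delicate point, which I expect to be the main obstacle, is the joint measurability of $(x,x')\mapsto\mathfrak C_{\rm GW}(c_{\mathcal X}(x,x'),\pi_x,\pi_{x'})$. Here the plan is to invoke the measurability of $\mathfrak C_{\rm GW}$ already noted after \eqref{eq:principal-lifted-costs}, together with measurability of $x\mapsto\pi_x$ for the evaluation $\sigma$-field. Alternatively, the kernel version of Tonelli delivers measurability of the inner integral directly. Independence of the choice of disintegration is inherited from Definition~\ref{def:generic-wgw}.

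For the IGW specialization, set $\mathcal L(a,b)=(a-b)^2$ with the inner-product relations. The right-hand side of the first display is then literally the objective in \eqref{eq:ordinary-igw-definition}, so $\WGW_{\mathfrak C_{\rm GW}}(\mu,\nu)=\IGW^2(\mu,\nu)$. The second-moment assumptions are used only to ensure that the common value is finite, for instance via the expansion in the proof of Theorem~\ref{thm:ordinary-igw-envelope}.
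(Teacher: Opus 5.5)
Your proposal is correct and follows the paper's proof. Both fix a coupling, disintegrate $\pi\otimes\pi$ as $\mu(dx)\mu(dx')\pi_x(dy)\pi_{x'}(dy')$, apply Tonelli to the nonnegative integrand to identify the inner integral with $\mathfrak C_{\rm GW}(c_{\mathcal X}(x,x'),\pi_x,\pi_{x'})$, and infimize over $\pi$; your rectangle check of the product disintegration and your kernel-Tonelli measurability remark make explicit details the paper leaves implicit.
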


\begin{proof}
For a fixed coupling, disintegrate both factors of $\pi\otimes\pi$ and apply
Tonelli's theorem (or Fubini's theorem in the integrable case):
\[
\begin{aligned}
 &\iint\mathcal L\big(c_{\mathcal X}(x,x'),c_{\mathcal Y}(y,y')\big)
 d\pi(x,y)d\pi(x',y')\\
 &\quad=\iint\left[\iint\mathcal L\big(c_{\mathcal X}(x,x'),
 c_{\mathcal Y}(y,y')\big)d\pi_x(y)d\pi_{x'}(y')\right]d\mu(x)d\mu(x').
\end{aligned}
\]
The bracket is $\mathfrak C_{\rm GW}(c_{\mathcal X}(x,x'),\pi_x,\pi_{x'})$.
Infimizing over $\pi$ proves the claim.
\end{proof}

\section{Barycentric wIGW}
\label{sec:wigw-definition}

Let \(\mu\in\cP_2(\R^{d_x})\) and \(\nu\in\cP_2(\R^{d_y})\). For
\(\pi\in\Pi(\mu,\nu)\), let \(\pi_x\) be a disintegration and define
\begin{equation}\label{eq:conditional-mean-map}
  m_\pi(x):=\int y\,\pi_x(dy)=\E_\pi[Y\mid X=x].
\end{equation}
Conditional Jensen gives $m_\pi\in L^2(\mu;\R^{d_y})$ and places every such map
below $\nu$ in convex order. The next lemma establishes the convexity and weak
compactness needed to optimize directly over all maps with this property.
Appendix~\ref{app:weak-compactness-direct-method} defines the weak
topology and weak compactness used here and states the direct method theorem
applied later.

\begin{lemma}[Geometry of the admissible map set]\label{lem:feasible-set}
Let $\mu\in\cP_2(\R^{d_x})$ and $\nu\in\cP_2(\R^{d_y})$, and define
\begin{equation}\label{eq:admissible-map-set}
  \mathcal C_\nu:=
  \{m\in L^2(\mu;\R^{d_y}):m_\#\mu\cx\nu\},
\end{equation}
where $\cx$ is the convex order of Definition~\ref{def:convex-order}. Then
$\mathcal C_\nu$ is nonempty, convex, and weakly compact in
$L^2(\mu;\R^{d_y})$. Every $m\in\mathcal C_\nu$ satisfies
\[
  \norm{m}_{L^2(\mu)}^2\le M_2(\nu):=\int\norm{y}^2d\nu(y).
\]
Convex order may be tested using finite continuous convex functions of at most
linear growth.
\end{lemma}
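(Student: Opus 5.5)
Nonemptiness is immediate. The constant map $m\equiv \bar y:=\int y\,d\nu$ has $m_\#\mu=\delta_{\bar y}$. Jensen's inequality gives $u(\bar y)\le\int u\,d\nu$ for every convex $u$ integrable under $\nu$, so $\delta_{\bar y}\cx\nu$. For the norm bound, I take $m\in\mathcal C_\nu$ and test convex order with $u(z)=\norm z^2$. This test has quadratic rather than linear growth, so I approximate it monotonically by finite convex functions of linear growth: $u_k(z):=\sup_{\norm a\le k}\{2\ip{a}{z}-\norm a^2\}$, which is the Huber-type function equal to $\norm z^2$ on $\norm z\le k$ and to $2k\norm z-k^2$ beyond. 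Each $u_k$ satisfies $\int u_k\,d(m_\#\mu)\le\int u_k\,d\nu\le M_2(\nu)$, and monotone convergence gives $\norm m_{L^2(\mu)}^2\le M_2(\nu)$. The same truncation shows that the linear-growth tests in Definition~\ref{def:convex-order} already control all convex tests with finite integrals, which is the final sentence of the statement.

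Convexity comes from the Strassen characterization (Theorem~\ref{thm:strassen}) together with the gluing argument in the proof of Theorem~\ref{thm:wot-blueprint}. A map $m\in L^2(\mu)$ lies in $\mathcal C_\nu$ iff $m=m_\pi$ $\mu$-a.e. for some $\pi\in\Pi(\mu,\nu)$. Given $m_i=m_{\pi_i}$ for $i=1,2$ and $t\in[0,1]$, I set $\pi:=t\pi_1+(1-t)\pi_2\in\Pi(\mu,\nu)$. Its disintegration is $\pi_x=t\pi_{1,x}+(1-t)\pi_{2,x}$ $\mu$-a.e., so $m_\pi=tm_1+(1-t)m_2$.

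Weak compactness is the main step. Since $\mathcal C_\nu$ lies in the closed ball of radius $\sqrt{M_2(\nu)}$ in the Hilbert space $L^2(\mu;\R^{d_y})$, which is weakly compact, it suffices to show $\mathcal C_\nu$ is weakly closed. Because it is convex, Mazur's lemma reduces this to strong closedness. I expect this reduction to be the cleanest route, because passing convex order through weak limits directly is awkward: the map $m\mapsto\int u(m)\,d\mu$ is only weakly lower semicontinuous, which goes the wrong way for testing $\int u(m)\,d\mu\le\int u\,d\nu$. Strong closedness is easy, however. Suppose $m_n\to m$ in $L^2(\mu)$ with $m_n\in\mathcal C_\nu$. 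For a convex $u$ with $|u(z)|\le C(1+\norm z)$, $u$ is Lipschitz with constant at most $C$. Indeed, a convex function of linear growth has subgradients bounded by the growth constant, since $u(z+tv)\ge u(z)+t\ip{g}{v}$ forces $\norm g\le C$ as $t\to\infty$. Hence $\left|\int u(m_n)d\mu-\int u(m)d\mu\right|\le C\norm{m_n-m}_{L^1(\mu)}\to0$, and the inequality $\int u(m)\,d\mu\le\int u\,d\nu$ passes to the limit. This gives $m\in\mathcal C_\nu$ and completes the proof.
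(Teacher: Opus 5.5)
Your proof is correct, but it reaches convexity and weak closedness by a different route than the paper.

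\textbf{Convexity.} The paper argues pointwise. For every finite convex test $u$ of linear growth, $u((1-t)m_0+tm_1)\le(1-t)u(m_0)+tu(m_1)$, and integrating gives $\int u((1-t)m_0+tm_1)\,d\mu\le\int u\,d\nu$. Your detour through Theorem~\ref{thm:strassen}, martingale gluing, and affinity of the disintegration in $\pi$ is valid and not circular. It does, however, spend a deep theorem on a one-line fact.

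\textbf{Weak closedness.} The paper works with the functional $I_u(q)=\int u(q)\,d\mu$. It first shows norm lower semicontinuity using an affine minorant, an a.e.\ convergent subsequence, and Fatou. It then upgrades this to weak lower semicontinuity by convexity, and concludes
\[
\int u(m)\,d\mu\le\liminf_n\int u(m_n)\,d\mu\le\int u\,d\nu .
\]
You apply the same Mazur/Hahn--Banach principle to the set rather than to the functional. Your observation that a convex $u$ with $|u(z)|\le C(1+\norm z)$ is $C$-Lipschitz gives norm \emph{continuity} of $I_u$ outright, which is cleaner than the Fatou step.

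Your stated reason for avoiding the direct route is mistaken, though. Lower semicontinuity is exactly the direction needed, because it preserves upper bounds in the limit (its sublevel sets are weakly closed). That is precisely how the paper's argument closes.

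\textbf{Final sentence of the statement.} Read literally, it follows from Definition~\ref{def:convex-order} once you note that finite convex functions on $\R^{d_y}$ are automatically continuous. Your stronger claim, that linear-growth tests control every convex test with finite integrals, is not literally ``the same truncation.'' For a general convex $u$ you need a slope truncation such as $\inf_w\{u(w)+k\norm{z-w}\}$, taking $k$ large enough that it is finite, together with monotone convergence.
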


\begin{proof}
Let $\bar y=\int y\,d\nu(y)$. Jensen's inequality gives
$\delta_{\bar y}\cx\nu$, so the constant map $m\equiv\bar y$ proves nonemptiness.
If $m_0,m_1\in\mathcal C_\nu$ and $t\in[0,1]$, then for every convex $u$,
\[
 \int u((1-t)m_0+tm_1)d\mu
 \le(1-t)\int u(m_0)d\mu+t\int u(m_1)d\mu
 \le\int u\,d\nu,
\]
which proves convexity. Using the equivalent convex-order formulation for convex
tests with finite integrals, take $u(z)=\norm z^2$ to obtain the stated bound.

To prove weak closedness, suppose $m_n\rightharpoonup m$ in $L^2$. For every
finite continuous convex $u$ with at most linear growth, the integral functional
$m\mapsto\int u(m)d\mu$ is convex and lower semicontinuous in norm, hence weakly
lower semicontinuous. Therefore
\[
  \int u(m)d\mu\le\liminf_n\int u(m_n)d\mu\le\int u\,d\nu.
\]
Such functions determine convex order on $\cP_1(\R^{d_y})$: an arbitrary finite
convex function is obtained increasingly from maxima of finitely many supporting
affine functions, and truncating the slopes gives linearly growing convex tests.
Thus $m\in\mathcal C_\nu$. The set is weakly closed and bounded in the reflexive
Hilbert space $L^2$, hence weakly compact.
\end{proof}

We specialize this framework by evaluating the wIGW relational loss at the
conditional mean map. For $m\in L^2(\mu;\R^{d_y})$, set
\begin{equation}\label{eq:wigw-map-objective}
 \mathcal J_\mu(m):=
 \iint\left(\ip{x}{x'}-\ip{m(x)}{m(x')}\right)^2d\mu(x)d\mu(x').
\end{equation}

\begin{definition}[Barycentric weak inner-product GW]\label{def:wigw}
For $\mu\in\cP_2(\R^{d_x})$, $\nu\in\cP_2(\R^{d_y})$, and the conditional mean
map $m_\pi$ in \eqref{eq:conditional-mean-map}, define
\begin{equation}\label{eq:wigw-coupling-primal}
  \wIGW^2(\mu,\nu)
  :=\inf_{\pi\in\Pi(\mu,\nu)}\mathcal J_\mu(m_\pi).
\end{equation}
Here $\mathcal J_\mu$ is defined in \eqref{eq:wigw-map-objective}.
The unsquared symbol \(\wIGW(\mu,\nu)\) denotes the nonnegative square root
of the value in \eqref{eq:wigw-coupling-primal}.
\end{definition}

\Needspace{0.18\textheight}
This definition is the aggregated specialization of the second lift in
\eqref{eq:principal-lifted-costs}, obtained by setting
\[
 D_{\rm bar}(\rho,\rho')
 :=\ip{\int y\,d\rho(y)}{\int y'\,d\rho'(y')},
 \qquad \mathcal L(a,b)=(a-b)^2.
\]
Regard $D_{\rm bar}$ as a function on $\cP(\R^{d_y})^2$ by setting
$D_{\rm bar}(\rho,\rho')=0$ whenever either measure lies outside
$\cP_1(\R^{d_y})$. The set $\cP_1(\R^{d_y})$ is Borel, and the barycenter map is
Borel on this set, so the resulting extension is measurable. Under
$\nu\in\cP_2$, the identity
$\int\!\int\norm y^2d\pi_x(y)d\mu(x)=M_2(\nu)$ shows that
$\pi_x\in\cP_2$ for $\mu$-almost every $x$. Hence every admissible coupling uses
the defining formula for $D_{\rm bar}$ for $\mu$-almost every conditional law.
With $c_{\mathcal X}(x,x')=\ip{x}{x'}$, one has the exact identification
\[
 \wIGW^2(\mu,\nu)=\WGW_{\mathfrak C_{D_{\rm bar}}}(\mu,\nu).
\]
For every fixed coupling $\pi$, if $(X,Y)$ and $(X',Y')$ are independent with
law $\pi$, conditional Jensen gives
\[
\begin{aligned}
 &\left(\ip{x}{x'}-D_{\rm bar}(\pi_x,\pi_{x'})\right)^2\\
 &\quad=\left(\ip{x}{x'}-
 \E[\ip{Y}{Y'}\mid X=x,X'=x']\right)^2\\
 &\quad\le \E\left[(\ip{x}{x'}-\ip{Y}{Y'})^2
 \mid X=x,X'=x'\right].
\end{aligned}
\]
The inequality quantifies the reduction obtained by replacing the pointwise target
relation with its conditional expectation before applying the squared loss.

\begin{remark}[Barycentric and distributional relations]
\label{rem:conditional-law-alternatives}
Equation~\eqref{eq:principal-lifted-costs} defines the aggregated lift
\[
  \mathfrak C_D(a,\rho,\rho')
  =\mathcal L\bigl(a,D(\rho,\rho')\bigr),
\]
whose weak GW value $\WGW_{\mathfrak C_D}$ is defined in
Definition~\ref{def:generic-wgw}. The present construction takes
$D=D_{\rm bar}$, where
\[
  D_{\rm bar}(\rho,\rho')
  =\ip{b(\rho)}{b(\rho')},
  \qquad b(\rho)=\int y\,d\rho(y).
\]
Thus each conditional law enters only through its barycenter, and conditional laws
with the same mean are identified. Other choices of $D$ can retain more
distributional information. Appendix~\ref{app:general-wgw-geometry},
Remark~\ref{rem:conditional-law-relations}, defines the maximal covariance relation
$D_{\rm MCov}$, the Wasserstein relation $D_{W_2}$, the kernel inner-product relation
$D_k^{\rm ip}$, and the MMD relation $D_k^{\rm MMD}$, together with their compatible
source relations. The projection, moment, and finite $A$--$B$ results below are
established for $D_{\rm bar}$. Corresponding results for the other relations require
separate analysis and are left for future work.
\end{remark}

Convex order characterizes exactly the conditional mean maps generated by
couplings with target marginal $\nu$. This characterization removes the coupling
from the primal.

\begin{proposition}[Coupling/map equivalence]\label{prop:map}
Let $\mu\in\cP_2(\R^{d_x})$ and $\nu\in\cP_2(\R^{d_y})$. With
$\mathcal C_\nu$ and $\mathcal J_\mu$ defined in
\eqref{eq:admissible-map-set} and \eqref{eq:wigw-map-objective}, respectively,
\begin{equation}\label{eq:wigw-map-primal}
  \wIGW^2(\mu,\nu)
  =\inf_{m\in\mathcal C_\nu}\mathcal J_\mu(m)
  =\inf_{m\in\mathcal C_\nu}
  \iint\left(\ip{x}{x'}-\ip{m(x)}{m(x')}\right)^2d\mu(x)d\mu(x').
\end{equation}
\end{proposition}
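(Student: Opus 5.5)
I will prove the equality by establishing the two inequalities between the coupling infimum in \eqref{eq:wigw-coupling-primal} and the map infimum over $\mathcal C_\nu$. The argument rests on one observation: $\mathcal J_\mu(m_\pi)$ depends on $\pi$ only through the $\mu$-a.e. class of its conditional mean map. It therefore suffices to show that
\[
 \{m_\pi:\pi\in\Pi(\mu,\nu)\}=\mathcal C_\nu
\]
as subsets of $L^2(\mu;\R^{d_y})$. I will also check that $\mathcal J_\mu$ is well defined and finite on $L^2(\mu)$. This is immediate from
\[
 \bigl(\ip{x}{x'}-\ip{m(x)}{m(x')}\bigr)^2\le 2\norm x^2\norm{x'}^2+2\norm{m(x)}^2\norm{m(x')}^2,
\]
whose double integral equals $2M_2(\mu)^2+2\norm m_{L^2}^4<\infty$. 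Changing $m$ on a $\mu$-null set does not change $\mathcal J_\mu(m)$.

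\emph{Inclusion $\subseteq$ (so the map infimum is at most the coupling infimum).} Fix $\pi\in\Pi(\mu,\nu)$ with disintegration $\pi_x$. Since $\int\!\int\norm y^2\,d\pi_x\,d\mu=M_2(\nu)<\infty$, the barycenter $m_\pi(x)$ is defined for $\mu$-a.e. $x$. It is Borel measurable, being an integral of a measurable kernel, and by Jensen $\norm{m_\pi}_{L^2(\mu)}^2\le M_2(\nu)$. Now take any finite convex $u$ of at most linear growth. Conditional Jensen gives $u(m_\pi(x))\le\int u\,d\pi_x$, and integrating against $\mu$ yields
\[
 \int u\,d(m_\pi)_\#\mu\le\int u\,d\nu .
\]
Hence $(m_\pi)_\#\mu\cx\nu$, so $m_\pi\in\mathcal C_\nu$, and therefore $\mathcal J_\mu(m_\pi)\ge\inf_{\mathcal C_\nu}\mathcal J_\mu$.

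\emph{Inclusion $\supseteq$ (the reverse inequality).} Fix $m\in\mathcal C_\nu$ and choose a Borel representative. Set $\eta:=m_\#\mu$, which lies in $\cP_2\subset\cP_1$ and satisfies $\eta\cx\nu$. Theorem~\ref{thm:strassen} supplies a Borel kernel $\kappa(dy\mid z)$ with $\int\kappa(\cdot\mid z)\,d\eta=\nu$ and $\int y\,\kappa(dy\mid z)=z$ for $\eta$-a.e. $z$. I glue the two stages by defining
\[
 \pi(dx,dy):=\mu(dx)\,\kappa(dy\mid m(x)).
\]
Its first marginal is $\mu$. By the change of variables $z=m(x)$, its second marginal is $\int\kappa(\cdot\mid z)\,d\eta(z)=\nu$. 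The kernel $x\mapsto\kappa(\cdot\mid m(x))$ is a disintegration of $\pi$, and its barycenter equals $m(x)$ whenever $m(x)$ lies outside the $\eta$-null exceptional set. The preimage of that set under $m$ is $\mu$-null, so $m_\pi=m$ holds $\mu$-a.e. Because disintegrations are unique $\mu$-a.e. (Definition~\ref{def:generic-wgw}), this gives $\mathcal J_\mu(m_\pi)=\mathcal J_\mu(m)$, and hence the coupling infimum is at most $\inf_{\mathcal C_\nu}\mathcal J_\mu$.

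I expect no deep obstacle here; the work is measure-theoretic bookkeeping. The most delicate points are in the second inclusion: composing the Borel kernel $\kappa$ with the Borel map $m$ to obtain a measurable kernel, and transferring the $\eta$-a.e. martingale property to a $\mu$-a.e. statement about $m_\pi$ through the pushforward. I will handle both by working with Borel representatives throughout and invoking the a.e. uniqueness of disintegrations. The first equality in \eqref{eq:wigw-map-primal} is exactly this argument; the second equality just writes out the definition \eqref{eq:wigw-map-objective}.
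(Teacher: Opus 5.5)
Your proposal is correct and follows the paper's proof. In both, conditional Jensen places every $m_\pi$ in $\mathcal C_\nu$, and Strassen's martingale kernel glued after $m$, as in $\pi(dx,dy)=\mu(dx)\kappa(dy\mid m(x))$, realizes every feasible map as a conditional mean. Your extra steps (the elementary finiteness bound, and transferring the $\eta$-a.e. martingale identity to a $\mu$-a.e. statement through the pushforward) only spell out bookkeeping that the paper treats in a sentence.
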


\begin{proof}
If \(m=m_\pi\), conditional Jensen gives, for every convex \(u\),
\[
  \int u\,d(m_\#\mu)=\E u(\E[Y\mid X])\le\E u(Y)=\int u\,d\nu.
\]
Thus \(m_\#\mu\cx\nu\). Conversely, if \(m_\#\mu\cx\nu\),
Theorem~\ref{thm:strassen} gives a martingale kernel from \(m_\#\mu\) to \(\nu\).
Then \(\pi(dx,dy)=\mu(dx)\kappa(dy\mid m(x))\) has conditional mean \(m(x)\).
The kernel composition is measurable because the spaces are standard Borel. The
objective depends on \(\pi\) only through this mean. Finally, all terms in
\eqref{eq:wigw-map-primal}
are finite: conditional Jensen gives $m\in L^2$, and for independent copies
$\E\ip{m(X)}{m(X')}^2=\norm{S_m}_F^2<\infty$.
\end{proof}

\begin{remark}[Directionality]
The condition \(m_\#\mu\cx\nu\) is directional, and the values
\(\wIGW(\mu,\nu)\) and \(\wIGW(\nu,\mu)\) can differ. We use ``discrepancy'' and
reserve ``distance'' for symmetric settings.
\end{remark}

\section{Moment representation and zero structure}
\label{sec:moment-representation}

We reduce the pairwise loss to three finite-dimensional second moments. For
$\mu\in\cP_2(\R^{d_x})$ and
$m\in L^2(\mu;\R^{d_y})$, define
\begin{equation}\label{eq:wigw-moment-matrices}
  S_\mu=\int xx^\top d\mu(x),\qquad
  S_m=\int m(x)m(x)^\top d\mu(x),\qquad
  M_m=\int x\,m(x)^\top d\mu(x).
\end{equation}

\begin{proposition}[Moment reduction]\label{prop:moment}
Let $\mu\in\cP_2(\R^{d_x})$, $\nu\in\cP_2(\R^{d_y})$, and
$m\in\mathcal C_\nu$, where $\mathcal C_\nu$ is defined in
\eqref{eq:admissible-map-set}. For the moment matrices in
\eqref{eq:wigw-moment-matrices},
\[
\begin{aligned}
  &\iint\left(\ip{x}{x'}-\ip{m(x)}{m(x')}\right)^2d\mu(x)d\mu(x')\\
  &\hspace{4em}=\norm{S_\mu}_F^2+\norm{S_m}_F^2-2\norm{M_m}_F^2.
\end{aligned}
\]
Consequently,
\[
  \wIGW^2(\mu,\nu)=\norm{S_\mu}_F^2+
  \inf_{m\in\mathcal C_\nu}\{\norm{S_m}_F^2-2\norm{M_m}_F^2\}.
\]
\end{proposition}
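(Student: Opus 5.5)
The plan is to expand the square inside the double integral and rewrite each of the three resulting terms as a Frobenius inner product of second moment matrices, exactly as in the proof of Theorem~\ref{thm:ordinary-igw-envelope}. Let $X,X'$ be independent with law $\mu$ and write
\[
 \bigl(\ip{X}{X'}-\ip{m(X)}{m(X')}\bigr)^2
 =\ip{X}{X'}^2-2\ip{X}{X'}\ip{m(X)}{m(X')}+\ip{m(X)}{m(X')}^2 .
\]
Each term will be handled by the identity $\ip{a}{b}\ip{c}{d}=\Tr(a b^\top d c^\top)$, that is, $\ip{a}{b}\ip{c}{d}=\ip{ac^\top}{bd^\top}_F$, followed by independence.

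For the first term, $\ip{X}{X'}^2=\ip{XX^\top}{X'X'^\top}_F$, so independence gives $\E\ip{X}{X'}^2=\ip{S_\mu}{S_\mu}_F=\norm{S_\mu}_F^2$. The third term gives $\norm{S_m}_F^2$ in the same way. For the cross term, $\ip{X}{X'}\ip{m(X)}{m(X')}=\ip{Xm(X)^\top}{X'm(X')^\top}_F$, and taking expectations gives $\norm{M_m}_F^2$. Summing the three contributions yields $\norm{S_\mu}_F^2+\norm{S_m}_F^2-2\norm{M_m}_F^2$.

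The only point needing care is integrability, which justifies splitting the expectation and applying Fubini to the product measure $\mu\otimes\mu$. Since $m\in\mathcal C_\nu$, Lemma~\ref{lem:feasible-set} gives $\norm m_{L^2(\mu)}^2\le M_2(\nu)<\infty$, and $\mu\in\cP_2$. By Cauchy--Schwarz,
\[
 |\ip{X}{X'}|\,|\ip{m(X)}{m(X')}|\le \norm X\norm{m(X)}\,\norm{X'}\norm{m(X')},
\]
whose expectation factors as $(\E\norm X\norm{m(X)})^2\le M_2(\mu)\,\norm m_{L^2}^2$. The squared terms are bounded similarly by $M_2(\mu)^2$ and $\norm m_{L^2}^4$. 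Hence all three terms are integrable, and the matrices $S_\mu$, $S_m$, $M_m$ are well defined with finite entries.

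The consequence then follows by substituting the identity into Proposition~\ref{prop:map}, which expresses $\wIGW^2(\mu,\nu)$ as $\inf_{m\in\mathcal C_\nu}\mathcal J_\mu(m)$, and pulling out the constant $\norm{S_\mu}_F^2$, which does not depend on $m$. I expect no real obstacle here; the integrability check is the only step beyond algebra, and the $L^2$ bound from Lemma~\ref{lem:feasible-set} settles it.
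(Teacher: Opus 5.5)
Your proposal is correct and follows the paper's proof: expand the square for independent copies $X,X'\sim\mu$, use independence to identify the three expectations as $\norm{S_\mu}_F^2$, $\norm{S_m}_F^2$ and $\norm{M_m}_F^2$, then take the infimum over $\mathcal C_\nu$ via Proposition~\ref{prop:map}; your explicit integrability check through Lemma~\ref{lem:feasible-set} spells out what the paper handles only in passing. One cosmetic slip: the trace form should read $\Tr(ac^\top db^\top)$, since your $\Tr(ab^\top dc^\top)$ equals $\ip{a}{c}\ip{b}{d}$ (and is not even well defined in the cross term when $d_x\neq d_y$), but the Frobenius form $\ip{ac^\top}{bd^\top}_F$ that you actually use is right.
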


\begin{proof}
Let \((X,m(X))\) and \((X',m(X'))\) be independent copies. Independence gives
\[
  \E\ip{X}{X'}^2=\norm{S_\mu}_F^2,
  \quad \E\ip{m(X)}{m(X')}^2=\norm{S_m}_F^2,
\]
and
\[
  \E[\ip{X}{X'}\ip{m(X)}{m(X')}]
  =\sum_{i,k}(\E[X_i m_k(X)])^2=\norm{M_m}_F^2.
\]
Expand the square.
\end{proof}

The squared loss gives a direct description of the zero set.
Lemma~\ref{lem:feasible-set} makes the feasible map set weakly compact, and the
moment representation above makes the objective weakly lower semicontinuous.
The map problem therefore has a minimizer. Nonnegativity of the squared objective
then yields the following characterization.

\begin{corollary}[Zero set]\label{cor:zero}
Let $\mu\in\cP_2(\R^{d_x})$ and $\nu\in\cP_2(\R^{d_y})$. Then
\(\wIGW(\mu,\nu)=0\) if and only if there is a Borel map \(m\) such that
\[
  m\in L^2(\mu;\R^{d_y}),
  \qquad
  m_\#\mu\cx\nu,
  \qquad \ip{x}{x'}=\ip{m(x)}{m(x')}
  \quad \mu\otimes\mu\text{-a.e.}
\]
Here $\cx$ denotes the convex order of Definition~\ref{def:convex-order}.
\end{corollary}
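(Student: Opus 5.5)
The plan has two directions. The easy one is sufficiency; the substantive one is necessity, which requires showing that the infimum in the map formulation is attained.

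\emph{Sufficiency.} Suppose a Borel map $m\in L^2(\mu;\R^{d_y})$ satisfies $m_\#\mu\cx\nu$ and $\ip{x}{x'}=\ip{m(x)}{m(x')}$ for $\mu\otimes\mu$-a.e.\ pair. Then $m\in\mathcal C_\nu$ and the integrand of $\mathcal J_\mu(m)$ vanishes $\mu\otimes\mu$-a.e., so $\mathcal J_\mu(m)=0$. By Proposition~\ref{prop:map}, $\wIGW^2(\mu,\nu)\le 0$. Since the objective is nonnegative, $\wIGW(\mu,\nu)=0$.

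\emph{Necessity, step 1 (minimizing sequence).} Assume $\wIGW(\mu,\nu)=0$. By Proposition~\ref{prop:map}, choose $m_n\in\mathcal C_\nu$ with $\mathcal J_\mu(m_n)\to 0$. By Lemma~\ref{lem:feasible-set}, $\mathcal C_\nu$ is weakly compact in $L^2(\mu;\R^{d_y})$. Since $L^2$ is a Hilbert space, bounded sets are weakly sequentially compact, so after passing to a subsequence we have $m_n\rightharpoonup m\in\mathcal C_\nu$.

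\emph{Necessity, step 2 (lower semicontinuity).} We need $\mathcal J_\mu(m)\le\liminf_n\mathcal J_\mu(m_n)=0$. Proposition~\ref{prop:moment} gives
\[
\mathcal J_\mu(m)=\norm{S_\mu}_F^2+\norm{S_m}_F^2-2\norm{M_m}_F^2 .
\]
Each entry of $M_m$ is the $L^2$ pairing $\int x_i m_k\,d\mu$, which is a continuous linear functional of $m$ because $x_i\in L^2(\mu)$. Hence $M_{m_n}\to M_m$ entrywise, and the term $-2\norm{M_m}_F^2$ converges. The difficulty is $\norm{S_m}_F^2$: the map $m\mapsto S_m$ is not weakly continuous.

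\emph{Necessity, step 3 (handling $\norm{S_m}_F^2$).} Write
\[
\norm{S_m}_F^2=\iint\ip{m(x)}{m(x')}^2\,d\mu(x)\,d\mu(x').
\]
This is the integral of a convex nonnegative function of the tensor $m(x)\otimes m(x')$, but that tensor is not a linear function of $m$, so convexity in the tensor does not immediately give weak lower semicontinuity in $m$. Use instead the Frobenius identity
\[
\norm{S}_F=\sup\Bigl\{\ip{S}{B}_F:\ B\succeq0,\ \norm{B}_F\le1\Bigr\},
\]
valid for $S\succeq0$. For a fixed $B\succeq0$, the map $m\mapsto\ip{S_m}{B}_F=\int m^\top Bm\,d\mu$ is a continuous convex quadratic form on $L^2$, hence weakly lower semicontinuous. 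A supremum of weakly lower semicontinuous functions is weakly lower semicontinuous, so $m\mapsto\norm{S_m}_F$ is weakly lower semicontinuous, and so is its square because it is nonnegative.

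\emph{Necessity, step 4 (conclusion).} Combining steps 2 and 3 gives $\mathcal J_\mu(m)\le\liminf_n\mathcal J_\mu(m_n)=0$. Since $\mathcal J_\mu(m)\ge0$, we get $\mathcal J_\mu(m)=0$. A nonnegative integrand with zero integral vanishes almost everywhere, so $\ip{x}{x'}=\ip{m(x)}{m(x')}$ for $\mu\otimes\mu$-a.e.\ pair. Finally, replace $m$ by a Borel representative of its $L^2$ class. This changes $m$ only on a $\mu$-null set, which alters neither the pushforward $m_\#\mu$ nor the a.e.\ identity.

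The main obstacle is the weak lower semicontinuity of $\norm{S_m}_F^2$ in step 3. The $B$-linearization mirrors the role of the outer variable $B$ in the paper's $A$--$B$ dual. Once this is in place, the existence argument and the zero characterization follow directly.
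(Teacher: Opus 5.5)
Your proposal is correct and follows the paper's route. The paper takes a minimizer of the map problem from Theorem~\ref{thm:existence}, and that theorem's proof is your direct-method argument: weak compactness of $\mathcal C_\nu$ from Lemma~\ref{lem:feasible-set}, weak continuity of $m\mapsto M_m$, and weak lower semicontinuity of $\norm{S_m}_F^2$ via a linearization over $B\succeq0$. It then concludes because a nonnegative integrand with zero integral vanishes almost everywhere. The only difference is cosmetic: the paper uses $\norm{S_m}_F^2=\sup_{B\succeq0}\{2\ip{B}{S_m}_F-\norm{B}_F^2\}$ directly, whereas you linearize $\norm{S_m}_F$ over the positive semidefinite unit ball and then square, which is equally valid.
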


\begin{proof}
Let $m^\star$ be a minimizer, whose existence follows from the preceding
compactness and lower semicontinuity argument. Since the integrand is
nonnegative, $m^\star$ has zero objective if and only if that integrand vanishes
$\mu\otimes\mu$-almost everywhere. Conversely, any feasible map with this
property has zero objective.
\end{proof}

The zero set criterion is especially transparent when the conditional mean is an
isometric embedding. It then allows arbitrary target noise whose conditional mean
vanishes.

\begin{corollary}[Isometries with martingale noise]\label{cor:noisy-isometry}
Let $\mu\in\cP_2(\R^{d_x})$ and $\nu\in\cP_2(\R^{d_y})$.
Let \(T:\R^{d_x}\to\R^{d_y}\) be a linear isometric embedding,
\(T^\top T=I_{d_x}\). If $T_\#\mu\cx\nu$ in the convex order of
Definition~\ref{def:convex-order}, then \(\wIGW(\mu,\nu)=0\). In particular,
suppose $(X,Y)$ has a joint law in $\Pi(\mu,\nu)$ and
\[
  Y=TX+\xi,
  \qquad \E[\xi\mid X]=0.
\]
Then \(\wIGW(\mu,\nu)=0\). The conclusion allows $\xi$ to depend on $X$ and to
be heteroscedastic, non-Gaussian, and anisotropic.
\end{corollary}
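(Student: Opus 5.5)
The plan is to apply Corollary~\ref{cor:zero} with the candidate map $m(x)=Tx$. Measurability and integrability are immediate. Since $T$ is linear and Borel, and $\norm{Tx}=\norm x$, we have $\int\norm{m}^2d\mu=M_2(\mu)<\infty$, so $m\in L^2(\mu;\R^{d_y})$. Feasibility $m_\#\mu=T_\#\mu\cx\nu$ is exactly the hypothesis. The relational identity holds pointwise, not merely almost everywhere:
\[
\ip{Tx}{Tx'}=x^\top T^\top T x'=\ip{x}{x'} .
\]
Thus the criterion of Corollary~\ref{cor:zero} is met and $\wIGW(\mu,\nu)=0$. One could equally bypass the corollary: Proposition~\ref{prop:map} shows the infimum is at most $\mathcal J_\mu(T)=0$, and the objective is nonnegative.

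For the second claim, the plan is to reduce to the first by checking $T_\#\mu\cx\nu$. Take the joint law of $(X,Y)$, which lies in $\Pi(\mu,\nu)$. Since $\E\norm{\xi}\le\E\norm Y+\E\norm{TX}<\infty$, conditional expectations are well defined, and the hypothesis gives
\[
\E[Y\mid X]=TX+\E[\xi\mid X]=TX .
\]
The conditional-Jensen step in the proof of Proposition~\ref{prop:map} then yields, for every finite convex $u$ of at most linear growth,
\[
\int u\,d(T_\#\mu)=\E u(\E[Y\mid X])\le \E u(Y)=\int u\,d\nu .
\]
Hence $T_\#\mu\cx\nu$ in the sense of Definition~\ref{def:convex-order}, and the first part applies. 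Alternatively, the coupling itself has $m_\pi=T$ $\mu$-a.e., so directly $\wIGW^2(\mu,\nu)\le\mathcal J_\mu(T)=0$.

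The remark about heteroscedastic, non-Gaussian, and anisotropic noise needs no separate argument. The only property of $\xi$ used is $\E[\xi\mid X]=0$, so any dependence of the law of $\xi$ on $X$ is allowed.

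The only mildly delicate point is integrability: ensuring $u(Y)$ and $u(\E[Y\mid X])$ are integrable so that conditional Jensen applies. This follows from the linear growth of $u$ together with $\nu\in\cP_2\subset\cP_1$. I do not expect any real obstacle.
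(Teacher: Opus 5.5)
Your proposal is correct and follows essentially the same route as the paper. You take $m(x)=Tx$ in Corollary~\ref{cor:zero} and use the identity $T^\top T=I$ for the Gram relation. For the noise model, you obtain $\E[Y\mid X]=TX$ and apply conditional Jensen to verify $T_\#\mu\cx\nu$. Your added checks ($m\in L^2(\mu)$ via $\norm{Tx}=\norm x$, and integrability of $\xi$) are correct refinements that the paper leaves implicit.
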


\begin{proof}
For the first statement, take \(m(x)=Tx\) in Corollary~\ref{cor:zero}. It is
feasible by assumption, and
\[
  \ip{m(x)}{m(x')}=x^\top T^\top Tx'=\ip{x}{x'}.
\]
For the second statement, conditional centering gives
\[
  \E[Y\mid X]=TX+\E[\xi\mid X]=TX.
\]
Let $\varphi:\R^{d_y}\to\R$ be any finite convex function of at most linear
growth. Conditional Jensen yields
\[
\begin{aligned}
  \int\varphi(z)\,d(T_\#\mu)(z)
  &=\E[\varphi(TX)]
   =\E\!\left[\varphi\!\left(\E[Y\mid X]\right)\right]\\
  &\le \E[\varphi(Y)]
   =\int\varphi(y)\,d\nu(y).
\end{aligned}
\]
Thus $T_\#\mu\cx\nu$ by Definition~\ref{def:convex-order}, and the first
statement applies.
\end{proof}

\begin{remark}[Familiar orthogonal transformation with noise]
When $d_x=d_y$, the familiar case is
\[
  Y=OX+\xi,
  \qquad \E[\xi\mid X]=0,
\]
with $O$ orthogonal. The noise may be heteroscedastic, anisotropic, dependent on
$X$, and non-Gaussian. The certificate requires a vanishing conditional mean.
Global centering is weaker: $\E[\xi]=0$ may hold while
$\E[\xi\mid X]\ne0$.
\end{remark}

\begin{remark}[Orthogonal invariance]
For $U\in O(d_x)$ and $V\in O(d_y)$,
\(\wIGW(U_\#\mu,V_\#\nu)=\wIGW(\mu,\nu)\). Raw inner products are sensitive to
translations, so centering is part of our experimental protocol.
\end{remark}

The zero set results identify exact relational matches. The next section extends
this geometry to arbitrary values of the discrepancy through projection in convex
order.

\section{Projection onto the convex order cone}\label{sec:projection}

For \(\mu\in\cP_2(\R^{d_x})\) and \(\eta\in\cP_2(\R^{d_y})\), define
\begin{equation}\label{eq:projection-igw-objective}
  \IGW^2(\mu,\eta):=\inf_{\gamma\in\Pi(\mu,\eta)}
  \iint\left(\ip{x}{x'}-\ip{z}{z'}\right)^2
  d\gamma(x,z)d\gamma(x',z').
\end{equation}
We show that wIGW equals the minimum ordinary IGW discrepancy over laws below the
target in convex order.

\begin{theorem}[Exact projection in convex order]\label{thm:projection}
For every \(\mu\in\cP_2(\R^{d_x})\) and \(\nu\in\cP_2(\R^{d_y})\),
\[
  \wIGW^2(\mu,\nu)=
  \inf_{\substack{\eta\in\cP_2(\R^{d_y})\\ \eta\cx\nu}}
  \IGW^2(\mu,\eta).
\]
Here $\cx$ is the convex order of Definition~\ref{def:convex-order} and
$\wIGW^2(\mu,\nu)$ and $\IGW^2(\mu,\eta)$ are defined in
\eqref{eq:wigw-coupling-primal} and \eqref{eq:projection-igw-objective},
respectively.
\end{theorem}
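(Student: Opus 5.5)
We prove the two inequalities separately, working throughout with the map form of Proposition~\ref{prop:map}. That is, we compare $\inf_{m\in\mathcal C_\nu}\mathcal J_\mu(m)$ with $\inf_{\eta\cx\nu}\IGW^2(\mu,\eta)$.

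\emph{Step 1 ($\ge$).} Fix $m\in\mathcal C_\nu$ and set $\eta:=m_\#\mu$.
\begin{itemize}
\item By Lemma~\ref{lem:feasible-set}, $\eta\in\cP_2(\R^{d_y})$ and $\eta\cx\nu$.
\item The deterministic coupling $\gamma:=(\mathrm{id},m)_\#\mu$ lies in $\Pi(\mu,\eta)$.
\item Under $\gamma$, the ordinary IGW integrand in \eqref{eq:projection-igw-objective} becomes exactly $\mathcal J_\mu(m)$.
\end{itemize}
Hence $\IGW^2(\mu,\eta)\le\mathcal J_\mu(m)$. Taking the infimum over $m$ gives $\inf_{\eta\cx\nu}\IGW^2(\mu,\eta)\le\wIGW^2(\mu,\nu)$.

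\emph{Step 2 ($\le$).} Fix $\eta\in\cP_2$ with $\eta\cx\nu$ and any $\gamma\in\Pi(\mu,\eta)$. Define the barycentric map
\[
\bar m(x):=\E_\gamma[Z\mid X=x].
\]
Two facts are needed.
\begin{itemize}
\item \emph{Feasibility.} For convex $u$ of linear growth, conditional Jensen gives $\int u(\bar m)\,d\mu\le\int u\,d\eta\le\int u\,d\nu$. So $\bar m_\#\mu\cx\eta\cx\nu$, and transitivity of convex order gives $\bar m\in\mathcal C_\nu$. Also $\bar m\in L^2(\mu)$ by conditional Jensen.
\item \emph{Cost comparison.} We claim $\mathcal J_\mu(\bar m)\le\iint(\ip{x}{x'}-\ip{z}{z'})^2\,d\gamma\,d\gamma$.
\end{itemize}

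The cost comparison is the key step. We do it with the moment identity rather than pointwise Jensen. By the computation in the proof of Theorem~\ref{thm:ordinary-igw-envelope}, the $\gamma$-cost equals
\[
\norm{S_\mu}_F^2+\norm{S_\eta}_F^2-2\norm{M_\gamma}_F^2 .
\]
By Proposition~\ref{prop:moment} (whose expansion only uses $\bar m\in L^2$), $\mathcal J_\mu(\bar m)=\norm{S_\mu}_F^2+\norm{S_{\bar m}}_F^2-2\norm{M_{\bar m}}_F^2$. Comparing the two expressions term by term:
\begin{itemize}
\item The tower property gives $M_{\bar m}=\E[X\,\E[Z\mid X]^\top]=M_\gamma$.
\item Conditional Jensen for the matrix-convex map $z\mapsto zz^\top$ gives $0\preceq S_{\bar m}\preceq S_\eta$.
\item For PSD matrices, $0\preceq P\preceq Q$ implies $\norm P_F^2=\Tr(P^2)\le\Tr(PQ)\le\Tr(Q^2)$, so $\norm{S_{\bar m}}_F\le\norm{S_\eta}_F$.
\end{itemize}
Equivalently, one can argue pointwise: conditioning on $(X,X')$ for independent copies, $\E[\ip{Z}{Z'}\mid X,X']=\ip{\bar m(X)}{\bar m(X')}$, and conditional Jensen applied to the square gives the bound directly. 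This pointwise route is the cleaner one to present.

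Taking the infimum over $\gamma$ and then over $\eta$ yields $\wIGW^2\le\inf_{\eta\cx\nu}\IGW^2(\mu,\eta)$, which completes the proof.

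The main obstacle is the routine measure-theoretic bookkeeping: the disintegration of $\gamma$, $\mu$-a.e. definedness of $\bar m$, and integrability of the cross terms. All of these follow from the second-moment assumptions.
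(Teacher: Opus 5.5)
Your proposal is correct, and its skeleton matches the paper's proof. The paper uses the deterministic coupling $(\mathrm{id},m)_\#\mu$ for one inequality. For the other, it takes the conditional mean $\bar m$ of an arbitrary $\gamma\in\Pi(\mu,\eta)$ and shows it is feasible by conditional Jensen and transitivity. The pointwise argument you sketch at the end, $\E[\ip{Z}{Z'}\mid X,X']=\ip{\bar m(X)}{\bar m(X')}$ followed by conditional Jensen for $b\mapsto(\ip{X}{X'}-b)^2$, is exactly the paper's cost comparison.

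Your primary moment route is a genuinely different but valid way to close that step. Since $M_{\bar m}=M_\gamma$, it yields the exact identity
\[
 J_{\rm IGW}(\gamma)-\mathcal J_\mu(\bar m)
 =\norm{S_\eta}_F^2-\norm{S_{\bar m}}_F^2,
 \qquad
 S_\eta-S_{\bar m}=\E_\gamma\bigl[\operatorname{Cov}(Z\mid X)\bigr]\succeq0 .
\]
Your trace chain $\Tr(P^2)\le\Tr(PQ)\le\Tr(Q^2)$ for $0\preceq P\preceq Q$ then correctly shows this gap is nonnegative.

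What the moment route buys is an explicit, finite-dimensional description of how much cost is removed by passing to conditional means: a Frobenius gap driven by the average conditional covariance. It does this while reusing Proposition~\ref{prop:moment} and the moment identity behind Theorem~\ref{thm:ordinary-igw-envelope}. What it costs is generality, since it depends on the squared loss and the full second-moment expansion. The paper's pointwise conditional Jensen uses only bilinearity of the inner-product relation and convexity of the loss in its second argument. It would therefore carry over unchanged to any loss $\mathcal L(a,b)$ convex in $b$ paired with $D_{\rm bar}$, given suitable integrability.
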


\begin{proof}
Take a feasible barycentric map \(m\) and set \(\eta=m_\#\mu\cx\nu\). The
deterministic coupling \((\mathrm{id},m)_\#\mu\) is feasible for
\(\IGW(\mu,\eta)\), so the projection value is no larger than the wIGW value.

Conversely, fix \(\eta\cx\nu\) and \(\gamma\in\Pi(\mu,\eta)\). Let
\(m_\gamma(x)=\E_\gamma[Z\mid X=x]\). Conditional Jensen gives
\((m_\gamma)_\#\mu\cx\eta\cx\nu\). Let \((X,Z)\) and \((X',Z')\) be
independent with law \(\gamma\). A regular conditional law of $(Z,Z')$ given
$(X,X')=(x,x')$ is $\gamma_x\otimes\gamma_{x'}$; hence conditional independence
and bilinearity give
\[
\begin{aligned}
  \E[\ip{Z}{Z'}\mid X=x,X'=x']
  &=\iint\ip{z}{z'}d\gamma_x(z)d\gamma_{x'}(z')\\
  &=\ip{m_\gamma(x)}{m_\gamma(x')}.
\end{aligned}
\]
Conditional Jensen, applied to the square as a convex function of the scalar
\(\ip{Z}{Z'}\), yields
\[
\begin{aligned}
  &\left(\ip{X}{X'}-\ip{m_\gamma(X)}{m_\gamma(X')}\right)^2\\
  &\qquad\le
  \E\left[\left(\ip{X}{X'}-\ip{Z}{Z'}\right)^2\mid X,X'\right].
\end{aligned}
\]
This application is legitimate under only second moments: independence gives
$\E\ip{Z}{Z'}^2=\norm{S_\eta}_F^2<\infty$ and likewise for $X$ and
$m_\gamma(X)$, the latter by conditional Jensen. Integrating the conditional
inequality shows that the wIGW objective at $m_\gamma$ is at most the IGW
objective at $\gamma$. Convex order is transitive because its defining inequalities
are transitive. The argument holds for every admissible $\eta$ and $\gamma$;
infimize first over $\gamma$ and then over $\eta$.
\end{proof}

The feasible projection set expands when the target is replaced by a martingale
refinement. The projection identity therefore gives the following directional
monotonicity.

\begin{corollary}[Monotonicity under target martingale refinement]
\label{cor:target-refinement}
Let \(\mu\in\cP_2(\R^{d_x})\) and
\(\nu,\widetilde\nu\in\cP_2(\R^{d_y})\). If
\(\nu\cx\widetilde\nu\) in the convex order of
Definition~\ref{def:convex-order}, then
\[
  \wIGW(\mu,\widetilde\nu)\le \wIGW(\mu,\nu).
\]
Thus a mean-preserving spread of the target cannot increase directional wIGW.
\end{corollary}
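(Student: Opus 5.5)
The natural route is through the map formulation of Proposition~\ref{prop:map}, or equivalently the projection identity of Theorem~\ref{thm:projection}; the whole statement reduces to showing that the feasible set grows when $\nu$ is replaced by $\widetilde\nu$. Concretely, I will show $\mathcal C_\nu\subseteq\mathcal C_{\widetilde\nu}$, where $\mathcal C_\nu$ is the admissible map set \eqref{eq:admissible-map-set}. The objective $\mathcal J_\mu$ in \eqref{eq:wigw-map-objective} depends only on $\mu$ and the map $m$, not on the target law. Hence the infimum over the larger set is no larger:
\[
  \wIGW^2(\mu,\widetilde\nu)=\inf_{m\in\mathcal C_{\widetilde\nu}}\mathcal J_\mu(m)\le\inf_{m\in\mathcal C_\nu}\mathcal J_\mu(m)=\wIGW^2(\mu,\nu).
\]
Taking nonnegative square roots then gives the stated inequality.

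The inclusion itself is transitivity of convex order. Take $m\in\mathcal C_\nu$, so $m_\#\mu\cx\nu$. Let $u$ be any finite convex function of at most linear growth. The definition gives $\int u\,d(m_\#\mu)\le\int u\,d\nu$, and the hypothesis $\nu\cx\widetilde\nu$ gives $\int u\,d\nu\le\int u\,d\widetilde\nu$. Chaining these, $m_\#\mu\cx\widetilde\nu$. Since $m\in L^2(\mu;\R^{d_y})$ is unchanged, $m\in\mathcal C_{\widetilde\nu}$. This is the same transitivity step used inside the proof of Theorem~\ref{thm:projection}.

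An alternative phrasing works at the level of laws. Every $\eta\in\cP_2$ with $\eta\cx\nu$ also satisfies $\eta\cx\widetilde\nu$, so the infimum in Theorem~\ref{thm:projection} runs over a larger set for $\widetilde\nu$. This gives the same conclusion.

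I do not expect a real obstacle. The only point to check is that both sides are well defined under the stated hypotheses. Here $\mu\in\cP_2$ and both targets lie in $\cP_2$, so Proposition~\ref{prop:map} applies to both pairs. Also, $\mathcal J_\mu(m)<\infty$ for every $m\in L^2(\mu)$, as noted in the proof of that proposition, so comparing the two infima is legitimate.
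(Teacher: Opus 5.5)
Your proposal is correct and is essentially the paper's argument. The paper uses transitivity to get $\{\eta:\eta\cx\nu\}\subseteq\{\eta:\eta\cx\widetilde\nu\}$ and then compares the two infima in Theorem~\ref{thm:projection}, which is exactly your alternative phrasing; your primary inclusion $\mathcal C_\nu\subseteq\mathcal C_{\widetilde\nu}$ in the map formulation of Proposition~\ref{prop:map} applies the same transitivity step at the level of maps and needs only that proposition rather than the projection theorem.
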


\begin{proof}
Transitivity of convex order gives
\[
  \{\eta\in\cP_2(\R^{d_y}):\eta\cx\nu\}\subseteq
  \{\eta\in\cP_2(\R^{d_y}):\eta\cx\widetilde\nu\}.
\]
The claim follows by comparing the two infima in
Theorem~\ref{thm:projection}.
\end{proof}

Choosing the target law itself as the intermediate measure compares the weak and
ordinary discrepancies directly.

\begin{corollary}[Comparison with ordinary IGW]\label{cor:igw-comparison}
For every $\mu\in\cP_2(\R^{d_x})$ and $\nu\in\cP_2(\R^{d_y})$, the
quantities defined in \eqref{eq:wigw-coupling-primal} and
\eqref{eq:projection-igw-objective} satisfy
\(\wIGW(\mu,\nu)\le\IGW(\mu,\nu)\).
\end{corollary}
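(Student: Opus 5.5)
The plan is to read the inequality off the projection identity of Theorem~\ref{thm:projection} by choosing the intermediate measure to be the target itself. That theorem expresses $\wIGW^2(\mu,\nu)$ as an infimum of $\IGW^2(\mu,\eta)$ over all $\eta\in\cP_2(\R^{d_y})$ with $\eta\cx\nu$. It therefore suffices to show that $\eta=\nu$ is admissible, after which
\[
  \wIGW^2(\mu,\nu)\le\IGW^2(\mu,\nu).
\]
Taking nonnegative square roots then gives the claim.

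Admissibility of $\eta=\nu$ requires two facts. First, $\nu\in\cP_2(\R^{d_y})$ holds by hypothesis. Second, convex order is reflexive: $\int u\,d\nu\le\int u\,d\nu$ holds trivially for every finite convex $u$ of at most linear growth, so $\nu\cx\nu$ under Definition~\ref{def:convex-order}.

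We must also confirm that $\IGW^2(\mu,\eta)$ in \eqref{eq:projection-igw-objective}, evaluated at $\eta=\nu$, is the ordinary IGW objective \eqref{eq:ordinary-igw-definition}. This holds because the two formulas are literally the same after renaming $z$ to $y$.

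As a cross-check, and as an alternative that avoids Theorem~\ref{thm:projection}, one can argue coupling by coupling. For any $\pi\in\Pi(\mu,\nu)$, the conditional Jensen inequality displayed after Definition~\ref{def:wigw} gives $\mathcal J_\mu(m_\pi)$ no larger than the IGW integrand integrated against $\pi\otimes\pi$. Taking infima over $\pi$ yields the same conclusion.

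There is no real obstacle here. The only point needing care is that both sides are finite under second moments, and this is already ensured by Proposition~\ref{prop:moment} and the ordinary IGW moment expansion.
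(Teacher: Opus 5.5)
Your proposal is correct and matches the paper's proof: the paper also takes $\eta=\nu$ in Theorem~\ref{thm:projection}, which is admissible because $\nu\cx\nu$. Your coupling-by-coupling cross-check is also valid; it is the reverse half of the projection proof specialized to $\eta=\nu$ and $\gamma=\pi$.
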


\begin{proof}
In Theorem~\ref{thm:projection}, the choice $\eta=\nu$ is admissible because
$\nu\cx\nu$.
\end{proof}

The preceding results identify optimal values. The direct method also provides
optimizing maps, projection laws, and couplings, while Strassen's kernel realizes
each optimal map with the prescribed target marginal.

\begin{theorem}[Existence of minimizers and coupling realization in $\cP_2$]
\label{thm:existence}
Let $\mu\in\cP_2(\R^{d_x})$ and $\nu\in\cP_2(\R^{d_y})$, and let
$\mathcal C_\nu$, $\mathcal J_\mu$, and $m_\pi$ be defined in
\eqref{eq:admissible-map-set}, \eqref{eq:wigw-map-objective}, and
\eqref{eq:conditional-mean-map}. The following three problems each admit a
minimizer:
\[
\begin{aligned}
 \text{map:}\quad
 &\inf_{m\in\mathcal C_\nu}\mathcal J_\mu(m),\\
 \text{coupling:}\quad
 &\inf_{\pi\in\Pi(\mu,\nu)}\mathcal J_\mu(m_\pi),\\
 \text{projection:}\quad
 &\inf_{\substack{\eta\in\cP_2(\R^{d_y})\\ \eta\cx\nu}}
   \IGW^2(\mu,\eta).
\end{aligned}
\]
Here $\cx$ is the convex order of
Definition~\ref{def:convex-order} and the last objective is defined in
\eqref{eq:projection-igw-objective}. If $m^\star$ minimizes the map problem, then
$\eta^\star=(m^\star)_\#\mu$ minimizes the projection problem and
$(\mathrm{id},m^\star)_\#\mu$ minimizes the IGW coupling problem between $\mu$
and $\eta^\star$. By Theorem~\ref{thm:strassen}, there exists a Borel probability
kernel \(\kappa^\star(dy\mid z)\) satisfying
\[
 \int\kappa^\star(\mathord\cdot\mid z)d((m^\star)_\#\mu)(z)=\nu,
 \qquad
 \int y\,\kappa^\star(dy\mid z)=z
 \quad\text{for $(m^\star)_\#\mu$-a.e. }z.
\]
Every such kernel defines
\[
  \pi^\star(dx,dy)=\mu(dx)\kappa^\star(dy\mid m^\star(x)).
\]
The measure $\pi^\star$ belongs to $\Pi(\mu,\nu)$ and minimizes the coupling
problem.
Conversely, if \(\eta^\star\in\cP_2(\R^{d_y})\) minimizes the projection problem and
\(\gamma^\star\in\Pi(\mu,\eta^\star)\) minimizes the coupling problem defining
$\IGW^2(\mu,\eta^\star)$ in \eqref{eq:projection-igw-objective}, then, for any
disintegration $\gamma^\star(dx,dz)=\mu(dx)\gamma_x^\star(dz)$, the map
\[
  \widehat m(x):=\int z\,\gamma_x^\star(dz)
  =\E_{\gamma^\star}[Z\mid X=x]
\]
minimizes the map problem. A probability kernel satisfying the same target marginal
and martingale identities with $m^\star$ replaced by $\widehat m$ exists; the same
construction then gives a coupling in $\Pi(\mu,\nu)$ that minimizes the coupling
problem.
In particular, every optimal map is realizable by an optimal wIGW coupling.
\end{theorem}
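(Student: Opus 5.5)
The plan is to run the direct method on the map problem first and transfer the result to the other two problems through Proposition~\ref{prop:map} and Theorem~\ref{thm:projection}.

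\emph{Map problem.} By Lemma~\ref{lem:feasible-set}, $\mathcal C_\nu$ is nonempty and weakly compact in $L^2(\mu;\R^{d_y})$. By Proposition~\ref{prop:moment}, on $\mathcal C_\nu$
\[
  \mathcal J_\mu(m)=\norm{S_\mu}_F^2+\norm{S_m}_F^2-2\norm{M_m}_F^2 .
\]
The term $m\mapsto M_m$ is a bounded linear map into a finite-dimensional space, so it is weakly continuous. Along a weakly convergent sequence, $\norm{M_{m_n}}_F^2$ therefore converges.

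The term $\norm{S_m}_F^2$ is only weakly lower semicontinuous, and this is the main obstacle. I would handle it as follows. Write
\[
  \norm{S_m}_F^2=\iint\ip{m(x)}{m(x')}^2\,d\mu(x)\,d\mu(x'),
\]
that is, the squared $L^2(\mu\otimes\mu)$ norm of $m\otimes m$. If $m_n\rightharpoonup m$ in $L^2(\mu)$, then $m_n\otimes m_n\rightharpoonup m\otimes m$ in $L^2(\mu\otimes\mu;\R^{d_y\times d_y})$. Boundedness is clear, and convergence can be tested on the total set of products $f(x)g(x')$. Weak lower semicontinuity of the norm then gives
\[
  \norm{S_m}_F^2\le\liminf_n\norm{S_{m_n}}_F^2 .
\]
Hence $\mathcal J_\mu$ is weakly lsc on the weakly compact set $\mathcal C_\nu$. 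Take a minimizing sequence, extract a weakly convergent subsequence (Eberlein--\v{S}mulian, or directly by reflexivity), and conclude that a minimizer $m^\star$ exists.

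\emph{Projection and coupling problems.} Set $\eta^\star=(m^\star)_\#\mu$. Then $\eta^\star\cx\nu$, and $\eta^\star\in\cP_2$ because $m^\star\in L^2$. The deterministic coupling $(\mathrm{id},m^\star)_\#\mu$ gives
\[
  \IGW^2(\mu,\eta^\star)\le\mathcal J_\mu(m^\star)=\wIGW^2(\mu,\nu).
\]
By Theorem~\ref{thm:projection} the right-hand side equals the projection infimum. So equality holds throughout, $\eta^\star$ is a projection minimizer, and $(\mathrm{id},m^\star)_\#\mu$ is IGW-optimal between $\mu$ and $\eta^\star$.

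For the coupling problem, apply Theorem~\ref{thm:strassen} to $\eta^\star\cx\nu$ to obtain $\kappa^\star$, and define $\pi^\star=\mu\otimes\kappa^\star(\cdot\mid m^\star(x))$. Integrating the kernel against $(m^\star)_\#\mu$ shows that the second marginal is $\nu$. The martingale identity holds $\eta^\star$-a.e., hence $\mu$-a.e.\ after composing with $m^\star$, which gives $m_{\pi^\star}=m^\star$ $\mu$-a.e. Therefore
\[
  \mathcal J_\mu(m_{\pi^\star})=\mathcal J_\mu(m^\star),
\]
and this is the coupling infimum by Proposition~\ref{prop:map}. Measurability of the composed kernel holds as in the proof of Proposition~\ref{prop:map}.

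\emph{Converse.} Given optimal $\eta^\star$ and $\gamma^\star$, set $\widehat m(x)=\E_{\gamma^\star}[Z\mid X=x]$. The conditional Jensen argument in the proof of Theorem~\ref{thm:projection} gives $(\widehat m)_\#\mu\cx\eta^\star\cx\nu$, so $\widehat m\in\mathcal C_\nu$. It also gives
\[
  \mathcal J_\mu(\widehat m)\le\IGW^2(\mu,\eta^\star),
\]
and the right-hand side is the projection minimum, which equals $\wIGW^2(\mu,\nu)$. Hence $\widehat m$ is a map minimizer. The independence of the choice of disintegration holds because changes occur only on $\mu$-null sets. Applying the Strassen gluing from the previous step to $\widehat m$ yields an optimal coupling, which proves the final realizability claim.
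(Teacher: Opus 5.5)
Your proof is correct and follows the paper's architecture step for step: the direct method on $\mathcal C_\nu$, the sandwich $\inf_{\eta\cx\nu}\IGW^2(\mu,\eta)\le\IGW^2(\mu,\eta^\star)\le\mathcal J_\mu(m^\star)=\wIGW^2(\mu,\nu)$, Strassen gluing, and the conditional-mean converse. The one genuine difference is how you obtain weak lower semicontinuity of $m\mapsto\norm{S_m}_F^2$.

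The paper uses the Fenchel representation $\norm{S_m}_F^2=\sup_{B\succeq0}\{2\int m^\top Bm\,d\mu-\norm B_F^2\}$. This is a supremum of convex, norm-continuous (hence weakly lsc) functionals on $L^2(\mu)$, so it never leaves $L^2(\mu)$, and it reuses the identity that later introduces the dual variable $B$. You instead pass to the product space and use weak convergence of the pair kernels $k_{m_n}(x,x')=\ip{m_n(x)}{m_n(x')}$ in $L^2(\mu\otimes\mu)$. That route is more self-contained and in fact gives slightly more. Since $\mathcal J_\mu(m)=\norm{k_\mu-k_m}^2_{L^2(\mu\otimes\mu)}$ with $k_\mu(x,x')=\ip{x}{x'}$, it yields weak lsc of the whole objective $\mathcal J_\mu$ at once, without needing the moment reduction for this step.

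There is one slip to fix. The matrix-valued function $(x,x')\mapsto m(x)m(x')^\top$ has squared $L^2(\mu\otimes\mu)$ norm $\norm{m}_{L^2(\mu)}^4=(\Tr S_m)^2$, not $\norm{S_m}_F^2$. The two differ: if $m$ takes the values $e_1$ and $e_2$ on sets of $\mu$-mass $1/2$ each, they equal $1$ and $1/2$ respectively. The object whose norm is $\norm{S_m}_F$ is the pointwise trace of that matrix function, namely the scalar kernel $k_m$. You can repair this in either of two ways.
\begin{itemize}
\item Apply the bounded linear trace map to your matrix-valued weak convergence.
\item Test $k_{m_n}$ directly against products $f(x)g(x')$. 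Here $\iint k_{m_n}\,f\otimes g\,d\mu\,d\mu=\sum_k\bigl(\int m_{n,k}f\,d\mu\bigr)\bigl(\int m_{n,k}g\,d\mu\bigr)$ converges by weak convergence of $m_n$. Boundedness comes from $\norm{k_{m_n}}_{L^2(\mu\otimes\mu)}=\norm{S_{m_n}}_F\le\Tr S_{m_n}\le M_2(\nu)$.
\end{itemize}
With either fix, the rest of your argument goes through unchanged.
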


\begin{proof}
By Lemma~\ref{lem:feasible-set}, $\mathcal C_\nu$ is weakly compact in $L^2$.
For $m\in\mathcal C_\nu$, write $Tm=M_m$. The finite-dimensional bounded linear
map $T:L^2(\mu;\R^{d_y})\to\R^{d_x\times d_y}$ sends weakly convergent sequences
to sequences converging in Frobenius norm. Moreover,
\[
 \norm{S_m}_F^2
 =\sup_{B\succeq0}\left\{2\int m(x)^\top Bm(x)d\mu(x)-\norm B_F^2\right\}.
\]
For every fixed $B\succeq0$, the integral in braces is convex and
lower semicontinuous in norm as a function of $m$, hence weakly lower semicontinuous.
Its supremum is therefore
weakly lower semicontinuous. Since $m\mapsto-2\norm{Tm}_F^2$ is weakly continuous,
Proposition~\ref{prop:moment}'s map objective is weakly lower semicontinuous on
$\mathcal C_\nu$ and therefore has a minimizer there.

Let $m^\star$ be a minimizer and set $\eta^\star=(m^\star)_\#\mu$. The first half
of the proof of Theorem~\ref{thm:projection} shows that the deterministic coupling
$(\mathrm{id},m^\star)_\#\mu$ has IGW cost equal to the weak optimum. Since the
projection and wIGW values are equal, both $\eta^\star$ and that coupling are
optimal. Theorem~\ref{thm:strassen} supplies a martingale kernel from
$\eta^\star$ to $\nu$; gluing it after $m^\star$ produces an optimal wIGW coupling,
so the coupling primal also admits a minimizer.

Conversely, ordinary IGW has a minimizer under second moments by
Theorem~\ref{thm:ordinary-igw-envelope}. For any minimizer $\eta^\star$ of the
projection problem and any minimizer $\gamma^\star$ of the inner coupling problem,
the second half of the proof
of Theorem~\ref{thm:projection} sends $\gamma^\star$ to the feasible conditional
mean $\widehat m$ with no larger cost. Equality of the two optimal values makes it a
wIGW minimizer, and the stated martingale gluing follows again from
Theorem~\ref{thm:strassen}.
\end{proof}

The two directions of Theorem~\ref{thm:existence} produce optimal mean maps in
different ways. The map $m^\star$ is chosen directly as an optimizer of the map
problem. By contrast, $\widehat m(x)=\E_{\gamma^\star}[Z\mid X=x]$ is the
conditional mean of an optimal IGW coupling $\gamma^\star$ for an optimal
projection law. If $X\sim\mu$, define the random variable
$\widehat M:=\widehat m(X)$ and its law
$\widehat\eta:=\widehat m_\#\mu$. This law is an optimal projection law, although
it need not equal the original optimizer $\eta^\star$ when minimizers are
nonunique. A Strassen martingale kernel
$\widehat\kappa(dy\mid z)$ from $\widehat\eta$ to $\nu$ then gives the chain
\[
 X\sim\mu
 \xrightarrow{\ \widehat m\ }
 \widehat M=\widehat m(X)\sim\widehat\eta
 \xrightarrow{\ \widehat\kappa\ }
 Y\sim\nu,
 \qquad
 \widehat\pi(dx,dy)
 =\mu(dx)\widehat\kappa(dy\mid\widehat m(x)).
\]
Both $m^\star$ and $\widehat m$ are optimal feasible mean maps, but they need not
coincide when minimizers are nonunique. Strassen's theorem glues either map to the
prescribed target law $\nu$ and thereby produces an optimal wIGW coupling.

These optimizing objects support the dual representations. Under compact support,
convex potentials and finite-dimensional moment variables can be handled directly.

\section{Duality under compact support}\label{sec:compact-duality}

The projection formula gives a geometric characterization of wIGW. Under compact
support, we derive a dual form in which convex potentials enforce convex
order and two finite-dimensional matrices linearize the moment terms.

We write \(\mathbb S^d\) for the space of real symmetric \(d\times d\)
matrices and \(\mathbb S_+^d\) for its positive semidefinite cone.

Assume that \(\mu\in\cP_2(\R^{d_x})\) and \(\nu\in\cP_2(\R^{d_y})\) satisfy
\(\operatorname{supp}(\mu)\subset K_X\) and
\(\operatorname{supp}(\nu)\subset K_Y\) for compact sets \(K_X,K_Y\). Set
\begin{equation}\label{eq:compact-dual-domains}
\begin{aligned}
  K&=\operatorname{conv}(K_Y),\\
  R_X&=\sup_{x\in K_X}\norm{x},\qquad
  R_Y=\sup_{z\in K}\norm{z},\\
  \mathcal A&=\{A\in\R^{d_x\times d_y}:\norm{A}_F\le R_X R_Y\},\\
  \mathcal B&=\{B\in\mathbb S_+^{d_y}:0\preceq B\preceq R_Y^2I\},\\
  \cM&=\{m\in L^2(\mu;\R^{d_y}):m(x)\in K\ \mu\text{-a.e.}\},\\
  \cU_K&=\{u\in C(K):u\text{ is convex on }K\}.
\end{aligned}
\end{equation}
The set \(\cM\) is weakly compact convex in \(L^2\), and \(\cU_K\) is a convex
cone in \(C(K)\) with the uniform topology. On measures supported in \(K\),
\(\cU_K\) tests convex order in both directions. If \(m_\#\mu\cx\nu\), a
Strassen martingale coupling is supported on \(K\times K\), so conditional Jensen
gives \(\int u(m)d\mu\le\int u\,d\nu\) for every continuous convex function
\(u\) defined on \(K\). Conversely, if convex order fails, a globally defined
convex function separates \(m_\#\mu\) and \(\nu\); its restriction to \(K\)
belongs to \(\cU_K\) and preserves the strict separating inequality.
For
\((A,B,u)\in\mathcal A\times\mathcal B\times\cU_K\), define
\begin{equation}\label{eq:compact-transform-functional}
\begin{aligned}
  Q_{A,B}^{K}u(x)
  &=\min_{z\in K}\{u(z)+2z^\top Bz-4z^\top A^\top x\},\\
  \mathcal D_K(A,B,u)&=2\norm{A}_F^2-\norm{B}_F^2
  -\int u\,d\nu+\int Q_{A,B}^{K}u\,d\mu.
\end{aligned}
\end{equation}

The Fenchel identities are
\begin{equation}\label{eq:moment-fenchel-identities}
\begin{aligned}
  \norm{S_m}_F^2
  &=\sup_{B\succeq0}\{2\ip{B}{S_m}_F-\norm{B}_F^2\},
  &B^\star&=S_m,\\
  -2\norm{M_m}_F^2
  &=\inf_{A\in\R^{d_x\times d_y}}
    \{2\norm{A}_F^2-4\ip{A}{M_m}_F\},
  &A^\star&=M_m.
\end{aligned}
\end{equation}
The compact restrictions contain these optimizers.

We use Sion's minimax theorem in the precise form stated in
Appendix~\ref{app:reference-theorems}. In every application below, the compact
variable, ambient topologies, and sectionwise semicontinuity and convexity are
identified explicitly.

\begin{theorem}[Duality under compact support]\label{thm:compact-dual}
Let $K_X\subset\R^{d_x}$ and $K_Y\subset\R^{d_y}$ be compact, and let
\(\mu\in\cP_2(\R^{d_x})\) and \(\nu\in\cP_2(\R^{d_y})\) have supports
contained in \(K_X\) and \(K_Y\), respectively. With the domains in
\eqref{eq:compact-dual-domains}, the transform and functional in
\eqref{eq:compact-transform-functional}, and
\(S_\mu=\int xx^\top d\mu(x)\),
\[
  \wIGW^2(\mu,\nu)=\norm{S_\mu}_F^2+
  \inf_{A\in\mathcal A}\sup_{\substack{B\in\mathcal B\\u\in\cU_K}}
  \mathcal D_K(A,B,u).
\]
\end{theorem}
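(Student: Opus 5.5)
We start from the moment reduction of Proposition~\ref{prop:moment}: $\wIGW^2(\mu,\nu)-\norm{S_\mu}_F^2$ equals the infimum over $m\in\mathcal C_\nu$ of $\norm{S_m}_F^2-2\norm{M_m}_F^2$. The plan has three steps: insert the Fenchel identities \eqref{eq:moment-fenchel-identities} with $A,B$ restricted to $\mathcal A,\mathcal B$; dualize the convex order constraint with $u\in\cU_K$; and then exchange the order of optimization.

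\emph{Restricting to the compact domains.} If $m\in\mathcal C_\nu$, then $m(x)\in K$ for $\mu$-a.e. $x$. This holds because a martingale coupling of $m_\#\mu$ with $\nu$ has barycenters in $\operatorname{conv}(K_Y)$. Hence $\mathcal C_\nu\subset\cM$. Moreover, $\norm{M_m}_F\le R_XR_Y$, and $0\preceq S_m\preceq R_Y^2 I$. So the optimizers $A^\star=M_m$ and $B^\star=S_m$ lie in $\mathcal A$ and $\mathcal B$, and for each feasible $m$ we obtain
\[
 \norm{S_m}_F^2-2\norm{M_m}_F^2=\inf_{A\in\mathcal A}\sup_{B\in\mathcal B}\Big\{2\norm A_F^2-\norm B_F^2+\int\big(2m^\top Bm-4m^\top A^\top x\big)d\mu\Big\}.
\]
Next we write the convex order constraint as a penalty. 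For $m\in\cM$, we have $\sup_{u\in\cU_K}\{\int u(m)d\mu-\int u\,d\nu\}$ equal to $0$ if $m_\#\mu\cx\nu$ and to $+\infty$ otherwise. This uses the two-sided testing of convex order by $\cU_K$ stated before the theorem, together with the fact that $\cU_K$ is a cone. The primal value is therefore
\[
\inf_{m\in\cM}\ \inf_{A\in\mathcal A}\ \sup_{B\in\mathcal B,\,u\in\cU_K} L(m,A,B,u),
\]
where
\[
L=2\norm A_F^2-\norm B_F^2+\int\big(u(m)+2m^\top Bm-4m^\top A^\top x\big)d\mu-\int u\,d\nu .
\]
The two infima commute trivially.

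\emph{Exchanges.} The target formula is $\inf_A\sup_{B,u}\inf_m L$, because $\inf_{m\in\cM}\int(\cdots)d\mu=\int Q^K_{A,B}u\,d\mu$. This identity is a measurable selection argument: the integrand is continuous in $z$ on the compact set $K$, so a measurable minimizer $z(x)$ exists, and the pointwise minimum is attained by some $m\in\cM$.

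For fixed $A$, we exchange $\inf_m$ and $\sup_{(B,u)}$ by Sion's theorem. The compact variable is $m\in\cM$ in the weak $L^2$ topology, and $\cM$ is weakly compact and convex. For fixed $(B,u)$, the map $m\mapsto L$ is convex, since $u$ is convex and $B\succeq0$. It is also norm lower semicontinuous, because $u$ is continuous and bounded on $K$ and dominated convergence applies; hence it is weakly lower semicontinuous. In $(B,u)$, $L$ is concave (affine minus $\norm B_F^2$) and upper semicontinuous on the convex set $\mathcal B\times\cU_K$. This gives
\[
\inf_m\sup_{B,u}L=\sup_{B,u}\inf_m L \quad\text{for each fixed } A.
\]
Taking $\inf_{A\in\mathcal A}$ on both sides, and noting that $\inf_A$ already sits outermost on the primal side once the infima are commuted, yields
\[
\wIGW^2(\mu,\nu)=\norm{S_\mu}_F^2+\inf_{A\in\mathcal A}\sup_{B\in\mathcal B,\,u\in\cU_K}\mathcal D_K(A,B,u).
\]

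The main obstacle is the initial rewriting, where a $\sup_B$ coming from the Fenchel identity and an $\inf_A$ both sit inside $\inf_m$. The order we need is $\inf_m\inf_A\sup_{B,u}$. For each fixed $m$, the identity holds as $\inf_A$ of $(\text{terms in }A)$ plus $\sup_B$ of $(\text{terms in }B)$, because $A$ and $B$ decouple in $L$ and the $u$-penalty does not involve them. So writing $\inf_A\sup_{B,u}$ of the sum is legitimate. The remaining care is the Sion step. Semicontinuity in $m$ under weak convergence needs convexity of the integrand, which holds only because $B\succeq0$; this is exactly why $\mathcal B$ is taken inside the PSD cone.
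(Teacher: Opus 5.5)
Your proof is correct and follows the paper's argument essentially step for step. The steps are the Fenchel identities restricted to $\mathcal A\times\mathcal B$, the $\cU_K$ indicator of convex order on $\cM$, Sion's theorem exchanging $\inf_{m\in\cM}$ with $\sup_{(B,u)}$ for fixed $A$, and a measurable selection yielding $\int Q^K_{A,B}u\,d\mu$. The one cosmetic difference is that you place $m(x)$ in $K$ via the barycenters of a Strassen kernel supported on $K_Y$, whereas the paper tests convex order with the convex function $\operatorname{dist}(\cdot,K)$; both work.
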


\begin{proof}
Every map feasible for the convex-order constraint belongs to $\cM$. Indeed,
$K=\operatorname{conv}(K_Y)$ is compact and convex, so
$z\mapsto\operatorname{dist}(z,K)$ is convex and
\[
 \int\operatorname{dist}(m(x),K)d\mu(x)
 \le \int\operatorname{dist}(y,K)d\nu(y)=0.
\]
Thus $m(x)\in K$ for $\mu$-almost every $x$.
Convex order on \(K\) is encoded by
\[
  \sup_{u\in\cU_K}\left\{\int u(m)d\mu-\int u\,d\nu\right\}
  =\begin{cases}0,&m_\#\mu\cx\nu,\\+\infty,&\text{otherwise}.
  \end{cases}
\]
For the zero case, use Strassen's martingale coupling and conditional Jensen on
$K$; for the infinite case, scale the restriction to $K$ of a global separating
convex test.
Together with the Fenchel identities, this gives
\[
  \wIGW^2-\norm{S_\mu}_F^2
  =\inf_{A\in\mathcal A}\inf_{m\in\cM}
  \sup_{(B,u)\in\mathcal B\times\cU_K}\mathcal L(m,A,B,u),
\]
where
\[
\begin{aligned}
  \mathcal L(m,A,B,u)
  =&\ 2\norm{A}_F^2-\norm{B}_F^2-\int u\,d\nu\\
  &+\int[u(m)+2m^\top Bm-4m^\top A^\top x]d\mu.
\end{aligned}
\]
Fix \(A\). For fixed \((B,u)\), the functional is convex and weakly lower
semicontinuous in \(m\): the integral of $u(m)$ is weakly lower semicontinuous
because $u$ is continuous convex, and the positive quadratic integral has the same
property. For fixed \(m\), the functional is continuous concave in \(B\) and
continuous linear in \(u\) for the Frobenius and uniform topologies. The set
$\cM$ is convex, norm bounded, and weakly closed (use the integral of the squared
distance to $K$), hence weakly compact. Thus every Sion hypothesis \cite{Sion} has now been
verified and $\inf_m$ may be exchanged with $\sup_{B,u}$.

For fixed $(A,B,u)$, the integrand is a Carath\'eodory function of $(x,z)$ on
$K_X\times K$. Its argmin correspondence is nonempty, compact-valued, and
measurable by the measurable minimum theorem
\cite[Theorem~18.19]{AliprantisBorder}. A measurable selector $m_{A,B,u}$ therefore
exists, belongs to $\cM$,
and gives
\[
 \inf_{m\in\cM}\int[\cdots]d\mu
 =\int\min_{z\in K}[\cdots]d\mu
 =\int Q_{A,B}^{K}u\,d\mu.
\]
\end{proof}

The formula separates the three roles in the compact problem: \(u\) enforces
convex order, \(A\) linearizes the cross moment between the source and the map, and
\(B\) linearizes the map second moment. To make the weak OT block explicit, for
$(A,B)\in\mathcal A\times\mathcal B$ define
\[
 c^0_{A,B}(x,z):=2z^\top Bz-4z^\top A^\top x
\]
and
\begin{equation}\label{eq:compact-wot-block}
\begin{aligned}
 \mathsf W^K_{A,B}(\mu,\nu)
 &:=\min_{\pi\in\Pi(\mu,\nu)}
   \int c^0_{A,B}\bigl(x,m_\pi(x)\bigr)\,d\mu(x)\\
 &=\min_{\substack{m\in\cM\\m_\#\mu\cx\nu}}
   \int c^0_{A,B}\bigl(x,m(x)\bigr)\,d\mu(x)\\
 &=\sup_{u\in\cU_K}
   \left\{-\int u\,d\nu+\int Q^K_{A,B}u\,d\mu\right\}.
\end{aligned}
\end{equation}
The coupling and map problems agree by the Jensen--Strassen characterization.
For fixed $(A,B)$, Sion's theorem exchanges the compact map minimization with the
potential supremum: the objective is weakly lower semicontinuous and convex in
$m$, and continuous and affine in $u$. This proves equality with the compact
potential dual in \eqref{eq:compact-wot-block}. The outer formulation is therefore
\begin{equation}\label{eq:compact-wot-envelope}
 \wIGW^2(\mu,\nu)
 =\norm{S_\mu}_F^2+
 \inf_{A\in\mathcal A}\sup_{B\in\mathcal B}
 \left\{2\norm A_F^2-\norm B_F^2+
 \mathsf W^K_{A,B}(\mu,\nu)\right\}.
\end{equation}
This regrouping preserves the order $\inf_A\sup_B$; it does not exchange the two
outer matrix optimizations. The inner barycentric map may be nonunique when $B$ is
singular. Section~\ref{sec:ridge-duality} adds a positive ridge to obtain the
corresponding coercive weak OT block without compact support.

\section{Ridge duality and the weak OT envelope}\label{sec:ridge-duality}

In Section~\ref{sec:compact-duality}, we derived an unregularized potential dual
by restricting the conditional means to a compact set. We now assume only finite
second moments. We first isolate the two matrix variables in the unregularized
moment formula. The resulting fixed-matrix cost may fail the coercivity required
by Theorem~\ref{thm:wot-blueprint}, so we add a ridge to obtain a coercive weak
OT block and its potential dual. A stronger ridge condition is needed only for
the final exchange of the two outer matrix optimizations.

\subsection{Unregularized matrix representation}

Let \(\mu\in\cP_2(\R^{d_x})\), \(\nu\in\cP_2(\R^{d_y})\), and write
\[
 M_2(\mu)=\int\norm{x}^2d\mu(x),
 \qquad
 M_2(\nu)=\int\norm{y}^2d\nu(y).
\]
Cauchy--Schwarz and Lemma~\ref{lem:feasible-set} place every Fenchel optimizer
\(A=M_m\), \(B=S_m\) in the compact matrix sets
\begin{equation}\label{eq:ridge-outer-domains}
\begin{aligned}
 \mathcal A_2
 &=\{A\in\R^{d_x\times d_y}:\norm{A}_F
     \le\sqrt{M_2(\mu)M_2(\nu)}\},\\
 \mathcal B_2
 &=\{B\in\mathbb S_+^{d_y}:\norm{B}_F\le M_2(\nu)\}.
\end{aligned}
\end{equation}
\paragraph{Notation for Sections~\ref{sec:ridge-duality}--\ref{sec:reconstruction}.}
The admissible mean maps form
\(\mathcal C_\nu=\{m\in L^2(\mu;\R^{d_y}):m_\#\mu\cx\nu\}\).
The outer variables range over \(\mathcal A_2\) and \(\mathcal B_2\) in
\eqref{eq:ridge-outer-domains}; all matrix norms and inner products are Frobenius,
and every \(B\in\mathcal B_2\) is symmetric positive semidefinite. For a mean map
\(m\), the matrices \(M_m\) and \(S_m\) are the cross moment and second moment
defined in \eqref{eq:wigw-moment-matrices}.
For \(m\in\mathcal C_\nu\), \(A\in\mathcal A_2\), and
\(B\in\mathcal B_2\), define
\begin{equation}\label{eq:unregularized-map-matrix-data}
\begin{aligned}
 c_{A,B}^0(x,z)
 &:=2z^\top Bz-4z^\top A^\top x,\\
 \mathcal H_0(m,A,B)
 &:=2\norm A_F^2-\norm B_F^2
   +\int c_{A,B}^0(x,m(x))d\mu(x).
\end{aligned}
\end{equation}

\begin{lemma}[Unregularized matrix representation]
\label{lem:unregularized-map-matrix}
Let \(\mu\in\cP_2(\R^{d_x})\) and
\(\nu\in\cP_2(\R^{d_y})\). Let \(\mathcal C_\nu\) be the admissible map set in
\eqref{eq:admissible-map-set}, and let \(\mathcal A_2,\mathcal B_2\) and
\(\mathcal H_0\) be defined by
\eqref{eq:ridge-outer-domains}--\eqref{eq:unregularized-map-matrix-data}.
Then, with \(S_\mu=\int xx^\top d\mu(x)\),
\begin{equation}\label{eq:unregularized-map-matrix-form}
\begin{aligned}
 \wIGW^2(\mu,\nu)-\norm{S_\mu}_F^2
 &=\inf_{m\in\mathcal C_\nu}\inf_{A\in\mathcal A_2}
   \sup_{B\in\mathcal B_2}\mathcal H_0(m,A,B)\\
 &=\inf_{A\in\mathcal A_2}\inf_{m\in\mathcal C_\nu}
   \sup_{B\in\mathcal B_2}\mathcal H_0(m,A,B).
\end{aligned}
\end{equation}
For each fixed feasible \(m\), the Fenchel optimizers are
\(A=M_m\) and \(B=S_m\), with \(M_m,S_m\) defined in
\eqref{eq:wigw-moment-matrices}.
\end{lemma}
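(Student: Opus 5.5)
The plan is to fix a feasible map $m\in\mathcal C_\nu$ and evaluate the two matrix optimizations in closed form via the Fenchel identities \eqref{eq:moment-fenchel-identities}. Then we invoke Proposition~\ref{prop:moment}. The second line of \eqref{eq:unregularized-map-matrix-form} is just an interchange of two infima, which is always legitimate. So the substance is the first equality, together with the claim that the restricted domains $\mathcal A_2,\mathcal B_2$ contain the unrestricted Fenchel optimizers.

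\emph{Step 1: the integral is a linear moment functional.} For fixed $m\in\mathcal C_\nu$, $A$ and $B$, the identities $z^\top Bz=\ip{B}{zz^\top}_F$ and $z^\top A^\top x=\ip{A}{xz^\top}_F$ give
\[
 \int c^0_{A,B}(x,m(x))\,d\mu(x)=2\ip{B}{S_m}_F-4\ip{A}{M_m}_F .
\]
This is well defined because $m\in L^2(\mu)$ and $\mu\in\cP_2$. Hence
\[
 \mathcal H_0(m,A,B)=\bigl(2\norm A_F^2-4\ip{A}{M_m}_F\bigr)+\bigl(2\ip{B}{S_m}_F-\norm B_F^2\bigr).
\]
The two brackets decouple, so $\inf_A\sup_B$ splits into $\inf_A$ of the first term plus $\sup_B$ of the second, with no minimax issue.

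\emph{Step 2: the optimizers lie in the balls.} The first bracket is strictly convex in $A$ and is minimized over all of $\R^{d_x\times d_y}$ at $A=M_m$, with value $-2\norm{M_m}_F^2$. The second bracket equals $\norm{S_m}_F^2-\norm{B-S_m}_F^2$. It is maximized over $B\succeq0$ at $B=S_m$, which is admissible since $S_m\succeq0$, with value $\norm{S_m}_F^2$. To see that these optimizers lie in the compact domains:
\begin{itemize}
\item For $A$, Cauchy--Schwarz gives
\[
 \norm{M_m}_F\le\int\norm x\norm{m(x)}\,d\mu\le\sqrt{M_2(\mu)}\,\norm m_{L^2(\mu)}\le\sqrt{M_2(\mu)M_2(\nu)},
\]
using Lemma~\ref{lem:feasible-set}.
\item For $B$, we have $\norm{S_m}_F\le\Tr S_m=\norm m_{L^2(\mu)}^2\le M_2(\nu)$, again by Lemma~\ref{lem:feasible-set}.
\end{itemize}
Restricting the optimization from the full spaces to $\mathcal A_2$ and $\mathcal B_2$ therefore leaves both values unchanged, and the optimizers remain $M_m$ and $S_m$.

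\emph{Step 3: conclude.} Steps 1 and 2 give
\[
 \inf_{A\in\mathcal A_2}\sup_{B\in\mathcal B_2}\mathcal H_0(m,A,B)=\norm{S_m}_F^2-2\norm{M_m}_F^2 .
\]
Taking $\inf_{m\in\mathcal C_\nu}$ and applying Proposition~\ref{prop:moment} yields $\wIGW^2(\mu,\nu)-\norm{S_\mu}_F^2$. This identifies the right-hand side written as $\inf_m\inf_A\sup_B$, and the $\inf_A\inf_m\sup_B$ form follows by swapping the two infima.

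The main obstacle is the order of $\inf_A$ and $\sup_B$ for fixed $m$. The displayed formula places $\inf_A$ outside $\sup_B$, and this must not be confused with the harder outer exchange treated later in Theorem~\ref{thm:swap}. Here the separability in Step 1 makes any order trivial, so the only genuine check is the norm bounds of Step 2.
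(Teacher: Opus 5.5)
Your proposal is correct and follows essentially the same route as the paper. You write $\int c^0_{A,B}(x,m(x))\,d\mu$ as $2\langle B,S_m\rangle_F-4\langle A,M_m\rangle_F$ so that the $A$- and $B$-terms decouple, place the Fenchel optimizers $M_m\in\mathcal A_2$ and $S_m\in\mathcal B_2$ via Cauchy--Schwarz and Lemma~\ref{lem:feasible-set}, take the infimum over $m$ using Proposition~\ref{prop:moment}, and commute the two infima; this matches the paper step for step, including the observation that the fixed-$m$ order of $\inf_A$ and $\sup_B$ is harmless because the terms decouple.
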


\begin{proof}
Proposition~\ref{prop:moment} gives
\[
 \wIGW^2(\mu,\nu)-\norm{S_\mu}_F^2
 =\inf_{m\in\mathcal C_\nu}
   \{\norm{S_m}_F^2-2\norm{M_m}_F^2\}.
\]
Cauchy--Schwarz and Lemma~\ref{lem:feasible-set} give
\[
 \norm{M_m}_F\le\sqrt{M_2(\mu)M_2(\nu)},\qquad
 \norm{S_m}_F\le\Tr(S_m)=\norm m_{L^2(\mu)}^2\le M_2(\nu).
\]
Thus the Fenchel optimizers lie in the domains
\eqref{eq:ridge-outer-domains}. For each fixed \(m\in\mathcal C_\nu\),
\begin{align*}
 -2\norm{M_m}_F^2
 &=\inf_{A\in\mathcal A_2}
   \{2\norm A_F^2-4\ip{A}{M_m}_F\},\\
 \norm{S_m}_F^2
 &=\sup_{B\in\mathcal B_2}
   \{2\ip{B}{S_m}_F-\norm B_F^2\}.
\end{align*}
Combining these identities and using
\[
 -4\ip{A}{M_m}_F+2\ip{B}{S_m}_F
 =\int c_{A,B}^0(x,m(x))d\mu(x)
\]
gives the first equality in
\eqref{eq:unregularized-map-matrix-form}. The second follows because the two
minimizing infima commute.
\end{proof}

Lemma~\ref{lem:unregularized-map-matrix} is an exact finite-dimensional
reduction, but it does not yet provide a noncompact weak OT potential dual.
Theorem~\ref{thm:wot-blueprint} requires a lower bound
\begin{equation}\label{eq:section8-required-coercivity}
 c(x,z)\ge \alpha\norm z^2-C(1+\norm x^2)
 \qquad(\alpha>0,\ C<\infty).
\end{equation}
If \(B\) is singular, choose \(0\ne v\in\ker B\). Then
\(c_{A,B}^0(0,tv)=0\) for every \(t\), which contradicts
\eqref{eq:section8-required-coercivity} as \(|t|\to\infty\).
Since \(\mathcal B_2\) contains singular matrices, including \(B=0\),
Theorem~\ref{thm:wot-blueprint} cannot be applied uniformly to the
unregularized fixed-matrix costs.

\subsection{Ridge regularization and the weak OT block}

For \(\varepsilon>0\), define ridge wIGW by
\begin{equation}\label{eq:ridge-coupling-primal}
\begin{aligned}
  \wIGWeps^2(\mu,\nu)
  :=\inf_{\pi\in\Pi(\mu,\nu)}\Bigg\{
  &\iint\left(\ip{x}{x'}-
  \ip{m_\pi(x)}{m_\pi(x')}\right)^2d\mu(x)d\mu(x')\\
  &\quad+\varepsilon\int\norm{m_\pi(x)}^2d\mu(x)\Bigg\}.
\end{aligned}
\end{equation}
The added term is finite by conditional Jensen. Proposition~\ref{prop:map} and
Proposition~\ref{prop:moment} give, respectively,
\begin{equation}\label{eq:ridge-map-primal}
  \wIGWeps^2(\mu,\nu)
  =\inf_{m_\#\mu\cx\nu}\left\{
  \iint\left(\ip{x}{x'}-\ip{m(x)}{m(x')}\right)^2d\mu(x)d\mu(x')
  +\varepsilon\norm{m}_{L^2(\mu)}^2\right\},
\end{equation}
and
\begin{equation}\label{eq:ridge-moment-primal}
  \wIGWeps^2(\mu,\nu)=\norm{S_\mu}_F^2+
  \inf_{m\in\mathcal C_\nu}\left\{\norm{S_m}_F^2-2\norm{M_m}_F^2
  +\varepsilon\norm{m}_{L^2(\mu)}^2\right\}.
\end{equation}
The ridge changes the value by at most its strength times the target second
moment:
\[
 0\le \wIGWeps^2(\mu,\nu)-\wIGW^2(\mu,\nu)
 \le\varepsilon M_2(\nu).
\]
The lower bound is immediate. For the upper bound, evaluate the ridge objective
at an unregularized minimizing map from Theorem~\ref{thm:existence} and use
Lemma~\ref{lem:feasible-set}.

For \(A\in\R^{d_x\times d_y}\) and \(B\in\mathbb S_+^{d_y}\), set
\begin{equation}\label{eq:ridge-cost}
\begin{aligned}
 C_{B,\varepsilon}&:=2B+\varepsilon I_{d_y},\\
 c_{A,B}^\varepsilon(x,z)&:=z^\top C_{B,\varepsilon}z-4z^\top A^\top x.
\end{aligned}
\end{equation}
Thus \(c_{A,B}^\varepsilon=c_{A,B}^0+\varepsilon\norm z^2\), and Young's
inequality gives
\begin{equation}\label{eq:section8-ridge-coercivity}
 c_{A,B}^\varepsilon(x,z)
 \ge \frac{\varepsilon}{2}\norm z^2
      -\frac{8\norm A_F^2}{\varepsilon}\norm x^2,
 \qquad
 \nabla_{zz}^2c_{A,B}^\varepsilon
 \succeq2\varepsilon I_{d_y}.
\end{equation}
The first estimate verifies \eqref{eq:section8-required-coercivity}; the second
makes the cost uniformly strongly convex in \(z\). These two facts give
coercivity and uniqueness of the optimal barycentric mean map in the
fixed-matrix weak OT problem; they do not imply uniqueness of its coupling.

For brevity, write
\begin{equation}\label{eq:ridge-potential-class}
 \cU_2:=\mathcal U_2^{\rm cvx},
\end{equation}
where \(\mathcal U_2^{\rm cvx}\) is defined in
\eqref{eq:quadratic-convex-potentials}. Define
\begin{equation}\label{eq:ridge-cost-transform}
\begin{aligned}
 \mathsf W_{A,B}^\varepsilon(\mu,\nu)
 &:=\inf_{\pi\in\Pi(\mu,\nu)}
   \int c_{A,B}^\varepsilon(x,m_\pi(x))d\mu(x),\\
 Q_{A,B}^{(\varepsilon)}u(x)
 &:=\inf_{z\in\R^{d_y}}
  \{u(z)+c_{A,B}^\varepsilon(x,z)\}.
\end{aligned}
\end{equation}

\Needspace{6\baselineskip}
\begin{proposition}[Weak OT duality for the ridge cost]\label{prop:wot}
Let \(\mu\in\cP_2(\R^{d_x})\), \(\nu\in\cP_2(\R^{d_y})\),
\(A\in\R^{d_x\times d_y}\), \(B\in\mathbb S_+^{d_y}\), and
\(\varepsilon>0\). Let \(c_{A,B}^\varepsilon\),
\(Q_{A,B}^{(\varepsilon)}\), \(\mathsf W_{A,B}^\varepsilon\), and
\(\cU_2\) be defined by
\eqref{eq:ridge-cost}--\eqref{eq:ridge-cost-transform}. For a disintegration
\(\pi(dx,dy)=\mu(dx)\pi_x(dy)\), set
\(m_\pi(x)=\int y\,\pi_x(dy)\), and let \(\cx\) denote the convex order in
Definition~\ref{def:convex-order}. Then
\[
\begin{aligned}
 \mathsf W_{A,B}^\varepsilon(\mu,\nu)
 &=\min_{\pi\in\Pi(\mu,\nu)}
   \int c_{A,B}^\varepsilon(x,m_\pi(x))d\mu(x)\\
 &=\min_{\substack{m\in L^2(\mu;\R^{d_y})\\m_\#\mu\cx\nu}}
   \int c_{A,B}^\varepsilon(x,m(x))d\mu(x)\\
 &=\sup_{u\in\cU_2}\left\{\int Q_{A,B}^{(\varepsilon)}u\,d\mu-
   \int u\,d\nu\right\}.
\end{aligned}
\]
All minimizing couplings have the same conditional mean
\(m_{A,B}\), uniquely determined in \(L^2(\mu;\R^{d_y})\). For fixed
\((A,B)\), the coupling objective
\[
 \pi\longmapsto
 \int c_{A,B}^\varepsilon\bigl(x,m_\pi(x)\bigr)d\mu(x)
\]
is convex on \(\Pi(\mu,\nu)\). It is strongly convex in the induced mean map
\(m_\pi\), but it need not be strictly convex in \(\pi\): distinct couplings can
have the same conditional mean. Moreover, for
\(A'\in\R^{d_x\times d_y}\) and \(B'\in\mathbb S_+^{d_y}\), with the same
\(\varepsilon\),
\begin{equation}\label{eq:ridge-oracle-lipschitz}
\begin{aligned}
 \left|\mathsf W_{A,B}^\varepsilon(\mu,\nu)
       -\mathsf W_{A',B'}^\varepsilon(\mu,\nu)\right|
 \le{}&4\sqrt{M_2(\mu)M_2(\nu)}\,\norm{A-A'}_F\\
 &+2M_2(\nu)\,\norm{B-B'}_F .
\end{aligned}
\end{equation}
\end{proposition}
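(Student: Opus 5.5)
The plan is to reduce everything to Theorem~\ref{thm:wot-blueprint} applied to the cost \(c=c_{A,B}^\varepsilon\), and then to handle convexity and the Lipschitz estimate separately.

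\emph{Hypotheses of Theorem~\ref{thm:wot-blueprint}.} Continuity of \(c_{A,B}^\varepsilon\) and convexity in \(z\) are immediate, since \(C_{B,\varepsilon}=2B+\varepsilon I\succ0\). The lower growth bound is the Young estimate \eqref{eq:section8-ridge-coercivity}, with \(\alpha=\varepsilon/2\). For the upper bound, \(z^\top C_{B,\varepsilon}z\le(2\norm B_F+\varepsilon)\norm z^2\) and \(|4z^\top A^\top x|\le2\norm A_F(\norm x^2+\norm z^2)\), which give \(c\le C(1+\norm x^2+\norm z^2)\). Since the Hessian satisfies \(\nabla^2_{zz}c\succeq2\varepsilon I\), the cost is uniformly strongly convex. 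Theorem~\ref{thm:wot-blueprint} then delivers at once:
\begin{itemize}
\item existence of a minimizing coupling;
\item equality of the coupling, map and potential forms over \(\cU_2\);
\item existence of a minimizing map and its uniqueness in \(L^2(\mu)\).
\end{itemize}

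\emph{Common conditional mean.} Any minimizing coupling \(\pi\) has \(m_\pi\) feasible, by conditional Jensen. The coupling and map values agree, so \(m_\pi\) attains the map minimum and therefore equals the unique \(m_{A,B}\).

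\emph{Convexity in \(\pi\).} For \(\pi=(1-t)\pi^0+t\pi^1\), disintegrate with respect to the common first marginal \(\mu\): \(\pi_x=(1-t)\pi^0_x+t\pi^1_x\) is a valid disintegration. Hence \(m_\pi=(1-t)m_{\pi^0}+tm_{\pi^1}\) \(\mu\)-a.e., and convexity of \(c(x,\cdot)\) gives convexity of the objective. Strong convexity in \(m_\pi\) is the \(2\varepsilon\)-Hessian bound integrated against \(\mu\). Failure of strict convexity in \(\pi\) comes from the example of two distinct martingale kernels gluing the same \(m\), for instance a target law admitting two martingale couplings from \(m_\#\mu\). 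I would only remark on this, since the statement says ``need not'' and a single example suffices.

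\emph{Lipschitz estimate.} I would use the map form. For every feasible \(m\),
\[
\int c_{A,B}^\varepsilon(x,m)\,d\mu-\int c_{A',B'}^\varepsilon(x,m)\,d\mu=2\ip{B-B'}{S_m}_F-4\ip{A-A'}{M_m}_F .
\]
Lemma~\ref{lem:feasible-set} and Cauchy--Schwarz give \(\norm{S_m}_F\le\Tr S_m\le M_2(\nu)\) and \(\norm{M_m}_F\le\sqrt{M_2(\mu)M_2(\nu)}\). So the two objectives differ by at most \(4\sqrt{M_2(\mu)M_2(\nu)}\norm{A-A'}_F+2M_2(\nu)\norm{B-B'}_F\), uniformly over the feasible set. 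An infimum of functions that differ uniformly by at most \(\delta\) changes by at most \(\delta\), which gives \eqref{eq:ridge-oracle-lipschitz}.

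The only delicate point is the measurability and a.e. bookkeeping in the mixture-disintegration step. Everything else is a direct invocation of Theorem~\ref{thm:wot-blueprint}. I therefore expect the verification of the growth constants to be the main, though routine, obstacle.
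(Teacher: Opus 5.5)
Your proposal is correct and follows the paper's proof essentially step for step: check continuity, the two growth bounds and the $2\varepsilon I$ Hessian bound so that Theorem~\ref{thm:wot-blueprint} applies to $c_{A,B}^\varepsilon$; identify the conditional mean of every optimal coupling with the unique optimal map $m_{A,B}$; deduce convexity in $\pi$ from affinity of $\pi\mapsto m_\pi$; and obtain \eqref{eq:ridge-oracle-lipschitz} by comparing the two costs uniformly over feasible maps. Your identity $2\ip{B-B'}{S_m}_F-4\ip{A-A'}{M_m}_F$, followed by taking infima, is only a tidier packaging of the paper's pointwise bound under the integral followed by cross-evaluation at the two minimizers.
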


\begin{proof}
The bounds in \eqref{eq:section8-ridge-coercivity}, together with the quadratic
upper bound obtained from Young's inequality, verify all hypotheses of
Theorem~\ref{thm:wot-blueprint}. That theorem gives existence, coupling/map
equality, and the potential dual. Uniform strong convexity gives uniqueness of
the minimizing mean map. If \(\pi_t=t\pi_1+(1-t)\pi_2\), affinity of
disintegration with a common first marginal gives
\(m_{\pi_t}=tm_{\pi_1}+(1-t)m_{\pi_2}\). Convexity of
\(z\mapsto c_{A,B}^\varepsilon(x,z)\) therefore proves convexity in the coupling.
Strong convexity acts only after passage to the conditional mean, so it does not
imply strict convexity in \(\pi\). For the last estimate, evaluate the two fixed-matrix
costs at a common feasible map and use
\(\norm m_{L^2}^2\le M_2(\nu)\) and
\(\int\norm{x}\norm{m(x)}d\mu(x)
\le\sqrt{M_2(\mu)M_2(\nu)}\).
\end{proof}

\subsection{The ridge envelope and outer minimax equality}

For \(m\in\mathcal C_\nu\), \(A\in\mathcal A_2\), and
\(B\in\mathcal B_2\), define
\begin{equation}\label{eq:ridge-map-matrix-functional}
\begin{aligned}
 \mathcal H_\varepsilon(m,A,B)
 &:=2\norm A_F^2-\norm B_F^2
    +\int c_{A,B}^\varepsilon(x,m(x))d\mu(x)\\
 &=2\norm A_F^2-\norm B_F^2
   -4\ip{A}{M_m}_F+2\ip{B}{S_m}_F
   +\varepsilon\norm m_{L^2(\mu)}^2,
\end{aligned}
\end{equation}
and
\begin{equation}\label{eq:ridge-envelope-functionals}
\begin{aligned}
 F_\varepsilon(A,B)
 &:=2\norm A_F^2-\norm B_F^2+\mathsf W_{A,B}^\varepsilon(\mu,\nu),\\
 \mathcal D_\varepsilon(A,B,u)
 &:=2\norm A_F^2-\norm B_F^2-
   \int u\,d\nu+\int Q_{A,B}^{(\varepsilon)}u\,d\mu.
\end{aligned}
\end{equation}

\paragraph{Envelope notation.}
Hereafter, \(c_{A,B}^\varepsilon\) is the ridge cost in
\eqref{eq:ridge-cost}, \(\mathsf W_{A,B}^\varepsilon\) is its barycentric weak
OT value in \eqref{eq:ridge-cost-transform}, and
\(\mathcal H_\varepsilon,F_\varepsilon,\mathcal D_\varepsilon\) are the
functional in the map and matrix variables, the outer functional, and the
potential functional in
\eqref{eq:ridge-map-matrix-functional}--\eqref{eq:ridge-envelope-functionals}.
The potential class is \(\cU_2\) from \eqref{eq:ridge-potential-class}.
For every \(B\in\mathcal B_2\),
\(2B+\varepsilon I_{d_y}\succeq\varepsilon I_{d_y}\), and this strict
positivity persists on an open neighborhood of
\(\mathcal A_2\times\mathcal B_2\). We apply
Theorem~\ref{thm:danskin-envelope} on that neighborhood and restrict the
resulting gradients to the admissible matrix domains.

\begin{theorem}[Weak OT envelope for ridge wIGW]\label{thm:ridge-dual}
Let \(\mu\in\cP_2(\R^{d_x})\), \(\nu\in\cP_2(\R^{d_y})\), and
\(\varepsilon>0\). Use the notation above, and let
\(\wIGWeps^2(\mu,\nu)\) be the ridge coupling primal in
\eqref{eq:ridge-coupling-primal}.
Then, with \(S_\mu=\int xx^\top d\mu(x)\),
\begin{equation}\label{eq:ridge-map-matrix-form}
\begin{aligned}
 \wIGWeps^2(\mu,\nu)-\norm{S_\mu}_F^2
 &=\inf_{A\in\mathcal A_2}\inf_{m\in\mathcal C_\nu}
   \sup_{B\in\mathcal B_2}\mathcal H_\varepsilon(m,A,B)\\
 &=\inf_{A\in\mathcal A_2}\sup_{B\in\mathcal B_2}
   \inf_{m\in\mathcal C_\nu}\mathcal H_\varepsilon(m,A,B)\\
 &=\inf_{A\in\mathcal A_2}\sup_{B\in\mathcal B_2}
   F_\varepsilon(A,B)\\
 &=\inf_{A\in\mathcal A_2}
   \sup_{\substack{B\in\mathcal B_2\\u\in\cU_2}}
   \mathcal D_\varepsilon(A,B,u).
\end{aligned}
\end{equation}
\end{theorem}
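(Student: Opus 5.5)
The plan follows the unregularized argument of Lemma~\ref{lem:unregularized-map-matrix}, with the ridge term added, and then exchanges $\inf_m$ with $\sup_B$ using Sion's theorem.

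\emph{First equality.} Start from the ridge moment primal \eqref{eq:ridge-moment-primal}. For each fixed $m\in\mathcal C_\nu$, apply the two Fenchel identities \eqref{eq:moment-fenchel-identities}. Their optimizers $A=M_m$ and $B=S_m$ lie in $\mathcal A_2$ and $\mathcal B_2$ by the Cauchy--Schwarz and trace bounds from the proof of Lemma~\ref{lem:unregularized-map-matrix}. The ridge term $\varepsilon\norm m_{L^2(\mu)}^2$ does not depend on $(A,B)$, so it passes through both the infimum and the supremum. This gives
\[
\wIGWeps^2-\norm{S_\mu}_F^2=\inf_{m}\inf_{A}\sup_{B}\mathcal H_\varepsilon(m,A,B).
\]
The two infima commute trivially, which yields the first line.

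\emph{Second equality.} This is the key step. Fix $A\in\mathcal A_2$ and apply Sion's minimax theorem to $(m,B)\mapsto\mathcal H_\varepsilon(m,A,B)$ on $\mathcal C_\nu\times\mathcal B_2$. We need the following facts.
\begin{itemize}
\item $\mathcal C_\nu$ is convex and weakly compact by Lemma~\ref{lem:feasible-set}.
\item $\mathcal B_2$ is convex and compact.
\item For fixed $B\succeq0$, the map $m\mapsto 2\int m^\top Bm\,d\mu+\varepsilon\norm m^2-4\ip{A}{M_m}_F$ is convex. It is norm continuous, hence weakly lower semicontinuous. Here the linear term is weakly continuous because $m\mapsto M_m$ has finite rank.
\item For fixed $m$, the map $B\mapsto -\norm B_F^2+2\ip{B}{S_m}_F$ is concave and continuous.
\end{itemize}
Sion's theorem then lets us swap $\inf_m$ and $\sup_B$. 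The compactness sits on the minimization side in the weak topology, which is the version stated in Appendix~\ref{app:reference-theorems}. The main care is topological: we must use the weak topology on $L^2$ for compactness while checking lower semicontinuity in that same topology. The convex-plus-norm-continuous argument handles this, exactly as in the proof of Theorem~\ref{thm:existence}.

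\emph{Third and fourth equalities.} For fixed $(A,B)$,
\[
\inf_{m\in\mathcal C_\nu}\mathcal H_\varepsilon=2\norm A_F^2-\norm B_F^2+\inf_{m\in\mathcal C_\nu}\int c^\varepsilon_{A,B}(x,m(x))\,d\mu .
\]
By Proposition~\ref{prop:wot}, the last infimum equals $\mathsf W^\varepsilon_{A,B}(\mu,\nu)$. This is the map formulation, and it coincides with the coupling one via Jensen--Strassen. Hence the third line equals $\inf_A\sup_B F_\varepsilon$.

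Finally, the potential dual in Proposition~\ref{prop:wot} writes $\mathsf W^\varepsilon_{A,B}=\sup_{u\in\cU_2}\{\int Q^{(\varepsilon)}_{A,B}u\,d\mu-\int u\,d\nu\}$. Substituting this and merging the two suprema gives $\sup_{B,u}\mathcal D_\varepsilon(A,B,u)$. The integrals are finite because of the quadratic bound $|Q_cu|\le C'(1+\norm x^2)$ from Theorem~\ref{thm:wot-blueprint}. Nothing in these steps exchanges the outer $\inf_A$ and $\sup_B$, so no ridge condition beyond $\varepsilon>0$ is needed.
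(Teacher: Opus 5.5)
Your proposal is correct and follows the paper's proof essentially step for step. Both arguments proceed in the same order. First, the Fenchel identities are applied with the ridge term passing through, and the two infima are commuted. Second, Sion's theorem exchanges only $m$ and $B$ for fixed $A$, using weak compactness of $\mathcal C_\nu$ and convex, norm-continuous $m$-sections. Third, Proposition~\ref{prop:wot} identifies the inner minimum with $\mathsf W^\varepsilon_{A,B}$ and then with its potential dual, after which the suprema over $B$ and $u$ are merged.
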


\begin{proof}
For each feasible $m$, the two Fenchel identities in the proof of
Lemma~\ref{lem:unregularized-map-matrix} have optimizers $A=M_m$ and
$B=S_m$. Adding the ridge term, which is independent of both matrices, therefore
gives
\[
 \norm{S_m}_F^2-2\norm{M_m}_F^2+\varepsilon\norm m_{L^2}^2
 =\inf_{A\in\mathcal A_2}\sup_{B\in\mathcal B_2}
   \mathcal H_\varepsilon(m,A,B).
\]
Taking the infimum over $m$ and commuting the two minimizing infima proves the
first line of \eqref{eq:ridge-map-matrix-form}. With $A$ fixed,
Lemma~\ref{lem:feasible-set} and Sion's theorem \cite{Sion} exchange only $m$
and $B$, giving the second line. Proposition~\ref{prop:wot} identifies the
inner map minimum with $\mathsf W_{A,B}^\varepsilon$, and then with its dual over
convex potentials. These two identifications give the last two lines.
\end{proof}

\paragraph{Minimax exchange for fixed \(A\).}
Fix \(A\in\mathcal A_2\), equip \(\mathcal C_\nu\) with the weak \(L^2\)
topology, and equip \(\mathcal B_2\) with the Frobenius topology.
Lemma~\ref{lem:feasible-set} shows that \(\mathcal C_\nu\) is weakly compact and
convex; \(\mathcal B_2\) is compact and convex in finite dimension. For
\(B\in\mathcal B_2\), positivity of \(B\) gives
\(2B+\varepsilon I_{d_y}\succeq\varepsilon I_{d_y}\), so
\(m\mapsto\mathcal H_\varepsilon(m,A,B)\) is convex and weakly lower
semicontinuous. For fixed \(m\), the terms that depend on \(B\) are
\(2\ip{B}{S_m}_F-\norm B_F^2\); hence
\(B\mapsto\mathcal H_\varepsilon(m,A,B)\) is continuous and concave. Sion's
theorem therefore gives
\[
 \inf_{m\in\mathcal C_\nu}\sup_{B\in\mathcal B_2}
 \mathcal H_\varepsilon(m,A,B)
 =
 \sup_{B\in\mathcal B_2}\inf_{m\in\mathcal C_\nu}
 \mathcal H_\varepsilon(m,A,B).
\]
This is the exchange in the second line of
\eqref{eq:ridge-map-matrix-form}: it exchanges \(m\) and \(B\) while leaving
\(A\) fixed. Proposition~\ref{prop:wot} then identifies the inner minimum with
the ridge weak OT value \(\mathsf W_{A,B}^{\varepsilon}(\mu,\nu)\), giving the
third line.

For every positive ridge, this argument uses only convexity in the minimizing
mean variable and concavity in \(B\), with \(A\) fixed. Although the objective is
convex in \(A\) for fixed \((m,B)\), these separate convexity statements do not
give joint convexity in \((A,m)\): the term
\(-4\ip{A}{M_m}_F\) couples the two variables. The spectral ridge condition in
Theorem~\ref{thm:swap} supplies precisely the additional joint convexity needed
for the outer saddle formulation.

\paragraph{From the IGW OT envelope to the wIGW weak OT envelope.}
The ordinary IGW envelope in Theorem~\ref{thm:ordinary-igw-envelope} and the
ridge wIGW envelope in Theorem~\ref{thm:ridge-dual} have parallel variational
forms:
\[
\begin{aligned}
 \IGW^2(\mu,\nu)-\norm{S_\mu}_F^2-\norm{S_\nu}_F^2
 &=\min_{A\in\R^{d_x\times d_y}}
   \left\{2\norm A_F^2+\mathsf{OT}_{c_A}(\mu,\nu)\right\},\\
 \wIGWeps^2(\mu,\nu)-\norm{S_\mu}_F^2
 &=\inf_{A\in\mathcal A_2}\sup_{B\in\mathcal B_2}
   \left\{2\norm A_F^2-\norm B_F^2+
   \mathsf W_{A,B}^{\varepsilon}(\mu,\nu)\right\}.
\end{aligned}
\]
The two inner transport problems are
\[
\begin{aligned}
 \mathsf{OT}_{c_A}(\mu,\nu)
 &=\inf_{\pi\in\Pi(\mu,\nu)}\int c_A(x,y)d\pi(x,y),\\
 \mathsf W_{A,B}^{\varepsilon}(\mu,\nu)
 &=\inf_{\pi\in\Pi(\mu,\nu)}
   \int c_{A,B}^{\varepsilon}\bigl(x,m_\pi(x)\bigr)d\mu(x).
\end{aligned}
\]
Ordinary IGW evaluates the bilinear cost \(c_A\) directly at paired points
\((x,y)\), so its inner problem is ordinary OT. Ridge wIGW evaluates
\(c_{A,B}^{\varepsilon}\) at the conditional mean
\(m_\pi(x)=\E_\pi[Y\mid X=x]\), so its inner problem is barycentric weak OT.
The target second moment \(S_\nu\) is fixed in ordinary IGW, and the matrix
\(A\) represents the coupling dependent cross moment. In wIGW, both
\(M_{m_\pi}\) and \(S_{m_\pi}\) vary with the coupling: \(A\) represents the
cross moment, and \(B\) represents the second moment of the conditional mean.
The ridge adds \(\varepsilon I_{d_y}\) to the quadratic weak OT cost, giving
the coercivity and strong convexity used in Proposition~\ref{prop:wot}. At an
outer optimizer, Proposition~\ref{prop:reconstruction} proves
\(A^\star=M_{m^\star}\) and \(B^\star=S_{m^\star}\).

\Needspace{14\baselineskip}
\paragraph{Outer \(A\)--\(B\) minimax equality.}
The preceding exchange concerns \(m\) and \(B\), with \(A\) fixed. For every
\(\varepsilon>0\), Theorem~\ref{thm:ridge-dual} therefore gives the ordered
envelope \(\inf_{A\in\mathcal A_2}\sup_{B\in\mathcal B_2}
F_\varepsilon(A,B)\). To justify reversing these two outer optimizations by
Sion's theorem, we establish joint convexity in \((A,m)\). Set
\(\lambda_X:=\lambda_{\max}(S_\mu)\) and impose
\[
 \varepsilon>0
 \quad\text{and}\quad
 \varepsilon\ge 2\lambda_X.
\]
Under this condition, \(\mathcal H_\varepsilon(\cdot,\cdot,B)\) is jointly
convex in \((m,A)\) for every fixed \(B\). Because
\(\pi\mapsto m_\pi\) is affine on \(\Pi(\mu,\nu)\), the equivalent coupling
formulation is jointly convex in \((\pi,A)\). For fixed \((A,m)\), or fixed
\((A,\pi)\), the objective is concave in \(B\). Thus the formulation is a
convex--concave saddle problem with minimizing block \((A,m)\), equivalently
\((A,\pi)\), and maximizing variable \(B\). After partial minimization,
\(F_\varepsilon\) is convex in \(A\) and concave in \(B\), so a second
application of Sion's theorem gives the outer minimax equality stated next.
Equality at the spectral threshold is sufficient here;
Section~\ref{sec:algorithm} uses the strict condition
\(\varepsilon>2\lambda_X\) to obtain a positive convergence modulus.

\Needspace{12\baselineskip}
\begin{theorem}[Outer minimax equality]\label{thm:swap}
Let \(\mu\in\cP_2(\R^{d_x})\), \(\nu\in\cP_2(\R^{d_y})\), and set
\(S_\mu=\int xx^\top d\mu(x)\) and
\(\lambda_X=\lambda_{\max}(S_\mu)\). Let \(\mathcal A_2,\mathcal B_2\) and
\(F_\varepsilon\) be defined by
\eqref{eq:ridge-outer-domains} and \eqref{eq:ridge-envelope-functionals}.
If \(\varepsilon>0\) and \(\varepsilon\ge2\lambda_X\), then
\(A\mapsto F_\varepsilon(A,B)\) is convex and continuous on
\(\mathcal A_2\) for every \(B\in\mathcal B_2\), while
\(B\mapsto F_\varepsilon(A,B)\) is concave and continuous on
\(\mathcal B_2\) for every \(A\in\mathcal A_2\). Consequently,
\[
 g_\varepsilon(A):=\max_{B\in\mathcal B_2}F_\varepsilon(A,B)
\]
is convex and continuous on \(\mathcal A_2\), and the outer problem is the
convex minimization \(\min_{A\in\mathcal A_2}g_\varepsilon(A)\). Moreover,
\[
 \inf_{A\in\mathcal A_2}\sup_{B\in\mathcal B_2}F_\varepsilon(A,B)
 =\sup_{B\in\mathcal B_2}\inf_{A\in\mathcal A_2}F_\varepsilon(A,B).
\]
\end{theorem}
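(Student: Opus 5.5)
The plan is to derive all claims from the map representation $F_\varepsilon(A,B)=\inf_{m\in\mathcal C_\nu}\mathcal H_\varepsilon(m,A,B)$. This is Proposition~\ref{prop:wot} combined with the second and third lines of Theorem~\ref{thm:ridge-dual}.

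\textbf{Concavity and continuity in $B$.} For fixed $A$, the function $B\mapsto\mathcal H_\varepsilon(m,A,B)$ is the affine function $2\ip{B}{S_m}_F$ plus the concave term $-\norm B_F^2$ plus constants in $B$. An infimum over $m$ of such functions is concave. Continuity on $\mathcal B_2$ follows from the Lipschitz bound \eqref{eq:ridge-oracle-lipschitz} together with continuity of $\norm B_F^2$. The same bound gives Lipschitz continuity in $A$.

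\textbf{Convexity in $A$.} This is the main step. I will show that $(m,A)\mapsto\mathcal H_\varepsilon(m,A,B)$ is jointly convex on $L^2(\mu;\R^{d_y})\times\R^{d_x\times d_y}$ when $\varepsilon\ge2\lambda_X$. The term $2\ip{B}{S_m}_F=2\int m^\top Bm\,d\mu$ is convex because $B\succeq0$, so it suffices to treat the quadratic form
\[
 q(m,A)=2\norm A_F^2-4\ip{A}{M_m}_F+\varepsilon\norm m_{L^2(\mu)}^2 .
\]
Completing the square gives $2\norm{A-M_m}_F^2-2\norm{M_m}_F^2+\varepsilon\norm m^2_{L^2}$. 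The map $m\mapsto M_m=\int x\,m(x)^\top d\mu$ is linear. By Cauchy--Schwarz, for each unit vector $e\in\R^{d_y}$ and $v\in\R^{d_x}$,
\[
 \Bigl(v^\top\int x\,m^\top e\,d\mu\Bigr)^2\le v^\top S_\mu v\int(m^\top e)^2d\mu .
\]
Summing over an orthonormal basis of $\R^{d_y}$ in the column index gives $\norm{M_m}_F^2\le\lambda_X\norm m_{L^2}^2$. Hence $-2\norm{M_m}_F^2+\varepsilon\norm m^2_{L^2}$ is a nonnegative quadratic form, and therefore convex, exactly when $\varepsilon\ge2\lambda_X$. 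The term $2\norm{A-M_m}_F^2$ is a convex function of a linear map of $(m,A)$. So $q$ is a positive semidefinite quadratic form, hence jointly convex.

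Partial minimization of a jointly convex function over the convex set $\mathcal C_\nu$ (Lemma~\ref{lem:feasible-set}) then gives convexity of $A\mapsto F_\varepsilon(A,B)$. The value stays finite because Proposition~\ref{prop:wot} gives attainment.

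\textbf{Consequences and minimax equality.} The function $g_\varepsilon$ is a pointwise maximum of convex functions, so it is convex. The maximum is attained by compactness of $\mathcal B_2$ and continuity in $B$. Continuity of $g_\varepsilon$ follows from the uniform Lipschitz bound in $A$, which is independent of $B$. For the exchange, apply Sion's theorem (Appendix~\ref{app:reference-theorems}) to $F_\varepsilon$ on $\mathcal A_2\times\mathcal B_2$. Both sets are compact and convex in finite dimensions, $F_\varepsilon$ is continuous and convex in $A$, and continuous and concave in $B$; one compact factor suffices.

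The only delicate point is the bound $\norm{M_m}_F\le\sqrt{\lambda_X}\norm m_{L^2}$, which is where the threshold $2\lambda_X$ enters.
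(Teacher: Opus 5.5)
Your proposal is correct and follows the paper's proof essentially step for step. It uses the representation $F_\varepsilon=\inf_{m\in\mathcal C_\nu}\mathcal H_\varepsilon$, the completed square $2\norm{A-M_m}_F^2+\varepsilon\norm m_{L^2}^2-2\norm{M_m}_F^2$ with $\norm{M_m}_F^2\le\lambda_X\norm m_{L^2}^2$ (you prove this column by column, where the paper uses the adjoint $T^*H(x)=H^\top x$), then partial minimization, concavity in $B$ as an infimum of concave functions, continuity from \eqref{eq:ridge-oracle-lipschitz}, and Sion on the compact sets $\mathcal A_2,\mathcal B_2$; your Lipschitz-in-$A$ argument for continuity of $g_\varepsilon$, uniform in $B$, is only a slightly more direct variant of the paper's compactness argument.
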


\begin{proof}
Write \(Tm=M_m\). Since \(T^*T\preceq\lambda_XI\),
\[
 2\norm A_F^2-4\ip{A}{Tm}_F+\varepsilon\norm m_{L^2}^2
 =2\norm{A-Tm}_F^2+
  \ip{m}{(\varepsilon I-2T^*T)m}_{L^2}
\]
is jointly convex in \((A,m)\) when
\(\varepsilon\ge2\lambda_X\). The \(B\)-quadratic preserves convexity in \(m\),
so partial minimization over \(\mathcal C_\nu\) makes
\(F_\varepsilon(\cdot,B)\) convex. For fixed \(A\), the inner value is an
infimum of affine functions of \(B\), and
\(-\norm B_F^2\) makes \(F_\varepsilon(A,\cdot)\) concave.
Continuity follows from \eqref{eq:ridge-oracle-lipschitz} and the outer
quadratic terms. The pointwise maximum over the compact set \(\mathcal B_2\)
therefore makes \(g_\varepsilon\) convex and continuous. Sion's theorem
\cite{Sion} on the compact convex sets
\(\mathcal A_2,\mathcal B_2\) gives the asserted minimax equality.
\end{proof}

Thus every positive ridge yields the weak OT envelope and potential dual. The
spectral condition in Theorem~\ref{thm:swap} yields the outer minimax equality;
Section~\ref{sec:algorithm} uses the strict inequality
\(\varepsilon>2\lambda_X\) to obtain a positive convergence modulus. The next
section compares the resulting formulations, and
Section~\ref{sec:reconstruction} reconstructs an optimal coupling from an exact
weak OT block.

\section{Equivalent formulations and computational roles}
\label{sec:equivalent-formulations}

The projection identity, compact dual, and ridge envelope describe the construction
at different levels. We collect their roles in
Table~\ref{tab:equivalent-forms} before introducing the algorithm. Its first four rows equal the
unregularized \(\wIGW^2(\mu,\nu)\); its final two equal
\(\wIGWeps^2(\mu,\nu)\) and display the weak OT oracle together with its dual
over convex potentials.

\begin{table}[H]
\centering
\footnotesize
\setlength{\tabcolsep}{4pt}
\renewcommand{\arraystretch}{1.05}
\captionsetup{skip=4pt}
\begin{tabularx}{\textwidth}{@{}>{\raggedright\arraybackslash}p{.20\textwidth}X
  >{\raggedright\arraybackslash}p{.25\textwidth}@{}}
\toprule
View & Expression & Conditions and role\\
\midrule
Conditional coupling &
$\displaystyle \inf_{\pi\in\Pi(\mu,\nu)}
 \iint(\ip{x}{x'}-\ip{m_\pi(x)}{m_\pi(x')})^2d\mu d\mu$ &
$\mu,\nu\in\cP_2$. Definition~\ref{def:wigw}. This formulation keeps the full
coupling and its conditional laws visible.\\

Map and moments &
$\displaystyle \norm{S_\mu}_F^2+
 \inf_{m_\#\mu\cx\nu}\{\norm{S_m}_F^2-2\norm{M_m}_F^2\}$ &
$\mu,\nu\in\cP_2$. Propositions~\ref{prop:map}--\ref{prop:moment}. This form
exposes the convex order constraint and finite moments.\\

Projection in convex order &
$\displaystyle \inf_{\eta\cx\nu}\IGW^2(\mu,\eta)$ &
$\mu,\nu\in\cP_2$. Theorem~\ref{thm:projection}. This identity gives the geometric
interpretation and comparison with ordinary IGW.\\

Compact $u$-dual &
$\displaystyle \norm{S_\mu}_F^2+
 \inf_{A\in\mathcal A}\sup_{B\in\mathcal B,\,u\in\cU_K}
 \mathcal D_K(A,B,u)$ &
Compact supports. Theorem~\ref{thm:compact-dual}. Here $u$ enforces convex order,
while $A,B$ linearize the moment terms.\\
\midrule
Ridge weak OT envelope &
$\displaystyle \norm{S_\mu}_F^2+
 \inf_{A\in\mathcal A_2}\sup_{B\in\mathcal B_2}
 \{2\norm A_F^2-\norm B_F^2+\mathsf W_{A,B}^{\varepsilon}(\mu,\nu)\}$ &
$\mu,\nu\in\cP_2$, $\varepsilon>0$. Theorem~\ref{thm:ridge-dual}. This is the
principal computational form because the inner problem is weak OT.\\

Ridge $u$-dual &
$\displaystyle \norm{S_\mu}_F^2+
 \inf_{A\in\mathcal A_2}\sup_{B\in\mathcal B_2,\,u\in\cU_2}
 \mathcal D_\varepsilon(A,B,u)$ &
$\mu,\nu\in\cP_2$, $\varepsilon>0$. Proposition~\ref{prop:wot} and
Theorem~\ref{thm:ridge-dual}. This form supports dual certificates and analysis.\\
\bottomrule
\end{tabularx}
\caption{Equivalent formulations of wIGW and its ridge version.}
\label{tab:equivalent-forms}
\end{table}

\FloatBarrier
\section{Exact reconstruction from the weak OT block}\label{sec:reconstruction}

The weak OT envelope returns a coupling for each pair of outer matrices, whereas
the ridge primal is expressed through the moments of its conditional mean. Exact
reconstruction requires these two descriptions to agree at an outer optimizer.
Within each fixed-matrix weak OT block, the ridge makes the conditional mean map
unique. A martingale kernel then supplies the conditional variation needed to
realize the target marginal.
We derive compatibility from the outer optimality conditions and recover a coupling
by martingale gluing.

\begin{proposition}[Compatibility, reconstruction, and gluing]
\label{prop:reconstruction}
Let \(\mu\in\cP_2(\R^{d_x})\), \(\nu\in\cP_2(\R^{d_y})\), and
\(\varepsilon>0\). Let \(\mathcal A_2,\mathcal B_2\) and
\(F_\varepsilon\) be defined by \eqref{eq:ridge-outer-domains} and
\eqref{eq:ridge-envelope-functionals}. The nested outer problem has an optimizer
\[
 A^\star\in\argmin_{A\in\mathcal A_2}
 \max_{B\in\mathcal B_2}F_\varepsilon(A,B),
 \qquad
 B^\star\in\argmax_{B\in\mathcal B_2}F_\varepsilon(A^\star,B),
\]
where \(B^\star\) is the unique maximizer. Let
\(\pi^\star\in\Pi(\mu,\nu)\) minimize
\(\mathsf W_{A^\star,B^\star}^\varepsilon(\mu,\nu)\) from
\eqref{eq:ridge-cost-transform}, set
\(m^\star(x)=\E_{\pi^\star}[Y\mid X=x]\), and define
\[
 M_{m^\star}=\int x\,m^\star(x)^\top d\mu(x),\qquad
 S_{m^\star}=\int m^\star(x)m^\star(x)^\top d\mu(x).
\]
Then compatibility holds:
\[
  A^\star=M_{m^\star},\qquad B^\star=S_{m^\star}.
\]
Consequently, \(m^\star\) solves the ridge map primal
\eqref{eq:ridge-map-primal}. Moreover,
\[
  \kappa^\star(dy\mid z)=\mathcal L_{\pi^\star}(Y\in dy\mid m^\star(X)=z)
\]
is a martingale kernel from \((m^\star)_\#\mu\) to \(\nu\), and
\[
  \widehat\pi^\star(dx,dy)=\mu(dx)\kappa^\star(dy\mid m^\star(x))
\]
belongs to \(\Pi(\mu,\nu)\) and solves the ridge coupling primal
\eqref{eq:ridge-coupling-primal}.
\end{proposition}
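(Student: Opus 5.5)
The plan is to obtain existence from compactness and continuity, to get $B^\star=S_{m^\star}$ from a first-order condition in $B$, and to get $A^\star=M_{m^\star}$ together with optimality of $m^\star$ from a direct value comparison. That comparison needs no convexity in $A$, so the argument works for every $\varepsilon>0$ without the spectral condition of Theorem~\ref{thm:swap}.

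\emph{Existence and uniqueness of $B^\star$.} By \eqref{eq:ridge-oracle-lipschitz}, $F_\varepsilon$ is Lipschitz on the compact set $\mathcal A_2\times\mathcal B_2$. For fixed $A$, the inner value $\mathsf W^\varepsilon_{A,B}$ is an infimum over $m\in\mathcal C_\nu$ of functions affine in $B$, so it is concave in $B$. Adding $-\norm B_F^2$ makes $F_\varepsilon(A,\cdot)$ strongly concave, so its maximizer over the compact convex set $\mathcal B_2$ exists and is unique. The function $g_\varepsilon(A)=\max_{B}F_\varepsilon(A,B)$ is a maximum of a uniformly Lipschitz family, hence continuous on the compact $\mathcal A_2$ and attains its minimum at some $A^\star$. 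Let $B^\star$ be the unique maximizer of $F_\varepsilon(A^\star,\cdot)$.

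\emph{Compatibility in $B$.} By Proposition~\ref{prop:wot}, the minimizing mean map $m_{A,B}$ is unique. Hence Theorem~\ref{thm:danskin-envelope}, applied on the open neighborhood where $2B+\varepsilon I\succ0$, gives $\nabla_B\mathsf W^\varepsilon_{A,B}=2S_{m_{A,B}}$. Checking the continuity hypothesis of Danskin is the technical point I expect to need care. I would derive the weak $L^2$ continuity of $(A,B)\mapsto m_{A,B}$ from the weak compactness of $\mathcal C_\nu$ together with uniqueness of the minimizer. Since every coupling minimizing $\mathsf W^\varepsilon_{A^\star,B^\star}$ has the same mean, $m^\star=m_{A^\star,B^\star}$. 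The first-order condition for maximizing a concave function over $\mathcal B_2$ at $B^\star$ reads
\[
\langle -2B^\star+2S_{m^\star},\,B-B^\star\rangle_F\le0 \quad\text{for all } B\in\mathcal B_2 .
\]
The test point $B=S_{m^\star}$ is admissible by Lemma~\ref{lem:unregularized-map-matrix}'s bounds, and it yields $2\norm{S_{m^\star}-B^\star}_F^2\le0$, so $B^\star=S_{m^\star}$.

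\emph{Compatibility in $A$ and primal optimality.} Let $V$ be the optimal value and $\Phi(m)=\norm{S_m}_F^2-2\norm{M_m}_F^2+\varepsilon\norm m_{L^2}^2$ the ridge map objective. Theorem~\ref{thm:ridge-dual} gives $V=g_\varepsilon(A^\star)=\inf_{\mathcal C_\nu}\Phi$. On the other hand, substituting $B^\star=S_{m^\star}$ into \eqref{eq:ridge-map-matrix-functional} and completing the square gives
\[
 V=F_\varepsilon(A^\star,B^\star)
 =2\norm{A^\star}_F^2-\norm{S_{m^\star}}_F^2-4\ip{A^\star}{M_{m^\star}}_F+2\norm{S_{m^\star}}_F^2+\varepsilon\norm{m^\star}^2
 =2\norm{A^\star-M_{m^\star}}_F^2+\Phi(m^\star).
\]
Since $m^\star\in\mathcal C_\nu$, we have $\Phi(m^\star)\ge V$. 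Therefore $A^\star=M_{m^\star}$ and $\Phi(m^\star)=V$, so $m^\star$ solves \eqref{eq:ridge-map-primal}.

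\emph{Gluing.} Let $(X,Y)\sim\pi^\star$ and $Z=m^\star(X)$. Since $Z$ is $\sigma(X)$-measurable, the tower property gives
\[
\E[Y\mid Z]=\E\bigl[\E[Y\mid X]\mid Z\bigr]=Z .
\]
Hence the regular conditional law $\kappa^\star$ of $Y$ given $Z$, which exists because the spaces are standard Borel, is a martingale kernel from $(m^\star)_\#\mu$ to $\nu$. The glued measure $\widehat\pi^\star$ has first marginal $\mu$. Its second marginal is $\int\kappa^\star(\cdot\mid z)\,d(m^\star)_\#\mu(z)=\nu$. Its conditional mean is $m^\star$. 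Since the ridge coupling objective depends on a coupling only through its mean map, $\widehat\pi^\star$ attains $\Phi(m^\star)=V$ and is therefore optimal for \eqref{eq:ridge-coupling-primal}.
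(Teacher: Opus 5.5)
Your $B$-step is the paper's argument: Danskin for $\nabla_B\mathsf W^\varepsilon_{A,B}$, then the first-order condition at $B^\star$ tested with $S_{m^\star}$. However, the justification you propose for the Danskin hypothesis you flagged does not work.

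\textbf{The gap in the $B$-step.} Theorem~\ref{thm:danskin-envelope} needs, besides optimizer stability, convergence of the parameter gradient along optimizer sequences. Here that gradient is $2S_m$, which is quadratic in $m$. Weak compactness of $\mathcal C_\nu$ plus uniqueness only gives $m_{A_n,B_n}\rightharpoonup m_{A,B}$ weakly in $L^2$. Since $m\mapsto S_m$ is not weakly continuous, this does not yield $S_{m_{A_n,B_n}}\to S_{m_{A,B}}$. Concavity alone does not rescue the step either. It only gives the supergradient inequality $\mathsf W^\varepsilon_{A^\star,B}\le\mathsf W^\varepsilon_{A^\star,B^\star}+2\ip{S_{m^\star}}{B-B^\star}_F$, which points the wrong way for deducing $\ip{2(S_{m^\star}-B^\star)}{B-B^\star}_F\le0$. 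You need strong convergence, and there are three ways to get it or avoid it:
\begin{enumerate}
\item \emph{The paper's fix.} Add the two variational inequalities satisfied by the minimizers at $(A_1,B_1)$ and $(A_2,B_2)$. With $C_{B,\varepsilon}\succeq\varepsilon I$, this gives a Lipschitz bound of the type \eqref{eq:app-map-sensitivity}.
\item \emph{Upgrade your weak limit.} The optimal values converge by \eqref{eq:ridge-oracle-lipschitz}, and $M_{m_n}\to M_m$. Hence $\int m_n^\top C_{B,\varepsilon}m_n\,d\mu\to\int m^\top C_{B,\varepsilon}m\,d\mu$, which is an equivalent squared Hilbert norm. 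Weak convergence plus norm convergence is strong convergence.
\item \emph{Bypass Danskin entirely.} Fix $A^\star$ and take a minimizer $\tilde m$ of the weakly l.s.c.\ map $m\mapsto\sup_{B}\mathcal H_\varepsilon(m,A^\star,B)$. The Sion exchange in Theorem~\ref{thm:ridge-dual} then makes $(\tilde m,B^\star)$ a saddle point. Uniqueness of the mean map forces $\tilde m=m^\star$. Then $B^\star$ maximizes $2\ip{B}{S_{m^\star}}_F-\norm B_F^2$ over $\mathcal B_2$, whose unique maximizer is $S_{m^\star}$.
\end{enumerate}

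\textbf{The $A$-step is correct and differs from the paper.} Granting $B^\star=S_{m^\star}$, your argument works. The paper instead applies Danskin a second time, to $g(A)=\max_{B\in\mathcal B_2}F_\varepsilon(A,B)$ using uniqueness of the maximizer. It tests the first-order inequality at the constrained minimizer $A^\star$ with $M_{m^\star}$, and only afterwards substitutes compatibility to identify $F_\varepsilon(A^\star,B^\star)$ with the primal value. Your identity $V=2\norm{A^\star-M_{m^\star}}_F^2+\Phi(m^\star)$, together with $\Phi(m^\star)\ge V$, gives $A^\star=M_{m^\star}$ and optimality of $m^\star$ at once. It uses only the value equality from Theorem~\ref{thm:ridge-dual} and \eqref{eq:ridge-moment-primal}, so it avoids differentiability of $g_\varepsilon$ and a second verification of Danskin's hypotheses.

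Your remark about convexity is not a real distinction, though. The paper's first-order inequality is necessary at any constrained minimizer of a differentiable function, so neither route uses the spectral condition. Your gluing step matches the paper.
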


\begin{proof}
For $\varepsilon>0$, Proposition~\ref{prop:wot} gives a unique inner barycentric
map $m_{A,B}$ and Danskin's theorem
(Theorem~\ref{thm:danskin-envelope}) gives
\[
 \nabla_AF_\varepsilon(A,B)=4(A-M_{m_{A,B}}),\qquad
 \nabla_BF_\varepsilon(A,B)=2(S_{m_{A,B}}-B).
\]
The estimate \eqref{eq:ridge-oracle-lipschitz}, together with the outer
quadratic terms, proves that $F_\varepsilon$ is continuous. Its outer sets are
compact, so the nested outer optimizer exists.

For each fixed $A$, $F_\varepsilon(A,\cdot)$ is $2$-strongly concave and has a
unique maximizer. At $B^\star$, the constrained first-order inequality, tested with
$S_{m^\star}\in\mathcal B_2$, gives
\[
 \ip{2(S_{m^\star}-B^\star)}{S_{m^\star}-B^\star}_F\le0,
\]
so $B^\star=S_{m^\star}$, including when the ball constraint is active. Now let
$g(A)=\max_{B\in\mathcal B_2}F_\varepsilon(A,B)$. Uniqueness of the maximizer
and Theorem~\ref{thm:danskin-envelope} make $g$ differentiable. At its constrained minimizer
$A^\star$, testing the first-order inequality with
$M_{m^\star}\in\mathcal A_2$ gives
\[
 \ip{4(A^\star-M_{m^\star})}{M_{m^\star}-A^\star}_F\ge0,
\]
and therefore $A^\star=M_{m^\star}$.

Adding the outer regularizers to the weak OT value and using this compatibility gives
\[
  F_\varepsilon(A^\star,B^\star)=\norm{S_{m^\star}}_F^2
  -2\norm{M_{m^\star}}_F^2+\varepsilon\norm{m^\star}_{L^2}^2.
\]
The nested min--max identity of Theorem~\ref{thm:ridge-dual} and
Proposition~\ref{prop:wot} identify this with the nonconstant part
\(\wIGWeps^2(\mu,\nu)-\norm{S_\mu}_F^2\) of the optimal ridge primal value. Let
\(M=m^\star(X)\). The tower property gives
\[
  \E[Y\mid M]=\E[\E[Y\mid X]\mid M]=\E[m^\star(X)\mid M]=M.
\]
Thus the disintegration through \(M\) is martingale and has target marginal \(\nu\).
Gluing it after \(m^\star\) preserves the conditional mean and the optimal value.
\end{proof}

\section{Finite algorithm and convergence}\label{sec:algorithm}

Following the primal weak OT algorithm of Paty, Chon\'e, and Kramarz
\cite{PatyChoneKramarz2022}, we discretize the fixed-matrix weak OT block, retain
the convexity established at the measure level in Proposition~\ref{prop:wot}, and
optimize the resulting unregularized convex problem approximately by normalized
KL mirror descent. Sinkhorn scaling computes the ideal KL mirror projection. Entropy
serves only as the mirror geometry: no entropy penalty is added to the weak OT
objective. Thus we apply KL mirror descent to unregularized weak OT; we do not solve
an entropic OT problem. A genuinely entropy-regularized weak OT problem would have
a different value and an additional
regularization bias; it is not the inner problem studied here. The resulting
approximate weak OT solution supplies an inexact oracle to the projected outer
\(A\)--\(B\) iteration analyzed below.

\subsection{Finite numerical formulation and projected algorithm}
\label{subsec:finite-projected-algorithm}

The reconstruction result identifies the moments that the outer iterations must
match. For finite measures, the conditional means are linear functions of the
coupling matrix, so the weak OT block becomes a convex optimization over a
transport polytope.

Let \(\mu=\sum_{i=1}^n a_i\delta_{x_i}\in\cP_2(\R^{d_x})\) and
\(\nu=\sum_{j=1}^p b_j\delta_{y_j}\in\cP_2(\R^{d_y})\), where all masses are
positive and each mass vector sums to one. Define
\begin{equation}\label{eq:finite-coupling-moments}
\begin{aligned}
 \Pi(a,b)
 &:=\{P\in\R_+^{n\times p}:P\mathbf 1_p=a,\ P^\top\mathbf 1_n=b\},\\
 m_i(P)&:=\frac{1}{a_i}\sum_{j=1}^pP_{ij}y_j,\\
 M_P&:=\sum_{i=1}^n a_i x_i m_i(P)^\top,\\
 S_P&:=\sum_{i=1}^n a_i m_i(P)m_i(P)^\top.
\end{aligned}
\end{equation}

\paragraph{Discrete reconstruction.}
At an exact outer solution, represent the inner coupling from
Proposition~\ref{prop:reconstruction} by \(P^\star\in\Pi(a,b)\) and write
\[
  m_i^\star=m_i(P^\star)=\frac{1}{a_i}\sum_j P_{ij}^\star y_j.
\]
If these barycenters are distinct, the reconstructed kernel is
\(K_{ij}=P_{ij}^\star/a_i\). If several rows share a barycenter \(z_\ell\), let
\(I_\ell=\{i:m_i^\star=z_\ell\}\),
\(\alpha_\ell=\sum_{i\in I_\ell}a_i\), and
\[
  K_{\ell j}=\frac{1}{\alpha_\ell}\sum_{i\in I_\ell}P_{ij}^\star.
\]
Then
\[
  \sum_j K_{\ell j}=1,\qquad
  \sum_\ell\alpha_\ell K_{\ell j}=b_j,\qquad
  \sum_j K_{\ell j}y_j=z_\ell.
\]

\paragraph{Numerical diagnostic.}
Proposition~\ref{prop:reconstruction} guarantees compatibility for an exact optimizer
of the full nested problem. At arbitrary $(A,B)$, compatibility is assessed through
\(\norm{A-M_m}_F\), \(\norm{B-S_m}_F\), and the marginal and martingale residuals.

\paragraph{Mirror descent for the weak OT block at fixed \(A\) and \(B\).}
For \(\varepsilon>0\), \(A\in\R^{d_x\times d_y}\), and
\(B\in\mathbb S_+^{d_y}\), let \(C_{B,\varepsilon}=2B+\varepsilon I\). The
fixed-\((A,B)\) oracle is the convex program
\begin{equation}\label{eq:finite-weak-ot-oracle}
\begin{aligned}
  f_{A,B}(P)
  &:=\sum_i a_i[m_i(P)^\top C_{B,\varepsilon}m_i(P)-4x_i^\top A m_i(P)],\\
  \mathsf W_{A,B}^\varepsilon(\mu,\nu)
  &=\min_{P\in\Pi(a,b)} f_{A,B}(P).
\end{aligned}
\end{equation}
For nonnegative matrices \(Q,R\), define the generalized discrete relative
entropy
\[
 \KL(Q\,\|\,R)
 :=\sum_{i,j}\left[
 Q_{ij}\log\frac{Q_{ij}}{R_{ij}}-Q_{ij}+R_{ij}\right],
\]
with \(0\log(0/r)=0\) for \(r\ge0\), and with a summand equal to \(+\infty\)
when \(Q_{ij}>0=R_{ij}\). We adapt the primal KL mirror ascent method of Paty,
Chon\'e, and Kramarz \cite{PatyChoneKramarz2022} to our minimizing oracle as a
normalized KL mirror descent step. Sinkhorn computes the KL projection onto
\(\Pi(a,b)\); the KL term defines the mirror geometry and does not regularize the
weak OT objective. The explicit quadratic gradient, its normalization,
and the surrounding projected \(A\)--\(B\) iteration are specific to the present
fixed-matrix oracle.

\begin{proposition}[Finite convex weak OT oracle and normalized KL mirror step]
\label{prop:finite-inner-mirror}
Fix \(A\in\R^{d_x\times d_y}\), \(B\in\mathbb S_+^{d_y}\), and
\(\varepsilon>0\), and let
\(\alpha_B=\lambda_{\min}(C_{B,\varepsilon})\ge\varepsilon\). The function
\(P\mapsto f_{A,B}(P)\) in \eqref{eq:finite-weak-ot-oracle} is convex and
differentiable on \(\Pi(a,b)\), with
\begin{equation}\label{eq:finite-oracle-gradient}
  \frac{\partial f_{A,B}}{\partial P_{ij}}
  =\ip{2C_{B,\varepsilon}m_i(P)-4A^\top x_i}{y_j}.
\end{equation}
For \(P,Q\in\Pi(a,b)\), it satisfies
\begin{equation}\label{eq:finite-mean-strong-convexity}
\begin{aligned}
 f_{A,B}(Q)\ge{}&f_{A,B}(P)
 +\ip{\nabla f_{A,B}(P)}{Q-P}_F\\
 &+\alpha_B\sum_i a_i\norm{m_i(Q)-m_i(P)}^2.
\end{aligned}
\end{equation}
Thus the objective is \(2\alpha_B\)-strongly convex in its induced barycentric
vector, but need not be strictly or strongly convex in \(P\) when distinct plans
have the same barycenters.

For a strictly positive \(P\in\Pi(a,b)\) and \(\gamma>0\), set
\[
 G(P):=\nabla f_{A,B}(P),
 \qquad c(P):=\max\{\norm{G(P)}_\infty,10^{-12}\}.
\]
The ideal normalized KL mirror step is
\begin{equation}\label{eq:finite-mirror-step}
  \widetilde P_{ij}=P_{ij}\exp\left(-\frac{\gamma}{c(P)}
  \frac{\partial f_{A,B}}{\partial P_{ij}}\right),
  \qquad
  P^+=\argmin_{Q\in\Pi(a,b)}\KL(Q\,\|\,\widetilde P).
\end{equation}
Equivalently, \(P^+\) uniquely minimizes the linearization of \(f_{A,B}\) at
\(P\) plus \(c(P)\KL(\cdot\,\|\,P)/\gamma\). Exact Sinkhorn scaling computes
this KL projection. The floored, finitely scaled, and marginally repaired step in
Algorithm~\ref{alg:implemented-ab} is an inexact realization of
\eqref{eq:finite-mirror-step}.
\end{proposition}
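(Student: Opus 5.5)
The plan is to treat the map \(P\mapsto(m_1(P),\dots,m_n(P))\) as linear and reduce every claim to properties of the quadratic \(z\mapsto z^\top C_{B,\varepsilon}z-4x_i^\top Az\) composed with that linear map. Write \(m_i(P)=a_i^{-1}\sum_jP_{ij}y_j\), so that \(\partial m_i(P)/\partial P_{kj}=\delta_{ik}y_j/a_i\). Then
\[
f_{A,B}(P)=\sum_i a_i\,g_i(m_i(P)),\qquad g_i(z):=z^\top C_{B,\varepsilon}z-4x_i^\top Az .
\]
The chain rule gives \(\partial f/\partial P_{ij}=a_i\,\ip{\nabla g_i(m_i)}{y_j}/a_i=\ip{2C_{B,\varepsilon}m_i(P)-4A^\top x_i}{y_j}\), which is \eqref{eq:finite-oracle-gradient}. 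Differentiability is immediate, since \(f\) is a polynomial in \(P\).

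For \eqref{eq:finite-mean-strong-convexity}, I will use the exact quadratic identity
\[
g_i(w)=g_i(z)+\ip{\nabla g_i(z)}{w-z}+(w-z)^\top C_{B,\varepsilon}(w-z)\ge g_i(z)+\ip{\nabla g_i(z)}{w-z}+\alpha_B\norm{w-z}^2,
\]
with \(w=m_i(Q)\) and \(z=m_i(P)\). Summing with weights \(a_i\) and rewriting \(\sum_ia_i\ip{\nabla g_i(m_i(P))}{m_i(Q)-m_i(P)}\) as \(\ip{\nabla f(P)}{Q-P}_F\) (linearity of \(m_i\)) gives the inequality. Convexity follows by dropping the nonnegative last term. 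The remark on non-strict convexity needs only an example of two plans with equal barycenters. For instance, take \(n=1\) with \(\nu\) having several atoms: then \(\Pi(a,b)\) is a single point. A better example is two rows with identical \(x_i\), or target atoms \(y_j\) that are affinely dependent, so that distinct \(P\) share all \(m_i\). It suffices to note that the kernel of the linear map \(P\mapsto(m_i(P))_i\), restricted to the tangent space of \(\Pi(a,b)\), is generally nontrivial when \(p>d_y+1\). The value \(\alpha_B=\lambda_{\min}(2B+\varepsilon I)\ge\varepsilon\) uses only \(B\succeq0\).

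For the mirror step, I will consider the problem \(\min_{Q\in\Pi(a,b)}\{\ip{G(P)}{Q}_F+(c(P)/\gamma)\KL(Q\|P)\}\). This is strictly convex, because \(\KL(\cdot\|P)\) is strictly convex on the domain where \(P>0\), and the feasible set is a nonempty compact polytope. Hence a unique minimizer exists. Next, I will expand
\[
\ip{G}{Q}+\tfrac{c}{\gamma}\KL(Q\|P)=\tfrac{c}{\gamma}\Bigl[\KL(Q\|\widetilde P)+\textstyle\sum_{ij}(P_{ij}-\widetilde P_{ij})\Bigr],
\]
using \(\log(Q/P)+\gamma G/c=\log(Q/\widetilde P)\) and the definition of the generalized KL. The bracket differs from \(\KL(Q\|\widetilde P)\) by a constant independent of \(Q\), so both problems share the minimizer \(P^+\). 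Finally, the KL projection of a positive matrix onto \(\Pi(a,b)\) has the form \(\mathrm{diag}(u)\widetilde P\mathrm{diag}(v)\) by the Lagrange conditions. Sinkhorn's alternating row/column scaling converges to exactly these scalings, which I will cite as the standard Sinkhorn/iterative-proportional-fitting result.

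The main obstacle is the bookkeeping in the constant-offset identity for the generalized KL, together with justifying that the minimizer is strictly positive, so that the Lagrange stationarity argument applies. I will handle positivity by noting that the derivative of \(Q\log Q\) diverges at \(0\), which forces interior solutions on the support of \(\widetilde P\), and \(\widetilde P\) has full support.
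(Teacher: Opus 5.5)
Your proof is correct and follows the paper's argument: the chain rule through the linear map $P\mapsto m_i(P)$, the exact quadratic remainder $\sum_i a_i d_i^\top C_{B,\varepsilon}d_i$, and the Lagrange/Sinkhorn characterization of the KL projection. Your constant-offset identity $\ip{G}{Q}_F+\tfrac{c}{\gamma}\KL(Q\,\|\,P)=\tfrac{c}{\gamma}\bigl[\KL(Q\,\|\,\widetilde P)+\sum_{ij}(P_{ij}-\widetilde P_{ij})\bigr]$, together with your strict-convexity and positivity arguments, actually justifies the equivalence and the uniqueness more carefully than the paper, which only reads $\widetilde P$ off the unconstrained stationarity condition. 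Your discussion of examples should be trimmed, however. The $n=1$ case and ``two rows with identical $x_i$'' are not examples of distinct plans with equal barycenters, and the kernel argument needs $n\ge2$. In any case, the statement only requires your observing, as the paper does, that the remainder vanishes (indeed $f_{A,B}$ is constant on the segment) whenever $m_i(P)=m_i(Q)$ for all $i$.
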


\begin{proof}
The affine formula for \(m_i(P)\) gives the gradient
\eqref{eq:finite-oracle-gradient}. Expanding the two row quadratics gives
\eqref{eq:finite-mean-strong-convexity}. The first-order equation for the KL
proximal problem gives the multiplicative kernel \(\widetilde P\); imposing the
two marginals gives its KL projection onto \(\Pi(a,b)\).
\end{proof}

For an exact minimizer \(P\) of \eqref{eq:finite-weak-ot-oracle},
\[
  \nabla_A F_\varepsilon=4(A-M_P),
  \qquad \nabla_B F_\varepsilon=2(S_P-B).
\]
Although the minimizing plan need not be unique, Proposition~\ref{prop:finite-inner-mirror}
and Proposition~\ref{prop:wot} show that all minimizing plans have the same
barycentric vector. Their parameter derivatives therefore agree, so
Theorem~\ref{thm:danskin-envelope} differentiates the
envelope while holding an exact minimizing plan fixed. Differentiating
$2\norm A_F^2-4\ip A{M_P}_F$ and
$-\norm B_F^2+2\ip B{S_P}_F$ gives these gradient formulas.

Projected outer updates require compact domains determined by the data. At a
compatible outer solution, the moment identities provide such bounds.

\begin{proposition}[Exact data-dependent outer balls]\label{prop:balls}
Let \(\mu=\sum_{i=1}^n a_i\delta_{x_i}\in\cP_2(\R^{d_x})\) and
\(\nu=\sum_{j=1}^p b_j\delta_{y_j}\in\cP_2(\R^{d_y})\) be probability
measures with positive masses, and let \(P\in\Pi(a,b)\). Define \(M_P,S_P\) by
\eqref{eq:finite-coupling-moments} and set
\(M_2(\mu)=\sum_i a_i\norm{x_i}^2\) and
\(M_2(\nu)=\sum_j b_j\norm{y_j}^2\). If outer matrices
\(A\in\R^{d_x\times d_y}\) and \(B\in\mathbb S_+^{d_y}\) are compatible
with \(P\), meaning \(A=M_P\) and \(B=S_P\), then
\[
  \norm{A}_F\le R_A:=\sqrt{M_2(\mu)M_2(\nu)},
  \qquad \norm{B}_F\le R_B:=M_2(\nu),\quad B\succeq0.
\]
In particular, these bounds hold for every compatible outer optimizer and its
associated exact inner solution.
\end{proposition}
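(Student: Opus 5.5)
The plan is to bound $M_P$ and $S_P$ directly from the definitions in \eqref{eq:finite-coupling-moments}, using only that $P$ is a coupling. Compatibility then transfers the bounds to $A$ and $B$. The argument is the finite analogue of the Cauchy--Schwarz and Lemma~\ref{lem:feasible-set} estimates used for \eqref{eq:ridge-outer-domains}. The key ingredient is a row-wise Jensen inequality for the barycenters $m_i(P)$.

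First I would note that, since $P\mathbf 1_p=a$ with $a_i>0$, each row $(P_{ij}/a_i)_j$ is a probability vector. Hence $m_i(P)$ is a convex combination of the $y_j$, and Jensen applied to $\norm{\cdot}^2$ gives
\[
\norm{m_i(P)}^2\le\frac1{a_i}\sum_j P_{ij}\norm{y_j}^2 .
\]
Multiplying by $a_i$, summing over $i$, and using $P^\top\mathbf 1_n=b$ yields
\[
\sum_i a_i\norm{m_i(P)}^2\le\sum_j b_j\norm{y_j}^2=M_2(\nu).
\]
This is the discrete instance of $(m_P)_\#\mu\cx\nu$ tested with $u(z)=\norm z^2$.

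Next, for $B=S_P$, positive semidefiniteness holds because $S_P$ is a nonnegative combination of the rank-one matrices $m_i m_i^\top$. For a PSD matrix, $\norm{S_P}_F\le\Tr(S_P)=\sum_i a_i\norm{m_i(P)}^2\le M_2(\nu)$, using the bound from the first step.

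For $A=M_P$, the triangle inequality together with $\norm{x m^\top}_F=\norm x\norm m$ gives
\[
\norm{M_P}_F\le\sum_i a_i\norm{x_i}\norm{m_i(P)}.
\]
Cauchy--Schwarz in $\ell^2(a)$ then bounds this by $\sqrt{M_2(\mu)}\,\bigl(\sum_i a_i\norm{m_i(P)}^2\bigr)^{1/2}\le\sqrt{M_2(\mu)M_2(\nu)}$.

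The final sentence of the statement follows from Proposition~\ref{prop:reconstruction}, specialized to finite measures. It provides compatibility $A^\star=M_{P^\star}$ and $B^\star=S_{P^\star}$ for an exact inner solution $P^\star$, so the bounds just proved apply to it.

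No step is a real obstacle. The only point requiring care is the row-wise Jensen step, which depends on positivity of the masses $a_i$ so that $m_i(P)$ is well defined. It also needs the column marginal constraint, which is what turns the row averages into the target second moment.
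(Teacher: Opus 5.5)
Your proposal is correct and follows essentially the same route as the paper. Both apply row-wise (conditional) Jensen and sum against the column marginal to get $\sum_i a_i\norm{m_i(P)}^2\le M_2(\nu)$, then use the trace bound for the positive semidefinite $S_P$ and the triangle inequality with Cauchy--Schwarz for $M_P$, and invoke Proposition~\ref{prop:reconstruction} for compatibility in the final sentence.
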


\begin{proof}
Conditional Jensen gives \(\sum_i a_i\norm{m_i(P)}^2\le M_2(\nu)\). Hence
\(\norm{M_P}_F^2\le M_2(\mu)M_2(\nu)\). Since \(S_P\succeq0\),
\(\norm{S_P}_F\le\Tr(S_P)\le M_2(\nu)\). At compatibility, \(A=M_P\) and
\(B=S_P\).
\end{proof}

The outer updates can therefore use the compatibility gradients while the inner
coupling is refined independently.

For the retention rule used by the implementation, define the direct ridge
primal at a coupling \(P\in\Pi(a,b)\) by
\begin{equation}\label{eq:finite-ridge-primal}
 J_{\rm w,\varepsilon}(P)
 :=\sum_{i,k}a_i a_k
 \bigl(\ip{x_i}{x_k}-\ip{m_i(P)}{m_k(P)}\bigr)^2
 +\varepsilon\sum_i a_i\norm{m_i(P)}^2.
\end{equation}
Algorithm~\ref{alg:implemented-ab} follows the implementation used for the
reported runs. It requires a strictly positive initial coupling so that the KL
step is well defined. Appendix~\ref{app:experimental-configurations} gives the
initializations and the detailed monitoring conventions used in the experiments.

\begin{algorithm}[t]
\caption{Implemented projected \(A\)--\(B\) solver with an inexact weak OT oracle}
\label{alg:implemented-ab}
\small
\begin{algorithmic}[1]
\Require Data \(X=(x_i)_{i=1}^n,a,Y=(y_j)_{j=1}^p,b\), ridge
  \(\varepsilon>0\), matrix initializations \(A_{\rm init},B_{\rm init}\), and
  a strictly positive \(P_{\rm init}\in\Pi(a,b)\)
\Require Outer and inner budgets \(T_{\rm out},T_{\rm in}\), Sinkhorn budget
  \(N_{\rm sk}\), and step sizes \(\eta_A,\eta_B,\gamma>0\)
\State Compute \(R_A=\sqrt{M_2(\mu)M_2(\nu)}\) and \(R_B=M_2(\nu)\)
\State \(A\gets A_{\rm init}\), \(B\gets\operatorname{Proj}_{\mathbb S_+}(B_{\rm init})\),
  \(P\gets P_{\rm init}\), and \(J_{\rm best}\gets+\infty\)
\For{\(t=0,\ldots,T_{\rm out}-1\)}
  \State \(\bar P\gets P\) and \(\bar f\gets f_{A,B}(P)\)
  \For{\(s=0,\ldots,T_{\rm in}-1\)}
    \State \(P\gets\Call{InexactKLStep}{P,A,B,\gamma,N_{\rm sk}}\)
    \State \(P\gets\Call{RepairMarginals}{P,a,b}\)
    \If{\(f_{A,B}(P)<\bar f\)}
      \State \(\bar P\gets P\) and \(\bar f\gets f_{A,B}(P)\)
    \EndIf
  \EndFor
  \State \(P\gets\bar P\); compute \(m(P),M_P,S_P\) from
    \eqref{eq:finite-coupling-moments}
  \If{\(J_{\rm w,\varepsilon}(P)<J_{\rm best}\)}
    \State \((J_{\rm best},P_{\rm best},A_{\rm best},B_{\rm best})
      \gets(J_{\rm w,\varepsilon}(P),P,A,B)\)
  \EndIf
  \State \(A\gets\operatorname{Proj}_{\norm{\cdot}_F\le R_A}
    [A-4\eta_A(A-M_P)]\)
  \State \(B\gets\operatorname{Proj}_{\mathcal B_2}
    [B+2\eta_B(S_P-B)]\)
\EndFor
\State \(P_{\rm best}\gets\Call{FinalBalance}{P_{\rm best},a,b}\)
\State \(P_{\rm best}\gets\Call{RepairMarginals}{P_{\rm best},a,b}\)
\State Recompute \(J_{\rm best}=J_{\rm w,\varepsilon}(P_{\rm best})\)
\Ensure \(P_{\rm best},A_{\rm best},B_{\rm best}\)
\end{algorithmic}
\end{algorithm}

The call \textsc{InexactKLStep} denotes the finite Sinkhorn approximation of the
normalized KL projection in \eqref{eq:finite-mirror-step}, and
\textsc{RepairMarginals} restores the prescribed marginals up to roundoff. The inner
rule retains the lowest value of \(f_{A,B}\), while the outer return rule retains
the lowest direct ridge primal value \eqref{eq:finite-ridge-primal}; final balancing
and repair act only on that returned state. Theorem~\ref{thm:convergence} analyzes the sequence of projected
outer iterates, not the best so far tuple returned by Algorithm~\ref{alg:implemented-ab}.
Its conclusions require the stated ridge, step-size, and oracle-error hypotheses
and do not by themselves establish an error schedule for the finite KL mirror
routine. Appendix~\ref{app:experimental-configurations} records the implementation
budgets, tolerances, repair details, and state retention conventions.

\paragraph{Implementation of the outer projections.}
Algorithm~\ref{alg:implemented-ab} uses exact Euclidean projections in the
Frobenius geometry. The projection of a matrix \(Z_A\) onto the ball of radius
\(R_A\) is radial:
\[
 \operatorname{Proj}_{\norm{\cdot}_F\le R_A}(Z_A)
 =
 \begin{cases}
  Z_A,&\norm{Z_A}_F\le R_A,\\[2pt]
  \displaystyle\frac{R_A}{\norm{Z_A}_F}Z_A,&\norm{Z_A}_F>R_A.
 \end{cases}
\]
For the \(B\)-update, let \(Z_B=B+2\eta_B(S_P-B)\), symmetrize it as
\(\overline Z_B=(Z_B+Z_B^\top)/2\), and compute
\[
 \overline Z_B=Q\operatorname{diag}(\lambda_1,\ldots,\lambda_{d_y})Q^\top.
\]
Set \(\lambda_k^+=\max\{\lambda_k,0\}\) and
\(r=(\sum_k(\lambda_k^+)^2)^{1/2}\). The implemented projection is
\[
 \operatorname{Proj}_{\mathcal B_2}(Z_B)
 =Q\operatorname{diag}(\widehat\lambda_1,\ldots,\widehat\lambda_{d_y})Q^\top,
 \qquad
 \widehat\lambda_k=
 \begin{cases}
  \lambda_k^+,&r\le R_B,\\[2pt]
  \displaystyle\frac{R_B}{r}\lambda_k^+,&r>R_B.
 \end{cases}
\]
Thus the eigendecomposition first sets every negative eigenvalue to zero. If
the resulting positive semidefinite matrix lies outside the Frobenius ball, all
of its nonnegative eigenvalues are then scaled by the same factor. This is the
Frobenius projection onto
\(\mathcal B_2=\mathbb S_+^{d_y}\cap\{B:\norm B_F\le R_B\}\).

\subsection{Projected contraction and inexact oracle control}
\label{subsec:convergence-inexact-oracle}

Under the strong ridge condition of Theorem~\ref{thm:swap}, the outer problem is
a convex--concave saddle problem. We adapt the inexact-oracle strategy of Rioux,
Goldfeld, and Kato \cite{RiouxGoldfeldKato2024} from ordinary entropic GW
envelopes to convex weak OT inner problems. The exact outer operator uses an exact
solution of each fixed-matrix weak OT block; Algorithm~\ref{alg:implemented-ab}
instead supplies an approximate weak OT oracle. The analysis below treats the
resulting additive operator error and then bounds it by inner objective
suboptimality. The contraction theorem uses the common step size
\(\eta_A=\eta_B=\eta\); unequal block step sizes require a corresponding
preconditioned product norm.

\paragraph{Notation for the outer analysis.}
Throughout this subsection, \(z=(A,B)\),
\(\mathcal Z=\mathcal A_2\times\mathcal B_2\), and the product space carries
the Frobenius inner product. The outer envelope is \(F_\varepsilon\) from
\eqref{eq:ridge-envelope-functionals}. For fixed \((A,B)\), the exact oracle
means the unique optimal barycentric vector \(m_{A,B}\) of
\eqref{eq:finite-weak-ot-oracle}; an optimal coupling itself need not be unique.

\begin{theorem}[Projected contraction with inexact inner oracles]\label{thm:convergence}
Let \(\mu=\sum_{i=1}^n a_i\delta_{x_i}\in\cP_2(\R^{d_x})\) and
\(\nu=\sum_{j=1}^p b_j\delta_{y_j}\in\cP_2(\R^{d_y})\) be probability
measures with positive masses. Set
\[
\begin{aligned}
 S_\mu&=\sum_i a_i x_ix_i^\top,
 &\lambda_X&=\lambda_{\max}(S_\mu),\\
 M_2(\mu)&=\sum_i a_i\norm{x_i}^2,
 &M_2(\nu)&=\sum_j b_j\norm{y_j}^2,
\end{aligned}
\]
and let \(\varepsilon>2\lambda_X\). Let
\(\mathcal Z=\mathcal A_2\times\mathcal B_2\), where the outer sets are defined
in \eqref{eq:ridge-outer-domains}, and let \(F_\varepsilon\) be defined in
\eqref{eq:ridge-envelope-functionals}. For each \((A,B)\in\mathcal Z\), let
\(m_{A,B}=(m_i(A,B))_{i=1}^n\) be the unique barycentric vector of an exact
minimizer of \eqref{eq:finite-weak-ot-oracle}, and set
\[
 M_{A,B}=\sum_i a_i x_i m_i(A,B)^\top,\qquad
 S_{A,B}=\sum_i a_i m_i(A,B)m_i(A,B)^\top.
\]
By uniqueness of the barycentric vector and
Theorem~\ref{thm:danskin-envelope}, \(F_\varepsilon\) is differentiable on the outer sets, and
its saddle operator is
\[
\begin{aligned}
 G(A,B)
 &:=(\nabla_A F_\varepsilon(A,B),-\nabla_B F_\varepsilon(A,B))\\
 &=\bigl(4(A-M_{A,B}),\,2(B-S_{A,B})\bigr).
\end{aligned}
\]
Equip \(\R^{d_x\times d_y}\times\mathbb S^{d_y}\) with the product Frobenius
inner product and norm
\[
 \ip{(A,B)}{(A',B')}
 :=\ip{A}{A'}_F+\ip{B}{B'}_F,
 \qquad
 \norm{(A,B)}^2:=\norm A_F^2+\norm B_F^2.
\]
The operator \(G\) is globally \(L\)-Lipschitz on \(\mathcal Z\), where
\[
 L=\norm{K}_{\rm op},\qquad
 K=\begin{pmatrix}4&0\\0&2\end{pmatrix}
 +\frac8\varepsilon
 \begin{pmatrix}\sqrt{\lambda_X}\\\sqrt{M_2(\nu)}\end{pmatrix}
 \begin{pmatrix}\sqrt{\lambda_X}&\sqrt{M_2(\nu)}\end{pmatrix}.
\]
In particular,
\(L\le4+8(\lambda_X+M_2(\nu))/\varepsilon\). Moreover, \(G\) is strongly
monotone with parameter
at least
\[
  \alpha=\min\left\{4-\frac{8\lambda_X}{\varepsilon},2\right\}.
\]
The variational inequality
\[
 \ip{G(z^\star)}{z-z^\star}\ge0
 \qquad\text{for every }z\in\mathcal Z
\]
has a unique solution \(z^\star\in\mathcal Z\). Given
\(z_k=(A_k,B_k)\in\mathcal Z\) and an approximate oracle
\(\widehat G(z_k)=G(z_k)+e_k\), define the common projected step
\[
  z_{k+1}=\operatorname{Proj}_{\mathcal Z}
  \bigl(z_k-\eta\widehat G(z_k)\bigr).
\]
Here the projection and all norms are taken in the product Frobenius geometry.
If
\(0<\eta<2\alpha/L^2\), then
\[
  \norm{z_{k+1}-z^\star}\le q\norm{z_k-z^\star}+\eta\norm{e_k},
  \quad q=\sqrt{1-2\eta\alpha+\eta^2L^2}<1.
\]
Consequently, writing \(d_k=\norm{z_k-z^\star}\):
\begin{enumerate}[label=(\roman*)]
\item if \(e_k=0\) for every \(k\), then \(d_k\le q^k d_0\);
\item if \(\norm{e_k}\to0\), then \(z_k\to z^\star\);
\item if \(\sup_k\norm{e_k}\le\bar e\), then
\[
 d_k\le q^k d_0+\eta\frac{1-q^k}{1-q}\,\bar e,
 \qquad
 \limsup_{k\to\infty}d_k\le\frac{\eta\bar e}{1-q}.
\]
\end{enumerate}
\end{theorem}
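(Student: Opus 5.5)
The proof has three stages: a stability estimate for the inner barycentric vector $m_{A,B}$, which gives the Lipschitz constant of $G$; a convexity estimate for $F_\varepsilon$, which gives strong monotonicity; and the standard projected-gradient contraction argument for strongly monotone variational inequalities with additive operator error.

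\emph{Stability and Lipschitz bound.} Work in the Hilbert space of barycentric vectors with norm $\norm{m}_a^2=\sum_i a_i\norm{m_i}^2$. The feasible set $\{m(P):P\in\Pi(a,b)\}$ is convex, being the affine image of $\Pi(a,b)$.

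By Proposition~\ref{prop:finite-inner-mirror}, $m\mapsto\sum_i a_i[m_i^\top C_{B,\varepsilon}m_i-4x_i^\top Am_i]$ is $2\varepsilon$-strongly convex on that set. Its $m$-gradient is $2C_{B,\varepsilon}m_i-4A^\top x_i$.

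Write the first-order variational inequalities for $m=m_{A,B}$ and $m'=m_{A',B'}$, test each with the other, and add them. This gives
\[
 2\varepsilon\norm{m-m'}_a^2\le\sum_i a_i\ip{4(B'-B)m'_i-4(A'-A)^\top x_i}{m_i-m'_i},
\]
up to the exact factor bookkeeping.

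Bound the right-hand side as follows. Use $\sum_ia_i\norm{(A-A')^\top x_i}^2\le\lambda_X\norm{A-A'}_F^2$, and use $\norm{(B-B')m'_i}\le\norm{B-B'}_F\norm{m'_i}$ with $\norm{m'}_a^2\le M_2(\nu)$ from Proposition~\ref{prop:balls}. This yields
\[
 \norm{m-m'}_a\le\tfrac{c}{\varepsilon}\bigl(\sqrt{\lambda_X}\norm{\Delta A}_F+\sqrt{M_2(\nu)}\norm{\Delta B}_F\bigr).
\]
Then transfer to the moments:
\[
 \norm{M_{A,B}-M_{A',B'}}_F\le\sqrt{\lambda_X}\norm{m-m'}_a,\qquad
 \norm{S_{A,B}-S_{A',B'}}_F\le\bigl(\norm m_a+\norm{m'}_a\bigr)\norm{m-m'}_a\le2\sqrt{M_2(\nu)}\norm{m-m'}_a.
\]
Combining these with the diagonal part $(4\Delta A,2\Delta B)$ gives $\norm{G(z)-G(z')}\le\norm{K}_{\rm op}\norm{z-z'}$, where $K$ is the stated diagonal matrix plus a rank-one term. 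I will tune constants so the rank-one coefficient is exactly $8/\varepsilon$. The bound $L\le4+8(\lambda_X+M_2(\nu))/\varepsilon$ then follows from the triangle inequality.

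\emph{Strong monotonicity.} I would show two things.
\begin{enumerate}
\item $A\mapsto F_\varepsilon(A,B)$ is $\bigl(4-8\lambda_X/\varepsilon\bigr)$-strongly convex, meaning its gradient is strongly monotone with that modulus.
\item $B\mapsto F_\varepsilon(A,B)$ is $2$-strongly concave. This is immediate, because the inner value is an infimum of functions affine in $B$ and the explicit term is $-\norm B_F^2$.
\end{enumerate}
For the $A$-part, use the decomposition from the proof of Theorem~\ref{thm:swap}:
\[
 2\norm{A-Tm}^2+\ip{m}{(\varepsilon-2T^*T)m}+2\ip{B}{S_m}.
\]
Subtract $\tfrac{\alpha_A}{2}\norm A_F^2$ with $\alpha_A=4-8\lambda_X/\varepsilon$, and check that the remainder stays jointly convex in $(A,m)$. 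Its $(A,m)$ Hessian block is
\[
 \begin{pmatrix}4-\alpha_A&-4T\\-4T^*&2\varepsilon\end{pmatrix}\ \text{(plus PSD terms)}.
\]
By a Schur complement this is PSD when $(4-\alpha_A)\,2\varepsilon\ge16\lambda_X$, which is exactly the stated $\alpha_A$. Partial minimization over the convex set of feasible $m$ preserves convexity, so $F_\varepsilon(\cdot,B)-\tfrac{\alpha_A}{2}\norm A^2_F$ is convex.

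Adding the monotonicity inequalities for $\nabla_AF$ (convex in $A$) and $-\nabla_BF$ (convex in $B$) in the usual saddle way, the cross terms cancel. This gives $\ip{G(z)-G(z')}{z-z'}\ge\alpha\norm{z-z'}^2$ with $\alpha=\min\{\alpha_A,2\}$. I expect this step to be the main obstacle. The difficulty is making the saddle-operator monotonicity rigorous when $F_\varepsilon$ is only differentiable via Theorem~\ref{thm:danskin-envelope}, and getting the exact modulus after constrained partial minimization. The Schur-complement computation above is the route I would try first.

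\emph{Existence, uniqueness and contraction.} Because $G$ is continuous and strongly monotone on the compact convex set $\mathcal Z$, the variational inequality has a solution. Uniqueness follows by adding the inequalities for two solutions. The solution $z^\star$ is characterized as a fixed point, $z^\star=\operatorname{Proj}_{\mathcal Z}(z^\star-\eta G(z^\star))$.

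Nonexpansiveness of the projection then gives
\[
 \norm{z_{k+1}-z^\star}\le\norm{z_k-z^\star-\eta(G(z_k)-G(z^\star))}+\eta\norm{e_k}.
\]
Expanding the square and using strong monotonicity and the Lipschitz bound gives the factor $q=\sqrt{1-2\eta\alpha+\eta^2L^2}$, which is less than $1$ for $0<\eta<2\alpha/L^2$. Items (i)--(iii) follow by unrolling the recursion: for (iii) sum the geometric series, and for (ii) use a standard Toeplitz/discrete-Gronwall argument with $\norm{e_k}\to0$.
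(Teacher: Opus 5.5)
Your proposal is correct and follows the paper's proof essentially step for step. The added variational inequalities give $\norm{m-m'}_a\le\frac{2}{\varepsilon}\bigl(\sqrt{\lambda_X}\norm{\Delta A}_F+\sqrt{M_2(\nu)}\norm{\Delta B}_F\bigr)$, so $c=2$ falls out directly and produces exactly the $8/\varepsilon$ rank-one term with no tuning. Your Schur-complement check is the paper's completed-square second-variation bound, and the projected contraction and unrolling are identical. For the saddle step, what you add are the four first-order strong convexity/concavity inequalities (modulus $4-8\lambda_X/\varepsilon$ in $A$, modulus $2$ in $B$) with base points $(A_1,B_1)$ and $(A_2,B_2)$, evaluated at the mixed points $(A_2,B_1)$ and $(A_1,B_2)$. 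The four function values then cancel, which is exactly how the paper argues, and this needs only the differentiability of $F_\varepsilon$ at the two base points that Danskin already supplies.
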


\begin{proof}
The strong ridge minimax theorem and compactness of $\mathcal Z$ give a
solution of the variational inequality; the strong monotonicity proved below makes it
unique.
For fixed \(B\), the inner objective is \(2\alpha_B\)-strongly convex in its
barycentric vector, where
\(\alpha_B=\lambda_{\min}(C_{B,\varepsilon})\ge\varepsilon\). The operator
\(Tm=M_m\) satisfies \(\norm{T}^2\le\lambda_X\). Add the two variational
inequalities for the minimizers at $(A_1,B)$ and $(A_2,B)$. Strong monotonicity
of the inner derivative and Cauchy--Schwarz give
\[
 2\alpha_B\norm{m_{A_1,B}-m_{A_2,B}}_{L^2}^2
 \le4\ip{A_1-A_2}{T(m_{A_1,B}-m_{A_2,B})}_F.
\]
Consequently,
\[
  \norm{m_{A_1,B}-m_{A_2,B}}_{L^2}
  \le\frac{2\sqrt{\lambda_X}}{\alpha_B}\norm{A_1-A_2}_F.
\]
Since $\nabla_AF_\varepsilon=4(A-Tm_{A,B})$, the same estimate yields directly
\[
 \ip{\nabla_AF_\varepsilon(A_1,B)-\nabla_AF_\varepsilon(A_2,B)}
 {A_1-A_2}_F
 \ge\left(4-\frac{8\lambda_X}{\alpha_B}\right)\norm{A_1-A_2}_F^2.
\]
The \(B\)-block is \(2\)-strongly
concave because it is an infimum of affine functions plus $-\norm{B}_F^2$.
Thus $F_\varepsilon(\cdot,B)$ is
$(4-8\lambda_X/\varepsilon)$-strongly convex and
$F_\varepsilon(A,\cdot)$ is $2$-strongly concave. Adding the four defining
inequalities expressing strong convexity and concavity at $(A_1,B_1)$ and
$(A_2,B_2)$ cancels
the mixed terms and yields
\[
 \ip{G(z_1)-G(z_2)}{z_1-z_2}
 \ge\alpha\norm{z_1-z_2}^2.
\]
Hence $G$ is $\alpha$-strongly monotone.

It remains to establish Lipschitzness. Adding the variational
inequalities for the two minimizers at arbitrary $(A_1,B_1)$ and $(A_2,B_2)$ gives
\[
 \norm{m_{A_1,B_1}-m_{A_2,B_2}}_{L^2}
 \le\frac2\varepsilon\left(
 \sqrt{\lambda_X}\norm{A_1-A_2}_F+
 \sqrt{M_2(\nu)}\norm{B_1-B_2}_F\right).
\]
Here we used $\norm{m_{A,B}}_{L^2}\le\sqrt{M_2(\nu)}$ from convex order.
Also,
\[
 \norm{S_{m_1}-S_{m_2}}_F
 \le2\sqrt{M_2(\nu)}\norm{m_1-m_2}_{L^2}.
\]
Applying these inequalities to
$\nabla_AF_\varepsilon=4(A-Tm)$ and
$-\nabla_BF_\varepsilon=2(B-S_m)$ bounds the vector of the two block gradient
norms by $K$ times the vector
$(\norm{A_1-A_2}_F,\norm{B_1-B_2}_F)$. Thus $L=\norm K_{\rm op}$ works, and
the simpler bound in Theorem~\ref{thm:convergence} follows from the diagonal
and rank-one terms.

If $\mathcal Z$ is the product of the
outer balls, the saddle point satisfies
$z^\star=\operatorname{Proj}_{\mathcal Z}(z^\star-\eta G(z^\star))$. Therefore,
by nonexpansiveness of projection,
\[
\begin{aligned}
 \norm{z_{k+1}-z^\star}
 &\le\norm{z_k-z^\star-\eta(G(z_k)-G(z^\star))-\eta e_k}\\
 &\le\sqrt{1-2\eta\alpha+\eta^2L^2}\norm{z_k-z^\star}
   +\eta\norm{e_k},
\end{aligned}
\]
Iterating this recurrence gives
\[
 d_k\le q^k d_0+
 \eta\sum_{j=0}^{k-1}q^{k-1-j}\norm{e_j}.
\]
This is \(q^kd_0\) for exact oracles and gives the geometric-series bound in
Theorem~\ref{thm:convergence} when the errors are uniformly bounded. If
\(\norm{e_j}\to0\), split the
sum at a fixed index: the contribution of its finite initial part vanishes as
\(k\to\infty\), while its tail is bounded by
\(\eta\sup_{j\ge N}\norm{e_j}/(1-q)\). Letting \(N\to\infty\) proves
\(d_k\to0\).
\end{proof}

The recurrence in Theorem~\ref{thm:convergence} is stated in terms of the outer
operator error, while inner accuracy can be expressed by objective suboptimality
relative to the exact fixed-matrix value. Strong convexity of the fixed-matrix
objective in the barycentric mean converts that suboptimality into the required
error bound. The approximate plan may be produced by the KL mirror
routine in Algorithm~\ref{alg:implemented-ab}, provided marginal repair has made
it feasible. The suboptimality below is always measured against the unregularized
fixed-matrix objective \(f_{A,B}\). If a different routine instead minimizes an
entropy-regularized objective, its optimization error and regularization bias must
first be combined into a bound for this unregularized suboptimality.

\begin{proposition}[From inner objective suboptimality to outer oracle error]
\label{prop:inner-gap-oracle-error}
Let \(\mu=\sum_{i=1}^n a_i\delta_{x_i}\in\cP_2(\R^{d_x})\) and
\(\nu=\sum_{j=1}^p b_j\delta_{y_j}\in\cP_2(\R^{d_y})\) be probability
measures with positive masses. Fix \(A\in\R^{d_x\times d_y}\),
\(B\in\mathbb S_+^{d_y}\), and \(\varepsilon>0\), and set
\(C_{B,\varepsilon}=2B+\varepsilon I\). Let \(P^\star\) minimize
\eqref{eq:finite-weak-ot-oracle}, where \(\Pi(a,b)\) and \(m_i(P)\) are defined
in \eqref{eq:finite-coupling-moments}, set
\(m=(m_i(P^\star))_{i=1}^n\), and let \(\widehat P\in\Pi(a,b)\) be an inner
feasible coupling with
\(\widehat m=(m_i(\widehat P))_{i=1}^n\) and objective suboptimality
\[
 \delta:=f_{A,B}(\widehat P)-f_{A,B}(P^\star)\ge0.
\]
Thus \(\delta\) is the true fixed-matrix objective suboptimality, not a KL
proximal objective or an entropy-regularized objective gap.
Set
\[
\begin{aligned}
 \alpha_B&:=\lambda_{\min}(C_{B,\varepsilon})\ge\varepsilon,\\
 \lambda_X&:=\lambda_{\max}\!\left(\sum_i a_i x_ix_i^\top\right),
 &M_2(\nu)&:=\sum_j b_j\norm{y_j}^2.
\end{aligned}
\]
For a barycentric vector \(r=(r_i)_i\), write
\(M_r=\sum_i a_i x_ir_i^\top\) and
\(S_r=\sum_i a_i r_ir_i^\top\). Define the exact and approximate saddle
operators at \((A,B)\) by
\[
\begin{aligned}
 G&=\bigl(4(A-M_m),\,2(B-S_m)\bigr),\\
 \widehat G&=\bigl(4(A-M_{\widehat m}),\,2(B-S_{\widehat m})\bigr),
 &e&=\widehat G-G.
\end{aligned}
\]
Using the weighted norm
\(\norm r_{L^2(\mu)}^2=\sum_i a_i\norm{r_i}^2\) and the product Frobenius
norm
\[
 \norm e^2:=\norm{e_A}_F^2+\norm{e_B}_F^2,
 \qquad e=(e_A,e_B),
\]
one has
\[
  \norm{\widehat m-m}_{L^2(\mu)}\le\sqrt{\delta/\alpha_B},
  \qquad
  \norm{e}\le4\sqrt{\frac{(\lambda_X+M_2(\nu))\delta}{\alpha_B}}.
\]
\end{proposition}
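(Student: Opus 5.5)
The plan is to combine the first-order optimality of \(P^\star\) with the strong-convexity inequality \eqref{eq:finite-mean-strong-convexity} of Proposition~\ref{prop:finite-inner-mirror}. Apply that inequality with \(P=P^\star\) and \(Q=\widehat P\):
\[
 f_{A,B}(\widehat P)\ge f_{A,B}(P^\star)+\ip{\nabla f_{A,B}(P^\star)}{\widehat P-P^\star}_F+\alpha_B\norm{\widehat m-m}_{L^2(\mu)}^2 .
\]
Since \(f_{A,B}\) is convex and differentiable on the convex polytope \(\Pi(a,b)\) and \(P^\star\) minimizes it there, the linear term is nonnegative. Hence \(\delta\ge\alpha_B\norm{\widehat m-m}_{L^2(\mu)}^2\), which is the first claim. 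No uniqueness of \(P^\star\) is needed here; only the barycentric vector enters.

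For the operator error, the \(A\)-block and \(B\)-block are bounded separately by linear functions of \(\norm{\widehat m-m}_{L^2(\mu)}\).

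\emph{The \(A\)-block.} Here \(e_A=-4(M_{\widehat m}-M_m)=-4T(\widehat m-m)\) with \(Tr=\sum_i a_i x_i r_i^\top\). Cauchy--Schwarz in the weighted space gives \(\norm{Tr}_F^2\le\lambda_X\norm r_{L^2(\mu)}^2\), as already used in the proof of Theorem~\ref{thm:convergence}. To verify it, for a unit-norm \(W\) write \(\ip{W}{Tr}_F=\sum_i a_i (W^\top x_i)\cdot r_i\), and bound it by \(\bigl(\sum_i a_i\norm{W^\top x_i}^2\bigr)^{1/2}\norm r_{L^2(\mu)}\le\sqrt{\lambda_X}\norm r_{L^2(\mu)}\). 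Hence \(\norm{e_A}_F\le4\sqrt{\lambda_X}\norm{\widehat m-m}_{L^2(\mu)}\).

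\emph{The \(B\)-block.} Here \(e_B=-2(S_{\widehat m}-S_m)\). Write
\[
 S_{\widehat m}-S_m=\sum_i a_i\bigl[\widehat m_i(\widehat m_i-m_i)^\top+(\widehat m_i-m_i)m_i^\top\bigr].
\]
Each term is bounded by \(\norm{\cdot}_{L^2(\mu)}\norm{\widehat m-m}_{L^2(\mu)}\) through \(\norm{uv^\top}_F=\norm u\norm v\) and Cauchy--Schwarz. Both \(\widehat m\) and \(m\) are barycentric vectors of couplings in \(\Pi(a,b)\), so conditional Jensen (as in Proposition~\ref{prop:balls}) gives \(\norm{\widehat m}_{L^2(\mu)},\norm m_{L^2(\mu)}\le\sqrt{M_2(\nu)}\). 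Hence \(\norm{e_B}_F\le4\sqrt{M_2(\nu)}\norm{\widehat m-m}_{L^2(\mu)}\).

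Combining the blocks,
\[
 \norm e^2\le16(\lambda_X+M_2(\nu))\norm{\widehat m-m}_{L^2(\mu)}^2\le16(\lambda_X+M_2(\nu))\,\delta/\alpha_B,
\]
and taking square roots gives the stated bound.

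The only delicate point is the first step: the strong-convexity inequality must be read with the correct constant. The paper states it with coefficient \(\alpha_B\), not \(\alpha_B/2\), because the quadratic part \(\sum_i a_i m_i^\top C_{B,\varepsilon}m_i\) is exactly quadratic in \(m\). Expanding it around \(m\) leaves the remainder \(\sum_i a_i(\widehat m_i-m_i)^\top C_{B,\varepsilon}(\widehat m_i-m_i)\ge\alpha_B\norm{\widehat m-m}^2\). This exact quadratic expansion is what produces \(\sqrt{\delta/\alpha_B}\) rather than \(\sqrt{2\delta/\alpha_B}\), so I would verify it directly rather than rely on a generic strong-convexity lemma.
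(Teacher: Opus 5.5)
Your proof is correct and follows the paper's argument essentially step for step. The suboptimality bound comes from the quadratic expansion plus the variational inequality at \(P^\star\). The \(A\)-block is handled via \(\norm{T}^2\le\lambda_X\), and the \(B\)-block via the rank-one splitting of \(S_{\widehat m}-S_m\) with the feasibility bounds \(\norm{m}_{L^2(\mu)},\norm{\widehat m}_{L^2(\mu)}\le\sqrt{M_2(\nu)}\). Your check of the constant is also right: the remainder is exactly \(\sum_i a_i d_i^\top C_{B,\varepsilon}d_i\ge\alpha_B\norm{d}_{L^2(\mu)}^2\). This matches the paper's ``\(2\alpha_B\)-strong convexity'' read in the standard convention.
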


\begin{proof}
The inner objective has strong convexity modulus $2\alpha_B$, so its
suboptimality is at
least $\alpha_B\norm{\widehat m-m}_{L^2}^2$. For the $A$ component,
\[
 \norm{e_A}_F=4\norm{M_{\widehat m}-M_m}_F
 \le4\sqrt{\lambda_X}\norm{\widehat m-m}_{L^2}.
\]
Both barycentric maps are feasible, hence their $L^2$ norms are at most
$\sqrt{M_2(\nu)}$. Therefore
\[
 \norm{S_{\widehat m}-S_m}_F
 \le\int\norm{\widehat m-m}(\norm{\widehat m}+\norm m)d\mu
 \le2\sqrt{M_2(\nu)}\norm{\widehat m-m}_{L^2},
\]
and $\norm{e_B}_F=2\norm{S_{\widehat m}-S_m}_F$. Combining the two components
in product Frobenius norm proves the result.
\end{proof}

Because \(\alpha_B\ge\varepsilon\), a sequence of inner suboptimalities
\(\delta_k\to0\) yields \(\norm{e_k}\to0\), and
Theorem~\ref{thm:convergence}(ii) then gives convergence of the projected outer
states. A uniform bound \(\delta_k\le\bar\delta\) instead gives the explicit
error floor in Theorem~\ref{thm:convergence}(iii) with
\(\bar e=4\sqrt{(\lambda_X+M_2(\nu))\bar\delta/\varepsilon}\).

\section{Experiments}\label{sec:experiments}

We present two groups of experiments. The synthetic studies use point clouds and
graphs endowed with low dimensional node features to isolate the algebraic zero
cost mechanism for martingale refinements of a Gram preserving skeleton. At the
constructed parent to child coupling, the barycentric wIGW cost vanishes, whereas
ordinary IGW at the same coupling is generally positive. The real data study uses
paired RNA and ATAC measurements to evaluate atlas based cell type transfer through
cell to cell and prototype to cell alignment.

\subsection{Synthetic objectives and reported quantities}
\label{subsec:reported-quantities}

For $P\in\Pi(a,b)$, define $m_i(P)=a_i^{-1}\sum_jP_{ij}y_j$. Every reported value
in the synthetic studies is recomputed directly from one of the following formulas:
\[
\begin{aligned}
 J_{\rm w}(P)
 &=\sum_{i,k}a_i a_k
   (\ip{x_i}{x_k}-\ip{m_i(P)}{m_k(P)})^2,\\
 J_{\rm IGW}(P)
 &=\sum_{i,k,j,\ell}
   (\ip{x_i}{x_k}-\ip{y_j}{y_\ell})^2P_{ij}P_{k\ell},\\
 J_{\rm w,\varepsilon}(P)
 &=J_{\rm w}(P)+\varepsilon\sum_i a_i\norm{m_i(P)}^2.
\end{aligned}
\]
In each synthetic construction, the generating parent of every target point is
known. Write $P_{\rm cert}$ for the coupling that places each target mass in
the row of its generating parent. Its name reflects the fact that its
conditional means recover the clean skeleton and therefore certify a zero weak
objective. The weak numerical solver does not receive this coupling.

Let \(z_i\) denote the known clean skeleton, let
\((\widehat A,\widehat B,\widehat P)\) be the state returned by the finite
\(A\)--\(B\) run, with \(\widehat P\) balanced, and set
\(\widehat m_i=m_i(\widehat P)\). The constructed coupling \(P_{\rm cert}\)
is an algebraically feasible zero wIGW witness; its ordinary value is only a
same plan diagnostic. Values at \(\widehat P\), \(P_{\rm POT}\), and
\(P_{\rm env}\) are finite iteration local values, while compatibility and
marginal residuals are numerical checks. Table~\ref{tab:synthetic-reported-quantities}
in Appendix~\ref{app:experimental-configurations} gives the precise definition
and evaluated plan or state for every reported synthetic quantity.

We center and scale each source point cloud once, before applying its rotation
and refinement; the target is then generated without independent centering or rescaling.
Every weak run starts from the product coupling rather than $P_{\rm cert}$ and
uses one deterministic initialization without restarts. The common solver
settings, budgets for each run, data seeds, and ordinary solver parameters are listed in
Tables~\ref{tab:common-ab-config}--\ref{tab:synthetic-ordinary-config} of
Appendix~\ref{app:experimental-configurations}. The choice
$\varepsilon=10^{-4}$ approximates the unregularized objective
and lies outside the strong ridge regime certified by
Theorem~\ref{thm:convergence}. Accordingly, these runs report empirical
behavior at the stated finite budgets.

\subsection{Shape refinements with martingale noise}\label{subsec:shape-refinement}

We sample dark foreground pixels without replacement from the IBM/USD cat image
\cite{IBMUSD}. The source is $X=(x_i)_{i=1}^n$ with uniform masses, and the clean
target skeleton is $z_i=R_{35^\circ}x_i$. Because $R_{35^\circ}$ is orthogonal,
$\ip{x_i}{x_k}=\ip{z_i}{z_k}$ algebraically;
floating point evaluations agree up to roundoff. The main point clouds are
shown in Figure~\ref{fig:cat-clouds} in the introduction. Two refinements are used.

For the symmetric refinement, draw a unit direction $v_i$ and set
\[
y_{i,+}=z_i+s_i v_i,\qquad y_{i,-}=z_i-s_i v_i,\qquad
 b_{i,+}=b_{i,-}=\frac{1}{2n}.
\]
The certificate has $P_{i,(i,+)}=P_{i,(i,-)}=1/(2n)$, hence
$m_i(P_{\rm cert})=z_i$ exactly. The main instance uses $n=500$ and $s_i=0.18$.
The homoscedastic sweep uses seven common radii in $[0,0.28]$. For the
heteroscedastic sweep, let
\[
 s_i(t)=t\left[0.04+0.20
 \frac{z_{i,2}-\min_k z_{k,2}}{\max_kz_{k,2}-\min_kz_{k,2}}\right],
 \qquad t\in\{0,0.2,\ldots,1.2\}.
\]
Thus points higher in the image receive more conditional spread while preserving
every parent mean.

For the Gaussian refinement, $n=250$ and each parent has $q=8$ offsets
$g_{ir}\sim N(0,I_2)$. We replace $g_{ir}$ by
$g_{ir}-q^{-1}\sum_sg_{is}$ and set $y_{ir}=z_i+0.10g_{ir}$ with mass $1/(nq)$.
This empirical recentering enforces zero sample mean for the offsets of every parent
in each realized dataset, so the resulting finite certificate is martingale.

\begin{figure}[t]
\centering
\includegraphics[width=.95\linewidth]{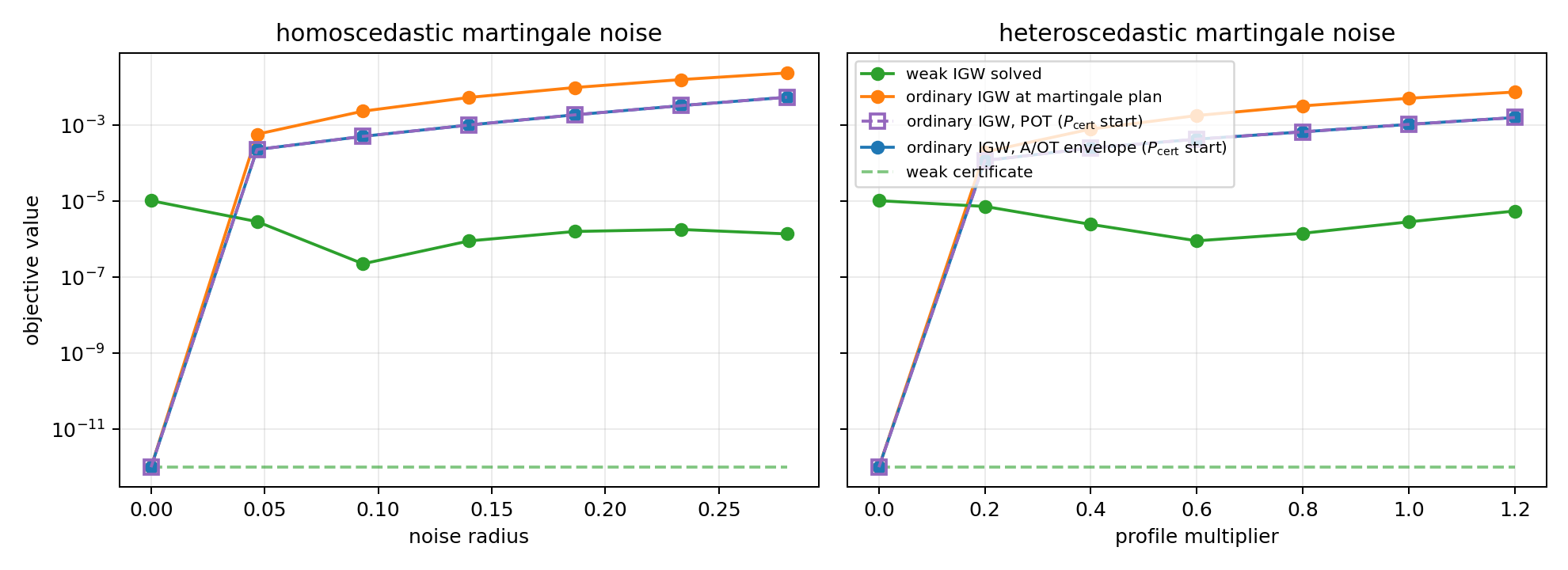}
\caption{Homoscedastic and heteroscedastic martingale noise sweeps. The dashed
certificate value remains at floating point zero. Values returned by the finite
\(A\)--\(B\) solver remain small but nonzero, while ordinary IGW evaluated at the
same refinement plan grows with target spread. The POT and $A$/OT curves are
finite ordinary solves initialized from the same $P_{\rm cert}$.}
\label{fig:cat-sweep}
\end{figure}

\begin{figure}[t]
\centering
\includegraphics[width=.9\linewidth]{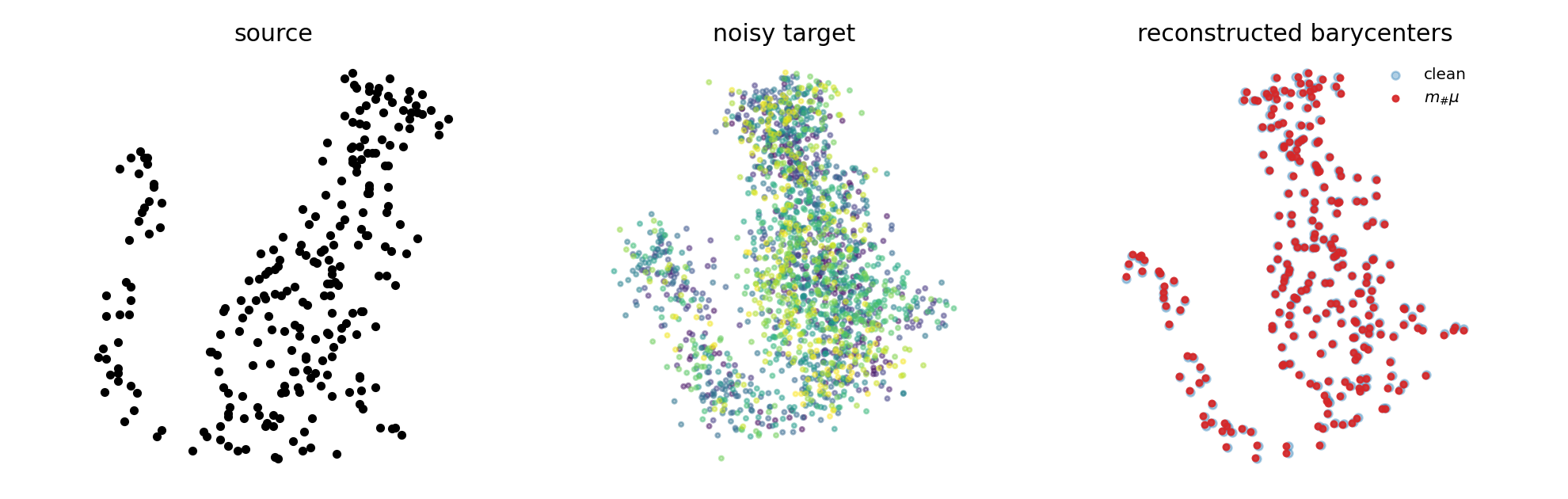}
\caption{Centered Gaussian refinement. Left: source. Middle: full noisy target.
Right: barycenters reconstructed from the numerical wIGW coupling, compared with the
clean rotated skeleton.}
\label{fig:cat-reconstruction}
\end{figure}

\begin{table}[tbp]
\centering
\small
\begin{tabular}{@{}lrr@{}}
\toprule
quantity & symmetric pair, $n=500$ & centered Gaussian, $n=250$\\
\midrule
clean rotation IGW & $0$ & $0$\\
weak certificate & $9.72\cdot10^{-34}$ & $2.78\cdot10^{-33}$\\
weak solved primal & $1.10\cdot10^{-6}$ & $1.93\cdot10^{-7}$\\
ridge solved value & $2.78\cdot10^{-5}$ & $3.91\cdot10^{-5}$\\
ordinary refinement plan & $9.17\cdot10^{-3}$ & $6.97\cdot10^{-3}$\\
ordinary POT & $1.79\cdot10^{-3}$ & $1.57\cdot10^{-3}$\\
ordinary $A$/OT envelope & $1.79\cdot10^{-3}$ & $1.57\cdot10^{-3}$\\
certificate pushforward error & $1.11\cdot10^{-16}$ & $2.22\cdot10^{-16}$\\
solved wIGW pushforward error & $6.07\cdot10^{-2}$ & $9.93\cdot10^{-3}$\\
$A$-compatibility & $9.04\cdot10^{-5}$ & $1.04\cdot10^{-4}$\\
$B$-compatibility & $1.40\cdot10^{-4}$ & $1.69\cdot10^{-4}$\\
maximum marginal residual & $6.94\cdot10^{-18}$ & $2.60\cdot10^{-18}$\\
\bottomrule
\end{tabular}
\caption{Shape results. The weak solved values and compatibility diagnostics use the
same retained $(A,B,P)$ state; both marginals of $P$ satisfy the stated
tolerance. The POT and $A$/OT rows use the common initialization compatible
with $P_{\rm cert}$ defined in Section~\ref{subsec:reported-quantities}.}
\label{tab:shape-results}
\end{table}

Both shape envelope runs stopped after three iterations with zero final
Frank--Wolfe gap. Their maximum marginal residuals were zero and
$2.95\cdot10^{-17}$, respectively, and their direct objectives agreed with POT
within $8.33\cdot10^{-17}$.

Across both sweeps the explicit weak certificate remains at arithmetic zero while
ordinary IGW at the same coupling increases with conditional spread.

\Needspace{5\baselineskip}
Under the fixed local optimization budget, the solved weak values remain small and vary
nonmonotonically with the noise parameter. POT and the $A$/OT envelope, both
initialized from $P_{\rm cert}$, agree to the reported precision. Within each sweep, one seeded direction
field is held fixed and only its radius is scaled across levels. These deterministic
curves test the numerical mechanism for the stated seed and budget.

\subsection{Graph feature refinement}\label{subsec:graph-refinement}

The coarse source has $n=10$ uniform nodes at
\[
 x_i=(\cos(2\pi i/n),\sin(2\pi i/n)),\qquad i=0,\ldots,n-1.
\]
Its undirected edge set contains ring edges $\{i,i+1\}$ and skip edges
$\{i,i+2\}$, with indices modulo $n$. We rotate the node features by $0.55$ radians
to obtain clean centers $z_i=R_{0.55}x_i$. In this instance each center has
$q=10$ children
\[
 y_{ir}=z_i+\rho_i(\epsilon_{ir}-\bar\epsilon_i),\qquad
\epsilon_{ir}\stackrel{\rm iid}{\sim}N(0,I_2),\qquad
 \rho_i=0.22[0.65+0.35\sin^2(1.7i)].
\]
All fine nodes have mass $1/(nq)$ and
$P_{i,(i,r)}^{\rm cert}=1/(nq)$, so $m_i(P_{\rm cert})=z_i$.

For visualization, the $q$ children of each parent are connected in a cycle, and three
deterministic child pairs are connected across the groups associated with each coarse
edge $\{i,j\}$. These edges serve as visualization metadata. The reported IGW
objective is computed from the two-dimensional latent node features and their Gram
matrices, making this an experiment on graph features at coarse and fine scales. The noise sweep uses
$q=8$, common multipliers in $[0,0.42]$, and one seeded graph per level.

\begin{figure}[H]
\centering
\includegraphics[width=.95\linewidth]{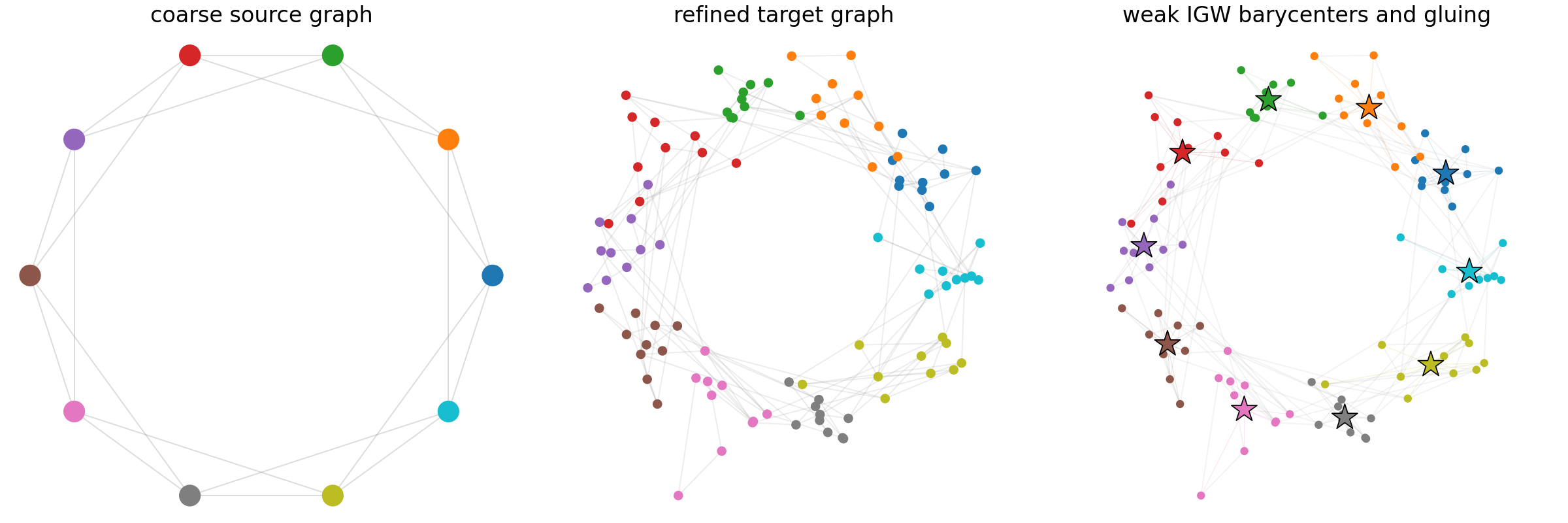}
\caption{Graph feature refinement. Left: the coarse source graph. Center: the refined
target graph. Right: certificate barycenters joined to their children by martingale
gluing rays. Shared marker colors identify children with the same parent coarse node.}
\label{fig:graph-refinement}
\end{figure}

\begin{center}
\begin{minipage}[t]{.47\linewidth}
\vspace{0pt}
\footnotesize
For one instance we obtained:
\[
\begin{array}{@{}lr@{}}
\toprule
\text{quantity}&\text{value}\\
\midrule
\text{weak certificate}&2.51\cdot10^{-32}\\
\text{weak solved primal}&9.97\cdot10^{-6}\\
\text{ridge solved value}&1.10\cdot10^{-4}\\
\text{ordinary refinement plan}&6.22\cdot10^{-2}\\
\text{ordinary POT}&5.48\cdot10^{-2}\\
\text{ordinary }A\text{/OT envelope}&5.48\cdot10^{-2}\\
\text{certificate pushforward error}&2.22\cdot10^{-16}\\
\text{pushforward RMS relative to certificate}&4.21\cdot10^{-1}\\
\text{\(A\)-compatibility}&2.76\cdot10^{-4}\\
\text{\(B\)-compatibility}&8.31\cdot10^{-5}\\
\text{row marginal residual}&1.39\cdot10^{-17}\\
\text{column marginal residual}&1.73\cdot10^{-18}\\
\bottomrule
\end{array}
\]
\end{minipage}\hfill
\begin{minipage}[t]{.49\linewidth}
\vspace{0pt}
\centering
\includegraphics[width=\linewidth]{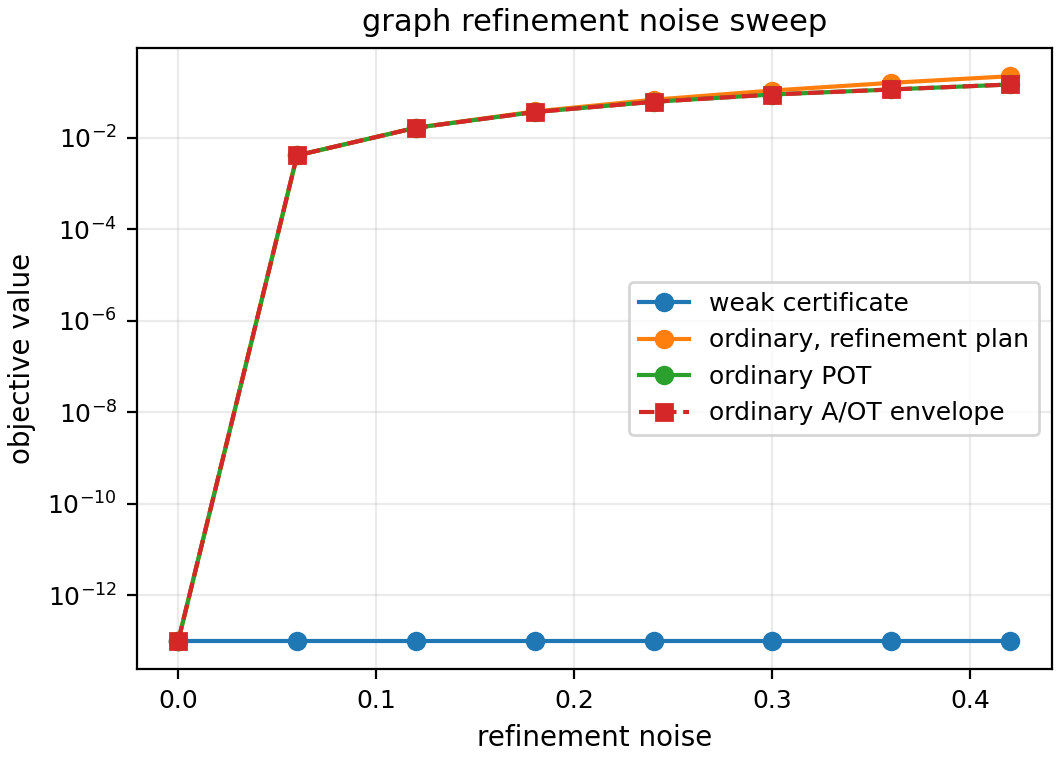}
\captionof{figure}{Graph feature refinement sweep. The explicit certificate value
remains at floating point zero; ordinary IGW evaluated at the same certificate
coupling increases with refinement spread. POT and the $A$/OT envelope start from
that coupling and agree to plotting precision.}
\label{fig:graph-sweep}
\end{minipage}
\end{center}

For this graph instance, the $A$/OT envelope stopped after two
iterations with final Frank--Wolfe gap zero and maximum marginal residual
$1.39\cdot10^{-17}$. Its direct objective agrees with POT to
$1.11\cdot10^{-15}$.

The stars and rays in Figure~\ref{fig:graph-refinement} depict the explicit
certificate; the numerical solver returns a separate coupling. Because inner-product
GW is orthogonally invariant, the finite problem can admit several nearly equivalent
barycentric skeletons. A small wIGW objective can therefore coexist with a large RMS
measured in the certificate's orientation and parent labeling. Both marginal residuals
of the reported graph plan are below the prescribed $10^{-12}$ feasibility tolerance.

\subsection{PBMC multiome atlas based cell type transfer}\label{subsec:pbmc-multiome}

\paragraph{Data and preprocessing.}
The experiments use paired gene expression (RNA) and chromatin accessibility (ATAC)
measurements from the 10x Genomics peripheral blood mononuclear cell (PBMC)
multiome dataset for one healthy donor
\cite{TenXPBMC}. The release contains \(12\,016\) cells. The quality control
criteria retain \(8\,212\) cells after requiring
\(500\)--\(6\,000\) detected genes, at least \(1\,000\) gene
expression counts, a mitochondrial fraction at most \(0.15\), at least \(3\,000\)
ATAC fragments, and a fraction of reads in peaks (FRIP) of at least \(0.15\).
The available barcode metadata do not include transcription start site (TSS)
enrichment, nucleosome signal, or blacklist fraction, so these criteria are not
evaluated. CellTypist provides the RNA
annotations \cite{DominguezCondeEtAl2022}; a fixed mapping gives six classes:
B cells, CD4 T cells, CD8 T cells, NK cells, CD14 monocytes, and FCGR3A
monocytes. Because RNA and ATAC are measured in the same physical cells, this
RNA-derived annotation is attached to the paired ATAC profile for evaluation; it
is not an independent ATAC annotation. The mapping leaves \(7\,760\) eligible
cells.

Each seed in \(\{17,23,31,47,59\}\) defines a balanced atlas of \(900\)
cells, with \(150\) cells per class, and a disjoint evaluation set of \(480\)
cells, with \(80\) cells per class. Every preprocessing transformation is fitted
on the atlas alone. For RNA, library normalization and selection of highly
variable genes precede principal component analysis (PCA); eight components are
retained. For ATAC,
TF--IDF and latent semantic indexing yield nine components; removing the first,
which is dominated by sequencing depth, leaves eight. Within each modality, the
atlas mean is subtracted and the coordinates are divided by the atlas root mean
square radius. The fitted transformations are applied unchanged to the evaluation
cells. The RNA and ATAC
coordinates both lie in \(\R^8\); their axes remain modality specific.

\paragraph{Alignment and transfer tasks.}
We evaluate atlas based cell type transfer under two alignment designs. The
cell to cell task aligns the RNA and ATAC
embeddings of the same \(480\)
evaluation cells:
\[
 X_{\rm cell}\in\R^{480\times8},
 \qquad Y_{\rm ATAC}\in\R^{480\times8}.
\]
The physical RNA/ATAC pairing remains hidden during optimization and is used
only for the secondary retrieval diagnostic reported in
Appendix~\ref{subsec:pbmc-evaluation-metrics}. The prototype to cell task aligns six
RNA class prototypes with the same \(480\) ATAC cells:
\[
 X_{\rm proto}\in\R^{6\times8},
 \qquad Y_{\rm ATAC}\in\R^{480\times8}.
\]
Each source prototype is the mean RNA embedding of one class in the disjoint
atlas. Thus one coarse source point corresponds to a heterogeneous population
of target cells. Barycentric wIGW compares the prototype Gram geometry with the
Gram geometry of the conditional ATAC means while retaining the full ATAC
marginal.

\paragraph{Baselines and protocol.}
We compare wIGW with POT's ordinary IGW solver applied to the Gram matrices
\(XX^\top\) and \(YY^\top\), using squared loss between Gram entries and the
product coupling as the initial plan \cite{POT}. The envelope
\eqref{eq:intro-ordinary-igw-envelope} gives a second implementation of the
same objective, which we call the IGW envelope baseline: each iteration solves
linear OT with cost \(-4XAY^\top\) and then updates \(A=X^\top P Y\). The scaled
rectangular identity avoids initializing \(A\) with the zero cross moment of the
centered product coupling in the prototype task. The table reports the product start
and scaled identity variants separately. The wIGW solver uses the same scaled
identity for \(A\) and fixes
\(\varepsilon=10^{-4}\), \(100\) outer iterations, and \(50\) inner mirror
iterations.

The modality specific coordinate axes make direct Euclidean OT depend on an
arbitrary identification of RNA and ATAC coordinates. The relational baselines
instead compare the within modality geometries. Their objectives are invariant to
independent orthogonal changes of coordinates, although a local solver with a fixed start
need not reach corresponding solutions; we report this distinction below.

Target \(k\)-means, with \(k=6\), uses only the ATAC embedding and serves as a
target only reference for its intrinsic class structure. After fitting, an optimal
one-to-one assignment (Hungarian matching) that maximizes total overlap maps the
six clusters to the six annotated classes for macro-F1; ARI and NMI are invariant
to this relabeling. SCOT provides an external single-cell multiome baseline
built from a graph in each modality \cite{DemetciEtAl2022}. It directly aligns the
\(480\) RNA cells with the \(480\) ATAC cells. In the prototype comparison, SCOT
aligns the \(900\) atlas RNA cells with the \(480\) ATAC cells, after which its
source rows are aggregated by atlas class. The table and figure label the
prototype SCOT result as a contextual comparison.

All transport methods use uniform empirical masses. Atlas labels define the six
prototypes, and source evaluation labels group transported mass by class after
optimization. Target labels determine the balanced evaluation splits and score
class transfer; physical pair identities are used only for the appendix
retrieval diagnostic.
Evaluation labels and pair identities do not enter the transport optimization.
Tables~\ref{tab:pbmc-data-config} and
\ref{tab:pbmc-solver-config} list the principal fixed settings; complete
configurations are stored with the run artifacts.

The main comparison combines four frozen sets of method runs on the same five
preprocessed splits. POT IGW and target \(k\)-means come from the original runs on
those splits; the IGW envelope initialized by a scaled identity and SCOT come from
their respective audits over the same splits; and wIGW uses the fixed
\(\varepsilon=10^{-4}\), \(100\times50\) protocol from the ridge sensitivity runs.
No score derived from target labels is used to select a restart or coupling.

\paragraph{Metrics.}
Following previous RNA/ATAC integration benchmarks, we report macro-F1 for
cell type transfer and ARI and NMI for agreement between the predicted and
annotated target partitions. uniPort reports F1, ARI, and NMI together on
paired scRNA/scATAC PBMC data \cite{CaoGongHongWan2022}, while sciCAN uses
macro-F1 for transfer in both directions between scRNA and scATAC
\cite{XuBegoliMcCord2022}.
For each target cell, we sum the transported mass over every source class and
select the class with largest mass. Since source and target use the same six
classes, this prediction requires no permutation matching. Macro-F1 measures
class transfer, while adjusted Rand index (ARI) and normalized mutual information
(NMI) compare the predicted and target partitions. Formal definitions are given
in Appendix~\ref{subsec:pbmc-evaluation-metrics}. That appendix also reports
top-1 and top-5 retrieval and mean reciprocal rank as secondary diagnostics of
exact cell matching, together with a reference based on random rankings. The main table
reports means and sample standard deviations over the five frozen splits.

\paragraph{Results.}
Table~\ref{tab:pbmc-main-results} and Figure~\ref{fig:pbmc-main-comparison}
compare class transfer across the five frozen splits. In the cell to cell task,
wIGW has mean macro-F1, ARI, and NMI values \(0.030\), \(0.082\), and \(0.052\)
higher, respectively, than the IGW envelope initialized with a scaled identity.
Its ARI and NMI are higher on all five splits.

The prototype to cell task most directly tests the proposed one-to-many construction.
Relative to the IGW envelope, wIGW has higher mean macro-F1, ARI, and NMI by
\(0.041\), \(0.115\), and \(0.060\), respectively; its ARI and NMI are higher on every
split. Target \(k\)-means has a slightly higher mean macro-F1, \(0.590\) versus
\(0.577\), while wIGW is higher by \(0.083\) in ARI and \(0.040\) in NMI. The target
\(k\)-means result shows that the ATAC embedding already contains substantial class
structure. By ARI and NMI, the partition transferred by wIGW agrees more closely
with the six classes.

\begin{table}[H]
\centering
\footnotesize
\renewcommand{\arraystretch}{1.08}
\begin{tabular}{@{}llccc@{}}
\toprule
task & method & macro-F1 & ARI & NMI\\
\midrule
cell to cell
 & POT IGW, product start
 & \(0.426\pm0.302\) & \(0.397\pm0.104\) & \(0.479\pm0.082\)\\
 & IGW envelope, scaled identity
 & \(0.653\pm0.091\) & \(0.502\pm0.029\) & \(0.568\pm0.023\)\\
 & barycentric wIGW, \(\varepsilon=10^{-4}\), \(100\times50\)
 & \(\mathbf{0.684\pm0.150}\) & \(\mathbf{0.584\pm0.046}\) &
   \(\mathbf{0.619\pm0.040}\)\\
 & target \(k\)-means\(^{\ast}\)
 & \(0.590\pm0.016\) & \(0.453\pm0.024\) & \(0.555\pm0.023\)\\
 & SCOT
 & \(0.515\pm0.204\) & \(0.526\pm0.089\) & \(0.576\pm0.071\)\\
\addlinespace[.25em]
prototype to cell
 & POT IGW, product start
 & \(0.248\pm0.079\) & \(0.413\pm0.015\) & \(0.524\pm0.015\)\\
 & IGW envelope, scaled identity
 & \(0.536\pm0.005\) & \(0.421\pm0.011\) & \(0.535\pm0.010\)\\
 & barycentric wIGW, \(\varepsilon=10^{-4}\), \(100\times50\)
 & \(0.577\pm0.144\) & \(\mathbf{0.536\pm0.014}\) &
   \(\mathbf{0.595\pm0.010}\)\\
 & target \(k\)-means\(^{\ast}\)
 & \(\mathbf{0.590\pm0.016}\) & \(0.453\pm0.024\) & \(0.555\pm0.023\)\\
 & SCOT\(^{\dagger}\)
 & \(0.470\pm0.211\) & \(0.516\pm0.076\) & \(0.565\pm0.058\)\\
\bottomrule
\end{tabular}
\caption{PBMC multiome atlas based cell type transfer. Entries are means and sample standard
deviations over five frozen subsampling splits from one donor. The method rows
come from the frozen sets of runs identified in the protocol paragraph; the table
compares alignment quality and does not match wall-clock budgets.
\(^{\ast}\)Target \(k\)-means uses only the ATAC embedding; an optimal
one-to-one assignment maps its clusters to class names before macro-F1 is
computed.
\(^{\dagger}\)SCOT aligns \(900\) atlas RNA cells with \(480\) ATAC cells in the
prototype to cell context, followed by source class aggregation.}
\label{tab:pbmc-main-results}
\end{table}

\begin{figure}[H]
\centering
\includegraphics[width=.98\linewidth]
 {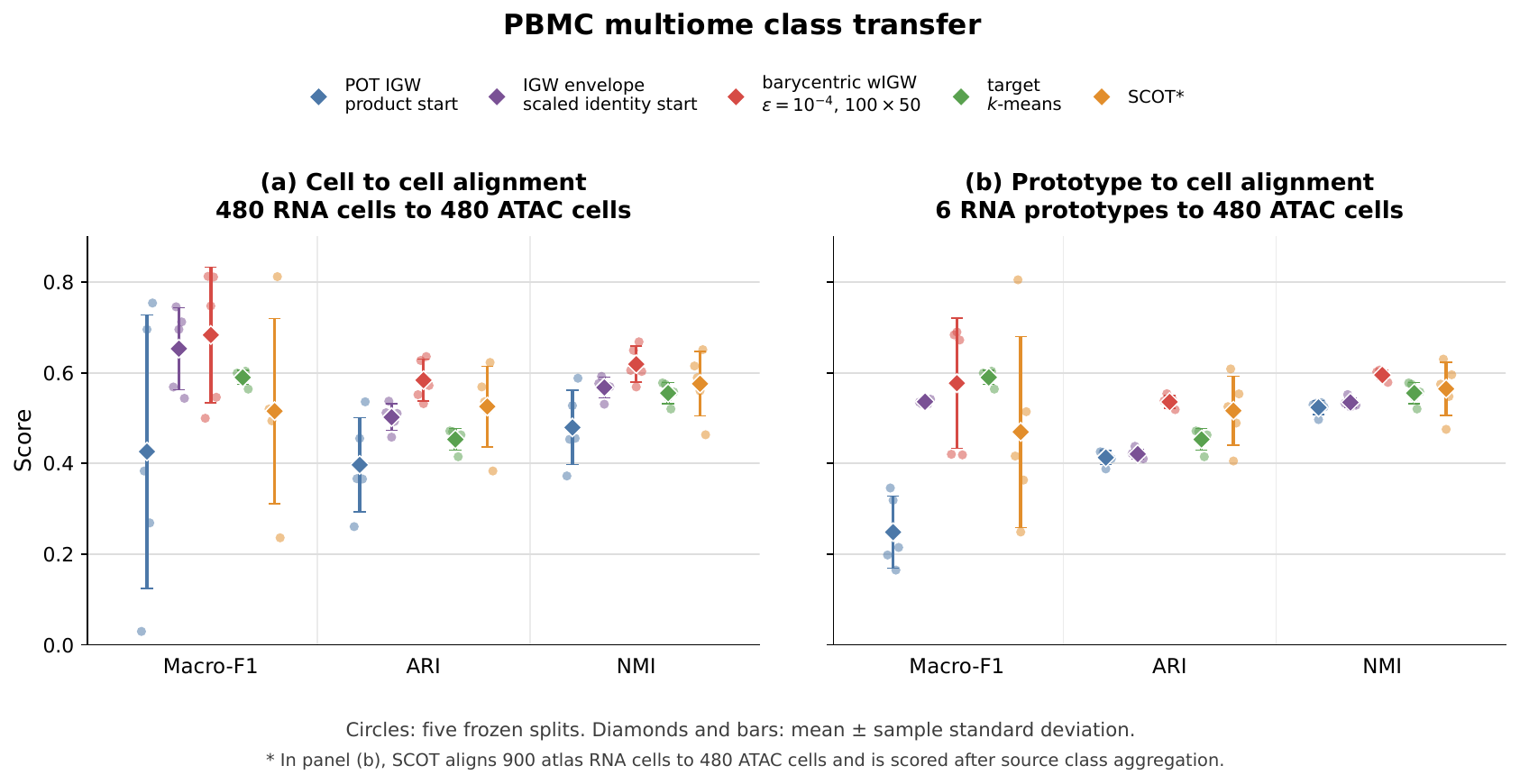}
\caption{PBMC cell type transfer by split. Circles show the five frozen subsamples;
diamonds show the means, and error bars show the sample standard deviation. In the
prototype to cell panel, SCOT aligns the \(900\) atlas RNA cells before class
aggregation, so it is a contextual comparison rather than a direct six-prototype
baseline. The wIGW label identifies the fixed
\(\varepsilon=10^{-4}\), \(100\times50\) run used in the table.}
\label{fig:pbmc-main-comparison}
\end{figure}

\paragraph{Secondary validation with a fixed protocol.}
After fixing \(\varepsilon=10^{-4}\) and the \(100\times50\) budget, we ran the
same protocol on five fresh subsampling seeds from the same donor. Relative to the
IGW envelope, the wIGW mean is higher by \(0.008\) in macro-F1, \(0.088\) in ARI,
and \(0.063\) in NMI for cell to cell transfer. For prototype to cell transfer,
the corresponding differences are \(-0.054\), \(0.132\), and \(0.081\).
ARI and NMI are higher for wIGW on all five new splits in both tasks, whereas
macro-F1 is variable. Table~\ref{tab:pbmc-postselection-validation} gives the full
secondary comparison. This is a post-selection check of the fixed protocol, not an
independent donor validation.

\paragraph{Ridge sensitivity.}
We repeat the wIGW runs for
\[
 \varepsilon\in
 \{10^{-6},10^{-5},10^{-4},10^{-3},10^{-2},10^{-1},1\},
\]
holding the splits, initialization, \(100\times50\) iteration budget, and
balancing rule fixed. We retain the existing value in the small ridge regime,
\(\varepsilon=10^{-4}\), as the fixed reference in this grid. The run and report
code receive that value explicitly and do not search the grid using metrics derived
from target labels. Across the five splits,
\(2\lambda_{\max}(S_\mu)\) ranges from \(0.771\) to \(0.814\) for cell sources
and from \(0.685\) to \(0.722\) for prototype sources. Thus only
\(\varepsilon=1\) among the tested values satisfies the strict strong ridge
condition of Theorem~\ref{thm:convergence} for every split.

Figure~\ref{fig:pbmc-epsilon-sensitivity} summarizes the sensitivity results.
The metrics are stable from \(10^{-6}\) through \(10^{-2}\). They decline at
\(10^{-1}\), and the alignment metrics at \(1\) differ substantially from the
small ridge regime. The spectral inequality certifies a sufficient
convex--concave regime for the outer problem. The empirical metrics separately assess alignment
quality. The full convergence theorem also imposes step size and inner oracle
conditions, which these fixed iteration runs do not certify.

\begin{figure}[H]
\centering
\includegraphics[width=.99\linewidth]
 {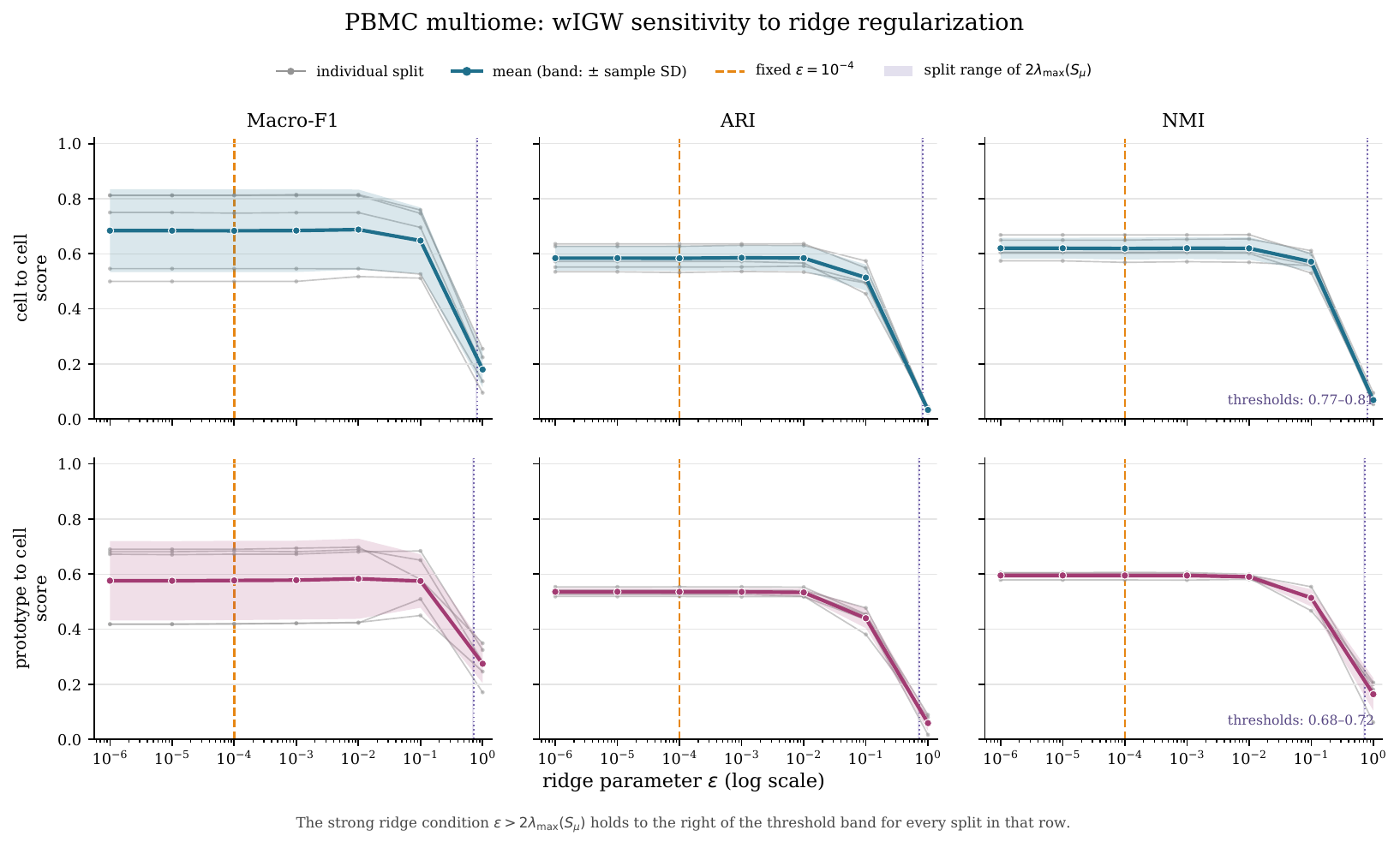}
\caption{Sensitivity to the ridge parameter. Rows correspond to the cell to cell
and prototype to cell tasks; columns report macro-F1, ARI, and NMI. Thin curves show
the five splits, thick curves show the mean, and shaded regions show the sample
standard deviation. The orange line marks \(\varepsilon=10^{-4}\); the purple
interval gives the split range of \(2\lambda_{\max}(S_\mu)\).}
\label{fig:pbmc-epsilon-sensitivity}
\end{figure}

Across all \(70\) runs, the largest coordinatewise marginal residual is
\(1.11\cdot10^{-16}\), the largest \(L^1\) change induced by final balancing and
repair is
\(2.24\cdot10^{-15}\), and the largest relative decrease in the running minimum
of the ridge objective over the final ten outer iterations is
\(8.30\cdot10^{-4}\).

\paragraph{Initialization and basis sensitivity.}
We performed a post hoc audit at the same reported iteration budget using the scaled
identity and two deterministic starts based on SVD frames, with selection by each
method's own objective. The scaled identity was selected on all five cell to cell splits
for both wIGW and the IGW envelope, on all five prototype to cell splits for the IGW
envelope, and on four of five prototype to cell splits for wIGW. The
prototype to cell wIGW results selected by objective across five splits have macro-F1 \(0.631\pm0.119\), ARI
\(0.535\pm0.014\), and NMI \(0.593\pm0.010\); the main table retains the original
results from the fixed start.

We also applied independent sign flips and orthogonal changes of basis to the two
modalities. Reusing a scaled identity in the transformed coordinates can reach
a different local solution; for example, the mean cell to cell wIGW macro-F1 changes
by \(-0.452\) after the sign flip and by \(-0.351\) after the orthogonal change
of basis.
The IGW envelope shows the same qualitative dependence. Covariant controls preserve
the plan exactly and change the objective by at most \(2.02\cdot10^{-16}\). Thus the
discrepancy has the expected invariance, while the finite local algorithms remain
initialization and basis sensitive. Appendix
Figure~\ref{fig:pbmc-basis-audit} gives the complete audit, and
Table~\ref{tab:pbmc-runtime} reports runtimes by method.

\paragraph{Limitations.}
The five splits come from one donor and measure subsampling variability rather
than uncertainty across donors. The frozen subsamples partly overlap, so their
sample standard deviations describe split variability rather than independent
replicate uncertainty. The annotation is derived from RNA and restricted to six
classes, and the source prototypes use atlas labels by construction. The benchmark is
class balanced, so its marginals do not represent natural PBMC abundance. All
SCOT runs emitted a Sinkhorn convergence warning with the fixed regularization
parameter;
their largest coordinatewise marginal residual was \(2.52\cdot10^{-5}\). The
comparison concerns alignment quality; the solver budgets differ by method and are
not matched by wall-clock time. The local solutions from the fixed start are also sensitive
to the coordinate representation, as the post hoc audit above shows. The transport
methods use local
optimization with finite iteration budgets, and the fixed
\(\varepsilon=10^{-4}\) lies below the strong ridge condition in the convergence
theorem. The barycentric model is appropriate when conditional mean geometry
retains the biological structure relevant to the alignment.

\FloatBarrier
\section{Conclusion}\label{sec:conclusion}

We introduced a framework based on conditional laws in which the disintegration
\(x\mapsto\pi_x\) represents the target side of a coupling. For the barycentric
inner-product relation, the conditional mean carries the compared geometry and a
martingale kernel supplies the variation needed to recover the prescribed target
marginal. The exact convex-order projection identifies this discrepancy with IGW over
laws below the target in convex order; under second moment assumptions, minimizers
exist and martingale gluing realizes an optimal coupling.

Moment duality yields a potential dual under compact support and, after positive ridge
regularization, an $A$--$B$ envelope whose weak OT objective is strongly convex
in its barycentric mean map. Under the spectral ridge, step-size, and oracle
hypotheses, the projected outer iteration has an explicit contraction bound under
oracle error. The point cloud and
graph feature studies isolate the refinement mechanism. In atlas based PBMC
prototype to cell transfer, wIGW exceeds the mean ARI and NMI of the IGW envelope
initialized by a scaled identity by \(0.115\) and \(0.060\), respectively; both
metrics are higher on all five splits.
Five additional splits from the same donor preserve the ARI and NMI advantage but not a
uniform macro-F1 advantage.
On the original five splits used for the ridge sweep, the metrics remain stable from \(10^{-6}\)
through \(10^{-2}\), an empirical range
below the threshold for the sufficient strong ridge condition used in the convergence theorem.

The construction is suited to settings in which conditional means carry the relevant
geometry and conditional variation represents target refinement. Convex order makes
the comparison directional: a mean preserving spread of the target cannot increase
the discrepancy. For fixed $(A,B)$, the ridge weak OT block has a unique barycentric
mean map, although its optimal coupling and martingale realization need not be unique.
Selecting among these realizations requires an additional criterion on conditional
variability. This limitation motivates richer relations between conditional laws,
including the maximal covariance relation \(D_{\rm MCov}\), the Wasserstein relation
\(D_{W_2}\), and the MMD relation \(D_k^{\rm MMD}\), which retain information beyond
the conditional mean. Their structural, dual, and computational properties remain to
be studied. The same coarse to fine viewpoint also suggests one-to-many comparisons
of quantum measurements and instruments, with their physical post-processing
constraints built into the formulation. Infinite-dimensional kernels and nonlinear
barycenters will require additional operator or barycenter control.

\Needspace{8\baselineskip}
\paragraph{Acknowledgments.}
The author is grateful to Alfred Galichon, Pierre Jacob, and Antoine Jacquet for
organizing the workshop \emph{Optimal Transport: Theory and Applications} at the
Institut d'\'Etudes Scientifiques de Carg\`ese in April 2024, and to Nathael Gozlan
and Beatrice Acciaio, whose talks there introduced the author to weak optimal
transport. The author also thanks Soumik Pal, Young-Heon Kim, Anna Korba, and
Brendan Pass for organizing the workshop \emph{Wasserstein Gradient Flows in Math
and Machine Learning} at the Banff International Research Station in June--July
2025. Discussions at these meetings helped motivate and shape the perspective
developed in this work.

OpenAI ChatGPT was used during manuscript preparation for editorial review, language
revision, and assistance with proof, literature, and code review. The author retains
responsibility for the manuscript's content and conclusions.

\let\proof\appendixproof
\let\endproof\endappendixproof
\appendix

\section{Analytic and variational foundations}
\label{app:detailed-proofs}\label{app:variational-proofs}

We establish here the functional analytic tools and variational results used for
ordinary IGW and barycentric weak OT. Intermediate estimates specify the topology and
integrability class used in each argument. Algebraic expansions that follow directly
from the stated identities are abbreviated.

\subsection{Weak compactness and the direct method}
\label{app:weak-compactness-direct-method}

We begin with the functional analytic facts used in the existence arguments. This
also fixes the precise meaning of ``weakly compact'' throughout the paper.
Our conventions for the weak topology follow
\cite[Sections~3.2--3.3 and Chapter~5]{BrezisFA}.

\begin{definition}[Weak convergence and weak compactness]
\label{def:app-weak-compactness}
Let $E$ be a real Banach space with continuous dual $E^*$. A sequence
$h_n\in E$ converges \emph{weakly} to $h\in E$, written
$h_n\rightharpoonup h$, if
\[
 \ell(h_n)\longrightarrow\ell(h)
 \qquad\text{for every }\ell\in E^*.
\]
A set $K\subset E$ is \emph{weakly compact} if it is compact for
$\sigma(E,E^*)$, the coarsest topology for which every $\ell\in E^*$ is
continuous. By the Eberlein--\v Smulian theorem
\cite[Chapter~3, Problem~10]{BrezisFA}, weak compactness is equivalent to the
following sequential property: every sequence in $K$ has a weakly convergent
subsequence whose limit belongs to $K$. Norm compactness is a strictly stronger
property.

When $E=H$ is a Hilbert space, the Riesz representation theorem identifies
$H^*$ with $H$, so weak convergence is tested by the inner products
$h\mapsto\ip{h}{g}_H$. In particular, for
$H=L^2(\mu;\R^{d_y})$ the tests are
$m_n\rightharpoonup m$ precisely when
\[
 \int g(x)^\top m_n(x)d\mu(x)
 \longrightarrow
 \int g(x)^\top m(x)d\mu(x)
 \qquad\text{for every }g\in L^2(\mu;\R^{d_y}).
\]
A functional $F:E\to(-\infty,+\infty]$ is \emph{weakly lower semicontinuous}
if every sublevel set $\{h:F(h)\le r\}$ is weakly closed.  In particular, if
$h_n\rightharpoonup h$, then $F(h)\le\liminf_nF(h_n)$.
\end{definition}

The compactness input in the following standard direct method statement is
Kakutani's reflexivity criterion \cite[Theorem~3.17]{BrezisFA}; compare the
minimization form in \cite[Corollaries~3.22--3.23]{BrezisFA}.

\begin{theorem}[Direct method in a reflexive space]
\label{thm:app-direct-method}
Let $E$ be a reflexive Banach space, let $K\subset E$ be nonempty, weakly closed,
and norm bounded, and let $F:K\to(-\infty,+\infty]$ be proper, bounded below,
and weakly lower semicontinuous.  Then $K$ is weakly compact and $F$ has a
minimizer on $K$.
\end{theorem}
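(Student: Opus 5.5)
The plan is to prove the two assertions in order: first that $K$ is weakly compact, then that a minimizer exists, using the Eberlein--\v Smulian reformulation of weak compactness in Definition~\ref{def:app-weak-compactness}.

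\emph{Weak compactness.} Since $K$ is norm bounded, it lies in some closed ball $\{h:\norm h_E\le R\}$. In a reflexive space, Kakutani's criterion \cite[Theorem~3.17]{BrezisFA} says this ball is weakly compact. The set $K$ is weakly closed by hypothesis. A weakly closed subset of a weakly compact set is weakly compact, so $K$ is weakly compact. In sequential form: every sequence in $K$ has a weakly convergent subsequence, and its weak limit stays in $K$ because $K$ is weakly closed.

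\emph{Existence of a minimizer.} Set $\alpha:=\inf_K F$. Because $F$ is bounded below, $\alpha>-\infty$. Because $F$ is proper, $F$ is finite somewhere on $K$, so $\alpha<+\infty$. I will choose a minimizing sequence $h_n\in K$ with $F(h_n)\to\alpha$.

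\begin{itemize}
\item By weak compactness of $K$, there is a subsequence $h_{n_k}\rightharpoonup h^\star\in K$.
\item Weak lower semicontinuity gives $F(h^\star)\le\liminf_k F(h_{n_k})=\alpha$.
\item Since $h^\star\in K$, also $F(h^\star)\ge\alpha$, so $F(h^\star)=\alpha$.
\end{itemize}

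Weak lower semicontinuity is stated for $F$ on $K$, meaning its sublevel sets are weakly closed in $K$. Because $K$ is itself weakly closed in $E$, these sublevel sets are then weakly closed in $E$, which is what the $\liminf$ inequality needs.

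The only delicate point is the passage from weak compactness to sequential extraction. Weak topologies are generally not metrizable, so a compact set need not obviously yield convergent subsequences. I will invoke Eberlein--\v Smulian, exactly as recorded in Definition~\ref{def:app-weak-compactness}, to justify it. Alternatively, the argument can avoid sequences entirely: take the nested nonempty weakly closed sublevel sets $\{F\le\alpha+1/n\}\cap K$. By the finite intersection property in the weakly compact set $K$, their intersection is nonempty, and any point in it is a minimizer.
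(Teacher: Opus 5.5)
Your proof is correct and follows the paper's argument essentially step for step: Kakutani's criterion makes the bounded, weakly closed set $K$ weakly compact, and then a minimizing sequence, a weakly convergent subsequence, and weak lower semicontinuity produce the minimizer. Your additions are sound refinements that the paper leaves implicit: finiteness of $\inf_K F$ from properness, the passage from relative to ambient weak closedness of the sublevel sets, and the sequence-free alternative via the finite intersection property.
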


\begin{proof}
Kakutani's criterion makes the closed unit ball weakly compact. Since $K$ is
norm bounded and weakly closed, it is therefore weakly compact. Choose a minimizing
sequence $(h_n)\subset K$.  Weak compactness gives a subsequence, not relabeled,
and $h^\star\in K$ such that $h_n\rightharpoonup h^\star$.  Weak lower
semicontinuity gives
\[
 F(h^\star)\le\liminf_nF(h_n)=\inf_{h\in K}F(h),
\]
so equality holds and $h^\star$ is a minimizer.
\end{proof}

\subsection{Ordinary IGW and barycentric weak OT}

\subsubsection{Proof of Theorem~\ref{thm:ordinary-igw-envelope}}\label{proof:2.1}

\begin{proof}
Fix $\pi\in\Pi(\mu,\nu)$ and let $(X,Y)$ and $(X',Y')$ be independent with
law $\pi$.  Independence of the two copies gives
\begin{align*}
 \E\ip{X}{X'}^2
 &=\sum_{r,s}\E[X_rX_s]\E[X'_rX'_s]=\norm{S_\mu}_F^2,\\
 \E\ip{Y}{Y'}^2&=\norm{S_\nu}_F^2,
\end{align*}
and
\[
 \E[\ip{X}{X'}\ip{Y}{Y'}]
 =\sum_{r,s}\E[X_rY_s]\E[X'_rY'_s]
 =\norm{M_\pi}_F^2.
\]
All terms are finite by Cauchy--Schwarz and the second moment assumptions.
Expanding the square therefore yields
\begin{equation}\label{eq:app-igw-moment}
 J_{\rm IGW}(\pi)
 =\norm{S_\mu}_F^2+\norm{S_\nu}_F^2-2\norm{M_\pi}_F^2.
\end{equation}

For any matrix $M$, completing the square gives
\[
 2\norm A_F^2-4\ip A M_F
 =2\norm{A-M}_F^2-2\norm M_F^2.
\]
Hence its minimum over $A$ is $-2\norm M_F^2$, with unique minimizer $A=M$.
Substitution into \eqref{eq:app-igw-moment} gives
\begin{align*}
 \inf_{\pi\in\Pi(\mu,\nu)}\inf_A
 \{2\norm A_F^2-4\ip A{M_\pi}_F\}
 &=\inf_A\left\{2\norm A_F^2+
   \inf_{\pi\in\Pi(\mu,\nu)}\int c_A\,d\pi\right\}.
\end{align*}
Because iterated infima commute,
$\inf_\pi\inf_A=\inf_A\inf_\pi$. This proves
\eqref{eq:ordinary-igw-envelope}.

For every coupling,
\[
 \norm{M_\pi}_F
 \le\int\norm x\norm y\,d\pi(x,y)
 \le\sqrt{M_2(\mu)M_2(\nu)}.
\]
Thus, for each fixed $\pi$, the Fenchel minimizer $A=M_\pi$ lies in
$\mathcal A_{\mu,\nu}$, so restricting $A$ to this set leaves the minimum
unchanged.

We next prove existence of a minimizer.  The set $\Pi(\mu,\nu)$ is tight because its two
marginals are fixed, hence relatively compact by Prokhorov's theorem
\cite[Theorem~5.1]{Billingsley}; it is weakly closed and therefore weakly compact.
Suppose
$\pi_n\Rightarrow\pi$.  For every coordinate product $x_ry_s$, choose a continuous
cutoff supported on $\{\norm x+\norm y\le2R\}$ and equal to one on the radius-$R$
set.  The cutoff integral converges weakly.  The discarded tail is uniform in $n$:
\begin{align*}
 \int\norm x\norm y\,\mathbf 1_{\{\norm x>R/2\}}d\pi_n
 &\le
 \left(\int\norm x^2\mathbf 1_{\{\norm x>R/2\}}d\mu\right)^{1/2}
 M_2(\nu)^{1/2},
\end{align*}
and the analogous target tail estimate also tends to zero.  Therefore
$M_{\pi_n}\to M_\pi$ entrywise and in Frobenius norm.  Equation
\eqref{eq:app-igw-moment} makes the IGW objective weakly continuous on the compact
coupling set, so there exists $\pi^\star\in\Pi(\mu,\nu)$ minimizing it.  The same
argument, together with compactness of $\mathcal A_{\mu,\nu}$, gives a jointly
minimizing pair $(A^\star,\pi^\star)$.

Finally, for fixed $A$,
\[
 |c_A(x,y)|\le4\norm A_F\norm x\norm y
 \le2\norm A_F(\norm x^2+\norm y^2).
\]
Thus $c_A$ is continuous and dominated in absolute value by a sum of integrable
marginal functions.  Kantorovich duality for such costs
\cite[Theorem~5.10]{VillaniOT} gives
\eqref{eq:ordinary-igw-kantorovich-dual} with pointwise Borel representatives.
If $u=-\psi$, feasibility of $(\varphi,\psi)$ is equivalent to
\[
 \varphi(x)\le u(y)-4y^\top A^\top x\quad\text{for every }y,
\]
and hence to $\varphi\le Q_Au$.  The infimum defining $Q_Au$ is universally
measurable for Borel $u$ by the analytic projection theorem
\cite[Proposition~7.47]{BertsekasShreve}, so it is measurable on the completed
source space.

For $(\varphi,\psi)\in\mathcal K_A$, choose $y_0$ such that $u(y_0)\in\R$.  Then
\[
 \varphi(x)\le Q_Au(x)\le u(y_0)-4y_0^\top A^\top x.
\]
The lower and upper bounds are integrable under $\mu$, and therefore
$Q_Au\in L^1(\mu)$.  Replacing $\varphi$ by $Q_Au$ preserves feasibility and cannot
decrease the dual objective.  Conversely, whenever $u\in L^1(\nu)$ and
$Q_Au\in L^1(\mu)$, the pair $(Q_Au,-u)$ belongs to $\mathcal K_A$.  These two
constructions prove \eqref{eq:ordinary-igw-Q-dual} with the potential class stated in
the theorem.
\end{proof}

\subsubsection{Proof of Theorem~\ref{thm:wot-blueprint}}\label{proof:2.4}

\begin{proof}
We first identify the feasible barycentric maps.  By the disintegration theorem
\cite[Theorem~3.4]{Kallenberg}, if
$\pi(dx,dy)=\mu(dx)\pi_x(dy)$ and $m_\pi(x)=\int y\,d\pi_x(y)$, then for every
integrable convex $u$, conditional Jensen gives
\[
 \int u(m_\pi(x))d\mu(x)
 \le\int\!\int u(y)d\pi_x(y)d\mu(x)=\int u(y)d\nu(y).
\]
Hence $(m_\pi)_\#\mu\cx\nu$.  Conversely, let $m_\#\mu\cx\nu$.
Theorem~\ref{thm:strassen} supplies a martingale kernel $K(z,dy)$ from
$m_\#\mu$ to $\nu$.  Define
\[
 \pi(dx,dy)=\mu(dx)K(m(x),dy).
\]
For bounded measurable $g$,
\[
 \int g(y)d\pi=\int\!\int g(y)K(z,dy)d(m_\#\mu)(z)=\int g(y)d\nu(y),
\]
and the first marginal is $\mu$ by construction.  The martingale identity gives
$\int yK(m(x),dy)=m(x)$ for $\mu$-almost every $x$.  Thus $m_\pi=m$, proving
the coupling/map equality.

The noncompact weak OT theorem
\cite[Theorems~2.9 and~3.1]{BBP} applies to the cost considered below.  It states
that if
$\nu\in\cP_2(\R^{d_y})$ and
$\widetilde C:\R^{d_x}\times\cP_2(\R^{d_y})\to\R\cup\{+\infty\}$ is jointly
lower semicontinuous for the Euclidean--$W_2$ product topology, bounded below, and
convex in its probability argument, then the weak primal admits a minimizer and
its value equals
\[
 \sup_{\psi\in\Phi_{b,2}}
 \left\{\int R_{\widetilde C}\psi\,d\mu-\int\psi\,d\nu\right\},
 \qquad
 R_{\widetilde C}\psi(x)
 :=\inf_{p\in\cP_2}\left\{\widetilde C(x,p)+\int\psi\,dp\right\},
\]
where $\Phi_{b,2}$ consists of continuous potentials bounded below by a constant
and above by a quadratic function.

For the present cost, set
\[
 C(x,p)=c(x,b(p)),\qquad b(p)=\int y\,dp(y).
\]
The barycenter map is continuous for $W_2$: if $p_n\to p$ in $W_2$, then an
optimal coupling and Cauchy--Schwarz give
$\norm{b(p_n)-b(p)}\le W_2(p_n,p)$.  Consequently $C$ is jointly continuous.
Because $b$ is affine and $c(x,\cdot)$ is convex, $p\mapsto C(x,p)$ is convex.
Let $C_0$ be the constant in the lower growth bound and put
$h(x)=C_0(1+\norm x^2)$.  Then
\[
 (C+h)(x,p)\ge\alpha\norm{b(p)}^2\ge0.
\]
Thus $C+h$ is jointly continuous, globally bounded below, and convex in $p$, while
$\nu\in\cP_2$ and $h\in L^1(\mu)$.  The cited weak OT theorem applies to
$C+h$.  Since $R_{C+h}\psi(x)=h(x)+R_C\psi(x)$, subtracting $\int h\,d\mu$ from
the primal and dual formulas yields existence of a primal minimizer and the probability-valued
dual formula for $C$.

It remains to reduce that dual to convex functions of the barycenter.  Given a
target potential $\psi$ in the weak dual, define
\[
 u(z)=\inf\left\{\int\psi(y)dp(y):p\in\cP_2(\R^{d_y}),\ b(p)=z\right\}.
\]
The competitor $p=\delta_z$ gives $u(z)\le\psi(z)$.  If $p_0,p_1$ have
barycenters $z_0,z_1$, then $(1-t)p_0+tp_1$ has barycenter
$(1-t)z_0+tz_1$; taking near minimizers proves convexity of $u$.  The growth
conditions in the weak dual make $u$ finite and lower bounded.  A finite convex
function on Euclidean space is continuous
\cite[Corollary~10.1.1]{Rockafellar}, and the Dirac competitor gives its
quadratic upper growth.  Partitioning the transform by $z=b(p)$ gives
\begin{align*}
 R_C\psi(x)
 &=\inf_{p}\left\{c(x,b(p))+\int\psi\,dp\right\}\\
 &=\inf_z\{c(x,z)+u(z)\}=Q_cu(x).
\end{align*}
For the replacement potential $u$, conditional Jensen and the Dirac competitor
give, for every $z$,
\[
 \inf_{p:\,b(p)=z}\int u(y)\,dp(y)=u(z).
\]
Consequently $R_Cu=Q_cu=R_C\psi$.  Since $u\le\psi$, the target term satisfies
$-\int u\,d\nu\ge-\int\psi\,d\nu$.  Replacing $\psi$ by $u$ therefore keeps the
transform fixed and can only increase the dual objective.  Thus convex barycentric
potentials yield the full dual value.

The dual may be indexed by all $u\in\mathcal U_2^{\rm cvx}$.  Every such $u$ has
an affine supporting lower bound
\cite[Corollary~12.1.2]{Rockafellar}.  Combining it with the coercive
quadratic lower bound on $c$ makes $Q_cu(x)>-\infty$, while evaluation at $z=0$
and the upper growth bounds give $|Q_cu(x)|\le C'(1+\norm x^2)$.  Moreover, for
every feasible $m$,
\[
 Q_cu(x)\le c(x,m(x))+u(m(x)),\qquad
 \int u(m)d\mu\le\int u\,d\nu.
\]
These inequalities give weak duality throughout $\mathcal U_2^{\rm cvx}$.  The
subclass constructed above already yields the primal value, so the supremum over
$\mathcal U_2^{\rm cvx}$ has the same value.

The feasible map set is convex: pointwise convex combinations remain below $\nu$
in convex order by Jensen.  If $c(x,\cdot)$ is $\lambda$-strongly convex and
$m_0,m_1$ are distinct minimizers, their midpoint is feasible and
\[
 \int c\left(x,\frac{m_0+m_1}{2}\right)d\mu
 \le\frac12\int c(x,m_0)d\mu+\frac12\int c(x,m_1)d\mu
 -\frac{\lambda}{8}\norm{m_0-m_1}_{L^2}^2,
\]
contradicting minimality.  Hence the optimal barycentric map is unique.
\end{proof}

\section{Barycentric formulations, moments, and projection}
\label{app:barycentric-proofs}

We prove here the coupling and map formulations, the moment representation, the zero
set, the convex order projection, and existence under finite second moments.

\subsection{Conditional laws and barycentric primal formulations}

\subsubsection{Proof of Proposition~\ref{prop:gw-specialization}}\label{proof:3.2}

\begin{proof}
By the disintegration theorem on standard Borel spaces
\cite[Theorem~3.4]{Kallenberg}, fix $\pi\in\Pi(\mu,\nu)$ and disintegrate
$\pi(dx,dy)=\mu(dx)\pi_x(dy)$.  The product law disintegrates as
\[
 (\pi\otimes\pi)(dx,dy,dx',dy')
 =\mu(dx)\mu(dx')\pi_x(dy)\pi_{x'}(dy').
\]
If the integrand is nonnegative, Tonelli's theorem applies; if it is signed and
integrable, the same rearrangement follows from Fubini's theorem.  Therefore
\begin{align*}
 &\iint\mathcal L(c_{\mathcal X}(x,x'),c_{\mathcal Y}(y,y'))
 d\pi(x,y)d\pi(x',y')\\
 &=\iint\left[\iint
 \mathcal L(c_{\mathcal X}(x,x'),c_{\mathcal Y}(y,y'))
 d\pi_x(y)d\pi_{x'}(y')\right]d\mu(x)d\mu(x').
\end{align*}
The bracket equals
$\mathfrak C_{\rm GW}(c_{\mathcal X}(x,x'),\pi_x,\pi_{x'})$.  The equality
holds for every coupling, so infimizing over $\pi$ proves the proposition.  Inner
products and squared loss give the stated IGW specialization.
\end{proof}

\subsubsection{Proof of Lemma~\ref{lem:feasible-set}}\label{proof:4.1}

\begin{proof}
Set $H=L^2(\mu;\R^{d_y})$.

\emph{Nonemptiness.}
Let $\bar y=\int y\,d\nu(y)$.  Jensen's inequality gives
$\delta_{\bar y}\cx\nu$, so the constant map $m\equiv\bar y$ belongs to
$\mathcal C_\nu$.

\emph{Convexity.}
If $m_0,m_1\in\mathcal C_\nu$ and $t\in[0,1]$, then every finite convex
function $u$ of at most linear growth satisfies
\[
 \int u((1-t)m_0+tm_1)\,d\mu
 \le(1-t)\int u(m_0)\,d\mu+t\int u(m_1)\,d\mu
 \le\int u\,d\nu.
\]
Hence $(1-t)m_0+tm_1\in\mathcal C_\nu$.

\emph{Norm boundedness.}
Although Definition~\ref{def:convex-order} uses convex tests of at most linear
growth, the quadratic test follows by monotone approximation.  For $R>0$, set
\[
 \phi_R(z):=
 \begin{cases}
  \norm z^2,&\norm z\le R,\\
  2R\norm z-R^2,&\norm z>R.
 \end{cases}
\]
Each $\phi_R$ is finite, convex, and of at most linear growth, and
$\phi_R(z)\uparrow\norm z^2$ as $R\uparrow\infty$.  Since
$m_\#\mu\cx\nu$, Definition~\ref{def:convex-order} and monotone convergence give
\[
 \norm{m}_{L^2(\mu)}^2
 =\int\norm{z}^2\,d(m_\#\mu)(z)
 \le\int\norm{y}^2\,d\nu(y)=M_2(\nu).
\]
Thus $\mathcal C_\nu$ is norm bounded.

\emph{Weak closedness.}
Suppose $m_n\in\mathcal C_\nu$ and $m_n\rightharpoonup m$ in $H$.  Fix a
finite continuous convex function $u$ of at most linear growth and define
\[
 I_u(q):=\int u(q(x))\,d\mu(x),\qquad q\in H.
\]
This functional is finite and convex, and it is lower semicontinuous in norm. To see
this, let $q_n\to q$ in $H$, take a subsequence realizing the lower limit,
and then extract a further subsequence converging pointwise $\mu$-almost
everywhere \cite[Theorem~4.9]{BrezisFA}.  Choose an affine function $\ell\le u$.
Since $u-\ell\ge0$,
continuity and Fatou's lemma give
\[
 \int (u-\ell)(q)\,d\mu
 \le\liminf_n\int (u-\ell)(q_n)\,d\mu.
\]
Strong $L^2$ convergence implies strong $L^1$ convergence, so
$\int\ell(q_n)\,d\mu\to\int\ell(q)\,d\mu$. Adding these relations proves that
$I_u$ is lower semicontinuous in norm. A convex functional with this property on a
Banach space is weakly lower semicontinuous
\cite[Corollary~3.9]{BrezisFA}.  Consequently,
\[
 \int u(m)\,d\mu
 \le\liminf_n\int u(m_n)\,d\mu
 \le\int u\,d\nu.
\]
Definition~\ref{def:convex-order} now gives $m_\#\mu\cx\nu$, so
$m\in\mathcal C_\nu$.  Thus $\mathcal C_\nu$ is weakly closed.

\emph{Weak compactness.}
The space $H$ is Hilbert and therefore reflexive.  The set $\mathcal C_\nu$ is
nonempty, weakly closed, and norm bounded, so
Theorem~\ref{thm:app-direct-method} implies that it is weakly compact.
\end{proof}

\subsubsection{Proof of Proposition~\ref{prop:map}}\label{proof:4.4}

\begin{proof}
Let $\pi\in\Pi(\mu,\nu)$.  Conditional Jensen gives, for every integrable convex
$u$,
\[
 \int u(m_\pi(x))d\mu(x)
 \le\int\!\int u(y)d\pi_x(y)d\mu(x)=\int u(y)d\nu(y),
\]
so $(m_\pi)_\#\mu\cx\nu$.  The coupling objective equals the map objective
evaluated at $m_\pi$.

Conversely, let $m\in\mathcal C_\nu$ and put $\eta=m_\#\mu$.
Theorem~\ref{thm:strassen} gives a measurable martingale kernel $K(z,dy)$ from
$\eta$ to $\nu$.
Set
\[
 \pi(dx,dy)=\mu(dx)K(m(x),dy).
\]
Its first marginal is $\mu$.  For bounded measurable $g$,
\[
 \int g(y)d\pi=\int\!\int g(y)K(z,dy)d\eta(z)=\int g(y)d\nu(y),
\]
so its second marginal is $\nu$.  Finally,
$\int yK(m(x),dy)=m(x)$ almost surely, and therefore $m_\pi=m$.  Every feasible
map is thus the conditional mean of a feasible coupling, and both infima coincide.
The moment estimate from Lemma~\ref{lem:feasible-set} ensures finiteness of all
terms.
\end{proof}

\subsection{Moments, zero structure, and projection}

\subsubsection{Proof of Proposition~\ref{prop:moment}}\label{proof:5.1}

\begin{proof}
Let $X,X'$ be independent with law $\mu$.  Expanding the square gives three
expectations.  Independence yields
\[
 \E\ip{X}{X'}^2=\norm{S_\mu}_F^2,\qquad
 \E\ip{m(X)}{m(X')}^2=\norm{S_m}_F^2,
\]
and, coordinate by coordinate,
\[
 \E[\ip{X}{X'}\ip{m(X)}{m(X')}]
 =\sum_{r,s}(\E[X_rm_s(X)])^2=\norm{M_m}_F^2.
\]
Substitution proves the moment identity.  Taking the infimum over the feasible map
set from Proposition~\ref{prop:map} gives the second formula.
\end{proof}

\subsubsection{Proof of Corollary~\ref{cor:zero}}\label{proof:5.2}

\begin{proof}
Theorem~\ref{thm:existence} gives an optimal map $m^\star\in\mathcal C_\nu$.
If the wIGW value is zero, then
\[
 \iint(\ip{x}{x'}-\ip{m^\star(x)}{m^\star(x')})^2d\mu(x)d\mu(x')=0.
\]
The integrand is measurable and nonnegative, hence it vanishes
$\mu\otimes\mu$-almost everywhere.  Conversely, any feasible map satisfying the
Gram identity has zero objective; nonnegativity then forces the infimum to be zero.
\end{proof}

\subsubsection{Proof of Corollary~\ref{cor:noisy-isometry}}\label{proof:5.3}

\begin{proof}
Take $m(x)=Tx$.  Since $T^\top T=I$,
$\ip{m(x)}{m(x')}=x^\top T^\top Tx'=\ip{x}{x'}$.  If $T_\#\mu\cx\nu$,
Corollary~\ref{cor:zero} therefore gives zero wIGW cost.

Under the noise model, conditional centering gives $\E[Y\mid X]=TX$.  For every
finite convex test $u$ of at most linear growth, conditional Jensen yields
\[
 \int u\,d(T_\#\mu)=\E u(TX)
 =\E u(\E[Y\mid X])\le\E u(Y)=\int u\,d\nu.
\]
Thus $T_\#\mu\cx\nu$, and the first part applies.  The argument uses only the
conditional centering condition $\E[\xi\mid X]=0$ on the residual law.
\end{proof}

\subsubsection{Proof of Theorem~\ref{thm:projection}}\label{proof:6.1}

\begin{proof}
Denote the projection value by
$P=\inf_{\eta\cx\nu}\IGW^2(\mu,\eta)$.  If $m\in\mathcal C_\nu$ and
$\eta=m_\#\mu$, then $(\mathrm{id},m)_\#\mu\in\Pi(\mu,\eta)$ and its IGW
cost equals the wIGW map cost $\mathcal J_\mu(m)$.  Consequently
\[
 P\le\IGW^2(\mu,m_\#\mu)\le \mathcal J_\mu(m).
\]
Infimizing over $m$ gives $P\le\wIGW^2(\mu,\nu)$.

For the reverse inequality, fix $\eta\cx\nu$ and
$\gamma(dx,dz)=\mu(dx)\gamma_x(dz)\in\Pi(\mu,\eta)$.  Define
$m_\gamma(x)=\int z\,d\gamma_x(z)$.  Conditional Jensen gives
$(m_\gamma)_\#\mu\cx\eta$, and transitivity gives
$(m_\gamma)_\#\mu\cx\nu$.

Let $(X,Z),(X',Z')$ be independent with law $\gamma$.  Conditional on
$(X,X')=(x,x')$, the variables $Z$ and $Z'$ have the product law
$\gamma_x\otimes\gamma_{x'}$.  Therefore Fubini and bilinearity give
\[
 \E[\ip{Z}{Z'}\mid X=x,X'=x']
 =\ip{m_\gamma(x)}{m_\gamma(x')}.
\]
For fixed $(X,X')$, the function
$b\mapsto(\ip{X}{X'}-b)^2$ is convex.  Conditional Jensen yields
\begin{align*}
 &(\ip{X}{X'}-\ip{m_\gamma(X)}{m_\gamma(X')})^2\\
 &\qquad\le
 \E[(\ip{X}{X'}-\ip{Z}{Z'})^2\mid X,X'].
\end{align*}
Second moments suffice for integrability: independence turns each squared inner
product into the Frobenius norm of a second moment matrix, and conditional Jensen
controls $m_\gamma$.  Integrating gives
\[
 \wIGW^2(\mu,\nu)\le \mathcal J_\mu(m_\gamma)\le J_{\rm IGW}(\gamma).
\]
Infimizing over $\gamma\in\Pi(\mu,\eta)$ and then over $\eta\cx\nu$ gives
$\wIGW^2(\mu,\nu)\le P$, proving equality.
\end{proof}

\subsubsection{Proofs of Corollaries~\ref{cor:target-refinement}
and~\ref{cor:igw-comparison}}
\label{proof:6.2}\label{proof:6.3}

\begin{proof}
If $\nu\cx\widetilde\nu$, transitivity gives
\[
 \{\eta:\eta\cx\nu\}\subseteq
 \{\eta:\eta\cx\widetilde\nu\}.
\]
The projection formula then implies
$\wIGW(\mu,\widetilde\nu)\le\wIGW(\mu,\nu)$.  For the ordinary IGW
comparison, use the admissible projection competitor $\eta=\nu$.
\end{proof}

\subsubsection{Proof of Theorem~\ref{thm:existence}}\label{proof:6.4}

\begin{proof}
We apply Theorem~\ref{thm:app-direct-method} with
\[
 H=L^2(\mu;\R^{d_y}),\qquad K=\mathcal C_\nu,
\]
and with the map objective from Proposition~\ref{prop:moment},
\[
 \mathcal J_\mu(m)
 =\norm{S_\mu}_F^2+\norm{S_m}_F^2-2\norm{M_m}_F^2.
\]

\emph{Feasible set and finiteness.}
Lemma~\ref{lem:feasible-set} verifies that $K$ is nonempty and weakly compact.
For every $m\in K$,
\[
 \norm{S_m}_F\le\Tr(S_m)=\norm{m}_{L^2}^2\le M_2(\nu),
 \qquad
 \norm{M_m}_F\le\sqrt{M_2(\mu)}\norm{m}_{L^2}.
\]
Thus $\mathcal J_\mu$ is finite on $K$.  It is bounded below by zero because
Proposition~\ref{prop:moment} identifies it with the integral of a square.

\emph{Weak continuity of the cross moment.}
Define
\[
 T:H\to\R^{d_x\times d_y},\qquad
 Tm=M_m=\int x m(x)^\top\,d\mu(x).
\]
For every $G\in\R^{d_x\times d_y}$, Cauchy--Schwarz gives
\[
 \left|\ip{G}{Tm}_F\right|
 =\left|\int\ip{G^\top x}{m(x)}\,d\mu(x)\right|
 \le\norm{G}_F\sqrt{M_2(\mu)}\norm{m}_{L^2(\mu)}.
\]
Thus $T$ is bounded and linear.  If $m_n\rightharpoonup m$ in $H$, every
coordinate of $Tm_n$ converges to the corresponding coordinate of $Tm$; because
the range is finite dimensional, weak and norm convergence coincide there
\cite[Proposition~3.6 and Theorem~3.10]{BrezisFA}, so
$Tm_n\to Tm$ in Frobenius norm.  Hence
$m\mapsto-2\norm{M_m}_F^2$ is weakly continuous.

\emph{Weak lower semicontinuity of the target moment.}
Since $S_m\succeq0$, the matrix Fenchel identity gives
\begin{equation}\label{eq:app-Sm-lsc}
 \norm{S_m}_F^2
 =\sup_{B\in\mathbb S_+^{d_y}}
 \left\{2\ip{B}{S_m}_F-\norm{B}_F^2\right\}
 =\sup_{B\in\mathbb S_+^{d_y}}
 \left\{2\int m(x)^\top Bm(x)\,d\mu(x)-\norm{B}_F^2\right\}.
\end{equation}
For fixed $B\succeq0$, the functional
$m\mapsto\int m^\top Bm\,d\mu=\norm{B^{1/2}m}_{L^2(\mu)}^2$ is weakly
lower semicontinuous \cite[Proposition~3.5(iii)]{BrezisFA}.  A pointwise supremum
of weakly lower semicontinuous functionals is weakly lower semicontinuous.
Consequently, \eqref{eq:app-Sm-lsc} shows that
$m\mapsto\norm{S_m}_F^2$ is weakly lower semicontinuous.  Adding the constant
$\norm{S_\mu}_F^2$ and the weakly continuous cross-moment term proves that
$\mathcal J_\mu$ is weakly lower semicontinuous on $K$.

\emph{Existence of a map minimizer.}
Theorem~\ref{thm:app-direct-method} therefore yields
$m^\star\in\mathcal C_\nu$ such that
$\mathcal J_\mu(m^\star)=\wIGW^2(\mu,\nu)$.

\emph{Projection minimizer and coupling realization.}
Let
\[
 P:=\inf_{\eta\cx\nu}\IGW^2(\mu,\eta),\qquad
 W:=\wIGW^2(\mu,\nu),\qquad
 \eta^\star:=(m^\star)_\#\mu.
\]
Theorem~\ref{thm:projection} gives $P=W$, while the deterministic coupling
$(\mathrm{id},m^\star)_\#\mu$ gives the explicit sandwich
\[
 P\le\IGW^2(\mu,\eta^\star)
 \le \mathcal J_\mu(m^\star)=W=P.
\]
Every inequality is an equality.  Hence $\eta^\star$ minimizes the projection
problem and $(\mathrm{id},m^\star)_\#\mu$ is an inner IGW optimizer.
Furthermore,
$M_2(\eta^\star)=\norm{m^\star}_{L^2}^2\le M_2(\nu)$, so
$\eta^\star\in\cP_2$.  Since $\eta^\star\cx\nu$,
Theorem~\ref{thm:strassen} gives a Borel
martingale kernel $K^\star$ from $\eta^\star$ to $\nu$.  The coupling
\[
 \pi^\star(dx,dy)=\mu(dx)K^\star(m^\star(x),dy)
\]
has conditional mean $m^\star(x)$, and hence has the same optimal wIGW objective.
Thus the optimal map $m^\star$ is realizable by an optimal coupling.
Since the objective depends only on this conditional mean, every martingale kernel
from $\eta^\star$ to $\nu$ yields an optimal wIGW coupling.  In particular, the wIGW
coupling primal admits a minimizer.

\emph{Converse reconstruction.}
Conversely, let $\eta^\star\cx\nu$ be a minimizer of the projection problem.  The quadratic
convex test gives $M_2(\eta^\star)\le M_2(\nu)$, so
$\eta^\star\in\cP_2$.  Theorem~\ref{thm:ordinary-igw-envelope} ensures that an
inner optimizer exists; let $\gamma^\star\in\Pi(\mu,\eta^\star)$ be any one.
The reverse half of the projection proof constructs
$m^\star(x)=\E_{\gamma^\star}[Z\mid X=x]$ and gives
\[
 W\le \mathcal J_\mu(m^\star)
 \le J_{\rm IGW}(\gamma^\star)=P=W.
\]
Thus $m^\star$ is a wIGW optimizer.  Any martingale kernel from
$(m^\star)_\#\mu$ to $\nu$ yields an optimal wIGW coupling by the same gluing
argument.
\end{proof}

\section{Duality and ridge formulations}
\label{app:duality-proofs}

We derive here the compact support and noncompact ridge duals and prove the curvature
condition that permits exchange of the two outer optimization variables.

\subsection{Compact and ridge duality}

\subsubsection{Proof of Theorem~\ref{thm:compact-dual}}\label{proof:7.1}

\begin{proof}
We first reduce all maps to the compact convex set $K=\operatorname{conv}(K_Y)$.
If $m_\#\mu\cx\nu$, then the convex function
$z\mapsto\operatorname{dist}(z,K)$ satisfies
\[
 0\le\int\operatorname{dist}(m(x),K)d\mu(x)
 \le\int\operatorname{dist}(y,K)d\nu(y)=0.
\]
Thus $m(x)\in K$ almost surely.  The set
\[
 \cM=\{m\in L^2(\mu;\R^{d_y}):m(x)\in K\ \mu\text{-a.e.}\}
\]
is convex and bounded by $R_Y$ in $L^2$.  It is norm closed: a strongly
convergent sequence has an almost everywhere convergent subsequence, whose limit
remains in the closed set $K$. A convex set closed in norm is weakly closed
\cite[Theorem~3.7]{BrezisFA}, so
reflexivity makes $\cM$ weakly compact.

For $m\in\cM$, convex order is encoded by
\begin{equation}\label{eq:app-cx-indicator}
 \sup_{u\in\cU_K}\left\{\int u(m)d\mu-\int u\,d\nu\right\}
 =\begin{cases}
 0,&m_\#\mu\cx\nu,\\
 +\infty,&\text{otherwise}.
 \end{cases}
\end{equation}
If convex order holds, Theorem~\ref{thm:strassen} supplies a martingale coupling
$(Z,Y)$ from $m_\#\mu$ to $\nu$. Both variables take values in $K$, so conditional
Jensen applies to every $u\in\cU_K$, even when $u$ is defined only on $K$:
\(\E u(Z)\le\E u(Y)\). Thus every term is nonpositive, and $u=0$ gives zero.
If convex order fails, restrict a globally defined separating convex test to $K$;
the restriction is a member $u_0\in\cU_K$ with a positive term. Replacing $u_0$
by $t u_0$ and letting $t\to\infty$ gives $+\infty$.

Insert \eqref{eq:app-cx-indicator} and the two matrix Fenchel identities into the
moment primal.  The variables $A$ and $m$ are both minimized, so their infima can
be grouped in either order.  For fixed $A$ we obtain
\[
 \inf_{m\in\cM}\sup_{(B,u)\in\mathcal B\times\cU_K}
 \mathcal L_A(m,B,u),
\]
where
\begin{align*}
 \mathcal L_A(m,B,u)
 =&\ 2\norm A_F^2-\norm B_F^2-\int u\,d\nu\\
 &+\int\{u(m(x))+2m(x)^\top Bm(x)-4m(x)^\top A^\top x\}d\mu(x).
\end{align*}

For Sion's theorem, give $\cM$ the weak $L^2$ topology and
$\mathcal B\times\cU_K$ the product of the finite-dimensional Frobenius topology
and the uniform topology of $C(K)$.  The first set is compact convex and the second
is convex.  For fixed $(B,u)$, the integrand is convex in $m(x)$.  If
$m_n\to m$ strongly in $L^2$, then $m_n\to m$ in probability; uniform continuity
and boundedness of $u$ on $K$ imply
$\int u(m_n)d\mu\to\int u(m)d\mu$.  The positive quadratic is norm continuous
and convex. Thus the whole $m$-section is convex and lower semicontinuous in norm,
hence weakly lower semicontinuous \cite[Corollary~3.9]{BrezisFA}. For fixed $m$,
the section is continuous and concave in $B$ and continuous linear in $u$. Sion's
theorem \cite{Sion}
therefore gives
\[
 \inf_{m\in\cM}\sup_{B,u}\mathcal L_A(m,B,u)
 =\sup_{B,u}\inf_{m\in\cM}\mathcal L_A(m,B,u).
\]

For fixed $(A,B,u)$, the function
\[
 (x,z)\mapsto u(z)+2z^\top Bz-4z^\top A^\top x
\]
is measurable in $x$ and continuous in $z\in K$.  The measurable minimum theorem
\cite[Theorem~18.19]{AliprantisBorder} provides a measurable minimizing selector,
so
\begin{align*}
 \inf_{m\in\cM}\int[\cdots]d\mu
 &=\int\min_{z\in K}
 \{u(z)+2z^\top Bz-4z^\top A^\top x\}d\mu(x)\\
 &=\int Q_{A,B}^Ku(x)d\mu(x).
\end{align*}
This yields the compact potential dual in Theorem~\ref{thm:compact-dual}.

Finally, the Fenchel optimizers are $A=M_m$ and $B=S_m$.  Since
$\norm x\le R_X$ and $\norm{m(x)}\le R_Y$,
\[
 \norm{M_m}_F\le R_XR_Y,\qquad
 0\preceq S_m\preceq R_Y^2I.
\]
Thus the Fenchel optimizers associated with each fixed $m$ belong to
$\mathcal A\times\mathcal B$.
\end{proof}

\subsubsection{Proof of Lemma~\ref{lem:unregularized-map-matrix}}
\label{proof:8.1}

\begin{proof}
Proposition~\ref{prop:moment} and
\eqref{eq:admissible-map-set} give
\[
 \wIGW^2(\mu,\nu)-\norm{S_\mu}_F^2
 =\inf_{m\in\mathcal C_\nu}
   \{\norm{S_m}_F^2-2\norm{M_m}_F^2\}.
\]
For every \(m\in\mathcal C_\nu\), Cauchy--Schwarz and
Lemma~\ref{lem:feasible-set} yield
\[
 \norm{M_m}_F\le\sqrt{M_2(\mu)M_2(\nu)},\qquad
 \norm{S_m}_F\le\Tr(S_m)=\norm m_{L^2(\mu)}^2\le M_2(\nu).
\]
Hence, for this fixed $m$, the Fenchel optimizers \(A=M_m\) and \(B=S_m\)
lie in \(\mathcal A_2\) and \(\mathcal B_2\). Restricting the Fenchel
identities to these domains does not change their values. For every fixed
\(m\in\mathcal C_\nu\),
\begin{align*}
 -2\norm{M_m}_F^2
 &=\inf_{A\in\mathcal A_2}
   \{2\norm A_F^2-4\ip{A}{M_m}_F\},\\
 \norm{S_m}_F^2
 &=\sup_{B\in\mathcal B_2}
   \{2\ip{B}{S_m}_F-\norm B_F^2\}.
\end{align*}
The variables \(A\) and \(B\) occur in separate terms, so the two identities
combine as
\begin{align*}
 \norm{S_m}_F^2-2\norm{M_m}_F^2
 &=\inf_{A\in\mathcal A_2}\sup_{B\in\mathcal B_2}
 \{2\norm A_F^2-\norm B_F^2
   -4\ip{A}{M_m}_F+2\ip{B}{S_m}_F\}\\
 &=\inf_{A\in\mathcal A_2}\sup_{B\in\mathcal B_2}
   \mathcal H_0(m,A,B).
\end{align*}
Indeed, the moment definitions give
\[
 -4\ip{A}{M_m}_F+2\ip{B}{S_m}_F
 =\int\!\left[-4m(x)^\top A^\top x+2m(x)^\top Bm(x)\right]d\mu(x)
 =\int c_{A,B}^0(x,m(x))d\mu(x).
\]
Taking the infimum over \(m\in\mathcal C_\nu\) proves the first equality in
\eqref{eq:unregularized-map-matrix-form}. Finally,
\[
 \inf_{m\in\mathcal C_\nu}\inf_{A\in\mathcal A_2}
 =\inf_{(m,A)\in\mathcal C_\nu\times\mathcal A_2}
 =\inf_{A\in\mathcal A_2}\inf_{m\in\mathcal C_\nu},
\]
which proves the second equality.
\end{proof}

\subsubsection{Proof of Proposition~\ref{prop:wot}}\label{proof:8.2}

\begin{proof}
Apply Theorem~\ref{thm:wot-blueprint} to
\[
 c_{A,B}^\varepsilon(x,z)
 =z^\top(2B+\varepsilon I)z-4z^\top A^\top x.
\]
The cost is finite and jointly continuous.  Its Hessian in $z$ is
\[
 \nabla_{zz}^2c_{A,B}^\varepsilon=4B+2\varepsilon I
 \succeq2\varepsilon I,
\]
so it is uniformly strongly convex.  Since $B\succeq0$, Young's inequality gives
\begin{align*}
 c_{A,B}^\varepsilon(x,z)
 &\ge\varepsilon\norm z^2-4\norm A_F\norm x\norm z\\
 &\ge\frac\varepsilon2\norm z^2
 -\frac{8\norm A_F^2}{\varepsilon}\norm x^2.
\end{align*}
For the upper bound,
\begin{align*}
 |c_{A,B}^\varepsilon(x,z)|
 &\le(2\norm B_F+\varepsilon)\norm z^2
 +4\norm A_F\norm x\norm z\\
 &\le C_{A,B,\varepsilon}(1+\norm x^2+\norm z^2)
\end{align*}
for a finite constant.  The assumptions
$\mu,\nu\in\cP_2$ provide the required marginal moments.

Theorem~\ref{thm:wot-blueprint} now gives existence of a primal minimizer, coupling/map equality,
and strong duality.  Its transform is
\[
 Q_{c_{A,B}^\varepsilon}u(x)
 =\inf_z\{u(z)+z^\top C_{B,\varepsilon}z-4z^\top A^\top x\}
 =Q_{A,B}^{(\varepsilon)}u(x),
\]
and its potential class is $\cU_2$.  The uniform Hessian lower bound and
convexity of $\mathcal C_\nu$ imply uniqueness: if two distinct minimizers existed,
their midpoint would decrease the objective by at least
$\varepsilon\norm{m_0-m_1}_{L^2}^2/4$.
The conditional mean of any minimizing coupling is feasible for the map problem
and has the same objective value.  It must therefore equal the unique map
$m_{A,B}$ in $L^2(\mu;\R^{d_y})$.

It remains to verify the asserted convexity in the coupling.  For
$\pi_0,\pi_1\in\Pi(\mu,\nu)$ and $t\in[0,1]$, disintegration with respect to the
common first marginal is affine, so
\[
 m_{(1-t)\pi_0+t\pi_1}=(1-t)m_{\pi_0}+tm_{\pi_1}
 \qquad\mu\text{-almost everywhere}.
\]
Since $z\mapsto c_{A,B}^\varepsilon(x,z)$ has Hessian at least
$2\varepsilon I$, integration of its convexity inequality proves convexity of
the coupling objective.  The same inequality is strong in the difference of
the two mean maps.  It is not generally strict in the couplings themselves,
because the affine map $\pi\mapsto m_\pi$ need not be injective.

For the parameter estimate, fix any feasible $m\in\mathcal C_\nu$.  Directly
comparing the two costs gives
\begin{align*}
 &\left|\int
   \bigl(c_{A,B}^\varepsilon-c_{A',B'}^\varepsilon\bigr)(x,m(x))
   d\mu(x)\right|\\
 &\quad\le
 2\norm{B-B'}_F\int\norm{m(x)}^2d\mu(x)
 +4\norm{A-A'}_F\int\norm{x}\norm{m(x)}d\mu(x)\\
 &\quad\le
 2M_2(\nu)\norm{B-B'}_F
 +4\sqrt{M_2(\mu)M_2(\nu)}\norm{A-A'}_F.
\end{align*}
The last line follows from Lemma~\ref{lem:feasible-set} and
Cauchy--Schwarz.  Evaluate the $(A,B)$ problem at a minimizer for $(A',B')$
to obtain one direction of \eqref{eq:ridge-oracle-lipschitz}; interchange the
two parameter pairs to obtain the reverse direction.
\end{proof}

\subsubsection{Proof of Theorem~\ref{thm:ridge-dual}}\label{proof:8.3}

\begin{proof}
The ridge primal and Proposition~\ref{prop:moment} give
\begin{equation}\label{eq:app-ridge-moment}
 \wIGWeps^2(\mu,\nu)-\norm{S_\mu}_F^2
 =\inf_{m\in\mathcal C_\nu}
 \{\norm{S_m}_F^2-2\norm{M_m}_F^2
 +\varepsilon\norm m_{L^2}^2\}.
\end{equation}
We derive the matrix form before making any minimax exchange.  Fix
$m\in\mathcal C_\nu$.  The bounds in the proof of
Lemma~\ref{lem:unregularized-map-matrix} place $M_m$ and $S_m$ in
$\mathcal A_2$ and $\mathcal B_2$.  Completing the two Frobenius squares gives
\begin{align}
 \inf_{A\in\mathcal A_2}
 \{2\norm A_F^2-4\ip A{M_m}_F\}
 &=-2\norm{M_m}_F^2,
 & A_{\rm opt}&=M_m,
 \label{eq:app-ridge-A-fenchel}\\
 \sup_{B\in\mathcal B_2}
 \{2\ip B{S_m}_F-\norm B_F^2\}
 &=\norm{S_m}_F^2,
 & B_{\rm opt}&=S_m.
 \label{eq:app-ridge-B-fenchel}
\end{align}
The $A$ and $B$ variables occur in separate terms, and the ridge term is
independent of both.  Hence
\begin{align*}
 &\norm{S_m}_F^2-2\norm{M_m}_F^2
   +\varepsilon\norm m_{L^2}^2\\
 &\quad=\inf_{A\in\mathcal A_2}\sup_{B\in\mathcal B_2}
 \bigl\{2\norm A_F^2-\norm B_F^2
 -4\ip A{M_m}_F+2\ip B{S_m}_F
 +\varepsilon\norm m_{L^2}^2\bigr\}\\
 &\quad=\inf_{A\in\mathcal A_2}\sup_{B\in\mathcal B_2}
 \mathcal H_\varepsilon(m,A,B),
\end{align*}
where the last equality is precisely the second line of
\eqref{eq:ridge-map-matrix-functional}.  Inserting this identity into
\eqref{eq:app-ridge-moment} yields
\[
 \wIGWeps^2-\norm{S_\mu}_F^2
 =\inf_{m\in\mathcal C_\nu}\inf_{A\in\mathcal A_2}
   \sup_{B\in\mathcal B_2}\mathcal H_\varepsilon(m,A,B).
\]
The two infima range over the product
$\mathcal C_\nu\times\mathcal A_2$ and therefore commute.  Thus
\[
 \wIGWeps^2-\norm{S_\mu}_F^2
 =\inf_{A\in\mathcal A_2}\inf_{m\in\mathcal C_\nu}
   \sup_{B\in\mathcal B_2}\mathcal H_\varepsilon(m,A,B).
\]

We next exchange $m$ and $B$ while keeping $A$ fixed.  Equip
$\mathcal C_\nu$ with the weak $L^2$ topology and
$\mathcal B_2$ with its Euclidean topology.  The minimizing set
$\mathcal C_\nu$ is compact and convex in the weak topology by
Lemma~\ref{lem:feasible-set}.  The maximizing set
$\mathcal B_2=\{B\in\mathbb S_+^{d_y}:\norm B_F\le M_2(\nu)\}$ is compact and
convex in finite dimension.  For fixed $B\in\mathcal B_2$, positivity of $B$
gives
$C_{B,\varepsilon}=2B+\varepsilon I_{d_y}\succeq\varepsilon I_{d_y}$.
Consequently, the $m$-section is convex and norm continuous, hence weakly lower
semicontinuous \cite[Corollary~3.9]{BrezisFA}.  For fixed $m$, all dependence on
$B$ is through
$2\ip{B}{S_m}_F-\norm B_F^2$, a continuous concave function on
$\mathcal B_2$.  Sion's theorem \cite{Sion} therefore exchanges the minimizing
variable $m$ and the maximizing variable $B$, for every fixed $A$:
\begin{align*}
 \inf_{m\in\mathcal C_\nu}\sup_{B\in\mathcal B_2}
 \mathcal H_\varepsilon(m,A,B)
 &=\sup_{B\in\mathcal B_2}\inf_{m\in\mathcal C_\nu}
 \mathcal H_\varepsilon(m,A,B).
\end{align*}
This is the only minimax exchange used so far; the outer $A$ and $B$ variables
have not been exchanged.

For fixed $(A,B)$, take the terms independent of $m$ outside the inner
infimum.  Proposition~\ref{prop:wot} then gives
\begin{align*}
 \inf_{m\in\mathcal C_\nu}\mathcal H_\varepsilon(m,A,B)
 &=2\norm A_F^2-\norm B_F^2
   +\min_{m\in\mathcal C_\nu}
     \int c_{A,B}^\varepsilon(x,m(x))d\mu(x)\\
 &=2\norm A_F^2-\norm B_F^2
   +\mathsf W_{A,B}^\varepsilon(\mu,\nu)
 =F_\varepsilon(A,B).
\end{align*}
Taking $\inf_A\sup_B$ proves the third line of
\eqref{eq:ridge-map-matrix-form}.  Finally, the potential formula in
Proposition~\ref{prop:wot} gives, for each fixed $(A,B)$,
\[
 F_\varepsilon(A,B)
 =\sup_{u\in\cU_2}\mathcal D_\varepsilon(A,B,u).
\]
The two suprema range over the product $\mathcal B_2\times\cU_2$, so they may
be combined.  This proves the final line of
\eqref{eq:ridge-map-matrix-form}.
\end{proof}

\subsubsection{Proof of Theorem~\ref{thm:swap}}\label{proof:8.4}

\begin{proof}
Proposition~\ref{prop:wot} and the definitions of
$\mathcal H_\varepsilon$ and $F_\varepsilon$ give, for every
$(A,B)\in\mathcal A_2\times\mathcal B_2$,
\begin{equation}\label{eq:app-F-partial-minimum}
 F_\varepsilon(A,B)
 =\inf_{m\in\mathcal C_\nu}\mathcal H_\varepsilon(m,A,B).
\end{equation}
Let $T:L^2(\mu;\R^{d_y})\to\R^{d_x\times d_y}$ be $Tm=M_m$.  Its adjoint
satisfies $T^*H(x)=H^\top x$, and
\[
 \norm{T^*H}_{L^2}^2
 =\int x^\top HH^\top x\,d\mu(x)
 \le\lambda_X\norm H_F^2.
\]
Thus $T^*T\preceq\lambda_XI$.  For fixed $B$, the part of the joint objective
depending on $(A,m)$ obeys the identity
\begin{align*}
 &2\norm A_F^2-4\ip A{Tm}_F+\varepsilon\norm m_{L^2}^2
 +2\int m^\top Bm\,d\mu\\
 &=2\norm{A-Tm}_F^2
 +\ip m{(\varepsilon I-2T^*T)m}_{L^2}
 +2\int m^\top Bm\,d\mu.
\end{align*}
When $\varepsilon\ge2\lambda_X$, every term on the right is jointly convex in
$(A,m)$.  Because $\pi\mapsto m_\pi$ is affine for couplings with first
marginal $\mu$, the corresponding objective is also jointly convex in
$(A,\pi)$.  For fixed $(A,m)$, equivalently fixed $(A,\pi)$, its dependence on
$B$ is concave.  Thus, under the spectral condition, $(A,m)$ or $(A,\pi)$ is
the minimizing block and $B$ is the maximizing block of a convex--concave
saddle problem.  Partial minimization over the fixed convex set
$\mathcal C_\nu$ therefore makes $A\mapsto F_\varepsilon(A,B)$ convex.

For fixed $A$ and feasible $m$, the $B$-dependence is
$-\norm B_F^2+2\ip B{S_m}_F$.  The infimum over $m$ of its affine part
is concave, and adding $-\norm B_F^2$ preserves concavity.  In fact this term makes
the result $2$-strongly concave.

The estimate \eqref{eq:ridge-oracle-lipschitz} makes
$(A,B)\mapsto\mathsf W_{A,B}^\varepsilon$ continuous.  Adding the two outer
quadratic terms proves continuity of $F_\varepsilon$.
Consequently,
\[
 g_\varepsilon(A)=\max_{B\in\mathcal B_2}F_\varepsilon(A,B)
\]
is convex as a pointwise maximum of convex functions and is continuous by
compactness of $\mathcal B_2$ and continuity of $F_\varepsilon$.

The sets $\mathcal A_2$ and $\mathcal B_2$ are finite-dimensional compact convex
sets.  Sion's theorem \cite{Sion}, with $A$ as the compact minimizing variable,
now gives
\[
\inf_{A\in\mathcal A_2}\sup_{B\in\mathcal B_2}F_\varepsilon(A,B)
 =\sup_{B\in\mathcal B_2}\inf_{A\in\mathcal A_2}F_\varepsilon(A,B).
\]
\end{proof}

\section{Reconstruction and algorithmic guarantees}
\label{app:algorithm-proofs}

We prove compatibility and martingale reconstruction, derive the finite mirror step
and outer domains, and establish the convergence and oracle error bounds.

\subsection{Reconstruction and finite algorithms}

\subsubsection{Proof of Proposition~\ref{prop:reconstruction}}\label{proof:10.1}

\begin{proof}
For fixed $(A,B)$, Proposition~\ref{prop:wot} gives a unique optimal barycentric
map $m_{A,B}$. Adding the two optimality inequalities at nearby parameter pairs
shows that these maps converge strongly when $(A,B)$ converges. Thus the
optimizer-stability hypothesis of Danskin's theorem
(Theorem~\ref{thm:danskin-envelope}) holds. Let $H_A,H_B$ be matrix directions;
that theorem and direct
differentiation of the inner cost give
\begin{align*}
 D_A\mathsf W_{A,B}^\varepsilon[H_A]
 &=-4\ip{H_A}{M_{m_{A,B}}}_F,\\
 D_B\mathsf W_{A,B}^\varepsilon[H_B]
 &=2\ip{H_B}{S_{m_{A,B}}}_F.
\end{align*}
Adding the derivatives of the outer regularizers yields
\begin{equation}\label{eq:app-outer-gradients}
 \nabla_AF_\varepsilon(A,B)=4(A-M_{m_{A,B}}),\qquad
 \nabla_BF_\varepsilon(A,B)=2(S_{m_{A,B}}-B).
\end{equation}

The estimate \eqref{eq:ridge-oracle-lipschitz}, together with the outer
quadratic terms in $F_\varepsilon$, proves that $F_\varepsilon$ is continuous.
Compactness of the two outer balls therefore gives an optimizer of the
nested problem.

For fixed $A$, $F_\varepsilon(A,\cdot)$ is $2$-strongly concave and hence has a
unique maximizer.  At $B^\star$, the first-order inequality for constrained
maximization is
\[
 \ip{\nabla_BF_\varepsilon(A^\star,B^\star)}{B-B^\star}_F\le0
 \qquad(B\in\mathcal B_2).
\]
The feasible choice $B=S_{m^\star}$ and \eqref{eq:app-outer-gradients} give
\[
 2\norm{S_{m^\star}-B^\star}_F^2\le0,
\]
so $B^\star=S_{m^\star}$, including on the boundary of the ball.

Let $g(A)=\max_{B\in\mathcal B_2}F_\varepsilon(A,B)$.  Uniqueness of the
maximizer and Theorem~\ref{thm:danskin-envelope} make $g$ differentiable.  At its constrained
minimum $A^\star$,
\[
 \ip{4(A^\star-M_{m^\star})}{A-A^\star}_F\ge0
 \qquad(A\in\mathcal A_2).
\]
Choosing the feasible point $A=M_{m^\star}$ gives
$-4\norm{A^\star-M_{m^\star}}_F^2\ge0$, and therefore
$A^\star=M_{m^\star}$.

Substituting compatibility into the envelope gives
\begin{align*}
 &2\norm{M_{m^\star}}_F^2-\norm{S_{m^\star}}_F^2
 +\int c_{M_{m^\star},S_{m^\star}}^\varepsilon
       (x,m^\star(x))d\mu(x)\\
 &\qquad=\norm{S_{m^\star}}_F^2-2\norm{M_{m^\star}}_F^2
 +\varepsilon\norm{m^\star}_{L^2}^2.
\end{align*}
Theorem~\ref{thm:ridge-dual} identifies this with the nonconstant part of the
optimal ridge primal, so $m^\star$ is an optimizer of the ridge wIGW primal.

Let $M=m^\star(X)$ under an optimal inner coupling $\pi^\star$.  A regular
conditional law of $Y$ given $M$ exists on Euclidean Borel spaces
\cite[Theorems~3.4 and~8.5]{Kallenberg}; define
\[
 \kappa^\star(dy\mid z):=\mathcal L(Y\in dy\mid M=z).
\]
Since $\E[Y\mid X]=m^\star(X)=M$, the tower property gives
\[
 \E[Y\mid M]=\E[\E[Y\mid X]\mid M]=\E[M\mid M]=M.
\]
Thus $\int y\,\kappa^\star(dy\mid z)=z$ for $(m^\star)_\#\mu$-almost every $z$,
so $\kappa^\star$ is a martingale kernel from $(m^\star)_\#\mu$ to $\nu$.  Gluing
this kernel after $m^\star$ produces a coupling with conditional mean $m^\star$,
and its ridge wIGW objective remains optimal.
\end{proof}

\subsubsection{Proof of Proposition~\ref{prop:finite-inner-mirror}}
\label{proof:11.1}\label{proof:finite-inner-mirror}

\begin{proof}
For $P\in\Pi(a,b)$,
\[
 m_i(P)=a_i^{-1}\sum_jP_{ij}y_j,\qquad
 \frac{\partial m_i}{\partial P_{ij}}=\frac{y_j}{a_i}.
\]
Only the $i$th barycentric row depends on $P_{ij}$.  Since
$C_{B,\varepsilon}$ is symmetric, the chain rule gives
\begin{align*}
 \frac{\partial}{\partial P_{ij}}
 a_i\{m_i^\top C_{B,\varepsilon}m_i-4x_i^\top A m_i\}
 &=(2C_{B,\varepsilon}m_i-4A^\top x_i)^\top y_j.
\end{align*}
This proves \eqref{eq:finite-oracle-gradient}. For $P,Q\in\Pi(a,b)$, set
$d_i=m_i(Q)-m_i(P)$. Since $P\mapsto m_i(P)$ is linear, direct expansion gives
\begin{align*}
 &f_{A,B}(Q)-f_{A,B}(P)-\ip{\nabla f_{A,B}(P)}{Q-P}_F\\
 &\qquad=\sum_i a_i d_i^\top C_{B,\varepsilon}d_i
 \ge\alpha_B\sum_i a_i\norm{d_i}^2.
\end{align*}
This is \eqref{eq:finite-mean-strong-convexity}; in particular, $f_{A,B}$ is
convex in $P$ and $2\alpha_B$-strongly convex in the induced barycentric vector.
If $P\ne Q$ but $m_i(P)=m_i(Q)$ for every $i$, the right side vanishes, so the
objective need not be strictly or strongly convex in the plan itself.

For the generalized relative entropy defined before
\eqref{eq:finite-mirror-step}, let \(P\in\Pi(a,b)\) be strictly positive and
set
\[
 G(P)=\nabla f_{A,B}(P),\qquad
 c(P)=\max\{\norm{G(P)}_\infty,10^{-12}\}.
\]
The ideal normalized mirror step solves
\[
 \argmin_{Q\in\Pi(a,b)}
 \left\{\ip{\nabla f_{A,B}(P)}Q_F+
 \frac{c(P)}{\gamma}\KL(Q\,\|\,P)\right\}.
\]
Without the marginal constraints, differentiating coordinatewise gives
\[
 \log(Q_{ij}/P_{ij})
 =-\frac{\gamma}{c(P)}(\nabla f_{A,B}(P))_{ij},
\]
and hence
\[
 \widetilde P
 =P\odot\exp\!\left(-\frac{\gamma}{c(P)}\nabla f_{A,B}(P)\right).
\]
The constrained minimizer is the KL projection of $\widetilde P$ onto
$\Pi(a,b)$.  Its Lagrange multipliers separate by rows and columns, so it has the
form $\operatorname{diag}(r)\widetilde P\operatorname{diag}(s)$; alternating
row and column scalings compute $r,s$ when run to convergence
\cite[Section~4.2]{PeyreCuturiOT}. Algorithm~\ref{alg:implemented-ab} instead
floors the kernel at \(10^{-300}\), stops after at most \(N_{\rm sk}\)
iterations, and repairs its remaining marginal error. It therefore implements
an inexact version of the ideal projection.
\end{proof}

\subsubsection{Proof of Proposition~\ref{prop:balls}}\label{proof:11.2}

\begin{proof}
For every row,
\[
 \norm{m_i(P)}^2
 =\norm{\E[Y\mid X=x_i]}^2
 \le\E[\norm Y^2\mid X=x_i]
\]
by conditional Jensen.  Multiplying by $a_i$ and summing gives
$\sum_i a_i\norm{m_i(P)}^2\le M_2(\nu)$.  Cauchy--Schwarz then yields
\begin{align*}
 \norm{M_P}_F
 &=\norm{\sum_i a_ix_im_i^\top}_F
 \le\sum_i a_i\norm{x_i}\norm{m_i}\\
 &\le\sqrt{M_2(\mu)M_2(\nu)}.
\end{align*}
Moreover, $S_P\succeq0$, so
\[
 \norm{S_P}_F\le\Tr(S_P)=\sum_i a_i\norm{m_i}^2\le M_2(\nu).
\]
By the compatibility hypothesis, $A=M_P$ and $B=S_P$, so these bounds
apply to every compatible tuple. Proposition~\ref{prop:reconstruction} shows, in
particular, that exact nested outer optimizers are compatible.
\end{proof}

\subsubsection{Proof of Theorem~\ref{thm:convergence}}\label{proof:11.3}

\begin{proof}
We separate curvature, sensitivity, and the projected iteration.

\emph{Curvature.}
Let $Tm=M_m$.  As in the proof of Theorem~\ref{thm:swap},
$\norm{T}^2\le\lambda_X$.  For fixed $B$, the second variation of the joint
$(A,m)$ objective in a direction $(H,h)$ is
\[
 4\norm H_F^2-8\ip H{Th}_F
 +2\varepsilon\norm h_{L^2}^2
 +4\int h(x)^\top Bh(x)d\mu(x).
\]
The last term is nonnegative.  Completing the square in $h$ and using
$\norm{T^*H}_{L^2}^2\le\lambda_X\norm H_F^2$ gives
\[
 4\norm H_F^2-8\ip{T^*H}{h}_{L^2}
 +2\varepsilon\norm h_{L^2}^2
 \ge\left(4-\frac{8\lambda_X}{\varepsilon}\right)\norm H_F^2.
\]
Partial minimization over $m\in\mathcal C_\nu$ preserves this strong convexity in
$A$.  For fixed $A$, the weak OT value is an infimum of affine functions of $B$;
it is concave, and $-\norm B_F^2$ makes $F_\varepsilon(A,\cdot)$
$2$-strongly concave. The saddle gradient operator is therefore strongly monotone
with parameter
\[
 \alpha=\min\left\{4-\frac{8\lambda_X}{\varepsilon},2\right\}>0.
\]
Set
\[
 \beta:=4-\frac{8\lambda_X}{\varepsilon},\qquad
 \Delta A:=A_2-A_1,\qquad \Delta B:=B_2-B_1,
\]
and write $F=F_\varepsilon$.  Strong convexity in $A$ and strong concavity in $B$
give
\begin{align*}
 F(A_2,B_1)
 &\ge F(A_1,B_1)
   +\ip{\nabla_AF(A_1,B_1)}{\Delta A}_F
   +\frac{\beta}{2}\norm{\Delta A}_F^2,\\
 F(A_1,B_2)
 &\ge F(A_2,B_2)
   -\ip{\nabla_AF(A_2,B_2)}{\Delta A}_F
   +\frac{\beta}{2}\norm{\Delta A}_F^2,\\
 F(A_1,B_2)
 &\le F(A_1,B_1)
   +\ip{\nabla_BF(A_1,B_1)}{\Delta B}_F
   -\norm{\Delta B}_F^2,\\
 F(A_2,B_1)
 &\le F(A_2,B_2)
   -\ip{\nabla_BF(A_2,B_2)}{\Delta B}_F
   -\norm{\Delta B}_F^2.
\end{align*}
Comparing the sum of the first two inequalities with the sum of the last two
cancels the four function values and yields
\begin{align*}
 &\ip{\nabla_AF(A_2,B_2)-\nabla_AF(A_1,B_1)}{\Delta A}_F\\
 &\qquad
 -\ip{\nabla_BF(A_2,B_2)-\nabla_BF(A_1,B_1)}{\Delta B}_F\\
 &\ge \beta\norm{\Delta A}_F^2+2\norm{\Delta B}_F^2\\
 &\ge \alpha\bigl(\norm{\Delta A}_F^2+\norm{\Delta B}_F^2\bigr).
\end{align*}
The left-hand side is $\ip{G(z_2)-G(z_1)}{z_2-z_1}$, which proves the asserted
strong monotonicity.

\emph{Sensitivity of the weak OT map.}
Let $m_i=m_{A_i,B_i}$, $d=m_1-m_2$, and
\[
 g_{A,B}(m)(x)=2C_{B,\varepsilon}m(x)-4A^\top x.
\]
Optimality on the convex feasible set gives
\[
 \ip{g_{A_1,B_1}(m_1)}{m_2-m_1}_{L^2}\ge0,\qquad
 \ip{g_{A_2,B_2}(m_2)}{m_1-m_2}_{L^2}\ge0.
\]
Adding, expanding $C_{B,\varepsilon}=2B+\varepsilon I$, and discarding the
nonnegative term $4\int d^\top B_1d\,d\mu$ gives
\begin{align*}
 2\varepsilon\norm d_{L^2}^2
 &\le4\ip{A_1-A_2}{Td}_F
 -4\int d(x)^\top(B_1-B_2)m_2(x)d\mu(x)\\
 &\le4\sqrt{\lambda_X}\norm{A_1-A_2}_F\norm d_{L^2}
 +4\sqrt{M_2(\nu)}\norm{B_1-B_2}_F\norm d_{L^2}.
\end{align*}
After division when $d\ne0$ (the zero case is trivial),
\begin{equation}\label{eq:app-map-sensitivity}
 \norm d_{L^2}
 \le\frac2\varepsilon\left(
 \sqrt{\lambda_X}\norm{A_1-A_2}_F+
 \sqrt{M_2(\nu)}\norm{B_1-B_2}_F\right).
\end{equation}

The moment differences satisfy
\[
 \norm{M_{m_1}-M_{m_2}}_F\le\sqrt{\lambda_X}\norm d_{L^2}.
\]
Also,
$m_1m_1^\top-m_2m_2^\top=d\,m_1^\top+m_2d^\top$, and therefore
\[
 \norm{S_{m_1}-S_{m_2}}_F
 \le(\norm{m_1}_{L^2}+\norm{m_2}_{L^2})\norm d_{L^2}
 \le2\sqrt{M_2(\nu)}\norm d_{L^2}.
\]
Using \eqref{eq:app-outer-gradients} and \eqref{eq:app-map-sensitivity}, the
vector of the two block gradient norms is bounded componentwise by
\[
 \left[
 \begin{pmatrix}4&0\\0&2\end{pmatrix}
 +\frac8\varepsilon
 \begin{pmatrix}\sqrt{\lambda_X}\\\sqrt{M_2(\nu)}\end{pmatrix}
 \begin{pmatrix}\sqrt{\lambda_X}&\sqrt{M_2(\nu)}\end{pmatrix}
 \right]
 \begin{pmatrix}\norm{A_1-A_2}_F\\\norm{B_1-B_2}_F\end{pmatrix}.
\]
Thus $L=\norm K_{\rm op}$ is a global Lipschitz constant.  The operator norm of
the diagonal term is $4$, and that of the rank-one term is
$8(\lambda_X+M_2(\nu))/\varepsilon$, giving the simpler bound in
Theorem~\ref{thm:convergence}.

\emph{Projected contraction.}
Strong monotonicity makes the solution of the variational inequality unique.  The Hilbert
projection characterization \cite[Theorem~3.16]{BauschkeCombettes} gives the
fixed-point identity
\[
 z^\star=\operatorname{Proj}_{\mathcal Z}(z^\star-\eta G(z^\star)).
\]
Nonexpansiveness of Euclidean projection
\cite[Proposition~4.16]{BauschkeCombettes} gives
\begin{align*}
 \norm{z_{k+1}-z^\star}
 &\le\norm{z_k-z^\star-\eta(G(z_k)-G(z^\star))-\eta e_k}\\
 &\le\norm{z_k-z^\star-\eta(G(z_k)-G(z^\star))}+\eta\norm{e_k}.
\end{align*}
Squaring the first norm, then using strong monotonicity and Lipschitz continuity,
gives
\[
 \norm{z_k-z^\star-\eta(G(z_k)-G(z^\star))}^2
 \le(1-2\eta\alpha+\eta^2L^2)\norm{z_k-z^\star}^2.
\]
Taking square roots proves the recurrence.  The condition
$0<\eta<2\alpha/L^2$ implies $1-2\eta\alpha+\eta^2L^2<1$, and hence
$q=\sqrt{1-2\eta\alpha+\eta^2L^2}<1$.
Writing \(d_k=\norm{z_k-z^\star}\) and iterating the recurrence gives
\[
 d_k\le q^k d_0+
 \eta\sum_{j=0}^{k-1}q^{k-1-j}\norm{e_j}.
\]
For exact oracles this reduces to \(d_k\le q^kd_0\). If
\(\sup_j\norm{e_j}\le\bar e\), summing the geometric series gives
\[
 d_k\le q^kd_0+\eta\frac{1-q^k}{1-q}\bar e,
 \qquad
 \limsup_{k\to\infty}d_k\le\frac{\eta\bar e}{1-q}.
\]
Finally, if \(\norm{e_j}\to0\), split the convolution sum at a fixed index
\(N\). Its finite initial part tends to zero with \(k\), while the tail is at
most \(\eta\sup_{j\ge N}\norm{e_j}/(1-q)\). Letting \(N\to\infty\) proves
\(d_k\to0\).
\end{proof}

\subsubsection{Proof of Proposition~\ref{prop:inner-gap-oracle-error}}
\label{proof:11.4}

\begin{proof}
Let $m=m_{A,B}$ and let $\widehat m$ be the barycentric map of an inner feasible
coupling. The proof uses only feasibility and the suboptimality of $\widehat P$
for the unregularized objective $f_{A,B}$; it therefore applies to a repaired KL
mirror iterate once that suboptimality is controlled. An optimizer of a different
entropy-regularized objective requires a separate bias bound before this argument
can be used. The Hessian of the inner objective in the barycentric vector is
$2C_{B,\varepsilon}$, whose
smallest eigenvalue is $2\alpha_B$. Strong convexity and the variational inequality
at the constrained minimizer imply that the objective suboptimality satisfies
\[
 \delta\ge\alpha_B\norm{\widehat m-m}_{L^2}^2,
 \qquad
 \norm{\widehat m-m}_{L^2}\le\sqrt{\delta/\alpha_B}.
\]
For the $A$ block, with $d=\widehat m-m$,
\[
 \norm{e_A}_F=4\norm{Td}_F
 \le4\sqrt{\lambda_X}\norm d_{L^2}.
\]
For the $B$ block, feasibility gives
$\norm{m}_{L^2},\norm{\widehat m}_{L^2}\le\sqrt{M_2(\nu)}$.  The identity
$\widehat m\widehat m^\top-mm^\top=d\,\widehat m^\top+md^\top$ yields
\[
 \norm{S_{\widehat m}-S_m}_F
 \le2\sqrt{M_2(\nu)}\norm d_{L^2},
\]
and hence
$\norm{e_B}_F=2\norm{S_{\widehat m}-S_m}_F
\le4\sqrt{M_2(\nu)}\norm d_{L^2}$.  Combining the two blocks in product
Frobenius norm gives
\[
 \norm e\le4\sqrt{\lambda_X+M_2(\nu)}\norm d_{L^2}
 \le4\sqrt{\frac{(\lambda_X+M_2(\nu))\delta}{\alpha_B}}.
\]
\end{proof}

\section{General geometry with conditional laws}\label{app:general-wgw-geometry}

We use Figure~\ref{fig:general-wgw-geometry} to isolate the general construction behind
Definition~\ref{def:generic-wgw}. A coupling turns each source point into a
probability-valued target node; a source relation and the two conditional nodes are
then passed to an arbitrary lifted cost.

\begin{figure}[H]
\centering
\resizebox{\textwidth}{!}{%
\begin{tikzpicture}[
  font=\small,
  >=Latex,
  panel/.style={rounded corners=3pt,draw=black!22,fill=black!1},
  ambient/.style={circle,fill=blue!20!white,draw=blue!48!black,
    line width=.55pt,inner sep=2.25pt},
  chosen/.style={circle,fill=blue!72!black,draw=white,
    line width=.4pt,inner sep=3pt},
  atom/.style={circle,fill=red!68!black,draw=white,
    line width=.3pt,inner sep=2pt},
  law/.style={ellipse,draw=red!62!black,fill=red!6,
    line width=.9pt,minimum width=3.55cm,minimum height=1.05cm},
  input/.style={circle,draw=black!40,fill=white,line width=.7pt,
    minimum size=.72cm},
  cost/.style={rounded corners=3pt,draw=green!45!black,fill=green!7,
    line width=1pt,minimum width=2.55cm,minimum height=1.05cm},
  stage/.style={->,line width=1pt,black!58},
  feed/.style={->,line width=.85pt,black!62}
]
  \draw[panel] (0,0) rectangle (4.05,4.55);
  \draw[panel] (5.35,0) rectangle (9.85,4.55);
  \draw[panel] (11.15,0) rectangle (15.25,4.55);

  \node[font=\bfseries,anchor=north] at (2.025,4.34) {source relation};
  \node[font=\bfseries,anchor=north] at (7.60,4.34) {conditional target laws};
  \node[font=\bfseries,anchor=north] at (13.20,4.34) {lifted comparison};

  \node[ambient] at (.72,1.10) {};
  \node[ambient] at (1.08,3.72) {};
  \node[ambient] at (2.20,.72) {};
  \node[ambient] at (3.48,3.48) {};
  \node[ambient] at (3.48,1.02) {};
  \node[chosen,label=left:$x$] (gx) at (1.12,2.78) {};
  \node[chosen,label=right:$x'$] (gxp) at (2.98,1.82) {};
  \draw[<->,line width=1.05pt,blue!70!black] (gx)--(gxp);
  \node[font=\footnotesize,fill=white,inner sep=2pt] at (2.02,3.28)
    {$a=c_{\mathcal X}(x,x')$};
  \node[blue!60!black] at (2.025,.34) {$x,x'\sim\mu$};

  \node[law] (lawx) at (7.60,2.98) {};
  \node[law] (lawxp) at (7.60,1.42) {};
  \node[red!65!black] at (7.60,3.67) {$\pi_x$};
  \node[red!65!black] at (7.60,2.11) {$\pi_{x'}$};
  \foreach \p/\s in {
    {6.62,2.98}/1.55,{7.00,2.80}/2.25,{7.43,3.10}/1.75,
    {7.88,2.82}/2.55,{8.34,3.08}/1.45,{8.67,2.92}/2.05}
    \node[atom,inner sep=\s pt] at (\p) {};
  \foreach \p/\s in {
    {6.61,1.44}/2.35,{7.03,1.25}/1.45,{7.43,1.53}/2.65,
    {7.90,1.29}/1.70,{8.31,1.56}/2.20,{8.67,1.37}/1.45}
    \node[atom,inner sep=\s pt] at (\p) {};
  \node[blue!60!black] at (7.60,.34)
    {$\pi(dx,dy)=\mu(dx)\pi_x(dy)$};

  \node[input] (ina) at (11.88,3.14) {$a$};
  \node[input,draw=red!55!black,text=red!65!black] (inrho) at (11.88,2.18)
    {$\pi_x$};
  \node[input,draw=red!55!black,text=red!65!black] (inrhop) at (11.88,1.22)
    {$\pi_{x'}$};
  \node[cost] (liftcost) at (13.78,2.18)
    {$\mathfrak C(a,\pi_x,\pi_{x'})$};
  \draw[feed] (ina.east)--([yshift=9pt]liftcost.west);
  \draw[feed] (inrho.east)--(liftcost.west);
  \draw[feed] (inrhop.east)--([yshift=-9pt]liftcost.west);
  \node[align=center,green!40!black] at (13.20,.48)
    {measurable conditional cost\\in $[0,+\infty]$};

  \draw[stage] (4.18,2.28)--(5.22,2.28)
    node[midway,above=3pt] {$\pi$};
  \draw[stage] (9.98,2.28)--(11.02,2.28)
    node[midway,above=3pt] {$\mathfrak C$};
\end{tikzpicture}}
\caption{Geometry of general weak GW with conditional laws. A coupling assigns to
each source point $x$ a target probability law $\pi_x$. For every source pair, the
source relation $a=c_{\mathcal X}(x,x')$ and the two induced laws are inputs to the
lifted cost $\mathfrak C(a,\pi_x,\pi_{x'})$. The objective averages this quantity under
$\mu\otimes\mu$ and optimizes the coupling. The construction uses measurable
relations on standard Borel spaces; linear, barycentric, and metric structures enter
through particular choices of the lifted cost.}
\label{fig:general-wgw-geometry}
\end{figure}

\begin{remark}[Further relations between conditional laws]
\label{rem:conditional-law-relations}
The aggregated construction permits several choices of $D$, each paired naturally
with a source relation.
\begin{enumerate}[label=(\roman*),leftmargin=2em]
\item If $\mathcal Y=\R^{d_y}$ and $\rho,\rho'\in\cP_1(\mathcal Y)$, let
\[
 D_{\rm bar}(\rho,\rho')
 :=\left\langle\int y\,d\rho(y),\int y'\,d\rho'(y')\right\rangle.
\]
Together with $c_{\mathcal X}(x,x')=\langle x,x'\rangle$ and
$\mathcal L(a,b)=(a-b)^2$, this is the barycentric wIGW specialization studied in
the main text.

\item On $\cP_2(\R^{d_y})$, the \emph{maximal covariance relation}
\[
 D_{\rm MCov}(\rho,\rho')
 :=\sup_{\omega\in\Pi(\rho,\rho')}
   \int\langle y,y'\rangle\,d\omega(y,y')
 =\frac12\left(
   \int\norm y^2d\rho(y)+\int\norm{y'}^2d\rho'(y')
   -W_2^2(\rho,\rho')\right)
\]
incorporates the conditional spreads through optimal pairwise alignment.
Regularization against a fixed reference law addresses the selection of martingale
gluings; see \cite{GuoNilssonWiesel2025} for the related martingale relaxation.

\item For Euclidean distance relations, one may take
\[
 c_{\mathcal X}(x,x')=\norm{x-x'},
 \qquad D_{W_2}(\rho,\rho')=W_2(\rho,\rho'),
\]
on $\cP_2(\mathcal Y)$ together with, for example,
$\mathcal L(a,b)=(a-b)^2$. The resulting loss compares a source distance with the
Wasserstein distance between two conditional target laws.

Both $D_{\rm MCov}$ and $D_{W_2}$ contain an inner optimal transport problem between
the conditional laws. They therefore produce an outer relational optimization with
nested conditional transports, an architecture related to nested distance
\cite{PflugPichler2012}, adapted or bicausal Wasserstein distance
\cite{BackhoffBartlBeiglbockEder2020}, and recursive entropic transport algorithms
\cite{QuTran2021}. Definition~\ref{def:generic-wgw} averages relations between two
rows induced by a single coupling. Nested and adapted transport encode a filtration
through recursive or causal coupling constraints. An equivalence between these
formulations would require additional assumptions.

\item Let $k(y,y')=\langle\psi(y),\psi(y')\rangle_{\mathcal H}$ be a measurable
positive definite kernel for which the Bochner mean embedding
$m_k(\rho):=\int\psi(y)d\rho(y)$ exists. Two associated relations are
\[
 \begin{aligned}
 D_k^{\rm ip}(\rho,\rho')
   &:=\langle m_k(\rho),m_k(\rho')\rangle_{\mathcal H}
     =\iint k(y,y')d\rho(y)d\rho'(y'),\\
 D_k^{\rm MMD}(\rho,\rho')
   &:=\norm{m_k(\rho)-m_k(\rho')}_{\mathcal H}.
 \end{aligned}
\]
The first pairs with a source kernel or feature inner product, and the second with a
source feature distance. MMD is a pseudometric in general and a metric for
characteristic kernels; see \cite{MuandetEtAl,ParkMuandet}.
\end{enumerate}

These functionals are measurable on the indicated moment classes, or on classes with
the stated RKHS integrability, under their standard topologies.
Definition~\ref{def:generic-wgw} may be restricted to those classes or completed by
measurable extensions. The projection theorem, moment reduction, and $A$--$B$
duality in the main text are established for the finite-dimensional bilinear
relation $D_{\rm bar}$. Corresponding structural and computational results for
$D_{\rm MCov}$, $D_{W_2}$, $D_k^{\rm ip}$, and $D_k^{\rm MMD}$ require separate
analysis and are left for future work.
\end{remark}

\section{Reference theorems used}\label{app:reference-theorems}

For completeness, we state the precise forms of Strassen's martingale theorem,
Danskin's envelope theorem, and Sion's minimax theorem used in our arguments. The
proof of Sion invokes
the standard finite form of Fan's KKM lemma \cite{FanKKM}. Kellerer's theorem gives
the corresponding martingale existence result for suitable time-indexed
one-dimensional families increasing in convex order \cite{Kellerer}; this paper uses
Strassen's two-time theorem.

\Needspace{12\baselineskip}
\subsection{Strassen's martingale characterization}\label{subsec:strassen}

\begin{theorem}[Strassen \cite{Strassen}]
Let $\eta$ and $\nu$ be Borel probability measures on $\R^d$ with finite first
moments. The following are equivalent.
\begin{enumerate}[label=(\roman*),leftmargin=2em]
\item For every finite continuous convex function $u:\R^d\to\R$ of at most linear
growth,
\[
 \int u\,d\eta\le\int u\,d\nu.
\]
Equivalently, the same inequality holds for every convex $u$ for which both
integrals are well defined.
\item There exists a Borel probability measure
$\kappa\in\Pi(\eta,\nu)$ on $\R^d\times\R^d$ such that, for the coordinate pair
$(Z,Y)$,
\[
 \E_\kappa[Y\mid Z]=Z\qquad\kappa\text{-almost surely}.
\]
\item There exists a Borel probability kernel $z\mapsto\kappa_z$ on $\R^d$ such
that
\[
 \int\kappa_z\,d\eta(z)=\nu,
 \qquad
 \int y\,d\kappa_z(y)=z
 \quad\text{for }\eta\text{-almost every }z.
\]
\end{enumerate}
In (ii)--(iii), finite first moments make the conditional barycenter well
defined. Condition (i) automatically forces equality of the barycenters of $\eta$
and $\nu$, because affine functions and their negatives are convex.
\end{theorem}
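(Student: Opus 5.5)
The plan is to prove the cycle (iii)$\Rightarrow$(ii)$\Rightarrow$(i)$\Rightarrow$(ii)$\Rightarrow$(iii). The equivalence of (ii) and (iii) is bookkeeping. Given a martingale kernel $\kappa_z$ as in (iii), the measure $\kappa(dz,dy)=\eta(dz)\kappa_z(dy)$ lies in $\Pi(\eta,\nu)$. Its conditional mean is $z$ for $\eta$-a.e.\ $z$, which is (ii). Conversely, given $\kappa$ as in (ii), disintegrate it on the standard Borel space $\R^d$ \cite[Theorem~3.4]{Kallenberg}. This gives a kernel $\kappa_z$ with $\int\kappa_z\,d\eta=\nu$. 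The identity $\E_\kappa[Y\mid Z]=Z$ says exactly that $\int y\,d\kappa_z(y)=z$ for $\eta$-a.e.\ $z$. Finite first moments make every barycenter well defined.

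For (ii)$\Rightarrow$(i), fix a finite convex $u$ of at most linear growth. Then $u(Y)$ is integrable because $\nu\in\cP_1$. Conditional Jensen gives $u(Z)=u(\E[Y\mid Z])\le\E[u(Y)\mid Z]$, and integrating yields $\int u\,d\eta\le\int u\,d\nu$. The extension to all convex tests with well-defined integrals follows by monotone approximation from below by maxima of finitely many affine minorants, as in the proof of Lemma~\ref{lem:feasible-set}.

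The substantive direction is (i)$\Rightarrow$(ii), which I would handle by a minimax argument over couplings. Let $\mathcal H=C_b(\R^d;\R^d)$ and define
\[
 \Phi(\kappa,h):=\int h(z)\cdot(y-z)\,d\kappa(z,y),\qquad \kappa\in\Pi(\eta,\nu),\ h\in\mathcal H.
\]
A coupling $\kappa$ is a martingale iff $\Phi(\kappa,h)=0$ for all $h$, because bounded continuous test functions determine $\E[Y-Z\mid Z]$. Since $\Phi$ is linear in $h$, this is equivalent to $\sup_h\Phi(\kappa,h)\le0$.

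The argument then runs as follows.
\begin{enumerate}[label=(\alph*)]
\item $\Pi(\eta,\nu)$ is convex and weakly compact by Prokhorov \cite[Theorem~5.1]{Billingsley}.
\item For fixed $h$, the map $\kappa\mapsto\Phi(\kappa,h)$ is weakly continuous. The integrand is continuous with $|h(z)\cdot(y-z)|\le\norm h_\infty(\norm y+\norm z)$, and the fixed marginals give uniform integrability of first moments, exactly as in the cutoff argument proving Theorem~\ref{thm:ordinary-igw-envelope}.
\item $\Phi$ is bilinear, so Sion's theorem \cite{Sion}, with $\kappa$ as the compact variable, gives
\[
\inf_\kappa\sup_h\Phi=\sup_h\inf_\kappa\Phi.
\]
\item Since $\kappa\mapsto\sup_h\Phi(\kappa,h)$ is lower semicontinuous as a supremum of continuous functions, the infimum is attained on the compact set. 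It therefore suffices to show $\inf_\kappa\Phi(\kappa,h)\le0$ for every $h$.
\end{enumerate}

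For fixed $h$, $\inf_\kappa\Phi(\kappa,h)$ is an OT problem with continuous cost $c(z,y)=h(z)\cdot(y-z)$. This cost is dominated by integrable marginal functions, so Kantorovich duality \cite[Theorem~5.10]{VillaniOT} applies. It is therefore enough to show $\int\varphi\,d\eta+\int\psi\,d\nu\le0$ whenever $\varphi(z)+\psi(y)\le h(z)\cdot(y-z)$ pointwise. Set
\[
 u(y):=\sup_z\{\varphi(z)-h(z)\cdot(y-z)\}.
\]
This $u$ is convex as a supremum of affine functions of $y$, and it is $\norm h_\infty$-Lipschitz. The constraint gives $\psi\le-u$, and the choice $z=y$ gives $u\ge\varphi$. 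Hence
\[
\int\varphi\,d\eta+\int\psi\,d\nu\le\int u\,d\eta-\int u\,d\nu\le0,
\]
where the last inequality is (i).

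The main obstacle is ensuring that $u$ is a legitimate test function: finite, of at most linear growth, with integrals well defined. Lipschitz continuity gives linear growth once $u$ is finite at one point. Finiteness needs $\varphi$ bounded above, which I would obtain first by truncating $\varphi$ at a level $N$ (this only weakens the constraint) and then letting $N\to\infty$ by monotone convergence. A second caveat is that Sion requires the topology on $\mathcal H$ only for the concavity side, which is trivial here because $\Phi$ is linear in $h$. I would double-check that the version in Appendix~\ref{app:reference-theorems} permits a noncompact, untopologized maximizing variable.
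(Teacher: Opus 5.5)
Your argument is correct and follows the paper's core. It uses the same disintegration and conditional-Jensen steps and the same test functionals $\int h(z)\cdot(y-z)\,d\kappa$. It also uses the same key construction, turning Kantorovich potentials for the cost $h(z)\cdot(y-z)$ into the convex test $u(y)=\sup_z\{\varphi(z)-h(z)\cdot(y-z)\}$.

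The only structural difference is how you pass from ``each $h$ separately'' to a single martingale coupling. The paper argues by contradiction: it strictly separates the origin from the image of $\Pi(\eta,\nu)$ under $(\mathcal F_h)_h$ by Hahn--Banach, combines the finitely many vector fields into one $h$ with $\inf_\kappa\mathcal F_h(\kappa)\ge\epsilon>0$, and refutes this via duality. You instead apply Sion's theorem directly to the bilinear $\Phi$, with $\Pi(\eta,\nu)$ as the compact variable. This produces the coupling as a minimizer of $\kappa\mapsto\sup_h\Phi(\kappa,h)$ rather than by contradiction, and reuses the minimax theorem already stated in the appendix. It is somewhat cleaner than the product-space separation. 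Your caveat about Sion is harmless: give $C_b(\R^d;\R^d)$ the sup norm. Then $h\mapsto\Phi(\kappa,h)$ is continuous, since $|\Phi(\kappa,h)|\le\norm{h}_\infty\int(\norm z+\norm y)\,d\kappa$, and the paper's statement requires only the minimizing set to be compact.

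One step is misjustified, though the repair is already in your text. Bounding $\varphi$ above does not make $u$ finite. The affine pieces of $u$ have intercepts $\varphi(z)+h(z)\cdot z$, and $h(z)\cdot z$ is unbounded for bounded $h$ (take $h(z)=z/\max\{1,\norm z\}$). So $\varphi\le N$ gives no upper bound on $u$.

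What controls $u$ is the dual constraint itself. Take any $y_0$ with $\psi(y_0)$ finite, which holds $\nu$-a.e. The constraint at $(z,y_0)$ gives
\[
 \varphi(z)-h(z)\cdot(y-z)\le-\psi(y_0)+\norm{h}_\infty\norm{y-y_0}
 \qquad\text{for all }z,y,
\]
so $u(y)\le-\psi(y_0)+\norm{h}_\infty\norm{y-y_0}$. Also $u>-\infty$, because it dominates any single affine piece with $\varphi(z_0)$ finite. This is exactly the paper's ``dual constraint evaluated at one fixed target point bounds the intercepts from above.'' You can drop the truncation of $\varphi$ and the monotone-convergence limit.
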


\begin{proof}
The equivalence of (ii) and (iii) is disintegration on Euclidean Borel spaces
\cite[Theorem~3.4]{Kallenberg}:
\[
 \kappa(dz,dy)=\eta(dz)\kappa_z(dy),
 \qquad \E[Y\mid Z=z]=\int y\,d\kappa_z(y).
\]
Thus the conditional martingale identity is the barycenter identity in
(iii).

Assume (ii).  For every integrable convex $u$, conditional Jensen gives
\[
 u(Z)=u(\E[Y\mid Z])\le\E[u(Y)\mid Z].
\]
Integrating proves $\int u\,d\eta\le\int u\,d\nu$, hence (i).

For (i)$\Rightarrow$(ii), we give the separation argument.  Equip
$\Pi(\eta,\nu)$ with weak convergence together with convergence of first moments.
It is compact because the fixed marginals give tightness and uniform integrability
of $\norm z+\norm y$; the compactness input is Prokhorov's theorem
\cite[Theorem~5.1]{Billingsley}.  For every bounded continuous vector field
$h:\R^d\to\R^d$, define
\[
 \mathcal F_h(\kappa)=\int h(z)^\top(y-z)d\kappa(z,y).
\]
This functional is continuous: it is weakly continuous after compact truncation,
and its tails are controlled by boundedness of $h$ and the fixed first moments.  A
coupling is a martingale if and only if $\mathcal F_h(\kappa)=0$ for every such
$h$; bounded continuous tests determine conditional expectation.

Suppose no martingale coupling exists.  The image of the compact convex set
$\Pi(\eta,\nu)$ under the family $(\mathcal F_h)_h$ is compact and convex in the
product topology and does not contain the origin.  Strict separation in this locally
convex product space follows from the Hahn--Banach separation theorem
\cite[Chapter~1]{BrezisFA} and uses finitely many coordinates.  Combining their vector fields
produces one bounded continuous $h$ and some $\epsilon>0$ such that
\begin{equation}\label{eq:appendix-strassen-separation}
 \inf_{\kappa\in\Pi(\eta,\nu)}
 \int h(z)^\top(y-z)d\kappa(z,y)\ge\epsilon.
\end{equation}

Apply Kantorovich duality \cite[Theorem~5.10]{VillaniOT} to the continuous cost with
at most linear growth
$c_h(z,y)=h(z)^\top(y-z)$.  By
\eqref{eq:appendix-strassen-separation}, there are integrable dual functions $a,b$
such that
\[
 a(z)+b(y)\le h(z)^\top(y-z),\qquad
 \int a\,d\eta+\int b\,d\nu>0.
\]
Define
\[
 u(y)=\sup_z\{a(z)+h(z)^\top z-h(z)^\top y\}.
\]
It is a supremum of affine functions and therefore convex.  Boundedness of $h$
bounds all slopes; the dual constraint evaluated at one fixed target point bounds the
intercepts from above.  Thus $u$ is finite and has at most linear growth.  Evaluating
at $z=y$ gives $a(y)\le u(y)$, while the dual constraint for every $z$ gives
\[
 b(y)\le\inf_z\{h(z)^\top y-h(z)^\top z-a(z)\}=-u(y).
\]
Convex order now implies the contradiction
\[
 0<\int a\,d\eta+\int b\,d\nu
 \le\int u\,d\eta-\int u\,d\nu\le0.
\]
Therefore a martingale coupling exists.
\end{proof}

Every application of this theorem uses Euclidean Borel spaces and measures in
$\cP_1$ or $\cP_2$; these settings satisfy the stated measurable and topological
hypotheses.

\subsection{Danskin's envelope theorem}\label{subsec:danskin}

\begin{theorem}[Danskin \cite{Danskin1966}]\label{thm:danskin-envelope}
Let $\Theta\subset\R^q$ be open, let $\mathcal X$ be a metric space, and let
$\Phi:\Theta\times\mathcal X\to\R$ be differentiable in its first variable.
For either choice $\operatorname{ext}\in\{\min,\max\}$, define
\[
 V(\theta)=\operatorname{ext}_{x\in\mathcal X}\Phi(\theta,x),
 \qquad
 \mathcal X^\star(\theta)
 =\arg\operatorname{ext}_{x\in\mathcal X}\Phi(\theta,x).
\]
Fix $\theta_0\in\Theta$. Assume that the extremum is attained for $\theta$ near
$\theta_0$ and that the optimizers are locally stable: whenever
$\theta_n\to\theta_0$ and $x_n\in\mathcal X^\star(\theta_n)$, a subsequence
converges to some $x^\star\in\mathcal X^\star(\theta_0)$. Assume also joint
gradient continuity along such optimizer sequences: for every additional
$\vartheta_n\to\theta_0$,
$\nabla_\theta\Phi(\vartheta_n,x_n)\to
\nabla_\theta\Phi(\theta_0,x^\star)$ along that subsequence. These hypotheses
hold, in particular, when $\mathcal X$ is compact and
$\Phi,\nabla_\theta\Phi$ are jointly continuous.

Then the directional derivative of $V$ at $\theta_0$ is
\[
 V'(\theta_0;h)=
 \begin{cases}
  \displaystyle\min_{x\in\mathcal X^\star(\theta_0)}
  \ip{\nabla_\theta\Phi(\theta_0,x)}{h},&\operatorname{ext}=\min,\\[7pt]
  \displaystyle\max_{x\in\mathcal X^\star(\theta_0)}
  \ip{\nabla_\theta\Phi(\theta_0,x)}{h},&\operatorname{ext}=\max.
 \end{cases}
\]
If all active optimizers give the same outer gradient $p$, then $V$ is
differentiable at $\theta_0$ and $\nabla V(\theta_0)=p$. In particular, this
holds when the optimizer is unique.
\end{theorem}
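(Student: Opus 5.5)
The statement is Danskin's envelope theorem in the form with locally stable optimizers. I will treat the $\min$ case; the $\max$ case then follows by applying it to $-\Phi$, which turns maxima into minima and reverses the sign of the gradient.

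Fix a direction $h$, set $\theta_t=\theta_0+th$ for small $t>0$, and bound the difference quotient $(V(\theta_t)-V(\theta_0))/t$ from both sides.

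\emph{Upper bound.} For any $x\in\mathcal X^\star(\theta_0)$ we have $V(\theta_t)\le\Phi(\theta_t,x)$ and $V(\theta_0)=\Phi(\theta_0,x)$. Dividing by $t$ and using differentiability in $\theta$ gives
\[
\limsup_{t\downarrow0}\frac{V(\theta_t)-V(\theta_0)}{t}\le\ip{\nabla_\theta\Phi(\theta_0,x)}{h}.
\]
Infimizing over $x\in\mathcal X^\star(\theta_0)$ gives the upper bound. That infimum is a minimum: optimizer stability, applied to the constant sequence $\theta_n=\theta_0$ and $\vartheta_n=\theta_0$, shows that $\mathcal X^\star(\theta_0)$ is sequentially compact and that $\nabla_\theta\Phi(\theta_0,\cdot)$ is continuous along optimizer sequences.

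\emph{Lower bound.} Take a sequence $t_n\downarrow0$ realizing the $\liminf$ of the quotient, and choose $x_n\in\mathcal X^\star(\theta_{t_n})$. Then $V(\theta_{t_n})=\Phi(\theta_{t_n},x_n)$ and $V(\theta_0)\le\Phi(\theta_0,x_n)$, so the quotient is at least
\[
\frac{\Phi(\theta_{t_n},x_n)-\Phi(\theta_0,x_n)}{t_n}=\ip{\nabla_\theta\Phi(\vartheta_n,x_n)}{h}
\]
for some $\vartheta_n$ on the segment $[\theta_0,\theta_{t_n}]$. This is the mean value theorem applied to the scalar function $s\mapsto\Phi(\theta_0+sh,x_n)$, which is differentiable.

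Now pass to a subsequence with $x_n\to x^\star\in\mathcal X^\star(\theta_0)$ by optimizer stability. The joint gradient continuity hypothesis, with this $\vartheta_n\to\theta_0$, gives
\[
\liminf_{t\downarrow0}\frac{V(\theta_t)-V(\theta_0)}{t}\ge\ip{\nabla_\theta\Phi(\theta_0,x^\star)}{h}\ge\min_{x\in\mathcal X^\star(\theta_0)}\ip{\nabla_\theta\Phi(\theta_0,x)}{h}.
\]
Combined with the upper bound, this gives the formula for $V'(\theta_0;h)$.

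\emph{Differentiability.} If every active optimizer gives the same gradient $p$, then $V'(\theta_0;h)=\ip{p}{h}$ is linear in $h$, so $V$ is Gateaux differentiable. For Fréchet differentiability I would rerun the argument with $\theta_n=\theta_0+t_nh_n$, where $h_n\to h$ with $\norm{h_n}=1$. The same stability and mean-value steps give uniform control of the remainder over the unit sphere, arguing by contradiction along sequences.

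\emph{Sufficient conditions.} When $\mathcal X$ is compact and $\Phi$, $\nabla_\theta\Phi$ are jointly continuous, attainment is immediate. Stability follows from extracting a convergent subsequence of $x_n$ and checking by continuity that its limit is optimal at $\theta_0$. Gradient continuity follows directly from joint continuity.

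I expect the main obstacle to be the lower bound, specifically justifying the mean-value step when $\Phi$ is only differentiable in $\theta$ rather than $C^1$. The scalar mean value theorem handles this, because the path $s\mapsto\Phi(\theta_0+sh,x_n)$ is differentiable on the whole segment inside the open set $\Theta$, and the hypothesis on arbitrary $\vartheta_n\to\theta_0$ is exactly what is needed afterwards.
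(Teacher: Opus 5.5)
Your proof is correct and follows the paper's argument. The paper does the max case and reduces min to it via $-\Phi$, the reverse of your reduction. Like you, it bounds one side of the difference quotient with a fixed active optimizer and the other with optimizers at $\theta_0+th$, the mean-value formula, optimizer stability and gradient continuity, and then gets a uniform $o(\norm h)$ remainder when the active gradients coincide. Your extra checks (that the extremum over $\mathcal X^\star(\theta_0)$ is attained, and the compact, jointly continuous sufficient conditions) supply details the paper leaves implicit.
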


\begin{proof}
We give the maximization argument; minimization follows by applying it to
$-\Phi$. For any $x\in\mathcal X^\star(\theta_0)$, comparison with the same $x$
at $\theta_0+th$ gives
\[
 \liminf_{t\downarrow0}
 \frac{V(\theta_0+th)-V(\theta_0)}{t}
 \ge\ip{\nabla_\theta\Phi(\theta_0,x)}h.
\]
For the reverse inequality, choose
$x_t\in\mathcal X^\star(\theta_0+th)$. Optimizer stability supplies a
subsequence converging to an active $x^\star$ at $\theta_0$. Since
$V(\theta_0)\ge\Phi(\theta_0,x_t)$, the mean-value formula and gradient
continuity give the matching upper bound
$\ip{\nabla_\theta\Phi(\theta_0,x^\star)}h$. Maximizing over the active set
proves the formula. If the active gradients coincide, the same two comparisons
for arbitrary increments give a uniform $o(\norm h)$ remainder and hence
differentiability.
\end{proof}

For Proposition~\ref{prop:reconstruction}, the ridge makes the optimal mean map
unique and the two optimality inequalities give its strong stability under
changes of $(A,B)$. For the finite oracle, the transport polytope is compact and
all minimizing plans have the same barycentric vector, hence the same outer
gradient. For the outer maximum over $B$, the maximizing matrix is unique. These
facts verify the hypotheses above in every use in Sections~\ref{sec:reconstruction}
and~\ref{sec:algorithm}.

\Needspace{8\baselineskip}
\subsection{Sion's minimax theorem}\label{subsec:sion}

For a real-valued function $f$ on a convex set, quasi-convexity means that every
sublevel set is convex; quasi-concavity means that every superlevel set is convex.

\begin{theorem}[Sion \cite{Sion}]
Let $E$ and $F$ be real Hausdorff topological vector spaces. Let
$X\subset E$ be a nonempty compact convex set and let $Y\subset F$ be a nonempty
convex set. Suppose $f:X\times Y\to\R$ satisfies:
\begin{enumerate}[label=(\roman*),leftmargin=2em]
\item for every $y\in Y$, the section $x\mapsto f(x,y)$ is lower semicontinuous
and quasi-convex on $X$;
\item for every $x\in X$, the section $y\mapsto f(x,y)$ is upper semicontinuous
and quasi-concave on $Y$.
\end{enumerate}
Then
\[
 \min_{x\in X}\sup_{y\in Y}f(x,y)
 =\sup_{y\in Y}\min_{x\in X}f(x,y).
\]
Only the minimizing set $X$ is required to be compact. In our applications the
two minimax values are finite; when both sets are compact and the corresponding
sections are semicontinuous, the outer minimum and maximum also have optimizers.
\end{theorem}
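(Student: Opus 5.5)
The plan is to prove Sion's theorem in the present form: $X$ compact convex and the minimizing variable, with sections lower semicontinuous and quasi-convex in $x$, upper semicontinuous and quasi-concave in $y$. I would use the classical route through the finite KKM lemma of Fan \cite{FanKKM}, as the appendix introduction indicates.

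\textbf{Step 1: the easy inequality.} The direction $\sup_y\min_x f\le\min_x\sup_y f$ holds for any function. Attainment of $\min_x$ is the only point to check. For fixed $y$, the section $f(\cdot,y)$ is lower semicontinuous on compact $X$, so it attains its minimum. The map $x\mapsto\sup_y f(x,y)$ is a supremum of lower semicontinuous functions, hence lower semicontinuous, so it also attains its minimum on $X$.

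\textbf{Step 2: reduction to finitely many $y$.} I argue by contradiction: suppose $\sup_y\min_x f<c<\min_x\sup_y f$ for some real $c$. For each $y$ set $X_y=\{x\in X:f(x,y)\le c\}$. This set is closed by lower semicontinuity and convex by quasi-convexity. The right inequality gives $\bigcap_y X_y=\emptyset$. By compactness of $X$, some finite family already has empty intersection: $\bigcap_{i=1}^n X_{y_i}=\emptyset$.

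So it suffices to prove the theorem for finitely many $y_1,\dots,y_n$, with $Y$ replaced by the convex hull $\operatorname{conv}\{y_1,\dots,y_n\}$, which is finite dimensional. Concretely, I would show there exist $x_0\in X$ and $y_0\in\operatorname{conv}\{y_i\}$ with $f(x_0,y)\le c'$ for all $y$ in the hull, yet $f(x,y_0)>c'$ for all $x\in X$, for a suitable $c'$ between the two values.

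\textbf{Step 3: the finite case via KKM (the main obstacle).} The cleanest route I would try first is the standard induction on $n$ (Komiya's argument).

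For $n=2$, write $y_t=(1-t)y_1+ty_2$. Suppose $\min_x\max(f(x,y_1),f(x,y_2))>c$ while $\min_x f(x,y_t)<c$ for every $t$. The sets $A_t=\{x:f(x,y_t)\le c\}$ are nonempty, closed and convex. Quasi-concavity in $y$ gives $A_t\subset\{f(\cdot,y_1)\le c\}\cup\{f(\cdot,y_2)\le c\}$, and these two closed convex sets are disjoint. Since $A_t$ is convex, hence connected, it lies entirely in one of them. This splits $[0,1]$ into two index sets $I_1\ni0$ and $I_2\ni1$.

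Both index sets are shown to be open by combining upper semicontinuity in $t$ with a margin argument, working with a level slightly below $c$ to obtain strict inequalities. This contradicts connectedness of $[0,1]$. The general $n$ follows by induction, applying the two-point case to the restricted function on $X'=\{x:f(x,y_n)\le c\}$.

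The semicontinuity bookkeeping in the openness step, with the choice of an intermediate level $c'<c$, is where care is needed. If it becomes delicate, the fallback is to apply KKM directly on the simplex spanned by the $y_i$, using closed sets $F_i$ defined through the lower semicontinuous sections and the finite intersection property.
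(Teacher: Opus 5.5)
Your proposal is correct, but its core differs from the paper's.

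\textbf{What is shared.} Both arguments use the same compactness reduction. The sets $\{x: f(x,y)\le r\}$ are closed and convex, so compactness of $X$ reduces the problem to finitely many $y_1,\dots,y_N$.

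\textbf{Where the routes split.} The paper settles the finite case in one step. It asserts, citing Fan's KKM lemma, that $\min_x f(x,y)<r$ on $\operatorname{conv}\{y_1,\dots,y_N\}$ forces $\bigcap_i\{f(\cdot,y_i)\le r\}\neq\varnothing$, and then concludes by the finite intersection property. You instead prove the finite case by Komiya's elementary argument: a two-point lemma via connectedness of convex sets and of $[0,1]$, then induction on $N$. Your route needs no KKM or fixed-point machinery. It uses only connectedness, upper semicontinuity along segments, and convexity of the strict sublevel sets $\{f(\cdot,y_t)<c\}$, which quasi-concavity places inside the union of the two disjoint closed sets.

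\textbf{What each buys.} Your route also makes explicit the two-level bookkeeping that the paper's sketch hides. The bound $f(x,y)\ge\min_{i\ \mathrm{active}}f(x,y_i)$ at the single level $r$ cannot by itself give $f(x,y)>r$ for every $x$. That would require the active sublevel sets to be empty, and they are not. So in the paper the real work is delegated to the cited lemma. The paper's route is shorter once KKM is accepted as a black box; yours is self-contained.

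\textbf{Three small repairs.}

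First, drop the ``concretely'' sentence in Step 2. No $x_0$ with $f(x_0,y)\le c'$ on the whole hull exists, because $\bigcap_iX_{y_i}=\varnothing$ gives $\max_if(x,y_i)>c$ for every $x$. What you need, and what Step 3 delivers, is only some $y_0\in\operatorname{conv}\{y_i\}$ with $\min_xf(x,y_0)>c$. That already contradicts $\sup_y\min_xf<c$.

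Second, state the finite lemma with a single level $\alpha$ in both hypothesis and conclusion: if $\min_x\max_if(x,y_i)>\alpha$, then some $y_0$ in the hull has $\min_xf(x,y_0)>\alpha$. The auxiliary level $\beta\in(\alpha,\min_x\max_if)$ then stays internal to the two-point case. For the induction step, proceed in two stages:
\begin{itemize}
\item apply the $(N-1)$-point lemma to $f$ on $X'\times Y$ to obtain $y_0'$ (the case $X'=\varnothing$ is trivial);
\item then apply the two-point lemma on all of $X$ to the pair $y_0'$ and $y_N$.
\end{itemize}
This replaces your description of applying the two-point case on $X'$.

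Third, the statement also asserts that the supremum is attained when $Y$ is compact. This holds because $y\mapsto\min_xf(x,y)$ is an infimum of upper semicontinuous functions, hence upper semicontinuous. Neither you nor the paper writes this out.
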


\begin{proof}
Set
\[
 \underline v=\sup_{y\in Y}\min_{x\in X}f(x,y),\qquad
 \overline v=\min_{x\in X}\sup_{y\in Y}f(x,y).
\]
For every $(x,y)$, $\min_{x'}f(x',y)\le f(x,y)\le\sup_{y'}f(x,y')$; taking the
outer extrema gives $\underline v\le\overline v$.

Fix $r>\underline v$ and, for $y\in Y$, define
$F_y=\{x\in X:f(x,y)\le r\}$.  Each $F_y$ is closed by lower semicontinuity and
convex by quasi-convexity.  We use the following finite intersection consequence of
Fan's KKM lemma \cite{FanKKM}: if $y_1,\ldots,y_N\in Y$ and
$\min_x f(x,y)<r$ for every $y$ in their convex hull, then
$\bigcap_{i=1}^NF_{y_i}\ne\varnothing$.  Indeed, quasi-concavity gives, for
$y=\sum_i\lambda_i y_i$,
\[
 f(x,y)\ge\min_{i:\lambda_i>0}f(x,y_i).
\]
If the intersection were empty, the KKM covering alternative applied to the closed
convex sublevel sets would produce an active convex combination $y$ for which every
$x$ has $f(x,y)>r$, contradicting $\min_xf(x,y)<r$.

Because $r>\underline v$, the premise holds for every $y\in Y$, hence for every
finite convex hull.  The family $(F_y)_{y\in Y}$ has the finite intersection
property.  Compactness of $X$ provides
$x_r\in\bigcap_{y\in Y}F_y$.  Consequently
$\sup_yf(x_r,y)\le r$, and therefore $\overline v\le r$.  Letting
$r\downarrow\underline v$ gives $\overline v\le\underline v$ and proves equality.
Finally, $x\mapsto\sup_yf(x,y)$ is lower semicontinuous as a supremum of
lower semicontinuous functions, so it has a minimizer on compact $X$.
\end{proof}

For Theorem~\ref{thm:compact-dual}, $E=L^2(\mu;\R^{d_y})$ carries its weak topology,
$X=\cM$, $F=\mathbb S^{d_y}\times C(K)$ carries the product of the Frobenius and
uniform topologies, and $Y=\mathcal B\times\cU_K$; the matrix $A$ is fixed before
Sion is applied. For Theorem~\ref{thm:ridge-dual}, $X=\mathcal C_\nu$ has the weak
$L^2$ topology and $Y=\mathcal B_2$ has the Frobenius topology. For
Theorem~\ref{thm:swap}, $X=\mathcal A_2$ and $Y=\mathcal B_2$ are finite-dimensional
compact convex sets. The proofs of those theorems verify the required sectionwise
properties in these specific topologies.

\section{Experimental configurations and evaluation metrics}
\label{app:experimental-configurations}

We specify here the reported synthetic quantities and the numerical settings
used in the synthetic and PBMC experiments. The symbols \(T_{\rm out}\),
\(T_{\rm in}\), and \(N_{\rm sk}\) denote the numbers of outer iterations,
inner mirror iterations per outer iteration, and Sinkhorn iterations allowed
per inner mirror step. In Table~\ref{tab:run-ab-config},
\(\eta=\eta_A=\eta_B\).

{\scriptsize
\renewcommand{\arraystretch}{1.08}
\begin{longtable}{@{}>{\raggedright\arraybackslash}p{.17\textwidth}
>{\raggedright\arraybackslash}p{.25\textwidth}
>{\raggedright\arraybackslash}p{.20\textwidth}
>{\raggedright\arraybackslash}p{.29\textwidth}@{}}
\caption{Definitions of the quantities reported in the synthetic experiments.
The plan \(P_{\rm cert}\) is the constructed martingale witness,
\((\widehat A,\widehat B,\widehat P)\) is the returned weak solver state, and
\(P_{\rm POT}\) and \(P_{\rm env}\) are returned by the matched start ordinary
solvers. Objective values and residuals are recomputed from the indicated plan
or state; compatibility and marginal residuals are numerical checks rather
than additional objectives.}
\label{tab:synthetic-reported-quantities}\\
\toprule
reported label & mathematical definition & evaluated plan or state &
interpretation\\
\midrule
\endfirsthead
\multicolumn{4}{@{}l}{\footnotesize\itshape Table~\thetable\ continued}\\
\toprule
reported label & mathematical definition & evaluated plan or state &
interpretation\\
\midrule
\endhead
\multicolumn{4}{@{}l}{\emph{Objectives and algebraic checks}}\\
clean rotation IGW &
\(J_{\rm IGW}\) with \(Y=Z\) and \(P=\operatorname{diag}(a)\) &
indexed coupling between the source and clean rotated skeleton &
sanity check that the rotation preserves the source Gram matrix\\
weak certificate &
\(J_{\rm w}(P_{\rm cert})\) &
explicit martingale coupling between each parent and its children &
floating point evaluation of the algebraic zero wIGW witness\\
weak solved primal &
\(J_{\rm w}(\widehat P)\) &
returned balanced weak coupling &
unregularized weak objective attained by the finite run\\
ridge solved value &
\(J_{\rm w,\varepsilon}(\widehat P)\) &
the same returned coupling &
ridge objective used to retain the best outer state, recomputed after repair\\
ordinary refinement plan &
\(J_{\rm IGW}(P_{\rm cert})\) &
the weak certificate coupling &
same plan ordinary comparison; not an ordinary optimality certificate\\
ordinary POT &
\(J_{\rm IGW}(P_{\rm POT})\) &
returned balanced coupling from the finite POT solve &
finite local ordinary IGW comparator\\
ordinary \(A\)/OT envelope &
\(J_{\rm IGW}(P_{\rm env})\) &
returned balanced coupling from the matched start envelope solve &
ordinary envelope formulation and implementation cross check\\
\midrule
\multicolumn{4}{@{}l}{\emph{Reconstruction checks}}\\
certificate pushforward error &
\(\max_{i,r}|m_i(P_{\rm cert})_r-z_{ir}|\) &
weak certificate coupling &
coordinatewise recovery error of the known clean skeleton\\
solved wIGW pushforward error (shapes); pushforward RMS relative to certificate
(graphs) &
\(\max_{i,r}|\widehat m_{ir}-z_{ir}|\) for shapes;
\(\bigl((nd_y)^{-1}\sum_i\)\newline
\(\norm{\widehat m_i-m_i(P_{\rm cert})}^2\)\newline
\(\bigr)^{1/2}\) for graphs &
returned balanced weak coupling &
maximum coordinate error for shapes and coordinatewise RMS error for graphs\\
\midrule
\multicolumn{4}{@{}l}{\emph{Numerical checks on the returned weak state}}\\
\(A\)-compatibility &
\(\norm{\widehat A-M_{\widehat m}}_F\) &
returned \((\widehat A,\widehat B,\widehat P)\) &
consistency of the retained cross moment variable\\
\(B\)-compatibility &
\(\norm{\widehat B-S_{\widehat m}}_F\) &
returned \((\widehat A,\widehat B,\widehat P)\) &
consistency of the retained second moment variable\\
row marginal residual &
\(\norm{\widehat P\mathbf 1-a}_\infty\) &
returned balanced weak coupling &
source marginal feasibility\\
column marginal residual &
\(\norm{\widehat P^\top\mathbf 1-b}_\infty\) &
returned balanced weak coupling &
target marginal feasibility\\
maximum marginal residual &
\(\max\{\norm{\widehat P\mathbf 1-a}_\infty,\)\newline
\(\norm{\widehat P^\top\mathbf 1-b}_\infty\}\) &
returned balanced weak coupling &
aggregate feasibility value reported in the shape results\\
\bottomrule
\end{longtable}
}

\begin{table}[H]
\centering
\footnotesize
\begin{tabularx}{\textwidth}{@{}>{\raggedright\arraybackslash}p{.27\textwidth}X@{}}
\toprule
parameter & value used in every synthetic weak solver run\\
\midrule
ridge & \(\varepsilon=10^{-4}\)\\
initial values &
\(A_{\rm init}=I_{d_x,d_y}\), \(B_{\rm init}=0.1I_{d_y}\), and
\(P_{\rm init}=ab^\top\). The code copies \(A_{\rm init}\), projects
\(B_{\rm init}\) onto the positive semidefinite cone, and begins the outer ball
projections after the first inner solve.\\
mirror step &
\(\gamma=0.5\), with gradient divisor
\(\max\{\norm{G}_\infty,10^{-12}\}\).\\
outer domains &
\(R_A=\sqrt{M_2(\mu)M_2(\nu)}\) and \(R_B=M_2(\nu)\), with Euclidean
projection onto the corresponding Frobenius balls and the positive semidefinite
constraint for \(B\).\\
inner warm start and selection &
The coupling retained by one outer iteration initializes the next inner solve.
Within an inner solve, the warm start and all mirror iterates are compared using
\(f_{A,B}\), and the lowest value is retained.\\
inner Sinkhorn step &
Tolerance \(10^{-12}\), checked every ten iterations; kernel floor
\(10^{-300}\); iteration cap \(N_{\rm sk}\) from
Table~\ref{tab:run-ab-config}. Reaching this cap is allowed inside the inexact
oracle.\\
outer retention &
The tuple \((A,B,P)\) before the outer update with the smallest direct ridge primal
\(J_{\rm w,\varepsilon}(P)\) is retained.\\
pre-update monitoring &
At each outer iteration, the implementation records the direct ridge primal, the
fixed-plan Lagrangian, the \(A\)- and \(B\)-compatibility residuals, and the two
marginal residuals before updating the outer matrices. These records form the
diagnostic history and do not alter the iterates or the return rule.\\
final balancing &
Tolerance \(10^{-12}\), checked every ten iterations, with at most
\(20\,000\) iterations. After scaling, the repair downscales overfull rows and
columns. For the remaining nonnegative deficits
\(r=a-P\mathbf 1\), \(c=b-P^\top\mathbf 1\), and their common mass
\(\delta=\mathbf 1^\top r=\mathbf 1^\top c>0\), it adds
\(rc^\top/\delta\). The objective and diagnostics are recomputed using the
repaired coupling, and the \(L^1\) change from the retained pre-balance
coupling to the returned coupling is recorded.\\
restarts &
One deterministic start and no random restarts. Every sweep level starts a new
solver run from the common initial values above.\\
\bottomrule
\end{tabularx}
\caption{Common configuration of the synthetic \(A\)--\(B\) runs.}
\label{tab:common-ab-config}
\end{table}

\begin{table}[H]
\centering
\footnotesize
\renewcommand{\arraystretch}{1.08}
\begin{tabularx}{\textwidth}{@{}>{\raggedright\arraybackslash}Xcc@{\hspace{.8em}}c@{\hspace{.8em}}cccc@{}}
\toprule
run & \(n\) & \(q\) & data seeds & \(T_{\rm out}\) & \(T_{\rm in}\) &
\(N_{\rm sk}\) & \(\eta\)\\
\midrule
symmetric cat & 500 & 2 & 4, 7 & 30 & 60 & 60 & 0.20\\
homoscedastic cat sweep & 500 & 2 & 4, 30 & 24 & 50 & 60 & 0.20\\
heteroscedastic cat sweep & 500 & 2 & 4, 60 & 24 & 50 & 60 & 0.20\\
Gaussian cat & 250 & 8 & 11, 91 & 35 & 70 & 70 & 0.20\\
graph instance & 10 & 10 & 4 & 45 & 80 & 300 & 0.18\\
\bottomrule
\end{tabularx}
\caption{Parameters that vary among runs of Algorithm~\ref{alg:implemented-ab}.
Here \(n\) is the number of source points
and \(q\) is the number of target children per source point. Cat entries list the
source sampling seed followed by the refinement seed. The graph source is
deterministic, so its entry gives only the refinement seed.}
\label{tab:run-ab-config}
\end{table}

\begin{table}[H]
\centering
\small
\begin{tabular}{@{}lrr@{}}
\toprule
run & maximum iterations & relative tolerance\\
\midrule
symmetric cat & 80 & \(10^{-7}\)\\
homoscedastic cat sweep & 80 & \(10^{-7}\)\\
heteroscedastic cat sweep & 80 & \(10^{-7}\)\\
Gaussian cat & 80 & \(10^{-7}\)\\
graph instance & 100 & \(10^{-8}\)\\
graph sweep & 80 & \(10^{-8}\)\\
\bottomrule
\end{tabular}
\caption{Synthetic ordinary IGW solver parameters. POT receives the Gram
matrices \(XX^\top\) and \(YY^\top\), squared loss, symmetric mode, no Armijo
line search, and $G_0=P_{\rm cert}$. The $A$/OT envelope receives
$P_0=P_{\rm cert}$ and $A_0=X^\top P_{\rm cert}Y$ and uses
\texttt{ot.emd} for each linear OT block. Both methods use the tabulated cap and
relative tolerance, absolute tolerance $10^{-9}$, no restarts, and final
balancing tolerance $10^{-12}$ with at most $20\,000$ iterations. The envelope
also uses compatibility tolerance $10^{-9}$, OT tie tolerance $10^{-12}$, and
at most $100\,000$ EMD iterations; its saved diagnostics include the iteration
count, both marginal residuals, and the final Frank--Wolfe gap. The graph sweep
reuses the seed 11 refinement offsets at all eight noise levels and rescales
them with the noise amplitude. Every reported value is recomputed from the
returned balanced plan.}
\label{tab:synthetic-ordinary-config}
\end{table}

{\footnotesize
\begin{longtable}{@{}>{\raggedright\arraybackslash}p{.24\textwidth}
>{\raggedright\arraybackslash}p{.70\textwidth}@{}}
\caption{PBMC data preparation and frozen split construction.}
\label{tab:pbmc-data-config}\\
\toprule
component & PBMC configuration\\
\midrule
\endfirsthead
\multicolumn{2}{@{}l}{\footnotesize\itshape Table~\thetable\ continued}\\
\toprule
component & PBMC configuration\\
\midrule
\endhead
dataset &
10x Genomics PBMC from one healthy donor, paired RNA and ATAC,
processed by Cell Ranger ARC 1.0.0 \cite{TenXPBMC}.\\
quality control &
\(500\le\) detected genes \(\le6000\); at least \(1000\) gene expression
counts; mitochondrial fraction at most \(0.15\); at least \(3000\) ATAC
fragments; fraction of reads in peaks (FRIP) at least \(0.15\). The filter
retains \(8212\) of \(12016\) cells. The available barcode metadata do not
include transcription start site (TSS) enrichment, nucleosome signal, or
blacklist fraction, so these criteria are not evaluated.\\
annotation &
CellTypist 1.7.1 with the pinned \texttt{Immune\_All\_Low.pkl} model,
followed by a fixed mapping to six classes; \(7760\) cells enter the eligible
pool.\\
subsamples &
Seeds \(17,23,31,47,59\). Each split has a \(900\)-cell atlas
(\(150\) per class) and a disjoint \(480\)-cell evaluation set
(\(80\) per class).\\
RNA transform &
Normalize each library to \(10^4\), apply \(\log(1+x)\), select \(2000\)
highly variable genes on the atlas, and fit an eight-component PCA on the
atlas.\\
ATAC transform &
Retain peaks present in at least ten atlas cells; fit TF--IDF and latent
semantic indexing with nine components on the atlas; discard the first
component.\\
centering and scaling &
Subtract the modality specific atlas mean and divide by its root mean square
radius. Apply each fitted transformation unchanged to the evaluation cells.\\
use of labels and pair identities &
Source atlas labels construct the six RNA prototypes, and source evaluation
labels group transported mass by class. Target labels determine the balanced
subsamples and score class transfer. Physical RNA/ATAC pair identities are
withheld from the solvers and used for retrieval scoring.\\
\bottomrule
\end{longtable}
}

\begin{table}[H]
\centering
\footnotesize
\begin{tabularx}{\textwidth}{@{}>{\raggedright\arraybackslash}p{.24\textwidth}X@{}}
\toprule
component & PBMC solver configuration\\
\midrule
support sizes &
Cell to cell: \(480\) RNA and \(480\) ATAC cells. Prototype to cell:
\(6\) RNA prototypes and \(480\) ATAC cells. All empirical masses are uniform.\\
POT IGW &
Product coupling start; at most \(100\) iterations; relative tolerance
\(10^{-9}\), absolute tolerance \(10^{-11}\), squared loss on \(XX^\top\)
and \(YY^\top\).\\
IGW envelope &
Scaled rectangular identity \(A\); at most \(100\) outer updates; exact
linear OT oracle with at most \(100\,000\) iterations; relative tolerance
\(10^{-9}\), absolute tolerance \(10^{-11}\).\\
barycentric wIGW &
\(\varepsilon=10^{-4}\), \(T_{\rm out}=100\), \(T_{\rm in}=50\),
\(\eta_A=\eta_B=0.12\), and inner mirror step \(\gamma=0.5\).
The Sinkhorn cap \(N_{\rm sk}\) is \(80\) for cell to cell and \(160\) for
prototype to cell.\\
wIGW initialization &
\(P_{\rm init}=ab^\top\); \(A_{\rm init}=I_{d_x,d_y}\) scaled to \(R_A\);
\(B_{\rm init}\) is the projection of \(0.1I_{d_y}\) onto
\(\mathcal B_2\). One deterministic start and no selection based on labels.\\
wIGW feasibility &
Marginal scaling tolerance \(10^{-10}\), followed by the exact nonnegative
marginal repair. The maximum change induced by final balancing and repair over the
\(70\)-run ridge sweep is
\(2.24\cdot10^{-15}\) in \(L^1\).\\
target \(k\)-means &
\(k=6\), \(20\) initializations, at most \(300\) iterations. An optimal
one-to-one linear assignment (Hungarian matching) maximizing total overlap maps
clusters to classes before macro-F1 is computed.\\
SCOT &
Pinned commit \texttt{14649be6e14017dcfe7ba619091b33d1df55f6a9};
\(k=50\), correlation graph, \(\epsilon_{\rm SCOT}=10^{-3}\), uniform
marginals, and no normalization. For the prototype to cell comparison, SCOT aligns
the \(900\) atlas RNA cells with the \(480\) ATAC cells and then aggregates
source rows by class.\\
ridge sensitivity &
\(\varepsilon\in\{10^{-6},10^{-5},10^{-4},10^{-3},10^{-2},10^{-1},1\}\);
all other wIGW settings remain fixed.\\
\bottomrule
\end{tabularx}
\caption{PBMC baselines, wIGW solver, and ridge sensitivity configuration.}
\label{tab:pbmc-solver-config}
\end{table}

\subsection{PBMC runtime and local solver robustness}

\begin{table}[H]
\centering
\footnotesize
\renewcommand{\arraystretch}{1.08}
\begin{tabular}{@{}lccc@{}}
\toprule
method & cell to cell time (s) & prototype to cell time (s) &
maximum marginal residual\\
\midrule
POT IGW, product start
 & \(0.152\pm0.079\) & \(0.0046\pm0.0009\) & \(2.22\cdot10^{-16}\)\\
IGW envelope, scaled identity
 & \(0.088\pm0.026\) & \(0.0014\pm0.0001\) & \(3.33\cdot10^{-16}\)\\
barycentric wIGW, \(\varepsilon=10^{-4}\)
 & \(46.314\pm1.354\) & \(4.340\pm0.075\) & \(5.55\cdot10^{-17}\)\\
target \(k\)-means
 & \(0.015\pm0.009\) & \(0.0147\pm0.0009\) & ---\\
SCOT
 & \(1.292\pm0.512\) & \(3.054\pm0.824\) & \(2.52\cdot10^{-5}\)\\
\bottomrule
\end{tabular}
\caption{Method runtime and feasibility in the PBMC comparison. Times are means
and sample standard deviations over the five frozen splits and measure the method
call after preprocessing. The method specific budgets are not matched by
wall-clock time; this table documents observed cost rather than a hardware-normalized
efficiency comparison.}
\label{tab:pbmc-runtime}
\end{table}

\begin{figure}[H]
\centering
\includegraphics[width=.86\linewidth]
 {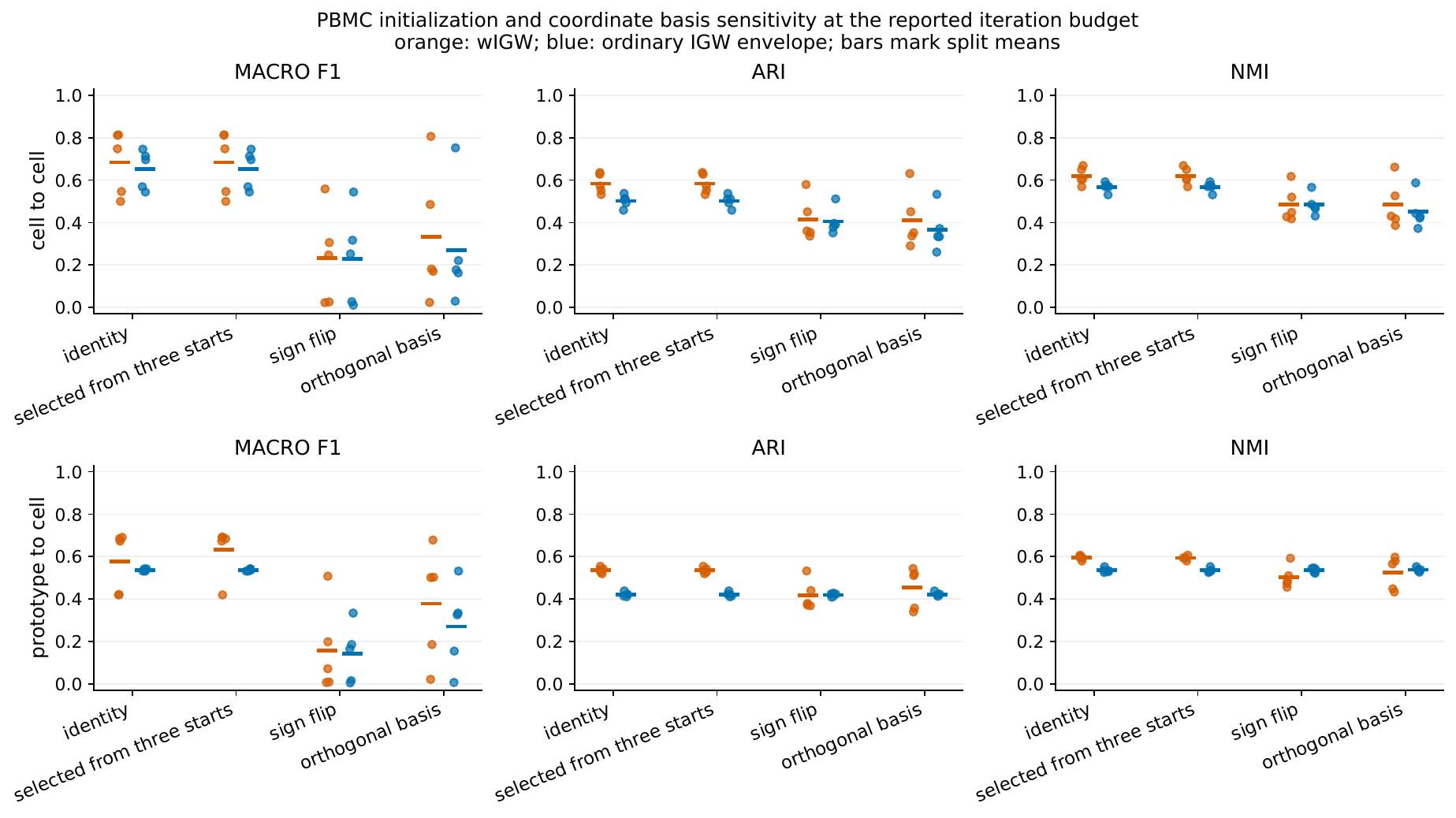}
\caption{Post hoc PBMC audit of initialization and basis sensitivity at the reported
iteration budget. Rows show the cell to cell and prototype to cell tasks, and columns show
macro-F1, ARI, and NMI. Orange denotes wIGW and blue denotes the IGW envelope;
points are frozen splits and horizontal bars are means. ``Selected from three starts'' uses
selection by each method's own objective among the scaled identity and two deterministic
starts based on SVD frames. The sign flips and orthogonal changes of basis use independent
coordinate changes in the two modalities followed by a newly constructed scaled identity start.}
\label{fig:pbmc-basis-audit}
\end{figure}

\begin{table}[H]
\centering
\footnotesize
\renewcommand{\arraystretch}{1.08}
\begin{tabular}{@{}llccc@{}}
\toprule
task & method & macro-F1 & ARI & NMI\\
\midrule
cell to cell
 & POT IGW, product start
 & \(0.368\pm0.267\) & \(0.389\pm0.115\) & \(0.476\pm0.092\)\\
 & IGW envelope, scaled identity
 & \(0.569\pm0.067\) & \(0.499\pm0.026\) & \(0.568\pm0.024\)\\
 & barycentric wIGW, \(\varepsilon=10^{-4}\)
 & \(0.577\pm0.111\) & \(\mathbf{0.587\pm0.029}\) &
   \(\mathbf{0.631\pm0.031}\)\\
 & target \(k\)-means
 & \(\mathbf{0.580\pm0.018}\) & \(0.461\pm0.017\) & \(0.563\pm0.019\)\\
\addlinespace[.2em]
prototype to cell
 & POT IGW, product start
 & \(0.239\pm0.065\) & \(0.412\pm0.019\) & \(0.526\pm0.022\)\\
 & IGW envelope, scaled identity
 & \(0.553\pm0.023\) & \(0.424\pm0.012\) & \(0.539\pm0.019\)\\
 & barycentric wIGW, \(\varepsilon=10^{-4}\)
 & \(0.500\pm0.123\) & \(\mathbf{0.556\pm0.022}\) &
   \(\mathbf{0.620\pm0.020}\)\\
 & target \(k\)-means
 & \(\mathbf{0.580\pm0.018}\) & \(0.461\pm0.017\) & \(0.563\pm0.019\)\\
\bottomrule
\end{tabular}
\par
\caption{Secondary validation with a fixed protocol on five fresh subsampling seeds
from the same donor, numbered 71, 73, 79, 83, and 89. The ridge value, iteration
budgets, preprocessing, and
method specific initializations---including the scaled identity for wIGW and the
IGW envelope---were fixed before these splits were run. Entries are means and
sample standard deviations. This audit does not provide independent donor validation.}
\label{tab:pbmc-postselection-validation}
\end{table}

The post hoc robustness audit uses the same \(100\times50\) wIGW budget and
the same \(100\)-iteration budget for the IGW envelope as the main comparison. It
selects among the scaled identity and two deterministic starts based on SVD frames
using only each method's own objective. Independent sign flips and orthogonal changes
of basis reconstruct the scaled
identity in each transformed coordinate system. Labels and pair identities do not
enter any run or selection. The covariant controls keep the canonical plan and
transform the returned matrices; their maximum plan difference is zero and their
maximum objective difference is \(2.02\cdot10^{-16}\).
\par\Needspace{4\baselineskip}\noindent
Among the wIGW states selected by objective, the maximum compatibility residuals for
\(A\) and \(B\) are
\(1.42\cdot10^{-4}\) and \(2.07\cdot10^{-4}\), respectively; the maximum projected
residual of the returned state is \(5.90\cdot10^{-4}\), and the maximum change in the running best over the
final ten iterations is \(8.29\cdot10^{-4}\). Because Algorithm~\ref{alg:implemented-ab}
returns a best tuple retained before the update, these values are numerical diagnostics
rather than a saddle point or convergence certificate.

\FloatBarrier

\subsection{PBMC evaluation metrics}\label{subsec:pbmc-evaluation-metrics}

Let \(z_i^X\in\{1,\ldots,C\}\) and \(z_j^Y\in\{1,\ldots,C\}\), with
\(C=6\), denote the source and target cell type labels. For a transport plan
\(P\), the class mass delivered to target cell \(j\) and its predicted label are
\[
 q_{cj}:=\sum_{i:z_i^X=c}P_{ij},
 \qquad
 \widehat z_j\in\arg\max_{1\le c\le C}q_{cj}.
\]
An exact tie is resolved by the fixed class order. Target labels evaluate
\(\widehat z\) and do not enter the transport optimization.

Let
\(n_{c\ell}:=\#\{j:z_j^Y=c,\widehat z_j=\ell\}\),
\(a_c:=\sum_\ell n_{c\ell}\),
\(b_\ell:=\sum_c n_{c\ell}\), and
\(n:=\sum_{c,\ell}n_{c\ell}\). The macro-F1 score is
\[
 \operatorname{macroF1}
 :=\frac1C\sum_{c=1}^C
 \frac{2n_{cc}}
 {2n_{cc}+\sum_{r\ne c}n_{rc}+\sum_{\ell\ne c}n_{c\ell}},
\]
with a zero contribution when the denominator vanishes. To define ARI, put
\[
 S:=\sum_{c,\ell}\binom{n_{c\ell}}2,
 \qquad A:=\sum_c\binom{a_c}2,
 \qquad B:=\sum_\ell\binom{b_\ell}2,
 \qquad T:=\binom n2.
\]
Then
\[
 \operatorname{ARI}
 :=\frac{S-AB/T}{\tfrac12(A+B)-AB/T}.
\]
For \(p_{c\ell}:=n_{c\ell}/n\), \(p_c:=a_c/n\), and
\(\widehat p_\ell:=b_\ell/n\), define
\[
 I:=\sum_{c,\ell:p_{c\ell}>0}p_{c\ell}
       \log\frac{p_{c\ell}}{p_c\widehat p_\ell},
 \quad
 H:=-\sum_c p_c\log p_c,
 \quad
 \widehat H:=-\sum_\ell\widehat p_\ell\log\widehat p_\ell.
\]
The reported normalized mutual information uses arithmetic normalization,
\[
 \operatorname{NMI}:=\frac{2I}{H+\widehat H}.
\]
The logarithm base does not affect this ratio.

For the target only \(k\)-means reference, let \(g_j\) be the cluster of target
cell \(j\) and let
\(h_{rc}:=\#\{j:g_j=r,z_j^Y=c\}\). After clustering, we choose
\[
 \sigma^\star\in\arg\max_{\sigma\in\mathfrak S_C}
 \sum_{r=1}^C h_{r,\sigma(r)}
\]
and set \(\widehat z_j=\sigma^\star(g_j)\). This optimal assignment, commonly
called Hungarian matching, uses target labels only to name clusters for
macro-F1. When the maximizer is not unique, we use the optimum returned by
SciPy's \texttt{linear\_sum\_assignment} applied to \(-h\). The matching does
not change the clustering, ARI, or NMI.

\paragraph{Paired cell retrieval.}
Paired cell retrieval is evaluated only in the cell to cell task. For target
cell \(j\), let \(r_j\) be the rank of its physically paired RNA cell when
source cells are ordered by decreasing \(P_{ij}\). The implementation uses the
fixed source order to break ties. We report
\[
 \operatorname{top}\text{-}k
 :=\frac1n\sum_{j=1}^n\mathbf 1\{r_j\le k\},
 \quad k\in\{1,5\},
 \qquad
 \operatorname{MRR}:=\frac1n\sum_{j=1}^n\frac1{r_j}.
\]
Pair identities are withheld during optimization. Retrieval is undefined for
target \(k\)-means and for the prototype to cell task. Each metric is
computed separately on the five frozen splits. Table~\ref{tab:pbmc-pair-retrieval}
reports the empirical means and sample standard deviations; its row for random ranking
gives the exact expectation for \(n=480\).

\begin{table}[H]
\centering
\footnotesize
\renewcommand{\arraystretch}{1.08}
\begin{tabular}{@{}lccc@{}}
\toprule
method & top-1 & top-5 & mean reciprocal rank\\
\midrule
random ranking (expectation)
 & \(0.0021\) & \(0.0104\) & \(0.0141\)\\
\addlinespace[.2em]
POT IGW, product start
 & \(0.026\pm0.025\) & \(0.033\pm0.024\) & \(0.037\pm0.025\)\\
IGW envelope, scaled identity
 & \(0.056\pm0.012\) & \(0.064\pm0.011\) & \(0.067\pm0.012\)\\
barycentric wIGW, \(\varepsilon=10^{-4}\)
 & \(\mathbf{0.068\pm0.017}\) & \(\mathbf{0.203\pm0.038}\) &
   \(\mathbf{0.146\pm0.027}\)\\
SCOT
 & \(0.027\pm0.015\) & \(0.106\pm0.068\) & \(0.084\pm0.044\)\\
\bottomrule
\end{tabular}
\caption{Paired cell retrieval in the cell to cell PBMC task. For each ATAC
cell, the \(480\) RNA cells are ranked by transported mass. Empirical entries
are means and sample standard deviations over the five frozen splits. Under a
uniformly random ranking, the exact expectations are \(1/480\) for top-1,
\(5/480\) for top-5, and \(H_{480}/480\) for mean reciprocal rank, where
\(H_{480}=\sum_{r=1}^{480}r^{-1}\). Target \(k\)-means has no cross-modal
coupling, and the prototype to cell task has no unique paired source cell, so
retrieval is not defined in those cases.}
\label{tab:pbmc-pair-retrieval}
\end{table}


\small
\begin{thebibliography}{99}

\bibitem{Memoli} F. M\'emoli. Gromov--Wasserstein distances and the metric approach to
object matching. \emph{Foundations of Computational Mathematics}, 11:417--487, 2011.

\bibitem{PeyreCuturiSolomon} G. Peyr\'e, M. Cuturi, and J. Solomon.
Gromov--Wasserstein averaging of kernel and distance matrices. In \emph{ICML}, 2016.

\bibitem{ZhangGoldfeldMrouehSriperumbudur} Z. Zhang, Z. Goldfeld, Y. Mroueh, and
B. K. Sriperumbudur. Gromov--Wasserstein distances: entropic regularization,
duality and sample complexity. \emph{Annals of Statistics}, 52(4):1616--1645,
2024.

\bibitem{OreshkovCalsamiglia2009} O. Oreshkov and J. Calsamiglia.
Distinguishability measures between ensembles of quantum states.
\emph{Physical Review A}, 79:032336, 2009.
doi:10.1103/PhysRevA.79.032336.

\bibitem{LeppajarviSedlak2021} L. Lepp\"aj\"arvi and M. Sedl\'ak.
Post-processing of quantum instruments. \emph{Physical Review A},
103:022615, 2021. doi:10.1103/PhysRevA.103.022615.

\bibitem{GRST} N. Gozlan, C. Roberto, P.-M. Samson, and P. Tetali. Kantorovich
duality for general transport costs and applications. \emph{Journal of Functional
Analysis}, 273(11):3327--3405, 2017.

\bibitem{BBP} J. Backhoff-Veraguas, M. Beiglb\"ock, and G. Pammer. Existence,
duality, and cyclical monotonicity for weak transport costs. \emph{Calculus of
Variations and Partial Differential Equations}, 58:203, 2019.

\bibitem{Strassen} V. Strassen. The existence of probability measures with given
marginals. \emph{Annals of Mathematical Statistics}, 36:423--439, 1965.

\bibitem{GozlanJuillet} N. Gozlan and N. Juillet. On a mixture of Brenier and Strassen
theorems. \emph{Proceedings of the London Mathematical Society},
120(3):434--463, 2020. doi:10.1112/plms.12302.

\bibitem{DomingoEnrichSchiffMroueh2023} C. Domingo-Enrich, Y. Schiff, and Y. Mroueh.
Learning with stochastic orders. In \emph{International Conference on Learning
Representations (ICLR)}, 2023. arXiv:2205.13684.
\url{https://arxiv.org/abs/2205.13684}.

\bibitem{TenXPBMC} 10x Genomics. \emph{PBMC from a Healthy Donor---No Cell
Sorting (10k)}. Epi Multiome dataset analyzed using Cell Ranger ARC 1.0.0,
2020.
\href{https://www.10xgenomics.com/datasets/pbmc-from-a-healthy-donor-no-cell-sorting-10-k-1-standard-1-0-0}
{10x Genomics data page},
accessed August 23, 2026.

\bibitem{IBMUSD} IBM. \emph{Unbalanced Sobolev Descent}. Source code
repository for the NeurIPS 2020 paper; the experiments use assets from its
\texttt{img/} directory. Archived GitHub repository,
\url{https://github.com/IBM/USD}, accessed August 19, 2026.

\bibitem{PatyChoneKramarz2022} F.-P. Paty, P. Chon\'e, and F. Kramarz. Algorithms
for weak optimal transport with an application to economics. arXiv:2205.09825, 2022.

\bibitem{RiouxGoldfeldKato2024} G. Rioux, Z. Goldfeld, and K. Kato. Entropic
Gromov--Wasserstein distances: Stability and algorithms.
\emph{Journal of Machine Learning Research}, 25(363):1--52, 2024.

\bibitem{DemetciEtAl2022} P. Demetci, R. Santorella, B. Sandstede,
W. S. Noble, and R. Singh. SCOT: Single-cell multi-omics alignment with optimal
transport. \emph{Journal of Computational Biology}, 29(1):3--18, 2022.
doi:10.1089/cmb.2021.0446.

\bibitem{ChungSongKimPark} J. Chung, E. Song, W. H. Kim, and G. Park.
Convex distance operator transport: A convex and geometry-preserving formulation.
In \emph{Proceedings of the 43rd International Conference on Machine Learning},
PMLR 306, 2026.

\bibitem{WangWangDing2026} H.-H. Wang, Y. Wang, and H. Ding.
MIRROR: Aligning semantic relations from language to image via
Gromov--Wasserstein. To appear in \emph{European Conference on Computer Vision},
2026. arXiv:2606.29462.

\bibitem{ChenLimMemoliWanWang} S. Chen, S. Lim, F. M\'emoli, Z. Wan, and Y. Wang.
Weisfeiler--Lehman meets Gromov--Wasserstein. In \emph{Proceedings of the 39th
International Conference on Machine Learning}, PMLR 162:3371--3416, 2022.

\bibitem{BauerMemoliNeedhamNishino} M. Bauer, F. M\'emoli, T. Needham, and
M. Nishino. The $Z$-Gromov--Wasserstein distance.
\emph{Journal of Machine Learning Research}, 26(291):1--57, 2025.

\bibitem{VincentCuazEtAl} C. Vincent-Cuaz, R. Flamary, M. Corneli, T. Vayer, and
N. Courty. Semi-relaxed Gromov--Wasserstein divergence and applications on graphs.
In \emph{International Conference on Learning Representations}, 2022.

\bibitem{BeierBeinertSteidl} F. Beier, R. Beinert, and G. Steidl. On a linear
Gromov--Wasserstein distance. \emph{IEEE Transactions on Image Processing},
31:7292--7305, 2022.

\bibitem{Kallenberg} O. Kallenberg. \emph{Foundations of Modern Probability}.
Third edition, Probability Theory and Stochastic Modelling 99, Springer, Cham, 2021.

\bibitem{ShakedShanthikumar2007} M. Shaked and J. G. Shanthikumar.
\emph{Stochastic Orders}. Springer Series in Statistics, Springer, New York, 2007.

\bibitem{Danskin1966} J. M. Danskin. The theory of max-min, with applications.
\emph{SIAM Journal on Applied Mathematics}, 14(4):641--664, 1966.
doi:10.1137/0114053.

\bibitem{POT} R. Flamary et al. POT: Python Optimal Transport.
\emph{Journal of Machine Learning Research}, 22(78):1--8, 2021.

\bibitem{DominguezCondeEtAl2022} C. Dom\'inguez Conde et al.
Cross-tissue immune cell analysis reveals tissue-specific features in humans.
\emph{Science}, 376(6594):eabl5197, 2022.
doi:10.1126/science.abl5197.

\bibitem{CaoGongHongWan2022} K. Cao, Q. Gong, Y. Hong, and L. Wan.
A unified computational framework for single-cell data integration with optimal
transport. \emph{Nature Communications}, 13:7419, 2022.
doi:10.1038/s41467-022-35094-8.

\bibitem{XuBegoliMcCord2022} Y. Xu, E. Begoli, and R. P. McCord.
sciCAN: Single-cell chromatin accessibility and gene expression data integration
via cycle-consistent adversarial network.
\emph{npj Systems Biology and Applications}, 8:33, 2022.
doi:10.1038/s41540-022-00245-6.

\bibitem{BrezisFA} H. Brezis. \emph{Functional Analysis, Sobolev Spaces and Partial
Differential Equations}. Universitext, Springer, New York, 2011.
doi:10.1007/978-0-387-70914-7.

\bibitem{Billingsley} P. Billingsley. \emph{Convergence of Probability Measures}.
Second edition, Wiley, New York, 1999.

\bibitem{VillaniOT} C. Villani. \emph{Optimal Transport: Old and New}.
Grundlehren der mathematischen Wissenschaften 338, Springer, Berlin, 2009.

\bibitem{BertsekasShreve} D. P. Bertsekas and S. E. Shreve.
\emph{Stochastic Optimal Control: The Discrete-Time Case}.
Mathematics in Science and Engineering 139, Academic Press, New York, 1978.

\bibitem{Rockafellar} R. T. Rockafellar. \emph{Convex Analysis}.
Princeton Mathematical Series 28, Princeton University Press, Princeton, 1970.

\bibitem{Sion} M. Sion. On general minimax theorems.
\emph{Pacific Journal of Mathematics}, 8(1):171--176, 1958.

\bibitem{AliprantisBorder} C. D. Aliprantis and K. C. Border.
\emph{Infinite Dimensional Analysis: A Hitchhiker's Guide}.
Third edition, Springer, Berlin, 2006.

\bibitem{PeyreCuturiOT} G. Peyr\'e and M. Cuturi. Computational optimal transport.
\emph{Foundations and Trends in Machine Learning}, 11(5--6):355--607, 2019.

\bibitem{BauschkeCombettes} H. H. Bauschke and P. L. Combettes.
\emph{Convex Analysis and Monotone Operator Theory in Hilbert Spaces}.
Second edition, CMS Books in Mathematics, Springer, Cham, 2017.

\bibitem{GuoNilssonWiesel2025} I. Guo, S. Nilsson, and J. Wiesel.
Dynamic characterization of barycentric optimal transport problems and their
martingale relaxation. arXiv:2511.21287, 2025.

\bibitem{PflugPichler2012} G. Ch. Pflug and A. Pichler.
A distance for multistage stochastic optimization models.
\emph{SIAM Journal on Optimization}, 22(1):1--23, 2012.

\bibitem{BackhoffBartlBeiglbockEder2020} J. Backhoff-Veraguas, D. Bartl,
M. Beiglb\"ock, and M. Eder. Adapted Wasserstein distances and stability in
mathematical finance. \emph{Finance and Stochastics}, 24:601--632, 2020.

\bibitem{QuTran2021} Z. Qu and B. Tran. Entropic regularization of the nested
distance. arXiv:2107.09864, 2021.

\bibitem{MuandetEtAl} K. Muandet, K. Fukumizu, B. Sriperumbudur, and
B. Sch\"olkopf. Kernel mean embedding of distributions: A review and beyond.
\emph{Foundations and Trends in Machine Learning}, 10(1--2):1--141, 2017.

\bibitem{ParkMuandet} J. Park and K. Muandet. A measure-theoretic approach to
kernel conditional mean embeddings. In \emph{Advances in Neural Information
Processing Systems}, 33, 2020.

\bibitem{FanKKM} K. Fan. A generalization of Tychonoff's fixed point theorem.
\emph{Mathematische Annalen}, 142:305--310, 1961.

\bibitem{Kellerer} H. G. Kellerer. Markov-Komposition und eine Anwendung auf
Martingale. \emph{Mathematische Annalen}, 198:99--122, 1972.

\end{thebibliography}
\end{document}